\documentclass[11pt]{amsart}
\usepackage{graphicx}
\usepackage{amsmath,amssymb,xy,array}
\usepackage[T1]{fontenc}
\usepackage{amsfonts}
\usepackage{mathabx}
\usepackage{amsmath}
\usepackage{amssymb}
\usepackage{amsthm}
\usepackage{graphicx}
\usepackage{enumitem}
\usepackage{xcolor}
\usepackage[utf8]{inputenc}
\usepackage{hyperref}
\usepackage{mathrsfs}
\usepackage[toc,page]{appendix}
\usepackage{fancyhdr}
\usepackage{tikz}
\usepackage{tikz-cd}
\usepackage{longtable}
\usepackage{geometry}
\usepackage{textcomp}
\usetikzlibrary{trees}
\usetikzlibrary{arrows}
\usepackage{multirow}
\usepackage{youngtab}
\usepackage{mathdots}

\setcounter{MaxMatrixCols}{30}

\oddsidemargin = 0.0mm
\evensidemargin = 0.0mm
\topmargin = 10mm
\textheight = 220mm
\textwidth = 165mm
\voffset = -10mm
\hoffset = 0.5mm

\definecolor{yqyqyq}{rgb}{0.5019607843137255,0.5019607843137255,0.5019607843137255}\definecolor{uuuuuu}{rgb}{0.26666666666666666,0.26666666666666666,0.26666666666666666}
\definecolor{uququq}{rgb}{0.25098039215686274,0.25098039215686274,0.25098039215686274}
\definecolor{wwwwww}{rgb}{0.4,0.4,0.4}
\definecolor{uuuuuu}{rgb}{0.26666666666666666,0.26666666666666666,0.26666666666666666}

\setlist[itemize]{leftmargin=6mm}

\renewcommand{\P}{\mathbb P}

\DeclareMathOperator{\codim}{codim}

\newcommand{\Aut}{\operatorname{Aut}}
\newcommand{\PsAut}{\operatorname{PsAut}}

\DeclareMathOperator{\Cl}{Cl}

\DeclareMathOperator{\Mor}{Mor}
\DeclareMathOperator{\NE}{NE}

\DeclareMathOperator{\lin}{lin}

\DeclareMathOperator{\mult}{mult}

\DeclareMathOperator{\Hom}{Hom}

\DeclareMathOperator{\Exc}{Exc}

\DeclareMathOperator{\Proj}{Proj}

\DeclareMathOperator{\Eff}{Eff}
\DeclareMathOperator{\Nef}{Nef}
\DeclareMathOperator{\Mov}{Mov}
\DeclareMathOperator{\Pic}{Pic}
\DeclareMathOperator{\rank}{rank}

\renewcommand{\sec}{\mathbb{S}ec}

\DeclareMathOperator{\Sym}{Sym}
\DeclareMathOperator{\Cox}{Cox}

\DeclareMathOperator{\mov}{mov}

\newcommand{\f}{\varphi}

\renewcommand{\P}{\mathbb{P}}

\newcommand\scalemath[2]{\scalebox{#1}{\mbox{\ensuremath{\displaystyle #2}}}}

\newtheorem{thm}{Theorem}[section]

\newtheorem{Question}[thm]{Question}
\newtheorem{Lemma}[thm]{Lemma}
\newtheorem{Proposition}[thm]{Proposition}

\newtheorem{Corollary}[thm]{Corollary}

\theoremstyle{definition}

\newtheorem{Definition}[thm]{Definition}
\newtheorem{Remark}[thm]{Remark}

\newtheorem{Example}[thm]{Example}
\newtheorem{Notation}[thm]{Notation}

\newtheorem{Construction}[thm]{Construction}
\newtheorem{Script}[thm]{Script}

\hypersetup{pdfpagemode=UseNone}
\hypersetup{pdfstartview=FitH}

\begin{document}
\title[\resizebox{6.1in}{!}{On the birational geometry of spaces of complete forms I: collineations and quadrics}]{On the birational geometry of spaces of complete forms I: collineations and quadrics}

\author[Alex Massarenti]{Alex Massarenti}
\address{\sc Alex Massarenti\\
Universidade Federal Fluminense\\
Campus Gragoat\'a, Rua Alexandre Moura 8 - S\~ao Domingos\\
24210-200, Niter\'oi, Rio de Janeiro\\ Brazil}
\email{alexmassarenti@id.uff.br}

\date{\today}
\subjclass[2010]{Primary 14E30; Secondary 14J45, 14N05, 14E07, 14M27}
\keywords{Complete collineations and quadrics; Mori dream spaces; Cox rings; Spherical varieties}

\begin{abstract}
Moduli spaces of complete collineations are wonderful compactifications of spaces of linear maps of maximal rank between two fixed vector spaces. We investigate the birational geometry of moduli spaces of complete collineations and quadrics from the point of view of Mori theory. We compute their effective, nef and movable cones, the generators of their Cox rings, and their groups of pseudo-automorphisms. Furthermore, we give a complete description of both the Mori chamber and stable base locus decompositions of the effective cone of the space of complete collineations of the 3-dimensional projective space.
\end{abstract}

\maketitle 

\setcounter{tocdepth}{1}

\tableofcontents

\section{Introduction}
\textit{Moduli spaces of complete collineations} are compactifications of spaces of linear maps of maximal rank between two vector spaces where the added boundary divisor is simple normal crossing. These spaces were introduced and studied, along with their symmetric and skew-symmetric counterparts called spaces of \textit{complete quadrics} and \textit{complete skew-fomrs}, by many leading algebraic geometers of the 19th-century such as M. Chasles \cite{Ch64}, G. Z. Giambelli \cite{Gi03}, T. A. Hirst \cite{Hi75}, \cite{Hi77}, H. Schubert \cite{Sc86}, and C. Segre \cite{Se84}. These algebraic varieties have been fundamental in enumerative geometry \cite{LH82}, and moreover they carry a beautiful geometry, enriched by symmetries inherited from their modular nature.  

During the 20th century, spaces of complete forms continued to be extensively studied both from the geometrical and enumerative point of view by J. G. Semple \cite{Se48}, \cite{Se51}, \cite{Se52}, J. A. Tyrrell \cite{Ty56}, I. Vainsencher \cite{Va82}, \cite{Va84}, S. Kleiman, D. Laksov, A. Lascoux, A. Thorup \cite{TK88}, \cite{LLT89}, and M. Thaddeus \cite{Tha99}. Furthermore, spaces of complete collineations are central not only in the study of other moduli spaces such as Hilbert schemes and Kontsevich spaces of twisted cubics \cite{Al56}, \cite{Pi81}, but also in their construction. Indeed, recently F. Cavazzani constructed moduli spaces of complete homogeneous varieties as GIT quotients of spaces of complete collineations \cite{Ca16}. Finally, the birational geometry of the spaces of complete quadrics and their relation with other moduli spaces such as Hilbert schemes and Kontsevich spaces of conics has been carefully investigated by C. Lozano Huerta in \cite{Ce15}.

The aim of this paper is to investigate the birational geometry of moduli spaces of complete forms from the point of view of Mori theory by exploiting their spherical nature. Roughly speaking, a \textit{Mori dream space} is a projective variety $X$ whose cone of effective divisors $\Eff(X)$ admits a well-behaved decomposition into convex sets called Mori chambers, and these chambers are the nef cones of birational models of $X$. These varieties, introduced by Y. Hu and S. Keel in \cite{HK00}, are named so because they behave in the best possible way from the point of view of Mori's minimal model program.

Given a reductive algebraic group $\mathscr{G}$ and a Borel subgroup $\mathscr{B}$, a \textit{spherical variety} is a variety admitting an action of $\mathscr{G}$ with an open dense $\mathscr{B}$-orbit. A special class of spherical varieties are the so called \textit{wonderful varieties} for which we require the existence of an open orbit whose complementary set is a simple normal crossing divisor. On a spherical variety we distinguish two types of $\mathscr{B}$-invariant prime divisors: a \textit{boundary divisor} is a $\mathscr{G}$-invariant prime divisor on $X$, a \textit{color} is a $\mathscr{B}$-invariant prime divisor that is not $\mathscr{G}$-invariant. Our interest in spherical and wonderful varieties comes from the fact that they are Mori dream spaces and that spaces of complete forms are wonderful; we refer to \cite{Pe14} for a comprehensive treatment of these topics. 

In this paper, having fixed two $K$-vector spaces $V,W$ respectively of dimension $n+1$, $m+1$ with $n\leq m$ over an algebraically closed field of characteristic zero, we will denote by $\mathcal{X}(n,m)$ the spaces of complete collineations $W\rightarrow V$ and we will set $\mathcal{X}(n):=\mathcal{X}(n,n)$, and by $\mathcal{Q}(n)$ the space of complete $(n-1)$-dimensional quadrics of $V$.

Recall that given an irreducible and reduced non-degenerate variety $X\subset\P^N$, and a positive integer $h\leq N$ the \textit{$h$-secant variety} $\sec_h(X)$ of $X$ is the subvariety of $\P^N$ obtained as the closure of the union of all $(h-1)$-planes spanned by $h$ general points of $X$. Spaces of complete forms reflect the triad of varieties parametrizing rank one matrices, composed by Grassmannians together with Veronese and Segre varieties. Spaces of matrices admit a natural stratification dictated by the rank. Indeed, a general point of the $h$-secant variety of a Grassmannian, a Veronese or a Segre corresponds to a matrix of rank $h$. The starting point of our investigation are constructions of the spaces of complete collineations and quadrics due to I. Vainsencher \cite{Va82}, \cite{Va84}, as sequences of blow-ups along the relevant variety parametrizing rank one matrices and the strict transforms of its secant varieties in order of increasing dimension. For instance, to obtain the space of complete collineations $\mathcal{X}(3)$ of $\mathbb{P}^3$ we must blow-up $\mathbb{P}^{15} = \mathbb{P}(\Hom(V,V))$ along the Segre variety $\mathcal{S}\cong\mathbb{P}^3\times\mathbb{P}^3$, and then along the strict transform of its variety of secant lines $\sec_2(\mathcal{S})$. Note that we do not need to blow-up the strict transform of the variety of $3$-secant planes $\sec_3(\mathcal{S})$ since it is a hypersurface in $\mathbb{P}^{15}$ which becomes a smooth divisor after the first two blow-ups.

In Section \ref{sec2}, as a warm up, we analyze the natural actions of $SL(n)\times SL(m)$ on $\mathcal{X}(n,m)$, and of $SL(n)$ on $\mathcal{Q}(n)$, and we compute the respective boundary divisors and colors. Thanks to general results on the cones of divisors of spherical varieties due to M. Brion \cite{Br89}, combined with an analysis of the projective geometry of distinguished hypersurfaces defined by the vanishing of certain minors of a general matrix, we manage to compute the effective and nef cones of these spaces, and as a consequence we get also an explicit presentation of their Mori cones and their cones of moving curves.

In Section \ref{sec3}, which is the core of the paper, we take advantage of the computation of the boundary divisors and colors in Section \ref{sec2} to give minimal sets of generators for the \textit{Cox rings} of spaces of complete forms. Cox rings were first introduced by D. A. Cox for toric varieties \cite{Cox95}, and then his construction was generalized to projective varieties in \cite{HK00}. These algebraic objects are basically universal homogeneous coordinate rings of projective varieties, defined as direct sum of the spaces of sections of all isomorphism classes of line bundles on them. For instance, as a consequence of the main results in Theorems \ref{theff}, \ref{gen} and in Propositions \ref{Grass}, \ref{LG} we have the following statement.
\begin{thm}
For any $i = 1,\dots,n+1$ let us denote by $D_i$ the strict transform in $\mathcal{X}(n)$ of the divisor in $\mathbb{P}(\Hom(V,V))$, with homogeneous coordinates $[z_{0,0}:\dots:z_{n,n}]$, given by 
\begin{equation*}
\det\left(
\begin{array}{ccc}
z_{n-i+1,n-i+1} & \dots & z_{n-i+1,n}\\ 
\vdots & \ddots & \vdots\\ 
z_{n,n-i+1} & \dots & z_{n,n}
\end{array}\right)=0
\end{equation*}
and by $E_j$ the exceptional divisors of the blow-ups in Vainsencher's construction. Then we have that $\Eff(\mathcal{X}(n))=\left\langle E_1,\dots,E_n\right\rangle$ and $\Nef(\mathcal{Q}(n)) = \left\langle D_1,\dots,D_{n}\right\rangle$.

Furthermore, the canonical sections associated to the $D_i$ and the $E_j$ form a set of minimal generators of $\Cox(\mathcal{X}(n))$, and the analogous statements hold for $\mathcal{X}(n,m)$ and $\mathcal{Q}(n)$. 

For those of these spaces which have Picard rank two we have that $\Cox(\mathcal{X}(1,m))$, $\Cox(\mathcal{X}(2))$, and $\Cox(\mathcal{Q}(2))$ are isomorphic to the homogeneous coordinate rings respectively of the Grassmannian $\mathcal{G}(1,m+2)$ parametrizing lines in $\mathbb{P}^{m+2}$, of the Grassmannian $\mathcal{G}(2,5)$ parametrizing planes in $\mathbb{P}^{5}$, and of the Lagrangian Grassmannian $\mathcal{LG}(2,5)$ parametrizing $3$-dimensional Lagrangian subspaces of a fixed $6$-dimensional vector space.
\end{thm}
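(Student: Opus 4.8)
The plan is to deduce every assertion from the spherical geometry of these spaces: once the colors and the boundary divisors have been matched with the explicit divisors $D_i$ and $E_j$, the cone statements will follow from Brion's description of the divisor cones of a spherical variety \cite{Br89}, and the Cox ring statements from the structure of the total coordinate ring of a wonderful variety.

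First I would identify the $B$-invariant prime divisors. In Vainsencher's construction the boundary (that is, $SL(V)\times SL(V)$-invariant) divisors of $\mathcal{X}(n)$ are the $n-1$ exceptional divisors $E_1,\dots,E_{n-1}$ of the successive blow-ups together with the strict transform of the determinant hypersurface $\{\det=0\}=D_{n+1}$, which is $G$-invariant because $\det(gAh)=\det(A)$ for $g,h\in SL(V)$; relabelling this last class as $E_n$ gives the $n$ boundary generators. The colors are the strict transforms of the nested minors $D_1,\dots,D_n$, each $B$-invariant but not $G$-invariant. By Brion's theorem $\Eff(\mathcal{X}(n))$ is spanned by all of these $B$-invariant prime divisors, so it suffices to check that every color lies in the cone generated by the boundary classes, which gives $\Eff(\mathcal{X}(n))=\langle E_1,\dots,E_n\rangle$. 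On $\mathcal{Q}(n)$ each determinantal divisor $D_i$ is the pullback of an ample class under one of the natural morphisms contracting $\mathcal{Q}(n)$ onto a space of degenerate quadrics, hence is nef and globally generated; verifying that these $n$ classes span a full-dimensional cone and that Brion's description of the nef cone in terms of the colors forbids anything outside it then yields $\Nef(\mathcal{Q}(n))=\langle D_1,\dots,D_n\rangle$. For the rectangular spaces $\mathcal{X}(n,m)$ there is no determinant hypersurface, the corank-one stratum already having codimension $\geq 2$, so the boundary consists of $n$ exceptional divisors and the argument is the same.

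Next I would treat the Cox ring. By the structure theorem for the total coordinate ring of a wonderful variety, $\Cox(\mathcal{X}(n))$ is generated, as a ring carrying the $SL(V)\times SL(V)$-action, by the canonical sections $s_{D_1},\dots,s_{D_{n+1}},s_{E_1},\dots,s_{E_{n-1}}$ of its colors and boundary divisors; the orbit of each $s_{D_i}$ under the group spans the full space of $i\times i$ minors, so that these lowest-weight sections together recover an honest system of algebra generators. Minimality is the statement that no one of these canonical sections lies in the subalgebra generated by the others, which I would check degree by degree by showing that the $B$-weight of each $s_{D_i}$ and $s_{E_j}$ is indecomposable in the weight monoid of $\Cox(\mathcal{X}(n))$, hence cannot be obtained as a product of sections of strictly smaller weight. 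The cases $\mathcal{X}(n,m)$ and $\mathcal{Q}(n)$ are handled identically after replacing the nested minors and exceptional divisors by their analogues.

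The Picard rank two identifications are the genuinely computational part and I expect them to be the main obstacle. For $\mathcal{X}(1,m)$, $\mathcal{X}(2)$ and $\mathcal{Q}(2)$ the algebra generators split into the few isotypic components produced above, so the whole content is the ideal of relations among them. I would compute the graded pieces of $\Cox$ explicitly, match the weights of the canonical sections with the Pl\"ucker coordinates of $\mathcal{G}(1,m+2)$ and of $\mathcal{G}(2,5)$, and check that the relations are exactly the quadratic Pl\"ucker relations; for $\mathcal{Q}(2)$ the passage to symmetric matrices makes the $SL(V)$-action factor through a symplectic symmetry, so that the analogous coordinates satisfy instead the defining relations of the Lagrangian Grassmannian $\mathcal{LG}(2,5)$. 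The delicate point is to prove that the relation ideal is generated in the expected degree with no spurious generators, which I would settle by comparing, bidegree by bidegree, the Hilbert functions of $\Cox$ and of the candidate homogeneous coordinate ring.
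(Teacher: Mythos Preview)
Your overall strategy matches the paper's: identify colors and boundary divisors via the wonderful structure, invoke Brion's results on divisor cones of spherical varieties, and obtain Cox ring generators from the structure theorem for the total coordinate ring of a wonderful variety (this is \cite[Theorem 4.5.4.6]{ADHL15} in the paper's notation). Your identification of the colors $D_1,\dots,D_n$ and of $E_n$ as the relabelled determinant hypersurface is exactly what the paper does in Proposition \ref{p1} and Theorem \ref{theff}.

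One small imprecision: for $\mathcal{X}(n,m)$ with $n<m$ the color $D_{n+1}$ does \emph{not} lie in the cone spanned by the boundary divisors, so $\Eff(\mathcal{X}(n,m))=\langle E_1,\dots,E_n,D_{n+1}\rangle$ has $n+1$ extremal rays, not $n$. Your sentence ``the argument is the same'' glosses over this extra generator.

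The substantive difference is in the Picard rank two identifications. You propose to pin down the relation ideal by comparing Hilbert functions bidegree by bidegree. The paper avoids this entirely with a dimension count: since $\dim\Cox(X)=\dim X+\rk\Pic(X)$, one need only write down enough relations to cut out an irreducible reduced variety of the right dimension. For $\mathcal{X}(1,m)$ the Pl\"ucker-type relations already define (the cone over) $\mathcal{G}(1,m+2)$, which is an integral variety of dimension $2m+2=\dim\Cox(\mathcal{X}(1,m))-1$; hence the surjection from the coordinate ring of the Grassmannian onto $\Cox$ is an isomorphism, because a proper quotient of a domain of the same Krull dimension is impossible. The same one-line argument handles $\mathcal{X}(2)$ via $\mathcal{G}(2,5)$ and $\mathcal{Q}(2)$ via $\mathcal{LG}(2,5)$. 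Your Hilbert function comparison would certainly work, but it is considerably more labor than necessary; the paper's argument needs only irreducibility and a single dimension computation.
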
 

In Section \ref{birmod} we study the birational models of $\mathcal{X}(n,m)$ induced by the extremal rays of the nef cone $\Nef(\mathcal{X}(n,m))$. In particular, we show that these models parametrize rational normal curves of osculating spaces of degree $n$ rational normal curves in $\mathbb{P}^n$. For instance, $\Nef(\mathcal{X}(3))$ has three extremal rays that we will denote by $D_1,D_2,D_2$. The divisors $D_i$ are big and so they induce birational models $\mathcal{X}(3)(D_i)$ of $\mathcal{X}(3)$. A general point of $\mathcal{X}(3)(D_1)$ corresponds to a twisted cubic, a general point of $\mathcal{X}(3)(D_2)$ corresponds to a rational normal quartic in the Grassmannian of lines of $\mathbb{P}^3$ parametrizing tangent lines to a twisted cubic, and a general point of $\mathcal{X}(3)(D_3)$ corresponds to a twisted cubic parametrizing osculating planes to another twisted cubic. 

A normal $\mathbb{Q}$-factorial projective variety $X$, over an algebraically closed field, with finitely generated Picard group is a Mori dream space if and only if its Cox ring is finitely generated \cite[Proposition 2.9]{HK00}. In this case the birational geometry of $X$ is completely encoded in the combinatorics of $\Cox(X)$. So, theoretically there is a way of computing the Mori chamber decomposition of a Mori dream space starting from an explicit presentation of its Cox ring, even though in practice, due to the intricacy of the involved combinatorics, this is impossible to do. Let us recall that the pseudo-effective cone $\overline{\Eff}(X)$ of a projective variety $X$ with irregularity zero, such as a Mori dream space, can be decomposed into chambers depending on the stable base locus of the corresponding linear series. Such decomposition called \textit{stable base locus decomposition} in general is coarser than the Mori chamber decomposition.   

In Section \ref{sec4} thanks to the computation of the generators of the Cox rings in Section \ref{sec3} we determine the Mori chamber and the stable base locus decomposition of $\Eff(\mathcal{X}(3))$. Indeed, since $\Pic(\mathcal{X}(3))\otimes\mathbb{R}$ is $3$-dimensional, the information on the generators of $\Cox(\mathcal{X}(3))$, even without knowing the relations among them, together with the computation of the stable base loci and basic considerations of convex geometry, is enough to completely describe both decompositions, which interestingly enough do not coincide. The same techniques apply to $\mathcal{X}(2,m)$ for $m> 2$. Indeed, as a consequence of Theorems \ref{MCD_main}, \ref{MCD_main2} we have the following.

\begin{thm}
The Mori chamber decomposition of $\Eff(\mathcal{X}(3))$ consists of $9$ chambers while its stable base locus decomposition consists of $8$ chambers. Furthermore, the same result holds for the variety $\mathcal{Q}(3)\subset\mathcal{X}(3)$ parametrizing complete quadric surfaces.

If $n > 2$ then the Mori chamber decompositon $\Eff(\mathcal{X}(2,m))$ has $5$ chambers while its stable base locus decomposition consists of $4$ chambers. Finally the Mori chamber and stable base locus decompositions of $\Eff(\mathcal{X}(2))$ coincide and have $3$ chambers, and the same result holds for the variety $\mathcal{Q}(2)\subset\mathcal{X}(2)$ parametrizing complete conics.
\end{thm}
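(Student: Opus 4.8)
The plan is to reduce both decompositions to a problem in convex geometry, using the explicit generators of the Cox rings from Section~\ref{sec3} together with the effective and nef cones from Section~\ref{sec2}. Since $\Pic(\mathcal{X}(3))\otimes\R$ and $\Pic(\mathcal{X}(2,m))\otimes\R$ are three-dimensional, I would slice the effective cone by an affine hyperplane and carry out the analysis in the resulting two-dimensional polygon, plotting the classes of the canonical sections of the $D_i$ and $E_j$ as points; for $\mathcal{X}(2)$, whose Picard rank is two, the slice is one-dimensional and the chambers are subintervals, so there both decompositions follow from a direct finite inspection. In each case the extremal rays of $\Eff$ are the boundary classes $E_j$ and $\Nef$ is the subcone spanned by the $D_i$, both already determined in Section~\ref{sec2}.

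For the Mori chamber decomposition I would use that, for a Mori dream space, $\Mov(\mathcal{X})$ is the union of the nef cones of its small $\Q$-factorial modifications, and that the walls of the decomposition are supported on the cones spanned by subsets of the degrees of the generators of $\Cox(\mathcal{X})$. In Picard rank at most three these candidate walls are the finitely many rays through pairs of generator classes, so I would enumerate the candidate chambers and test each of them. The nef cone is itself one chamber; the group of pseudo-automorphisms computed earlier then acts on $\Eff$ and produces the remaining chambers inside $\Mov$ by symmetry, which both completes the list and sharply limits the case analysis.

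For the stable base locus decomposition I would compute, for a class $\alpha=\sum_i a_i D_i+\sum_j b_j E_j$ in the interior of each candidate region, the stable base locus $\Bsl(\alpha)$. Since $H^0(\mathcal{X},\alpha)$ is spanned by the degree-$\alpha$ monomials in the Cox generators, $\Bsl(\alpha)$ is their common zero locus; geometrically these loci are supported on the boundary divisors $E_j$ and on the strict transforms of the secant varieties of the Segre, and which components occur can be read off from the signs of the coordinates $b_j$. Grouping the regions with equal stable base locus then yields the coarser decomposition with $8$, respectively $4$, chambers. For $\mathcal{Q}(3)\subset\mathcal{X}(3)$ and $\mathcal{Q}(2)\subset\mathcal{X}(2)$ I would observe that the symmetric analogues of the Cox generators, cones, and pseudo-automorphisms from Section~\ref{sec3} have the same combinatorial shape, so the two counts transfer without change.

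The main obstacle is the final comparison. The Mori chamber decomposition always refines the stable base locus decomposition, and the discrepancies $9$ versus $8$ and $5$ versus $4$ mean that exactly one wall separates two Mori chambers sharing the same stable base locus. I expect this wall to lie in the interior of $\Mov(\mathcal{X})$ and to correspond to a flip, that is, an isomorphism in codimension one between two small modifications across which the stable base locus does not move. Pinning down this single wall — exhibiting the modification and checking that the stable base locus is genuinely constant across it while the nef cone jumps — is the delicate geometric core of the argument, and it is exactly here that knowing the generators of the Cox ring but not its relations is only barely sufficient, with the low Picard rank and the symmetry supplying the rest.
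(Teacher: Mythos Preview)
Your overall strategy---reducing to a two-dimensional slice, using the degrees of the Cox generators as candidate wall supports, and then comparing Mori chambers against stable base locus regions---matches the paper's. But you have mislocated the one wall where the two decompositions differ, and this is not a cosmetic error: it changes which geometric phenomenon you need to exhibit.

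You write that the extra wall should lie in the interior of $\Mov(\mathcal{X}(3))$ and correspond to a flip between small modifications. In fact $\Mov(\mathcal{X}(3))=\langle D_1,D_2,D_3,D_M\rangle$ splits into exactly two Mori chambers, the nef cone $\langle D_1,D_2,D_3\rangle$ and $\langle D_1,D_3,D_M\rangle$, and these already have \emph{distinct} stable base loci (empty versus a codimension-two locus). The wall that is present in the Mori chamber decomposition but absent from the stable base locus decomposition is the segment $[D_2,E_2]$, which lies entirely \emph{outside} $\Mov(\mathcal{X}(3))$: it separates the two triangles $\langle D_1,D_2,E_2\rangle$ and $\langle D_2,D_3,E_2\rangle$, both of which have stable base locus equal to the divisor $E_2$. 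The same happens for $\mathcal{X}(2,m)$. So the phenomenon you must explain is not a flip of $\mathcal{X}(3)$ itself, but rather that after contracting $E_2$ one lands on two non-isomorphic (though abstractly isomorphic) models $\mathcal{X}(3)_1$ and $\mathcal{X}(3)_1^{2}$ related by the flop induced by $Z^{inv}$; equivalently, the quadrilateral $\langle D_1,D_2,D_3,E_2\rangle$ is not convex, and Mori chambers are convex, so it must be split. Your proposal, as written, would search for this wall in the wrong region and would not find it.

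Two smaller points. First, the claim that the stable base locus can be ``read off from the signs of the coordinates $b_j$'' is too coarse: for the region with $\textbf{B}(D)=E_2$ the paper invokes Nakamaye's theorem via the morphisms $\pi_1\times\pi_2$ and $\pi_2\times\pi_3$, and a sign argument alone does not distinguish $E_2$ from $E_2\cup(\text{something smaller})$. Second, since $Z^{inv}$ is an honest automorphism of $\mathcal{X}(3)$ it preserves $\Nef(\mathcal{X}(3))$, so pseudo-automorphisms do not ``produce the remaining chambers inside $\Mov$'' in the way you suggest; rather, $Z^{inv}$ gives the reflection symmetry $D_1\leftrightarrow D_3$, $E_1\leftrightarrow E_3$ of the whole picture, which halves the case analysis but does not by itself generate new chambers.
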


These decompositions are described in detail in Theorems \ref{MCD_main}, \ref{MCD_main2}. We would like to stress that in \cite{Ce15} C. Lozano Huerta computed, with different methods, $\Eff(\mathcal{Q}(n))$, $\Nef(\mathcal{Q}(n))$, and the Mori chamber decomposition of $\Eff(\mathcal{Q}(3))$ also giving a modular interpretation of some of the corresponding birational models.

Furthermore, in Proposition \ref{firstbu} we describe the Mori chamber decomposition of the first blow-up in Vainsencher's construction, namely the blow-up of $\mathbb{P}(\Hom(V,V))$ along the Segre variety. Thanks to this in Proposition \ref{Sarkisov} we get a neat description as a Sarkisov link of the birational involution $\mathbb{P}^{N}\dasharrow\mathbb{P}^{N*}$ defined by mapping a full rank matrix to its inverse. Such involution induces an automorphism $Z^{inv}$ of $\mathcal{X}(n)$. In Section \ref{psaut} we will prove that the groups of automorphisms and pseudo-automorphisms of spaces of complete forms coincide, and we will explicitly compute these groups where $Z^{inv}$ will appear as the only non trivial generator of the discrete part.   

In Appendix \ref{appendix} we discuss the movable cones of spaces of complete forms. Indeed, by Proposition \ref{PropMov} the number of extremal rays of the movable cones are given by geometric progressions. For instance, $\Mov(\mathcal{X}(n))$ and $\Mov(\mathcal{Q}(n))$ have $2^{n-1}$ extremal rays, and the isomorphism $\Pic(\mathcal{X}(n))\rightarrow\Pic(\mathcal{Q}(n))$, induced by the natural inclusion $\mathcal{Q}(n)\hookrightarrow\mathcal{X}(n)$, maps isomorphically $\Mov(\mathcal{X}(n))$ onto $\Mov(\mathcal{Q}(n))$. In Appendix \ref{appendix} we present Maple scripts explicitly computing these extremal rays.

We conclude the introduction with a general consideration. Note that, the main results in this paper suggest that the birational geometry of $\mathcal{X}(n)$ is completely determined by those of $\mathcal{Q}(n)$, and lead us to the following general question.

\begin{Question}
Let $X\hookrightarrow Y$ be an inclusion of Mori dream spaces. Under which hypothesis is the birational geometry of $Y$, meaning its cones of divisors and curves, and its Mori chamber and stable base locus decompositions, determined by those of $X$ and vice versa?  
\end{Question}

An obvious but not very satisfactory answer would be: when $X\hookrightarrow Y$ yields an isomorphism of groups $\Pic(Y)\rightarrow\Pic(X)$, and an isomorphism $\Cox(Y)\rightarrow\Cox(X)$ of graded $K$-algebras with respect to a fixed grading on $\Pic(Y)\cong\Pic(X)$. 

However, some partial general results in this direction are given in Lemma \ref{MCrest}, while more specific facts for the varieties $\mathcal{Q}(n)$ and $\mathcal{X}(n)$ are presented in Propositions \ref{MCXnQn}, \ref{firstbu}, and Theorem \ref{MCD_main}.

\subsection*{Organization of the paper}
All through the paper we will work over an algebraically closed field of characteristic zero. Both statements and proofs regarding spaces of complete collineations will be given in full detail. However, we will sometimes go quickly over the analogous statements and proofs for their symmetric counterpart. Anyway, whenever some additional argument is needed we will explain it in detail. 

In Section \ref{sec1} we recall the basics on moduli spaces of complete forms and their constructions as blow-ups. In Section \ref{sec2} we compute their cones of divisors and curves, and in Section \ref{sec3} we compute the generators of their Cox rings. In Section \ref{birmod} we describe the birational models of the spaces of complete collineations induced by the extremal rays of the nef cone, and in Section \ref{sec4} we compute the Mori chamber and stable base locus decompositions of the space of complete collineations of $\mathbb{P}^3$. Finally, in Section \ref{psaut} we give an explicit presentation of the pseudo-automorphism groups of space of complete forms, and in Appendix \ref{appendix} we present Maple scripts computing the extremal rays of their effective cones.  

While this paper is devoted to spaces of complete collineations and quadrics, their skew-symmetric counterpart, that is the space of complete skew-symmetric forms, is considered in \cite{Ma18}.

\subsection*{Acknowledgments}
I thank Antonio Laface, C\'esar Lozano Huerta and Rick Rischter for their useful comments, and Jethro van Ekeren for fruitful discussions on the representation theoretical aspects of this paper. The author is a member of the Gruppo Nazionale per le Strutture Algebriche, Geometriche e le loro Applicazioni of the Istituto Nazionale di Alta Matematica "F. Severi" (GNSAGA-INDAM).

\section{Moduli spaces of complete forms}\label{sec1}
Let $V,W$ be $K$-vector spaces of dimension respectively $n+1$ and $m+1$ with $n\leq m$, and let $\mathbb{P}^N$ with $N = nm+n+m$ be the projective space parametrizing collineations from $V$ to $W$ that is non-zero linear maps $V\rightarrow W$ up to a scalar multiple. 

The line bundle $\mathcal{O}_{\mathbb{P}^n\times \mathbb{P}^m}(1,1)=\mathcal{O}_{\mathbb{P}(V)}(1)\boxtimes\mathcal{O}_{\mathbb{P}(W)}(1)$
induces an embedding
$$
\begin{array}{cccc}
\sigma:
&\mathbb{P}(V)\times\mathbb{P}(W)& \longrightarrow & \mathbb{P}(V\otimes W)
=\mathbb{P}^{N},\\
      & (\left[u\right],\left[v\right]) & \longmapsto & [u\otimes v]
\end{array}
$$ 
The image $\mathcal{S} = \sigma(\mathbb{P}^n\times \mathbb{P}^m) \subset \mathbb{P}^{N}$ is the \textit{Segre variety}. Let $[x_0,\dots, x_n],[y_0,\dots,y_m]$ be homogeneous coordinates respectively on $\mathbb{P}^n$ and $\mathbb{P}^m$. Then the morphism $\sigma$ can be written as
$$\sigma([x_0,\dots, x_n],[y_0,\dots,y_m]) = [x_0y_0:\dots:x_0y_m:x_1y_0:\dots :x_ny_m]$$
We will denote by $[z_{0,0}:\dots :z_{n,m}]$ the homogeneous coordinates on $\mathbb{P}^N$, where $z_{i,j}$ corresponds to the product $x_iy_j$.  

\subsubsection*{Secant varieties} 
Given an irreducible and reduced non-degenerate variety $X\subset\P^N$, and a positive integer $h\leq N$ we denote by $\sec_h(X)$ 
the \emph{$h$-secant variety} of $X$. This is the subvariety of $\P^n$ obtained as the closure of the union of all $(h-1)$-planes 
$\langle x_1,...,x_{h}\rangle$ spanned by $h$ general points of $X$. 

A point $p\in \mathbb{P}^N = \mathbb{P}(\Hom(W,V))$ can be represented by an $(n+1)\times (m+1)$ matrix $Z$. The Segre variety $\mathcal{S}$ is the locus of rank one matrices. More generally, $p\in \sec_h(S)$ if and only if $Z$ can be written as a linear combination of $h$ rank one matrices that is if and only if $\rank(Z)\leq h$. If $p = [z_{0,0}:\cdots:z_{n,m}]$ then we may write
\stepcounter{thm}
\begin{equation}\label{matrix}
Z = \left(
\begin{array}{ccc}
z_{0,0} & \dots & z_{0,m}\\ 
\vdots & \ddots & \vdots\\ 
z_{n,0} & \dots & z_{n,m}
\end{array}\right)
\end{equation}
Then, the ideal of $\sec_h(\mathcal{S})$ is generated by the $(h+1)\times (h+1)$ minors of $Z$. The \textit{space of complete collineations} from $V$ to $W$ is the closure in 
\stepcounter{thm}
\begin{equation}\label{ambient}
\mathbb{P}(\Hom(W,V))\times\mathbb{P}(\Hom(\bigwedge^2 W,\bigwedge^2 V))\times\dots\times \mathbb{P}(\Hom(\bigwedge^{n+1} W,\bigwedge^{n+1} V))
\end{equation}
of the graph of the rational map
\stepcounter{thm}
\begin{equation}\label{graph}
\begin{array}{ccc}
\mathbb{P}(\Hom(W,V))& \dasharrow & \mathbb{P}(\Hom(\bigwedge^2 W,\bigwedge^2 V))\times\dots\times \mathbb{P}(\Hom(\bigwedge^{n+1} W,\bigwedge^{n+1} V))\\
 Z & \longmapsto & (\wedge^2Z,\dots,\wedge^{n+1}Z)
\end{array}
\end{equation}

\begin{Construction}\label{ccc}
Let us consider the following sequence of blow-ups:
\begin{itemize}
\item[-] $\mathcal{X}(n,m)_1$ is the blow-up of $\mathcal{X}(n,m)_0:=\mathbb{P}^{N}$ along $\mathcal{S}$;
\item[-] $\mathcal{X}(n,m)_2$ is the blow-up of $\mathcal{X}(n,m)_1$ along the strict transform of $\sec_2(\mathcal{S})$;\\
$\vdots$
\item[-] $\mathcal{X}(n,m)_i$ is the blow-up of $\mathcal{X}(n,m)_{i-1}$ along the strict transform of $\sec_i(\mathcal{S})$;\\
$\vdots$
\item[-] $\mathcal{X}(n,m)_n$ is the blow-up of $\mathcal{X}(n,m)_{n-1}$ along the strict transform of $\sec_n(\mathcal{S})$.
\end{itemize}
Let $f_i:\mathcal{X}(n,m)_i\rightarrow \mathcal{X}(n,m)_{i-1}$ be the blow-up morphism. We will denote by $E_i$ both the exceptional divisor of $f_i$ and its strict transforms in the subsequent blow-ups, and by $H$ the pull-back to $\mathcal{X}(n,m):=\mathcal{X}(n,m)_n$ of the hyperplane section of $\mathbb{P}^{N}$. We will denote by $f:\mathcal{X}(n,m)\rightarrow\mathbb{P}^N$ the composition of the $f_i$'s.

By \cite[Theorem 1]{Va84} we have that for any $i = 1,\dots,n$ the variety $\mathcal{X}(n,m)_{i}$ is smooth, the strict transform of $\sec_i(\mathcal{S})$ in $\mathcal{X}(n,m)_{i-1}$ is smooth, and the divisor $E_1\cup E_2\cup\dots \cup E_{i-1}$ in $\mathcal{X}(n,m)_{i-1}$ is simple normal crossing. Furthermore, the variety $\mathcal{X}(n,m)$ is isomorphic to the space of complete collineations from $V$ to $W$. When $n=m$ we will write simply $\mathcal{X}(n)$ for $\mathcal{X}(n,n)$.
\end{Construction}
 
\begin{Remark}\label{sym_anti}
Consider the case $n = m$. Note that in $\mathbb{P}^N = \mathbb{P}(\Hom(V,V))$ we have the  distinguished linear subspace $\mathbb{P}^{N_{+}} = \{z_{i,j}-z_{j,i}=0 \: \forall \: i\neq j\}$, where $N_{+} = \binom{n+2}{2}-1$, of symmetric matrices. Therefore, $\mathbb{P}^{N_{+}}$ cuts out scheme-theoretically on $\mathcal{S}$ the Veronese variety $\mathcal{V}\subseteq \mathbb{P}^{N_{+}}$ parametrizing rank one $(n+1)\times (n+1)$ symmetric matrices, and more generally $\mathbb{P}^{N_{+}}$ cut out scheme-theoretically on $\sec_h(\mathcal{S})$ the $h$-secant variety $\sec_h(\mathcal{V})$.
\end{Remark}

The \textit{space of complete quadrics} is the closure of the graph of the rational map 
$$
\begin{array}{ccc}
\mathbb{P}(\Sym^2V)& \dasharrow & \mathbb{P}(\Sym^2\bigwedge^{2}V)\times\dots\times \mathbb{P}(\Sym^2\bigwedge^nV)\\
 Z & \longmapsto & (\wedge^2Z,\dots,\wedge^{n}Z)
\end{array}
$$

By Remark \ref{sym_anti} restricting Construction \ref{ccc} to $\mathbb{P}^{N_{+}}$ we get the following blow-up construction of the space of complete quadrics \cite[Theorem 6.3]{Va82}.

\begin{Construction}\label{ccq}
Let us consider the following sequence of blow-ups:
\begin{itemize}
\item[-] $\mathcal{Q}(n)_1$ is the blow-up of $\mathcal{Q}(n)_0:=\mathbb{P}^{N_{+}}$ along the Veronese variety $\mathcal{V}$;
\item[-] $\mathcal{Q}(n)_2$ is the blow-up of $\mathcal{Q}(n)_1$ along the strict transform of $\sec_2(\mathcal{V})$;\\
$\vdots$
\item[-] $\mathcal{Q}(n)_i$ is the blow-up of $\mathcal{Q}(n)_{i-1}$ along the strict transform of $\sec_i(\mathcal{V})$;\\
$\vdots$
\item[-] $\mathcal{Q}(n)_n$ is the blow-up of $\mathcal{Q}(n)_{n-1}$ along the strict transform of $\sec_n(\mathcal{V})$.
\end{itemize}
Let $f_i^{+}:\mathcal{Q}(n)_i\rightarrow \mathcal{Q}(n)_{i-1}$ be the blow-up morphism. We will denote by $E_i^+$ both the exceptional divisor of $f_i^{+}$ and its strict transforms in the subsequent blow-ups, and by $H^{+}$ the pull-back to $\mathcal{Q}(n):=\mathcal{Q}(n)_n$ of the hyperplane section of $\mathbb{P}^{N_{+}}$. We will denote by $f^{+}:\mathcal{Q}(n)\rightarrow\mathbb{P}^{N_{+}}$ the composition of the $f_i^{+}$'s. 

Then for any $i = 1,\dots,n$ the variety $\mathcal{Q}(n)_{i}$ is smooth, the strict transform of $\sec_i(\mathcal{V})$ in $\mathcal{Q}(n)_{i-1}$ is smooth, and the divisor $E_1^+\cup E_2^+\cup\dots \cup E_{i-1}^+$ in $\mathcal{Q}(n)_{i-1}$ is simple normal crossing. Furthermore, the variety $\mathcal{Q}(n)$ is isomorphic to the space of complete $(n-1)$-dimensional quadrics.
\end{Construction}
 
\section{Cones of divisors and curves}\label{sec2}
Let $X$ be a normal projective $\mathbb{Q}$-factorial variety over an algebraically closed field of characteristic zero. We denote by $N^1(X)$ the real vector space of $\mathbb{R}$-Cartier divisors modulo numerical equivalence. 
The \emph{nef cone} of $X$ is the closed convex cone $\Nef(X)\subset N^1(X)$ generated by classes of nef divisors. 

The stable base locus $\textbf{B}(D)$ of a $\mathbb{Q}$-divisor $D$ is the set-theoretic intersection of the base loci of the complete linear systems $|sD|$ for all positive integers $s$ such that $sD$ is integral
\stepcounter{thm}
\begin{equation}\label{sbl}
\textbf{B}(D) = \bigcap_{s > 0}B(sD)
\end{equation}
The \emph{movable cone} of $X$ is the convex cone $\Mov(X)\subset N^1(X)$ generated by classes of 
\emph{movable divisors}. These are Cartier divisors whose stable base locus has codimension at least two in $X$.
The \emph{effective cone} of $X$ is the convex cone $\Eff(X)\subset N^1(X)$ generated by classes of 
\emph{effective divisors}. We have inclusions $\Nef(X)\ \subset \ \overline{\Mov(X)}\ \subset \ \overline{\Eff(X)}$. We refer to \cite[Chapter 1]{De01} for a comprehensive treatment of these topics. 

\begin{Definition}
A \textit{spherical variety} is a normal variety $X$ together with an action of a connected reductive affine algebraic group $\mathscr{G}$, a Borel subgroup $\mathscr{B}\subset \mathscr{G}$, and a base point $x_0\in X$ such that the $\mathscr{B}$-orbit of $x_0$ in $X$ is a dense open subset of $X$. 

Let $(X,\mathscr{G},\mathscr{B},x_0)$ be a spherical variety. We distinguish two types of $\mathscr{B}$-invariant prime divisors: a \textit{boundary divisor} of $X$ is a $\mathscr{G}$-invariant prime divisor on $X$, a \textit{color} of $X$ is a $\mathscr{B}$-invariant prime divisor that is not $\mathscr{G}$-invariant. We will denote by $\mathcal{B}(X)$ and $\mathcal{C}(X)$ respectively the set of boundary divisors and colors of $X$.
\end{Definition}

For instance, any toric variety is a spherical variety with $\mathscr{B}=\mathscr{G}$ equal to the torus. For a toric variety there are no colors, and the boundary divisors are the usual toric invariant divisors.

In the following we will carefully analyze the natural actions of $SL(n+1)\times SL(m+1)$ on $\mathcal{X}(n,m)$, and of $SL(n+1)$ on $\mathcal{Q}(n)$ in order to get information of the cones of divisors of these spaces.

\begin{Lemma}\label{l1}
The $\mathscr{G} = SL(n+1)\times SL(m+1)$-action on $\mathbb{P}^n\times\mathbb{P}^m$ given by 
$$
\begin{array}{cccc}
\mathscr{G}\times (\mathbb{P}^n\times\mathbb{P}^m) & \longrightarrow & \mathbb{P}^n\times\mathbb{P}^m\\
((A,B),([v],[w])) & \longmapsto & ([Av],[Bw])
\end{array}
$$ 
induces the following $\mathscr{G}$-action on $\mathbb{P}^N = \mathbb{P}(\Hom(V,W))$
$$
\begin{array}{cccc}
\mathscr{G}\times \mathbb{P}(\Hom(V,W)) & \longrightarrow & \mathbb{P}(\Hom(V,W))\\
((A,B),Z) & \longmapsto & AZB^{t}
\end{array}
$$ 
Furthermore, if $n = m$ the $SL(n+1)$-action
$$
\begin{array}{cccc}
SL(n+1)\times \mathbb{P}^n & \longrightarrow & \mathbb{P}^n\\
(A,[v]) & \longmapsto & [Av]
\end{array}
$$
induce the $SL(n+1)$-action on $\mathbb{P}^{N_{+}}$ given by
\stepcounter{thm}
\begin{equation}\label{acss}
\begin{array}{cccc}
SL(n+1)\times \mathbb{P}^{N_{+}} & \longrightarrow & \mathbb{P}^{N_{+}}\\
(A,Z) & \longmapsto & AZA^{t}
\end{array}
\end{equation}
\end{Lemma}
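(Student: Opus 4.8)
The plan is to verify both assertions by reducing the induced action to its effect on rank-one matrices, which span the relevant projective spaces, and then invoking linearity. Since the Segre embedding $\sigma$ is manifestly equivariant — one has $\sigma([Av],[Bw]) = [(Av)\otimes(Bw)] = [(A\otimes B)(v\otimes w)]$ — the action induced on $\mathbb{P}^N = \mathbb{P}(V\otimes W)$ is precisely the projectivization of the tensor product representation $A\otimes B$ on $V\otimes W$. The first step is therefore to identify $V\otimes W$ with the space of $(n+1)\times(m+1)$ matrices via the outer product $v\otimes w \mapsto vw^{t}$, under which a decomposable tensor corresponds to a rank-one matrix and the whole space $\Hom(W,V)$ is recovered.

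Under this identification I would compute $A\otimes B$ on a decomposable tensor directly: $(A\otimes B)(v\otimes w) = (Av)\otimes(Bw)$ corresponds to the matrix $(Av)(Bw)^{t} = Av\,w^{t}B^{t} = A(vw^{t})B^{t}$. Equivalently, in the homogeneous coordinates $z_{i,j}=x_iy_j$ of the excerpt, the substitutions $x_i \mapsto (Ax)_i$ and $y_j\mapsto (By)_j$ send $z_{i,j}$ to $\sum_{k,l} A_{ik}\,z_{k,l}\,(B^{t})_{lj} = (AZB^{t})_{ij}$. Thus on rank-one matrices the induced action is $Z\mapsto AZB^{t}$. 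Because the rank-one matrices span the full matrix space $\Hom(W,V)$ and $Z\mapsto AZB^{t}$ is linear, this map is the unique linear extension agreeing with the induced action on the non-degenerate Segre variety $\mathcal{S}$; hence it realizes the induced $\mathscr{G}$-action on all of $\mathbb{P}^N$, which proves the first assertion.

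For the symmetric statement I would specialize to $n=m$, take $A=B$, and restrict to the linear subspace $\mathbb{P}^{N_{+}}$ of symmetric matrices. Here the relevant rank-one locus is the Veronese variety $\mathcal{V}$ of matrices $vv^{t}$, and the identical computation gives $vv^{t}\mapsto (Av)(Av)^{t} = A(vv^{t})A^{t}$, so the induced action is $Z\mapsto AZA^{t}$, exactly as in \eqref{acss}. Two small checks finish the argument: first, $Z\mapsto AZA^{t}$ preserves symmetry, since $(AZA^{t})^{t} = AZ^{t}A^{t} = AZA^{t}$ whenever $Z=Z^{t}$, so the action genuinely restricts to $\mathbb{P}^{N_{+}}$; second, the symmetric rank-one matrices span $\mathbb{P}^{N_{+}}$ (equivalently, $\mathcal{V}$ is non-degenerate in $\mathbb{P}^{N_{+}}$), which again forces $Z\mapsto AZA^{t}$ to be the unique linear extension of the induced action on $\mathcal{V}$.

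There is no serious obstacle in this lemma; it is a direct linear-algebra verification resting on the spanning property of the Segre and Veronese varieties. The only point that requires genuine care is the bookkeeping of transpose conventions, so that the action on the $W$-factor produces $B^{t}$ on the right rather than $B$ or its inverse; the unambiguous way to pin this down is the coordinate computation with $z_{i,j}=x_iy_j$, which I would present explicitly to fix the convention once and for all.
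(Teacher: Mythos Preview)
Your proof is correct and follows essentially the same route as the paper: both verify the induced action through the coordinate identification $z_{i,j}=x_iy_j$ and compute directly that $z_{i,j}\mapsto (AZB^{t})_{ij}$, with the symmetric case handled by checking $(AZA^{t})^{t}=AZA^{t}$. Your framing via the tensor product representation and the spanning property of rank-one matrices is a clean way to package the same computation the paper does by writing out the matrix $\sigma(Av,Bw)$ explicitly.
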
 
\begin{proof}
Let $[v] = [x_0:\dots: x_n]$ and $[w] = [y_0:\dots :y_m]$. We have that $\sigma(Av,Bw)$ is given by
\begin{normalsize}
$$
\left(\begin{array}{ccccc}
a_{0,0}b_{0,0}x_0y_0+\dots +a_{0,n}b_{0,m}x_ny_m & \hdots & a_{0,0}b_{1,0}x_0y_0+\dots + a_{0,n}b_{m,m}x_ny_m\\ 
\vdots & \ddots & \vdots\\ 
a_{i,0}b_{0,0}x_0y_0+\dots + a_{i,n}b_{0,m}x_ny_m & \hdots & a_{i,0}b_{m,0}x_0y_0+\dots + a_{i,n}b_{m,m}x_ny_m \\ 
\vdots & \ddots & \vdots\\ 
a_{n,0}b_{0,0}x_0y_0+\dots +a_{n,n}b_{0,m}x_ny_m & \hdots & a_{n,0}b_{m,0}x_0y_0+\dots + a_{n,n}b_{m,m}x_ny_m
\end{array}\right) 
$$
\end{normalsize}
where $A = (a_{i,j})$ and $B = (b_{i,j})$. Recall that $z_{i,j}$ is the homogeneous coordinate on $\mathbb{P}^N = \mathbb{P}(\Hom(V,W))$ corresponding to the monomial $x_ix_j$. Therefore, $z_{i,j}$ is mapped by $(A,B)\in \mathscr{G}$ to 
$$a_{i,0}b_{j,0}z_{0,0}+\dots + a_{i,i}b_{j,j}z_{i,j} + \dots + a_{i,j}b_{j,i}z_{j,i} +\dots + a_{in}b_{j,m}z_{n,m}$$
that is the matrix $Z = (z_{i,j})$ is mapped by $(A,B)\in \mathscr{G}$ to the matrix $AZB^{t}$.

Now, let $n = m$. If $Z$ is symmetric then $(AZA^{t})^t = (A^t)^tZ^tA^t = AZA^t$. Therefore, the $SL(n+1)$-action on $\mathbb{P}^{N_{+}}$ in (\ref{acss}) is well-defined. 

In the symmetric case note that the action $SL(n+1)\curvearrowright \mathbb{P}^n$ is just the diagonal action of $\mathscr{G}\curvearrowright \mathbb{P}^n\times \mathbb{P}^n$.
\end{proof} 
 
\begin{Definition}
A \textit{wonderful variety} is a smooth projective variety $X$ with the action of a semi-simple simply connected group $\mathscr{G}$ such that:
\begin{itemize}
\item[-] there is a point $x_0\in X$ with open $\mathscr{G}$ orbit and such that the complement $X\setminus \mathscr{G}\cdot x_0$ is a union of prime divisors $E_1,\cdots, E_r$ having simple normal crossing;
\item[-] the closures of the $\mathscr{G}$-orbits in $X$ are the intersections $\bigcap_{i\in I}E_i$ where $I$ is a subset of $\{1,\dots, r\}$.
\end{itemize} 
\end{Definition} 
 
\begin{Proposition}\label{p1}
The varieties $\mathcal{X}(n,m)$, $\mathcal{Q}(n)$ are wonderful. Hence, in particular they are spherical.  

The Picard group of $\mathcal{X}(n,m)$ is given by 
$$\Pic(\mathcal{X}(n,m)) = 
\left\lbrace\begin{array}{ll}
\mathbb{Z}[H,E_1,\dots,E_{n}] & \text{if n $<$ m}\\ 
\mathbb{Z}[H,E_1,\dots,E_{n-1}] & \text{if n $=$ m}
\end{array}\right.
$$
For any $i = 1,\dots,n+1$ let us denote by $D_i$ the strict transform of the divisor in $\mathbb{P}^N$ given by 
\stepcounter{thm}
\begin{equation}\label{eqcol}
\det\left(
\begin{array}{ccc}
z_{n-i+1,m-i+1} & \dots & z_{n-i+1,m}\\ 
\vdots & \ddots & \vdots\\ 
z_{n,m-i+1} & \dots & z_{n,m}
\end{array}\right)=0
\end{equation}
The set of boundary divisors and colors of $\mathcal{X}(n,m)$ are given respectively by
$$
\mathcal{B}(\mathcal{X}(n,m)) = \{E_1,\dots,E_{n}\}
$$
$$
\mathcal{C}(\mathcal{X}(n,m)) = 
\left\lbrace\begin{array}{ll}
\{D_1,\dots,D_{n+1}\} & \text{if n $<$ m}\\ 
\{D_1,\dots,D_{n}\} & \text{if n $=$ m}
\end{array}\right.
$$
Now, let $n=m$. Then 
$$\Pic(\mathcal{Q}(n)) = \mathbb{Z}[H^{+},E_1^{+},\dots,E_{n-1}^{+}]$$ 
and
$$\mathcal{B}(\mathcal{Q}(n)) = \{E_1^{+},\dots,E_n^{+}\},\quad \mathcal{C}(\mathcal{Q}(n))=\{D_1^{+},\dots,D_n^{+}\}$$
where the $D_i^{+}$'s are defined by setting in (\ref{eqcol}) $z_{i,j} = z_{j,i}$.
\end{Proposition}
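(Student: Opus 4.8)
The plan is to read off all the assertions from the explicit $\mathscr{G}$-orbit structure produced by Vainsencher's construction, together with one Bruhat-type computation. First I would identify the open orbit. Since $\mathscr{G}=SL(n+1)\times SL(m+1)$ acts on $\mathbb{P}^N=\mathbb{P}(\Hom(V,W))$ by $(A,B)\cdot Z=AZB^{t}$ (Lemma \ref{l1}), rank is the only $\mathscr{G}$-invariant, and over an algebraically closed field $\mathscr{G}$ acts transitively on the locus $U$ of maximal-rank (rank $n+1$) matrices modulo scalars; thus $U\cong\mathscr{G}/\mathscr{H}$ is a single dense orbit. Because $\sec_i(\mathcal{S})=\{\rank\le i\}$ is $\mathscr{G}$-invariant, the whole tower in Construction \ref{ccc} is $\mathscr{G}$-equivariant, so $f^{-1}(U)=\mathcal{X}(n,m)\setminus(E_1\cup\dots\cup E_n)$ is the open orbit of $\mathcal{X}(n,m)$. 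By \cite[Theorem 1]{Va84} this complement is a smooth simple normal crossing divisor, and the rank stratification, resolved by the successive blow-ups, shows that the $\mathscr{G}$-orbit closures are exactly the intersections $\bigcap_{i\in I}E_i$. Since $\mathscr{G}$ is semisimple and simply connected, once sphericity is established this yields wonderfulness; the same argument restricted to $\mathbb{P}^{N_{+}}$ (Remark \ref{sym_anti}, Construction \ref{ccq}) handles $\mathcal{Q}(n)$.

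For the Picard groups I would simply track the blow-ups. One has $\Pic(\mathbb{P}^N)=\mathbb{Z}H$, and each blow-up along a smooth centre of codimension $\ge 2$ adjoins one free generator. When $n<m$ every $\sec_i(\mathcal{S})$, $i=1,\dots,n$, has codimension $\ge 2$, giving $n$ genuine blow-ups and $\Pic(\mathcal{X}(n,m))=\mathbb{Z}[H,E_1,\dots,E_n]$. When $n=m$ the top stratum $\sec_n(\mathcal{S})=\{\det Z=0\}$ is already a divisor, so it is not blown up and only $E_1,\dots,E_{n-1}$ are exceptional, whence $\Pic(\mathcal{X}(n))=\mathbb{Z}[H,E_1,\dots,E_{n-1}]$; the identical bookkeeping on $\mathbb{P}^{N_{+}}$ gives $\Pic(\mathcal{Q}(n))$. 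The boundary divisors are the $\mathscr{G}$-invariant prime divisors; since the complement of the open orbit is precisely $E_1\cup\dots\cup E_n$, every such divisor is one of the $E_i$, and conversely each $E_i$ is irreducible and $\mathscr{G}$-invariant, so $\mathcal{B}(\mathcal{X}(n,m))=\{E_1,\dots,E_n\}$. In the square case $E_n$ is interpreted as the strict transform of $\{\det Z=0\}$, which is $\mathscr{G}$-invariant because $\det(AZB^{t})=\det Z$ for $(A,B)\in\mathscr{G}$; note this class is not a new generator but equals $(n+1)H-\sum_i c_iE_i$ for suitable multiplicities $c_i$, consistent with the Picard rank. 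The statement for $\mathcal{Q}(n)$ is identical with $E_n^{+}$ the strict transform of the discriminant.

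The heart of the proof is the identification of the colors, and here I would fix $\mathscr{B}=B_{n+1}\times B_{m+1}$, the product of upper-triangular Borel subgroups, and consider the corner minors $\delta_i(Z)$ defined by the bottom-right $i\times i$ submatrix as in (\ref{eqcol}), for $i=1,\dots,n+1$. A direct computation shows that if $A,B$ are upper triangular then the last $i$ rows of $AZ$ (resp. the last $i$ columns of $ZB^{t}$) involve only the last $i$ rows (resp. columns) of $Z$, so that
\begin{equation*}
\delta_i(AZB^{t})=\Big(\prod_{l=n-i+1}^{n}a_{l,l}\Big)\Big(\prod_{l=m-i+1}^{m}b_{l,l}\Big)\,\delta_i(Z),
\end{equation*}
i.e. each $\delta_i$ is a $\mathscr{B}$-semiinvariant and its zero locus is $\mathscr{B}$-stable. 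Each $\delta_i$ is an irreducible polynomial (the determinant of a generic matrix), so its strict transform $D_i$ is a $\mathscr{B}$-stable prime divisor. I would then show these are exactly the colors by exhibiting the open $\mathscr{B}$-orbit: the locus of $Z\in U$ at which all corner minors are nonzero is a single $\mathscr{B}$-orbit (the open cell of a Bruhat/$LU$ normal form under $Z\mapsto AZB^{t}$), and
\begin{equation*}
U\setminus\big(\text{open }\mathscr{B}\text{-orbit}\big)=\bigcup_{i=1}^{n+1}\{\delta_i=0\}\cap U,
\end{equation*}
which simultaneously proves sphericity and shows that the $\mathscr{B}$-stable prime divisors meeting the open orbit are exactly the $D_i$. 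Finally I would distinguish the cases: for $i\le n$ a suitable element of $\mathscr{G}$ does not preserve $\{\delta_i=0\}$, so $D_1,\dots,D_n$ are genuine colors; when $n<m$ the minor $\delta_{n+1}$ is again a non-invariant corner minor that vanishes somewhere on $U$, giving a further color $D_{n+1}$, whereas when $n=m$ one has $\delta_{n+1}=\det Z$, which is $\mathscr{G}$-invariant and nonvanishing on $U$ and hence contributes the boundary divisor $E_n$ rather than a color. This yields $\mathcal{C}(\mathcal{X}(n,m))=\{D_1,\dots,D_{n+1}\}$ for $n<m$ and $\{D_1,\dots,D_n\}$ for $n=m$; the symmetric computation $\delta_i(AZA^{t})=(\prod_{l}a_{l,l})^2\,\delta_i(Z)$ gives $\mathcal{C}(\mathcal{Q}(n))=\{D_1^{+},\dots,D_n^{+}\}$.

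As a consistency check, the number of colors equals the Picard rank in every case ($n+1$ when $n<m$, $n$ when $n=m$), matching the general fact that the classes of the colors freely generate the Picard group of a wonderful variety. The routine parts are the semiinvariance computation and the blow-up bookkeeping; the main obstacle is the Bruhat/$LU$ step, namely proving rigorously that the nonvanishing of all corner minors cuts out a single dense $\mathscr{B}$-orbit, so that the complement of the open $\mathscr{B}$-orbit is exactly $\bigcup_i\{\delta_i=0\}$ and no further $\mathscr{B}$-stable prime divisors occur. Establishing this normal form — equivalently, that the corner minors generate the monoid of $\mathscr{B}$-eigenfunctions on $U$ — is what simultaneously secures sphericity and the completeness of the list, and it is the step I would treat with the most care.
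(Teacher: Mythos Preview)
Your proof is correct and arrives at the same conclusions, but the route to completeness of the color list is genuinely different from the paper's.

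The paper does not prove sphericity by hand. It cites \cite[Theorem 1]{Va84} and \cite[Theorem 6.3]{Va82} directly for wonderfulness, then invokes Luna \cite{Lu96} for the implication wonderful $\Rightarrow$ spherical. With sphericity in hand, it uses the structural fact (from \cite[Remark 4.5.5.3]{ADHL15}) that for a wonderful variety the classes of the colors freely generate $\Pic(X)$; hence the number of colors is known \emph{a priori} to equal the Picard rank. The paper then only needs to exhibit that many $\mathscr{B}$-stable, non-$\mathscr{G}$-stable prime divisors, which it does via the same block computation $\overline{Z}_{2,2}^i = A_{2,2}^i Z_{2,2}^i B_{2,2}^{i\,t}$ that you wrote down. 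Completeness of the list is thus a counting argument, not a geometric one.

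You instead establish sphericity from scratch by exhibiting the open $\mathscr{B}$-orbit via an $LU$/Bruhat normal form, and read off the colors as the irreducible components of its complement in the open $\mathscr{G}$-orbit. This is more self-contained --- you avoid the citations to Luna and to the Picard-rank-equals-number-of-colors fact --- at the cost of actually carrying out the Gaussian reduction that shows $\{\delta_1\neq 0,\dots,\delta_{n+1}\neq 0\}$ is a single $\mathscr{B}$-orbit. You correctly flag this as the only nontrivial step; it is a standard Bruhat-cell argument (successively clear row $n$ and column $m$ using the nonzero pivot $z_{n,m}$, then induct on the residual $(n{-}1)\times(m{-}1)$ block whose new bottom-right entry is $\delta_2/\delta_1$), but it does need to be written out. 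Your ``consistency check'' at the end is precisely the fact the paper uses as its main input.

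One small wording issue: the sentence ``once sphericity is established this yields wonderfulness'' inverts the logic --- wonderfulness in the paper's definition does not take sphericity as a hypothesis. You have in fact verified the wonderfulness axioms directly (open orbit, SNC complement, orbit closures are the $\bigcap_{i\in I}E_i$), and sphericity is a separate conclusion coming from your Bruhat step; just decouple the two claims in the write-up.
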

\begin{proof}
Since by Construction \ref{ccc} $\mathcal{X}(n,m)$ can be obtained by a sequence of blow-ups of smooth varieties along smooth centers the statement on the Picard group is straightforward. It is enough to observe that if $n = m$ then the strict transform of $\sec_n(\mathcal{S})$ is a Cartier divisor in $X[n]_{n-1}$ and hence the last blow-up in Construction \ref{ccc} is an isomorphism. The situation in the symmetric case is completely analogous.

By \cite[Theorem 1]{Va84}, \cite[Theorem 6.3]{Va82} the varieties $\mathcal{X}(n,m)$, $\mathcal{Q}(n)$ are wonderful with the actions of $SL(n+1)\times SL(m+1)$, and $SL(n+1)$ described in Lemma \ref{l1}. Therefore, they are in particular spherical as proven by D. Luna in \cite{Lu96}. 

We will develop in full detail the computation of the boundary divisors and the colors of $\mathcal{X}(n,m)$. 

As noticed in \cite[Remark 4.5.5.3]{ADHL15}, if $(X,\mathscr{G},\mathscr{B},x_0)$ is a spherical wonderful variety with colors $D_1,\dots,D_s$ the big cell $X\setminus (D_1\cup\dots \cup D_s)$ is an affine space. Therefore, it admits only constant invertible global functions and $\Pic(X) = \mathbb{Z}[D_1,\dots,D_s]$.

We consider the Borel subgroup
\stepcounter{thm}
\begin{equation}\label{borel}
\mathscr{B} = \{(A,B)\in SL(n+1)\times SL(m+1) \: |\: A, B\: \text{are upper triangular}\}
\end{equation}
We know that
$$\rank(\Pic(\mathcal{X}(n,m))= 
\left\lbrace\begin{array}{ll}
n+1 & \text{if \textit{n < m}}\\ 
n & \text{if \textit{n = m}}
\end{array}\right.
$$
Therefore, we must exhibit exactly $s = n+1$ colors when $n<m$, and $s = n$ colors when $n=m$. Note that these are exactly the number of colors predicted by the statement. By Lemma \ref{l1} an element $(A,B)\in \mathscr{G}$ maps the matrix $Z$ to the matrix $\overline{Z}=AZB^{t}$. Now, set
$$
A_{2,2}^i=\left(
\begin{array}{ccc}
a_{n-i+1,n-i+1} & \dots & a_{n-i+1,n}\\ 
\vdots & \ddots & \vdots\\ 
a_{n,n-i+1} & \dots & a_{n,n}
\end{array}\right)\quad
B_{2,2}^i=\left(
\begin{array}{ccc}
b_{m-i+1,m-i+1} & \dots & b_{m-i+1,m}\\ 
\vdots & \ddots & \vdots\\ 
b_{m,m-i+1} & \dots & b_{m,m}
\end{array}\right)
$$
$$
Z_{2,2}^i=\left(
\begin{array}{ccc}
z_{n-i+1,m-i+1} & \dots & z_{n-i+1,m}\\ 
\vdots & \ddots & \vdots\\ 
z_{n,m-i+1} & \dots & z_{n,m}
\end{array}\right)\quad
\overline{Z}_{2,2}^i=\left(
\begin{array}{ccc}
\overline{z}_{n-i+1,m-i+1} & \dots & \overline{z}_{n-i+1,m}\\ 
\vdots & \ddots & \vdots\\ 
\overline{z}_{n,m-i+1} & \dots & \overline{z}_{n,m}
\end{array}\right)
$$
and subdivide the matrices $A,B,Z,\overline{Z}$ in blocks as follows
$$
A = \left(\begin{array}{cc}
A_{1,1}^i & A_{1,2}^i\\ 
A_{2,1}^i & A_{2,2}^i
\end{array}\right)\:
B = \left(\begin{array}{cc}
B_{1,1}^i & B_{1,2}^i\\ 
B_{2,1}^i & B_{2,2}^i
\end{array}\right)\:
Z = \left(\begin{array}{cc}
Z_{1,1}^i & Z_{1,2}^i\\ 
Z_{2,1}^i & Z_{2,2}^i
\end{array}\right)\:
\overline{Z} = \left(\begin{array}{cc}
\overline{Z}_{1,1}^i & \overline{Z}_{1,2}^i\\ 
\overline{Z}_{2,1}^i & \overline{Z}_{2,2}^i
\end{array}\right)
$$
Then for $i=1,\dots, n$ the matrix $AZB^{t}$ can be subdivided in four blocks as follows
$$
\left\lbrace\begin{array}{l}
\overline{Z}_{1,1}^i = A_{1,1}^iZ_{1,1}^iB_{1,1}^{i\:t}+A_{1,2}^iZ_{2,1}^iB_{1,1}^{i\:t}+A_{1,1}^{i\:t}Z_{1,2}^{i\:t}B_{1,2}^{i\:t}+A_{1,2}^{i\:t}Z_{2,2}^{i\:t}B_{1,2}^{i\:t}\\ 
\overline{Z}_{1,2}^i = A_{1,1}^iZ_{1,1}^iB_{2,1}^{i\:t}+A_{1,2}^iZ_{2,1}^iB_{2,1}^{i\:t}+A_{1,1}^iZ_{1,2}^iB_{2,2}^{i\:t}+A_{1,2}^iZ_{2,2}^iB_{2,2}^{i\:t}\\
\overline{Z}_{2,1}^i = A_{2,1}^iZ_{1,1}^iB_{1,1}^{i\:t}+A_{2,2}^iZ_{2,1}^iB_{1,1}^{i\:t}+A_{2,1}^iZ_{1,2}^iB_{1,2}^{i\:t}+A_{2,2}^iZ_{2,2}^iB_{1,2}^{i\:t}\\ 
\overline{Z}_{2,2}^i = A_{2,1}^iZ_{1,1}^iB_{2,1}^{i\:t}+A_{2,2}^iZ_{2,1}^iB_{2,1}^{i\:t}+A_{2,1}^iZ_{1,2}^iB_{2,2}^{i\:t}+A_{2,2}^iZ_{2,2}^iB_{2,2}^{i\:t}
\end{array}\right.
$$
Therefore, the degree $i$ hypersurface $\{\det(Z_{2,2}^i)=0\}\subset\mathbb{P}^N$ is stabilized by the action of $\mathscr{G}$ if and only if $n = m$ and $i = n+1$. Indeed, $\{\det(Z_{2,2}^{n+1})=0\}\subset\mathbb{P}^N$ is the $n$-secant variety $\sec_n(\mathcal{S})$ of the Segre variety $\mathcal{S}$. Note that since $\mathscr{G}$ stabilizes the Segre variety $\mathcal{S}$ it must map a general $(h-1)$-plane $h$-secant to $\mathcal{S}$ to an $(h-1)$-plane with the same property. Since $\sec_h(\mathcal{S})$ is irreducible this means that $\mathscr{G}$ stabilizes $\sec_h(\mathcal{S})$ for $h = 1,\dots,n$. 

Now, assume that $(A,B)\in\mathscr{B}$. Then $A_{2,1}^i$ and $B_{2,1}^i$ are the zero matrix, and 
\stepcounter{thm}
\begin{equation}\label{funeq}
\overline{Z}_{2,2}^i = A_{2,2}^iZ_{2,2}^iB_{2,2}^{i\:t}
\end{equation}
This yields 
$$\det(\overline{Z}_{2,2}^i) = \det(A_{2,2}^i)\det(Z_{2,2}^i)\det(B_{2,2}^{i\:t})$$
Note that $A_{2,2}^i$ is upper triangular while $B_{2,2}^{i\:t}$ is lower triangular, and that the diagonals of $A_{2,2}^i$ and $B_{2,2}^{i\:t}$ are made of elements of the diagonals respectively of $A$ and $B$. 

Therefore, $\det(A_{2,2}^i)\neq 0$, $\det(B_{2,2}^{i\:t})\neq 0$ and hence the hypersurface $\{\det(Z_{2,2}^i)=0\}\subset\mathbb{P}^N$ is stabilized by the action of the Borel subgroup $\mathscr{B}$ on $\mathbb{P}^N$. Therefore, the strict transform $D_i$ of $\{\det(Z_{2,2}^i)=0\}$ is stabilized by the action of $\mathscr{B}$, and it is stabilized by the action of $\mathscr{G}$ on $\mathcal{X}(n,m)$ if and only if $n = m$ and $i = n+1$. This concludes the proof of the statement on the set of colors of $\mathcal{X}(n,m)$.

Now, note that since $\mathscr{G}$ stabilizes $\sec_h(\mathcal{S})$ for $h = 1,\dots,n$, \cite[Chapter II, Section 7, Corollary 7.15]{Har77} yields that $\mathscr{G}$ stabilizes the exceptional divisors $E_i$ and the strict transform of $\sec_n(\mathcal{S})$ when $n = m$. Therefore, these are boundary divisors. 

Now, let $D\subset\mathcal{X}(n,m)$ be a $\mathscr{G}$-invariant divisor which is not exceptional for the blow-up morphism $f:\mathcal{X}(n,m)\rightarrow \mathbb{P}^N$ in Construction \ref{ccc}. Then $f_{*}D\subset\mathbb{P}^N$ is $\mathscr{G}$-invariant as well. Hence, in particular $f_{*}D\subset\mathbb{P}^N$ is $\mathscr{B}$-invariant. On the other hand, by the previous computation of the colors of $\mathcal{X}(n,m)$ we have that the only $\mathscr{G}$-invariant hypersurface in $\mathbb{P}^N$ is $\sec_n(\mathcal{S})$ when $n = m$. 

Therefore, the set of the boundary divisors is given by the exceptional divisors if $n<m$, and by the exceptional divisors plus the strict transform of $\sec_n(\mathcal{S})$ when $n = m$. In the symmetric case we may argue in an analogous way with $\mathscr{G} = SL(n+1)$.
\end{proof}

The following result will be fundamental in order to write down the classes of the colors in Proposition \ref{p1} in terms of the generators of the Picard groups.

\begin{Lemma}\label{mult}
Consider hypersurfaces $Y_k\subset\mathbb{P}^N$ defined as the zero locus of the determinant of a $(k+1)\times (k+1)$ minor of the matrix $Z$ in (\ref{matrix}), $Y_k^{+}\subset\mathbb{P}^{N_{+}}$ defined as the zero locus of the determinant of a $(k+1)\times (k+1)$ minor of the symmetrization of $Z$. Then 
$$
\mult_{\sec_h(\mathcal{S})}Y_k = \mult_{\sec_h(\mathcal{V})}Y_k^{+} =
\left\lbrace\begin{array}{ll}
k-h+1 & \text{if h $\leq$ k}\\ 
0 & \text{if h $>$ k}
\end{array}\right.
$$ 
\end{Lemma}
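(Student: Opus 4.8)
The plan is to reduce the statement to a local computation of the order of vanishing of a determinant at a matrix of prescribed rank, and then to evaluate that order by a Schur complement argument; recall that for a hypersurface the multiplicity along a subvariety is the order of vanishing of a local equation at a general point of that subvariety.

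First I would record the easy half. A general point $p$ of $\sec_h(\mathcal{S})$ is a matrix of rank exactly $h$, and the chosen $(k+1)\times(k+1)$ submatrix of a general rank-$h$ matrix has rank $\min(h,k+1)$. If $h>k$ this rank is $k+1$, so the submatrix is nonsingular at $p$; hence $p\notin Y_k$, the local equation of $Y_k$ is a unit at $p$, and $\mult_{\sec_h(\mathcal{S})}Y_k=0$. The symmetric case is identical, using that the bottom-right principal $(k+1)\times(k+1)$ submatrix of a general rank-$h$ symmetric matrix has rank $\min(h,k+1)$.

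For $h\le k$ I would first observe that the defining equation of $Y_k$ is the determinant of the chosen submatrix, so it depends only on the $(k+1)^2$ entries appearing in it (resp. on the symmetric entries in the principal submatrix). Consequently, in a suitable affine chart centered at $p$ the remaining coordinates are transverse smooth directions, and the order of vanishing of this equation at $p$ equals the order of vanishing of $\det$ at the point $Z^{(k+1)}(p)$ inside the space of $(k+1)\times(k+1)$ matrices. Since $p$ is general of rank $h\le k$, the submatrix $Z^{(k+1)}(p)$ has rank exactly $h$; using the $\mathrm{GL}_{k+1}\times\mathrm{GL}_{k+1}$-action $M\mapsto AMB^{t}$, which scales $\det$ by a nonzero constant and preserves the rank stratification, I may assume $Z^{(k+1)}(p)=\left(\begin{smallmatrix} I_h & 0 \\ 0 & 0\end{smallmatrix}\right)$. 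Writing a nearby matrix as $\left(\begin{smallmatrix} A & B \\ C & D\end{smallmatrix}\right)$ with $A$ of size $h$ close to $I_h$ (hence invertible) and $B,C,D$ close to $0$, the Schur complement identity gives $\det=\det(A)\det(D-CA^{-1}B)$ with $\det(A)$ a local unit. The entries of the $(k+1-h)\times(k+1-h)$ matrix $D-CA^{-1}B$ are the independent local coordinates $d_{ij}$ corrected by terms of order $\ge 2$, hence agree with the $d_{ij}$ modulo $\mathfrak{m}_p^2$; so the lowest-degree part of $\det(D-CA^{-1}B)$ is the generic determinant of a $(k+1-h)\times(k+1-h)$ matrix in the $d_{ij}$, a nonzero homogeneous polynomial of degree $k+1-h$, and the order of vanishing is $k-h+1$. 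For the symmetric case I would use the congruence normal form $M\mapsto AMA^{t}$ (available over an algebraically closed field) to reduce to a symmetric rank-$h$ matrix in the same shape, together with the symmetric Schur complement $D-B^{t}A^{-1}B$, whose determinant is the generic symmetric determinant of size $k+1-h$, again homogeneous of degree $k+1-h$.

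The main obstacle I anticipate is the last point: verifying that the quadratic and higher corrections coming from $CA^{-1}B$ do not lower the order of vanishing, i.e. that the degree-$(k+1-h)$ homogeneous part of $\det(D-CA^{-1}B)$ is genuinely the generic determinant in the $d_{ij}$ and is not killed by cancellation. This follows because $CA^{-1}B\in\mathfrak{m}_p^2$ forces every entry of the Schur complement to have linear part exactly $d_{ij}$, so expanding the determinant multilinearly isolates $\det\big((d_{ij})\big)$ as the unique contribution of minimal degree; its nonvanishing (witnessed for instance by the diagonal monomial) then pins down the multiplicity to be $k-h+1$.
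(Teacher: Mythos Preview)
Your proof is correct and takes a genuinely different route from the paper's. The paper argues via partial derivatives: the order-$j$ partials of a $(k+1)\times(k+1)$ determinant are (up to sign) the $(k-j+1)\times(k-j+1)$ minors obtained by deleting $j$ rows and $j$ columns, hence vanish on $\sec_{k-j}(\mathcal{S})$, while the order-$(j+1)$ partials have degree $k-j$ and $I(\sec_{k-j}(\mathcal{S}))$ contains no nonzero forms of that degree, so they cannot all vanish there; setting $h=k-j$ gives multiplicity $k-h+1$, and the symmetric case is dispatched by restriction to $\mathbb{P}^{N_+}$. You instead work locally: normalize the submatrix via $\mathrm{GL}_{k+1}\times\mathrm{GL}_{k+1}$ (or congruence) and read off the order of vanishing from the Schur complement, and your handling of the flagged obstacle is fine since $CA^{-1}B\in\mathfrak m_p^2$ forces the leading term of $\det(D-CA^{-1}B)$ to be the nonzero generic $(k+1-h)$-determinant in the $d_{ij}$. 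Your method is self-contained---it uses nothing about $I(\sec_h(\mathcal{S}))$ beyond the fact that a general point has rank $h$---and makes transparent that the multiplicity equals the corank of the submatrix at the point; the paper's argument is shorter and exploits the recursive structure of determinantal derivatives, but relies on knowing the generation degree of the secant ideal as external input.
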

\begin{proof}
Note that $\deg(Y_k) = k+1$. Since the ideal of $\sec_h(\mathcal{S})$ is generated by the $(h+1)\times (h+1)$ minors of $Z$ in $I(\sec_i(\mathcal{S}))$ there are not polynomials of degree less than or equal to $h$. In particular, if $h> k$ a hypersurface of type $Y_k$ can not contain $\sec_h(\mathcal{S})$.

Now, consider the case $h \leq k$. Without loss of generality we may consider the hypersurface $Y_k$ given by $Y_k = \{\det(F)=0\}$ where 
\stepcounter{thm}
\begin{equation}\label{pol}
F = F(z_{0,0},\dots,z_{k,k}) = \det\left(
\begin{array}{ccc}
z_{0,0} & \dots & z_{0,k}\\ 
\vdots & \ddots & \vdots\\ 
z_{k,0} & \dots & z_{k,k}
\end{array}\right)
\end{equation} 
Note that 
$$\frac{\partial^j F}{\partial z_{0,0}^{j_{0,0}},\dots,\partial z_{k,k}^{j_{k,k}}}=0, \: j_{0,0}+\dots +j_{k,k}$$
whenever either $j_{r,s}\geq 2$ for some $r,s = 0,\dots,k$ or in the expression of the partial derivative there are at least two indexes either of type $j_{r,s},j_{r',s}$ or of type $j_{r,s},j_{r,s'}$. In all the other cases, a partial derivative of order $j$ is the determinant of the $(k-j+1)\times (k-j+1)$ minor of the matrix in \ref{pol} obtained by deleting the rows and the columns crossing in the elements which correspond to the variables with respect to which we are deriving.

Therefore, all the partial derivatives of order $j$ of $F$ vanish on $\sec_{k-j}(\mathcal{S})$. On the other hand, the non-zero partial derivatives of order $j+1$ of $F$ have degree $k-j$, and since $I(\sec_{k-j}(\mathcal{S}))$ is generated in degree $k-j+1$ they can not vanish on $\sec_{k-j}(\mathcal{S})$. We conclude that $\mult_{\sec_{k-j}(\mathcal{S})}Y_k = j+1$, that is $\mult_{\sec_h(\mathcal{S})}Y_k = k-h+1$ if $h \leq k$. 

In the symmetric case it is enough to recall that by Remark \ref{sym_anti} $\mathbb{P}^{N_{+}}$ cuts out scheme-theoretically on $\sec_h(\mathcal{S})$ the $h$-secant variety $\sec_h(\mathcal{V})$.   
\end{proof}

\begin{Remark}\label{stpb}
Let $Y$ be a smooth and irreducible subvariety of a smooth variety $X$, and let $f:Bl_YX\rightarrow X$ be the blow-up of $X$ along $Y$ with exceptional divisor $E$. Then for any divisor $D\in \Pic(X)$ in $\Pic(Bl_YX)$ we have
$$\widetilde{D} \sim f^{*}D-\mult_{Y}(D) E$$
where $\widetilde{D}\subset Bl_YX$ is the strict transform of $D$, and $\mult_Y(D)$ is the multiplicity of $D$ at a general point of $Y$. 
\end{Remark}

Now, we are ready to compute the effective and nef cones of the spaces of complete forms.

\begin{thm}\label{theff}
Let $n< m$. For the colors $D_1,\dots,D_{n+1}$ in Proposition \ref{p1} we have $D_1\sim H$ and
\stepcounter{thm}
\begin{equation}\label{Dk}
D_k \sim kH-\sum_{h=1}^{k-1}(k-h)E_h
\end{equation}
for $k = 2,\dots,n+1$. Furthermore, $\{E_1,\dots,E_n,D_{n+1}\}$ generate the extremal rays of $\Eff(\mathcal{X}(n,m))$, and $\{D_1,\dots,D_{n+1}\}$ generate the extremal rays of $\Nef(\mathcal{X}(n,m))$.

Now, let $n = m$. For the colors $D_1,\dots,D_{n}$ and $D_1^{+},\dots,D_{n}^{+}$ in Proposition \ref{p1} we have $D_1\sim H$, $D_1^{+}\sim H^{+}$ and
\stepcounter{thm}
\begin{equation}\label{Dkq}
D_k \sim kH-\sum_{h=1}^{k-1}(k-h)E_h, \quad D_k^{+} \sim kH^{+}-\sum_{h=1}^{k-1}(k-h)E_h^{+}
\end{equation}
for $k = 2,\dots,n$, and for the boundary divisors $E_n$ and $E_n^{+}$ we have
\stepcounter{thm}
\begin{equation}\label{lastsec}
E_{n}\sim (n+1)H-\sum_{h=1}^{n-1}(n-h+1)E_h, \quad E_{n}^{+}\sim (n+1)H^{+}-\sum_{h=1}^{n-1}(n-h+1)E_h^{+}
\end{equation}
Furthermore, $\{E_1,\dots,E_{n}\}$, $\{E_1^{+},\dots,E_{n}^{+}\}$ generate the extremal rays respectively of $\Eff(\mathcal{X}(n))$ and $\Eff(\mathcal{Q}(n))$, and $\{D_1,\dots,D_{n}\}$, $\{D_1^{+},\dots,D_{n}^{+}\}$ generate the extremal rays respectively of $\Nef(\mathcal{X}(n))$ and $\Nef(\mathcal{Q}(n))$. 
\end{thm}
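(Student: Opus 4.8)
The plan is to establish \eqref{Dk}, \eqref{Dkq} and \eqref{lastsec} by iterating the strict-transform formula of Remark~\ref{stpb} and feeding in the multiplicities of Lemma~\ref{mult}. By Proposition~\ref{p1} the color $D_k$ is the strict transform of the degree-$k$ hypersurface $\{\det(Z_{2,2}^k)=0\}\subset\mathbb{P}^N$. The key observation is that the secant varieties form a strictly increasing chain $\sec_1(\mathcal{S})\subsetneq\dots\subsetneq\sec_n(\mathcal{S})$ of irreducible varieties, so a general point of $\sec_h(\mathcal{S})$ lies on no lower secant; hence $f_1,\dots,f_{h-1}$ are isomorphisms near such a point, and the multiplicity of the strict transform of $\{\det(Z_{2,2}^k)=0\}$ in $\mathcal{X}(n,m)_{h-1}$ along $\widetilde{\sec_h(\mathcal{S})}$ equals $\mult_{\sec_h(\mathcal{S})}\{\det(Z_{2,2}^k)=0\}$ computed in $\mathbb{P}^N$, namely $k-h$ for $h\le k-1$ and $0$ otherwise by Lemma~\ref{mult}. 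Iterating Remark~\ref{stpb} over $h=1,\dots,k-1$ yields \eqref{Dk} ($D_1\sim H$ being clear), and the same argument applied to the degree-$(n+1)$ hypersurface $\sec_n(\mathcal{S})=\{\det Z=0\}$ gives \eqref{lastsec}, where only $\sec_1,\dots,\sec_{n-1}$ are blown up since the last blow-up in Construction~\ref{ccc} is an isomorphism. The symmetric statements for the $D_k^+$ and $E_n^+$ follow verbatim from the second equality in Lemma~\ref{mult}.

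\textbf{Step 2: the effective cone.} By Brion's description \cite{Br89} of the divisor cones of a spherical variety, $\overline{\Eff}(\mathcal{X}(n,m))$ is generated by the $\mathscr{B}$-invariant prime divisors, i.e.\ the boundary divisors and colors of Proposition~\ref{p1}. It remains to discard the redundant colors. Solving \eqref{Dk} for $H$ in terms of $D_{n+1},E_1,\dots,E_n$ (and, for $n=m$, \eqref{lastsec} for $H$ in terms of $E_n,E_1,\dots,E_{n-1}$) and substituting, a direct computation shows each color $D_k$ with $k\le n$ is a nonnegative combination of $E_1,\dots,E_n,D_{n+1}$ (resp.\ of $E_1,\dots,E_n$); for instance the $E_h$-coefficient for $h<k$ comes out to $\tfrac{h(n+1-k)}{n+1}\ge 0$. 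Hence $\overline{\Eff}(\mathcal{X}(n,m))=\langle E_1,\dots,E_n,D_{n+1}\rangle$ when $n<m$ and $\overline{\Eff}(\mathcal{X}(n))=\langle E_1,\dots,E_n\rangle$ when $n=m$. These generators are linearly independent (triangular with respect to the basis $H,E_1,\dots,E_n$), so the cone is simplicial and each generator spans an extremal ray.

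\textbf{Step 3: the nef cone.} First I would show every $D_k$ is globally generated, hence nef. Since $\mathcal{X}(n,m)$ is the closure of the graph of $Z\mapsto(\wedge^2Z,\dots,\wedge^{n+1}Z)$ inside the product \eqref{ambient}, each projection $p_k:\mathcal{X}(n,m)\to\mathbb{P}(\Hom(\bigwedge^kW,\bigwedge^kV))$ is a morphism; its components are the $k\times k$ minors of $Z$, so $p_k^{*}\mathcal{O}(1)$ is the strict transform of the minor linear system, which by Step~1 has class $D_k$. As the pullback of $\mathcal{O}(1)$ along a morphism it is globally generated, giving $\langle D_1,\dots,D_{n+1}\rangle\subseteq\Nef(\mathcal{X}(n,m))$. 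For the reverse inclusion I would argue by duality: the classes $\{D_k\}$ are linearly independent, and one constructs effective curves $\gamma_1,\dots,\gamma_{n+1}$ dual to them, normalized by $D_j\cdot\gamma_i=\delta_{ij}$. Solving these relations forces $\gamma_{n+1}$ to be a line in a fiber of $E_n\to\widetilde{\sec_n(\mathcal{S})}$, $\gamma_1$ to be the strict transform of a bisecant line to $\mathcal{S}$, and the intermediate $\gamma_i$ ($2\le i\le n$) to be $f$-contracted curves supported on the exceptional divisors with the ``wall-crossing'' pattern $E_{i-1}\cdot\gamma_i=-1,\,E_i\cdot\gamma_i=2,\,E_{i+1}\cdot\gamma_i=-1$; these are precisely the $\mathscr{G}$-invariant curves of the wonderful variety. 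Since a nef class is nonnegative on every $\gamma_i$, expanding it in the basis dual to the $\gamma_i$ forces nonnegative $D_i$-coordinates, so $\Nef\subseteq\langle D_1,\dots,D_{n+1}\rangle$; the cone is again simplicial with the stated extremal rays. Alternatively one may invoke directly Brion's duality between the nef cone and the cone of invariant curves.

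\textbf{Step 4: the quadric case and the main obstacle.} For $\mathcal{Q}(n)$, Remark~\ref{sym_anti} realizes the whole construction as the restriction of Construction~\ref{ccc} to the symmetric locus $\mathbb{P}^{N_+}$, and Lemma~\ref{mult} furnishes identical multiplicities; thus the formulas \eqref{Dkq}, \eqref{lastsec} and the identical convex bookkeeping give $\Eff(\mathcal{Q}(n))=\langle E_1^+,\dots,E_n^+\rangle$ and $\Nef(\mathcal{Q}(n))=\langle D_1^+,\dots,D_n^+\rangle$, with the maps $Z\mapsto\wedge^kZ$ again supplying nefness. I expect the genuine difficulty to lie in the reverse inclusion for the nef cone: everything else is either Brion's general machinery or formal convex geometry, whereas producing honest effective curves with the prescribed intersection numbers---in particular verifying that the relevant multisecant lines and wall-crossing curves survive as effective classes after the sequence of blow-ups---is the one step that requires the projective geometry of the secant varieties rather than pure bookkeeping.
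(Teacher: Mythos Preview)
Your Steps~1 and~2 are correct and essentially coincide with the paper's argument: both compute the classes $D_k$ by iterating Remark~\ref{stpb} with the multiplicities of Lemma~\ref{mult}, and both obtain the effective cone by invoking the spherical description (the paper cites \cite[Proposition 4.5.4.4]{ADHL15} rather than Brion directly, but the content is the same) and then writing each $D_k$ with $k\le n$ as an explicit nonnegative combination of $E_1,\dots,E_n,D_{n+1}$ (respectively $E_1,\dots,E_n$).

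Your Step~3 is where you diverge. The paper does \emph{not} prove nefness of the $D_k$ by hand, nor does it construct dual curves at this stage; it simply quotes Brion \cite[Section 2.6]{Br89}, which for a wonderful variety gives directly that the nef cone is generated by the colors. Since the class formulas of Step~1 already show the colors are linearly independent, they are automatically extremal and the proof is finished. Your alternative---global generation via the projections $p_k$ from the graph, plus a dual family of curves for the reverse inclusion---is correct and is in fact how the paper later establishes base-point-freeness (Proposition~\ref{morNEF}) and the Mori cone (Proposition~\ref{moricone}), but for Theorem~\ref{theff} it is unnecessary. This also means the ``genuine difficulty'' you flag in Step~4 is illusory: once Brion is cited, there is no reverse inclusion to verify and no curves to construct, so the only real work in the whole proof is Lemma~\ref{mult} plus the convex bookkeeping you already did.
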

\begin{proof}
First, consider the case $n < m$. Recall that $D_1$ is the strict transform of $\{z_{n,m}=0\}\subset\mathbb{P}^N$, and since the centers of the blow-ups in Construction \ref{ccc} are non-degenerated Remark \ref{stpb} yields $D_1\sim H$. Let $f:\mathcal{X}(n,m)\rightarrow\mathbb{P}^N$ be the blow-up morphism in Construction \ref{ccc}. By Remark \ref{stpb} in $\Pic(\mathcal{X}(n,m))$ we have 
$$D_k = \deg(f_{*}D_k)H-\sum_{h=1}^{n}\mult_{\sec_h(\mathcal{S})}(f_{*}D_k)E_h$$
for $k=2,\dots, n+1$. Now, to get (\ref{Dk}) it is enough to recall that since $f_{*}D_k$ is the zero locus of a $k\times k$ minor of the matrix $Z$ then $\deg(f_{*}D_k)=k$, and that by Lemma \ref{mult} we have $\mult_{\sec_h(\mathcal{S})}(f_{*}D_k) = k-h$ if $h = 1,\dots k-1$ and $\mult_{\sec_h(\mathcal{S})}(f_{*}D_k) = 0$ if $h\geq k$.  

By Proposition \ref{p1} we have $\mathcal{B}(\mathcal{X}(n,m))= \{E_1,\dots,E_{n}\}$ and $\mathcal{C}(\mathcal{X}(n,m)) = \{D_1,\dots,D_{n+1}\}$, and by \cite[Proposition 4.5.4.4]{ADHL15} $\Eff(\mathcal{X}(n,m))$ is generated by $\{E_1,\dots,E_{n},D_1,\dots,D_{n+1}\}$. Note that since 
$$D_{n+1} \sim (n+1)H-\sum_{h=1}^{n}(n-h+1)E_h$$
in the set $\{E_1,\dots,E_{n},D_{n+1}\}$ there is not a divisor that can be written as a linear combination with non-negative coefficients of the other ones. On the other hand, if $k\leq n$ we may write
$$D_k\sim kH-\sum_{h=1}^{k-1}(k-h)E_h \sim \frac{k}{n+1}D_{n+1}+\sum_{h=k}^{n}\frac{k(n-h+1)}{n+1}E_h+\sum_{h=1}^{k-1}\frac{h(n-k+1)}{n+1}E_h$$  
with $n-k+1> 0$.

Furthermore, by \cite[Section 2.6]{Br89} the colors $\{D_1,\dots,D_{n+1}\}$ generate $\Nef(\mathcal{X}(n,m))$. Now, to conclude that they generate the extremal rays of $\Nef(\mathcal{X}(n,m))$ it is enough to observe that by (\ref{Dk}) they are numerical independent.  

Now, consider the case $n = m$. In order to get (\ref{Dkq}) it is enough to argue as in the first part of the proof. Note that $E_n$ is the strict transform of $\sec_{n}(\mathcal{S})\subset\mathbb{P}^N$. This secant variety is a hypersurface of degree $n+1$. Now, to get (\ref{lastsec}) it is enough to apply Lemma \ref{mult}.

In order to prove that $\{E_1,\dots,E_{n}\}$ generate the extremal rays of $\Eff(\mathcal{X}(n))$ by \cite[Proposition 4.5.4.4]{ADHL15} it is enough to notice that these divisors are numerically independent, and that any of the colors in $\mathcal{C}(\mathcal{X}(n))$ can be written as a linear combination with non-negative coefficients of these divisors. Indeed, we may write
$$
D_k\sim kH-\sum_{h=1}^{k-1}(k-h)E_h \sim \frac{k}{n+1}E_{n}+\sum_{h=k}^{n-1}\frac{k(n-h+1)}{n+1}E_h+\sum_{h=1}^{k-1}\frac{h(n-k+1)}{n+1}E_h
$$
with $n-k+1> 0$. As for the case $n<m$ the statement on the nef cone follows from \cite[Section 2.6]{Br89} and Proposition \ref{p1}. In the symmetric case it is enough to apply Proposition \ref{p1} and Lemma \ref{mult} as we did for $\mathcal{X}(n)$.
\end{proof}

Now, consider the intermediate spaces $\mathcal{X}(n,m)_i$ appearing in Construction \ref{ccc}. By a slight abuse of notation we will keep denoting by $E_i,D_i$ the push-forwards to $\mathcal{X}(n,m)_i$ of the corresponding divisors on $\mathcal{X}(n,m)$ in Proposition \ref{p1}.

\begin{Proposition}\label{int}
Let $\mathcal{X}(n,m)_i$ be the intermediate space appearing at the step $1\leq i\leq n-1$ of Construction \ref{ccc}. Then $\mathcal{X}(n,m)_i$ is spherical. Furthermore, $\Pic(\mathcal{X}(n,m)_i) = \mathbb{Z}[H,E_1,\dots,E_i]$ and 
$$
\mathcal{B}(\mathcal{X}(n,m)_i) = 
\left\lbrace\begin{array}{ll}
\{E_1,\dots,E_{i}\} & \text{if n $<$ m}\\ 
\{E_1,\dots,E_{i},D_{n+1}\} & \text{if n $=$ m}
\end{array}\right.
$$
$$
\mathcal{C}(\mathcal{X}(n,m)_i) = 
\left\lbrace\begin{array}{ll}
\{D_1,\dots,D_{n+1}\} & \text{if n $<$ m}\\ 
\{D_1,\dots,D_{n}\} & \text{if n $=$ m}
\end{array}\right.
$$
Finally, $\Eff(\mathcal{X}(n,m)_i) = \left\langle E_1,\dots,E_i,D_{n+1}\right\rangle$ and $\Nef(\mathcal{X}(n,m)_i) = \left\langle D_1,\dots,D_{i+1}\right\rangle$. The analogous statements, with the obvious modifications, hold for the intermediate spaces $\mathcal{Q}(n)_i$ in Construction \ref{ccq}.  
\end{Proposition}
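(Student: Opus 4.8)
The plan is to run the same spherical-geometry machine used for $\mathcal{X}(n,m)$ in Proposition \ref{p1} and Theorem \ref{theff}, exploiting the fact that all the intermediate blow-ups share one common open $\mathscr{G}$-orbit. First I would establish sphericity. The centers of the blow-ups in Construction \ref{ccc} are the $\mathscr{G}$-invariant subvarieties $\sec_h(\mathcal{S})$, so the composition of the remaining blow-downs $\pi\colon \mathcal{X}(n,m)\to \mathcal{X}(n,m)_i$ is $\mathscr{G}$-equivariant and surjective. Since $\mathcal{X}(n,m)$ is spherical, its dense $\mathscr{B}$-orbit $O=\mathscr{B}\cdot x_0$ has image $\pi(O)=\mathscr{B}\cdot \pi(x_0)$, again a single $\mathscr{B}$-orbit, and it is dense because $\pi$ is dominant; as $\mathcal{X}(n,m)_i$ is smooth, hence normal, by \cite[Theorem 1]{Va84}, it is spherical. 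The Picard group is then immediate from the blow-up formula, giving $\Pic(\mathcal{X}(n,m)_i)=\mathbb{Z}[H,E_1,\dots,E_i]$.

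For the boundary divisors and colors I would argue exactly as in Proposition \ref{p1}. The open $\mathscr{G}$-orbit $U$ of full-rank collineations modulo scalars is common to $\mathbb{P}^N$, $\mathcal{X}(n,m)_i$ and $\mathcal{X}(n,m)$, and the number of colors equals the number of codimension-one $\mathscr{B}$-orbits in $U$, an intrinsic invariant of $U$; hence $\mathcal{X}(n,m)_i$ has exactly as many colors as $\mathcal{X}(n,m)$, namely $n+1$ if $n<m$ and $n$ if $n=m$. The functional equation (\ref{funeq}) shows that each $\{\det(Z_{2,2}^k)=0\}$ is $\mathscr{B}$-invariant, so its strict transform $D_k$ is $\mathscr{B}$-invariant, and these exhibit the required number of distinct colors. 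For the boundary divisors, $E_1,\dots,E_i$ are $\mathscr{G}$-invariant because their centers are; any other $\mathscr{G}$-invariant prime divisor pushes forward to a $\mathscr{G}$-invariant hypersurface of $\mathbb{P}^N$, which can only be $\sec_n(\mathcal{S})$ and only when $n=m$, producing in that case the extra boundary divisor $D_{n+1}$, the strict transform of $\sec_n(\mathcal{S})$.

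For the effective cone I would start from \cite[Proposition 4.5.4.4]{ADHL15}, which gives $\Eff(\mathcal{X}(n,m)_i)$ as generated by the boundary divisors together with the colors. Combining Remark \ref{stpb} with Lemma \ref{mult}, and remembering that only $\sec_1(\mathcal{S}),\dots,\sec_i(\mathcal{S})$ have been blown up, yields $D_k\sim kH-\sum_{h=1}^{\min(k-1,i)}(k-h)E_h$ on $\mathcal{X}(n,m)_i$. Substituting $H=\tfrac{1}{n+1}\bigl(D_{n+1}+\sum_{h=1}^{i}(n+1-h)E_h\bigr)$ expresses every color $D_k$ with $k\leq n$ as a non-negative combination of $E_1,\dots,E_i,D_{n+1}$, the coefficient of $E_h$ being $\tfrac{h(n+1-k)}{n+1}$ for $h\leq\min(k-1,i)$ and $\tfrac{k(n+1-h)}{n+1}$ otherwise, both $\geq 0$ since $k\leq n$. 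Thus these colors are redundant, and since $E_1,\dots,E_i,D_{n+1}$ are linearly independent they span the simplicial cone $\Eff(\mathcal{X}(n,m)_i)=\langle E_1,\dots,E_i,D_{n+1}\rangle$.

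Finally, for the nef cone I would first check that $D_1,\dots,D_{i+1}$ are nef: since $k-1\leq i$, the rational map $Z\mapsto \wedge^k Z$ becomes a genuine morphism $\psi_k\colon\mathcal{X}(n,m)_i\to\mathbb{P}(\Hom(\bigwedge^k W,\bigwedge^k V))$ once $\sec_1(\mathcal{S}),\dots,\sec_{k-1}(\mathcal{S})$ are blown up, and by Lemma \ref{mult} one has $D_k\sim\psi_k^*\mathcal{O}(1)$, the pullback of an ample class, hence nef. These $i+1$ classes are linearly independent, so they span a full-dimensional simplicial subcone of $\Nef(\mathcal{X}(n,m)_i)$. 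To get equality I would invoke the description of the nef cone of a spherical variety in \cite[Section 2.6]{Br89}, or dually exhibit effective curve classes $C_1,\dots,C_{i+1}$ with $D_j\cdot C_l=0$ for $j\neq l$ and $D_l\cdot C_l>0$, the $C_l$ being lines contracted by all $\psi_j$ with $j\neq l$, so that any nef $N=\sum_j a_j D_j$ satisfies $a_l=(N\cdot C_l)/(D_l\cdot C_l)\geq 0$. The main obstacle is precisely this last step: proving that $D_1,\dots,D_{i+1}$ exhaust the nef cone rather than spanning only a proper subcone, which requires controlling the intersection theory of the nested exceptional divisors $E_h$ over the chain $\sec_1(\mathcal{S})\subset\dots\subset\sec_i(\mathcal{S})$ in order to produce the dual curves, or equivalently to match Brion's inequalities. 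The statements for the symmetric spaces $\mathcal{Q}(n)_i$ follow along the same lines after restricting to $\mathbb{P}^{N_+}$ and using Remark \ref{sym_anti} together with the symmetric part of Lemma \ref{mult}.
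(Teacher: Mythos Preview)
Your proposal is correct and follows essentially the same route as the paper, which simply says ``argue as in the proofs of Proposition~\ref{p1} and Theorem~\ref{theff}''; you have written out that argument in detail. One point worth noting: you are right to flag the nef cone as the delicate step, and in fact the direct invocation of \cite[Section 2.6]{Br89} as in Theorem~\ref{theff} does not carry over verbatim, since the statement ``the colors generate $\Nef$'' used there is specific to wonderful varieties, whereas $\mathcal{X}(n,m)_i$ is spherical but not wonderful and has more colors than its Picard rank (e.g.\ on $\mathcal{X}(n,m)_1$ the color $D_3\sim 3H-2E_1=2D_2-D_1$ lies outside $\langle D_1,D_2\rangle$). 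The dual-curve alternative you sketch---producing classes $C_l$ with $D_j\cdot C_l=0$ for $j\neq l$, exactly as in Proposition~\ref{moricone}---is the robust way to close this, and the required curves are the obvious ones: a line in a fiber of $E_i$, strict transforms of lines in fibers of the lower $E_h$, and the strict transform of a general line in $\mathbb{P}^N$.
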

\begin{proof}
Note that $\mathcal{X}(n,m)_i$ is spherical, even though it is not wonderful, with respect to the action of $\mathscr{G} = SL(n+1)\times SL(m+1)$, with the Borel subgroup $\mathscr{B}$ in (\ref{borel}). Since the actions of $\mathscr{G}$ on $\mathcal{X}(n,m)$ and $\mathcal{X}(n,m)_i$ are compatible with the blow-up morphism $\mathcal{X}(n,m)\rightarrow \mathcal{X}(n,m)_i$, the intermediate space $\mathcal{X}(n,m)_i$ has at most the same number of boundary divisors and colors as $\mathcal{X}(n,m)$. Now, arguing as in the proof of Proposition \ref{p1} it is straightforward to compute $\mathcal{B}(\mathcal{X}(n,m)_i)$ and $\mathcal{C}(\mathcal{X}(n,m)_i)$.

Finally, arguing as in the proof of Theorem \ref{theff} we get also the statements on the effective and the nef cones.
\end{proof}

\subsection*{Cones of curves}
Let $X$ be a normal projective variety and let $N_1(X)$ be the real vector space of numerical equivalence classes of $1$-cycles on $X$. The closure of the cone in $N_1(X)$ generated by the classes of irreducible curves in $X$ is called the \textit{Mori cone} of $X$, we will denote it by $\NE(X)$. 

A class $[C]\in N_1(X)$ is called \textit{moving} if the curves in $X$ of class $[C]$ cover a dense open subset of $X$. The closure of the cone in $N_1(X)$ generated by classes of moving curves in $X$ is called the \textit{moving cone} of $X$ and we will denote it by $\mov(X)$.  

Our aim now is to describe these cones for spaces of complete forms. When $n < m$ we will denote by $\{l,e_1,\dots,e_n\}$ the basis of $N_1(\mathcal{X}(n,m))$ dual to $\{H,E_1,\dots,E_n\}$, and similarly when $n = m$ we will denote by $\{l,e_1,\dots,e_{n-1}\}$ the basis of $N_1(\mathcal{X}(n))$ dual to $\{H,E_1,\dots,E_{n-1}\}$. We will adopt the analogous notations for the basis of $N_1(\mathcal{Q}(n))$ dual to the basis of $\Pic(\mathcal{Q}(n))$ in Proposition \ref{p1}.

\begin{Proposition}
Let $n<m$ and write the class of a general curve $C\in N_1(\mathcal{X}(n,m))$ as $C = dl-\sum_{i=1}^nm_ie_i$. Then the cone of moving curves $\mov(\mathcal{X}(n,m))$ is defined by 
$$
\left\lbrace
\begin{array}{l}
m_i\geq 0 \text{ for $i=1,\dots n$}\\ 
d(n+1)-\sum_{i=1}^{n}(n-i+1)m_i\geq 0
\end{array}\right.
$$
In particular the extremal rays of $\mov(\mathcal{X}(n,m))$ are generated by the curves of class $l$ and $(n-i+1)l-(n+1)e_i$ for $i = 1,\dots,n$.

If $n = m$ then $\mov(\mathcal{X}(n))$ and $\mov(\mathcal{Q}(n))$ are defined by
$$
\left\lbrace
\begin{array}{l}
m_i\geq 0 \text{ for $i=1,\dots n-1$}\\ 
d(n+1)-\sum_{i=1}^{n-1}(n-i+1)m_i\geq 0
\end{array}\right.
$$
\end{Proposition}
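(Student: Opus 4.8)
The plan is to deduce the moving cone of curves from the effective cone of divisors already computed in Theorem \ref{theff}, using duality. Recall that, by the theorem of Boucksom, Demailly, P\u{a}un and Peternell, on a smooth projective variety the closed cone of moving curves is dual, with respect to the intersection pairing, to the pseudo-effective cone of divisors, so that $\mov(X)=\Eff(X)^{\vee}$. Since $\mathcal{X}(n,m)$ is wonderful it is smooth, and being a Mori dream space its pseudo-effective cone is the rational polyhedral cone $\Eff(\mathcal{X}(n,m))$ of Theorem \ref{theff}. For $n<m$ that cone is generated by the $n+1$ classes $E_1,\dots,E_n,D_{n+1}$, which span the $(n+1)$-dimensional space $N^1(\mathcal{X}(n,m))$ and generate its extremal rays; hence $\Eff(\mathcal{X}(n,m))$ is simplicial, and a curve class $C$ lies in $\mov(\mathcal{X}(n,m))$ if and only if $C\cdot E_i\geq 0$ for $i=1,\dots,n$ and $C\cdot D_{n+1}\geq 0$.

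Next I would make these conditions explicit. Writing $C=dl-\sum_{i=1}^n m_i e_i$ and computing the intersection numbers of $C$ against the generators of $\Pic(\mathcal{X}(n,m))$ one obtains $C\cdot E_i=m_i$; combining this with the expression $D_{n+1}\sim (n+1)H-\sum_{h=1}^{n}(n-h+1)E_h$ from Theorem \ref{theff} gives $C\cdot D_{n+1}=(n+1)d-\sum_{i=1}^{n}(n-i+1)m_i$. The two families of conditions $C\cdot E_i\geq 0$ and $C\cdot D_{n+1}\geq 0$ are then exactly the system $m_i\geq 0$ and $d(n+1)-\sum_{i=1}^n(n-i+1)m_i\geq 0$ of the statement.

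To extract the extremal rays I would use that $\mov(\mathcal{X}(n,m))$, being dual to a simplicial cone, is itself simplicial with $n+1$ extremal rays, each obtained by saturating $n$ of the $n+1$ defining inequalities. Setting $m_1=\dots=m_n=0$ (while $C\cdot D_{n+1}>0$) gives the ray spanned by $l$. Fixing an index $i$ and imposing $m_j=0$ for $j\neq i$ together with $(n+1)d-(n-i+1)m_i=0$ yields, up to scaling, the class $(n-i+1)l-(n+1)e_i$, which meets $E_i$ positively and lies on the facet $\{C\cdot D_{n+1}=0\}$. These $n+1$ classes are precisely the claimed generators.

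Finally, the case $n=m$ is formally identical: the Picard rank drops to $n$, and by Theorem \ref{theff} the cone $\Eff(\mathcal{X}(n))$ is generated by $E_1,\dots,E_n$, with $E_n\sim (n+1)H-\sum_{h=1}^{n-1}(n-h+1)E_h$ now playing the role of $D_{n+1}$. Repeating the intersection computation in the $n$-dimensional space $N_1(\mathcal{X}(n))$ produces the inequalities $m_i\geq 0$ for $i=1,\dots,n-1$ and $(n+1)d-\sum_{i=1}^{n-1}(n-i+1)m_i\geq 0$, and the same argument applied to the $E_i^{+}$ gives the statement for $\mathcal{Q}(n)$, compatibly with the isomorphism $\Pic(\mathcal{X}(n))\to\Pic(\mathcal{Q}(n))$. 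The only genuine subtlety in the whole argument is the correct invocation of the duality $\mov(X)=\Eff(X)^{\vee}$ together with careful bookkeeping of the signs in the intersection pairing between the two dual bases; once these are fixed, what remains is a finite and routine computation in convex geometry.
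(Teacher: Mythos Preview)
Your proposal is correct and follows essentially the same route as the paper: invoke the BDPP duality $\mov(X)=\Eff(X)^{\vee}$, plug in the generators of the effective cone from Theorem~\ref{theff}, and read off the inequalities from the intersection products $C\cdot E_i=m_i$ and $C\cdot D_{n+1}=(n+1)d-\sum_i(n-i+1)m_i$. The only addition you make is the explicit extraction of the extremal rays via the simpliciality of the dual cone, which the paper leaves implicit.
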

\begin{proof}
By \cite[Theorem 2.2]{BDPP13} the cone of moving curves is dual to the effective cone. By Theorem \ref{theff} we have $\Eff(\mathcal{X}(n,m)) = \left\langle E_1,\dots,E_n,D_{n+1}\right\rangle$. Now, to conclude it is enough to note that $C\cdot E_i = m_i$ for $i = 1,\dots,n$ and $C\cdot D_{n+1} = d(n+1)-\sum_{i=i}^n(n-i+1)m_i$.

In the case $n=m$, and in the symmetric case we can argue exactly in the same way by using the relevant presentation of the effective cone in Theorem \ref{theff}. 
\end{proof}

\begin{Example}
Consider for instance the case $n = m = 3$. Then the extremal rays of $\mov(\mathcal{X}(3))$ are generated by $l$, $3l-4e_1$ and $l-2e_2$. In this case we have $\mathcal{S}\subset\mathbb{P}^{15}$. Take four general points $p_1,\dots,p_4\in \mathcal{S}$, let $\left\langle p_1,\dots,p_4\right\rangle\cong\mathbb{P}^{3}$ be their span, and consider a twisted cubic $C\subset\left\langle p_1,\dots,p_4\right\rangle$ passing through $p_1,\dots,p_4$. Then the strict transform of $C$ in $\mathcal{X}(3)$ has class $3l-4e_1$. Note that since $\sec_4(\mathcal{S}) = \mathbb{P}^{15}$ the curves of this class cover a dense open subset of $\mathcal{X}(3)$. Now, the strict transform of a line secant to $\sec_2(\mathcal{S})$ has class $l-2e_2$. A general point $p\in\mathbb{P}^{15}$ lies on a line secant to $\sec_2(\mathcal{S})$ if and only if $p$ lies on a $3$-plane four secant to $\mathcal{S}$. Again, since $\sec_4(\mathcal{S}) = \mathbb{P}^{15}$ the curves of class $l-2e_2$ cover a dense open subset of $\mathcal{X}(3)$. 
\end{Example}

\begin{Proposition}\label{moricone}
Let $n<m$. Then the extremal rays of the Mori cone $\NE(\mathcal{X}(n,m))$ of $\mathcal{X}(n,m)$ are generated by the curves of class $l-2e_1+e_2$, $e_i-2e_{i+1}+e_{i+2}$ for $i = 1,\dots,n-2$, $e_{n-1}-2e_n$ and $e_n$.  

If $n = m$ then the extremal rays of the Mori cone of $\mathcal{X}(n)$ and $\mathcal{Q}(n)$ are generated respectively by the curves of class $l-2e_1+e_2$, $e_i-2e_{i+1}+e_{i+2}$ for $i = 1,\dots,n-3$, $e_{n-2}-2e_{n-1}$ and $e_{n-1}$, and by the curves of class $l^{+}-2e_1^{+}+e_2^{+}$, $e_i^{+}-2e_{i+1}^{+}+e_{i+2}^{+}$ for $i = 1,\dots,n-3$, $e_{n-2}^{+}-2e_{n-1}^{+}$ and $e_{n-1}^{+}$.
\end{Proposition}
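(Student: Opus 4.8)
The plan is to avoid constructing the extremal curves geometrically and instead read them off from the nef cone by duality. Since in our notation $\NE(\mathcal{X}(n,m))$ already denotes the closed Mori cone, the standard duality between nef divisors and effective curves (see \cite[Chapter 1]{De01}) gives $\NE(\mathcal{X}(n,m))=\Nef(\mathcal{X}(n,m))^{\vee}$ in $N_1(\mathcal{X}(n,m))$. By Theorem \ref{theff} the nef cone is generated by the classes $D_1,\dots,D_{n+1}$, and since these are numerically independent it is a full-dimensional \emph{simplicial} cone. Hence its dual is again simplicial with exactly $n+1$ extremal rays, the $j$-th of which is, up to positive scaling, the unique class $R_{(j)}$ satisfying $R_{(j)}\cdot D_k=0$ for all $k\neq j$ and $R_{(j)}\cdot D_j>0$. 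In other words $\{R_{(1)},\dots,R_{(n+1)}\}$ is, up to scaling, the basis of $N_1$ dual to $\{D_1,\dots,D_{n+1}\}$ for the intersection pairing, and the whole problem is to compute this dual basis.

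First I would put the pairing in coordinates. Writing a curve class as $C=dl-\sum_{i=1}^{n}m_ie_i$, the presentation $D_k\sim kH-\sum_{h=1}^{k-1}(k-h)E_h$ from Theorem \ref{theff} yields
\[
D_k\cdot C=kd-\sum_{h=1}^{k-1}(k-h)\,m_h .
\]
The point is structural: the coefficients $(k-h)$ form a discrete linear ramp, so the change of basis from $\{D_1,\dots,D_{n+1}\}$ to $\{H,E_1,\dots,E_n\}$ is a lower-triangular ``integration'' matrix whose inverse is the discrete second-difference operator. This is exactly why the dual rays carry the coefficient pattern $1,-2,1$. Concretely I would then verify that the listed classes are this dual basis: $l-2e_1+e_2$ pairs to $0$ with $D_2,\dots,D_{n+1}$ and to $+1$ with $D_1$; each $e_i-2e_{i+1}+e_{i+2}$ pairs to $0$ with every $D_k$ for $k\neq i+1$ and positively with $D_{i+1}$; and the truncated endpoints $e_{n-1}-2e_n$ and $e_n$ are dual to $D_n$ and $D_{n+1}$ respectively. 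Each check is a short telescoping computation with the displayed formula.

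The case $n=m$ is identical in structure but one dimension smaller. Here $\Nef(\mathcal{X}(n))=\langle D_1,\dots,D_n\rangle$ is still simplicial, the pairing formula is the same with $C=dl-\sum_{i=1}^{n-1}m_ie_i$, and since each $D_k$ with $k\le n$ already involves only $E_1,\dots,E_{n-1}$ no substitution for the dependent class $E_n$ is required. The dual basis therefore terminates one step earlier, producing $e_{n-2}-2e_{n-1}$ and $e_{n-1}$ in place of $e_{n-1}-2e_n$ and $e_n$, which is the stated list. The symmetric spaces $\mathcal{Q}(n)$ are handled verbatim, replacing $H,E_h,D_k$ and $l,e_i$ by their decorated analogues $H^{+},E_h^{+},D_k^{+}$ and $l^{+},e_i^{+}$ and invoking the presentation of $\Nef(\mathcal{Q}(n))$ from Theorem \ref{theff}.

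I do not expect a deep obstacle here, since duality plus simpliciality already force the Mori cone to have $n+1$ extremal rays; the only genuine care lies in the bookkeeping of the two endpoint rays, where the $1,-2,1$ pattern gets truncated, and in confirming that the telescoping sum $\sum_h (k-h)m_h$ vanishes for all $k\neq j$ simultaneously rather than for a single value. I would organize the argument so that this simultaneous vanishing is read off in one stroke from the second-difference structure of the inverse matrix, leaving only the single inequality $R_{(j)}\cdot D_j>0$, which fixes the orientation, to be checked by hand.
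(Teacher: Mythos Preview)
Your proposal is correct and follows essentially the same route as the paper: dualize the simplicial nef cone from Theorem \ref{theff} and verify that the listed curve classes are the intersection-dual basis of $D_1,\dots,D_{n+1}$ (respectively $D_1,\dots,D_n$). The paper's proof is terser, just recording the vanishing intersections, whereas you add the pleasant observation that the $1,-2,1$ pattern is the discrete second difference inverting the triangular ramp of coefficients $(k-h)$; this is expository value added but not a genuinely different argument.
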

\begin{proof}
The Mori cone is dual to the nef cone. Since by Theorem \ref{theff} $\Nef(\mathcal{X}(n,m))$ has $n+1$ extremal rays and by Proposition \ref{p1} $\Pic(\Nef(\mathcal{X}(n,m)))$ has rank $n+1$, $\NE(\mathcal{X}(n,m))$ must have $n+1$ extremal rays as well. It is enough to find for any of the listed curves $n$ nef divisors among the generators of $\Nef(\mathcal{X}(n,m))$ in Theorem \ref{theff} having zero intersection product with the curve. Indeed, we have $D_k\cdot (l-2e_1-e_2) =0$ for any $k\neq 1$, and $D_k\cdot (e_i-2e_{i+1}+e_{i+2}) = 0$ for any $k \neq i+1$. Furthermore, $D_k\cdot (e_{n-1}-2e_n)=0$ for $k\neq n-1$, $D_k\cdot e_n=0$ for $k\neq n$ and $D_k\cdot e_n =0$ for $k\neq n+1$. The case $n=m$, and the symmetric case can be developed in a similar way.
\end{proof}

\begin{Corollary}\label{Fano}
The spaces of complete forms $\mathcal{X}(n,m)$, $\mathcal{Q}(n)$ are Fano varieties. 
\end{Corollary}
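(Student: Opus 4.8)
The plan is to use the standard criterion that a smooth projective variety is Fano exactly when its anticanonical divisor is ample. Since by Proposition \ref{p1} the varieties $\mathcal{X}(n,m)$ and $\mathcal{Q}(n)$ are wonderful, hence smooth and projective, and since their nef cones are the simplicial cones computed in Theorem \ref{theff}, it suffices to show that $-K$ is a strictly positive combination of the nef generators $D_k$, placing it in the interior of $\Nef$.

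First I would compute the canonical class from the blow-up description in Construction \ref{ccc}. Writing $f\colon\mathcal{X}(n,m)\to\mathbb{P}^N$ for the composite morphism and $\beta_h := \codim_{\mathbb{P}^N}\sec_h(\mathcal{S})-1$, the iterated blow-up formula should give
$$
K_{\mathcal{X}(n,m)}=-(N+1)H+\sum_{h=1}^{n}\beta_h\,E_h .
$$
The point requiring care is that no cross-correction terms appear in the later blow-ups: at each stage the center is the strict transform of $\sec_h(\mathcal{S})$, whose general point lies outside $\sec_{h-1}(\mathcal{S})$ and hence off the previously created exceptional divisors, so each $E_j$ has multiplicity zero along the subsequent centers and $f_h^{*}E_j$ coincides with the strict transform $E_j$ (cf. Remark \ref{stpb}). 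The codimensions are read off from the dimension of the determinantal variety, namely $\dim\sec_h(\mathcal{S})=h(n+m+2-h)-1$, so that $\beta_h = N-h(n+m+2-h)$, a quadratic polynomial in $h$ with leading coefficient $+1$.

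Next I would write $-K=\sum_{k=1}^{n+1}\lambda_k D_k$ using $D_k\sim kH-\sum_{h<k}(k-h)E_h$ from Theorem \ref{theff}. Matching coefficients reduces to a triangular linear system whose solution expresses each interior coefficient as a second finite difference $\lambda_k=\beta_{k-1}-2\beta_k+\beta_{k+1}$; since $\beta_h$ is quadratic in $h$ with leading coefficient $+1$, these are the constant $2>0$, and the finitely many boundary coefficients are checked to be positive by direct substitution. As the $D_k$ are numerically independent and span $\Nef(\mathcal{X}(n,m))$, a strictly positive combination lies in the interior of the nef cone, so $-K$ is ample. Equivalently one may invoke Kleiman's criterion by intersecting $-K$ with the explicit extremal curves of Proposition \ref{moricone}, the dual computation producing the same positive second differences. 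The case $n=m$ is handled identically after noting that the last blow-up is an isomorphism, so only $E_1,\dots,E_{n-1}$ occur and these are already a basis.

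The symmetric case $\mathcal{Q}(n)$ is entirely analogous: one has $\dim\sec_h(\mathcal{V})=h(n+1)-\binom{h}{2}-1$, again quadratic in $h$ with positive leading term (now $\tfrac12$), so the second differences of the corresponding sequence $\beta_h^{+}$ are the positive constant $1$, and the same argument places $-K_{\mathcal{Q}(n)}$ in the interior of $\langle D_1^{+},\dots,D_n^{+}\rangle$. I expect the main obstacle to be the bookkeeping in the canonical-class computation — pinning down the codimensions and, above all, confirming that the iterated blow-ups introduce no cross-correction terms — after which the positivity of the nef-coordinates of $-K$ is immediate from the constancy of the second differences of a quadratic.
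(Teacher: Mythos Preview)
Your proposal is correct and follows essentially the same route as the paper: compute $-K$ from the iterated blow-up formula using the codimensions of the secant varieties, then verify ampleness against the cones determined in Theorem \ref{theff} and Proposition \ref{moricone}. The paper carries this out on the dual side, intersecting $-K$ directly with the extremal curves of Proposition \ref{moricone} and obtaining the values $3$, $2$, $2$, $m-n$ (your second-difference observation is precisely the computation $(-K)\cdot(e_i-2e_{i+1}+e_{i+2})=\beta_i-2\beta_{i+1}+\beta_{i+2}=2$), whereas you phrase it as writing $-K$ in the interior of the simplicial cone $\langle D_1,\dots,D_{n+1}\rangle$; you yourself note these are the same calculation read through Kleiman duality.
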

\begin{proof}
Consider the case $n<m$. Since
$$\codim_ {\P^N}(\sec_h(\mathcal{S})) = (n-h+1)(m-h+1)$$
for $1\leq h\leq n$ Construction \ref{ccc} yields
$$-K_{\mathcal{X}(n,m)}\sim (nm+n+m+1)H-\sum_{h=1}^{n}(nm+h^2-nh-mh+n+m-2h)E_h$$
Therefore, $-K_{\mathcal{X}(n,m)}\cdot (l-2e_1+e_2) = 3$, $-K_{\mathcal{X}(n,m)}\cdot (e_i-2e_{i+1}+e_{i+2}) = 2$, $-K_{\mathcal{X}(n,m)}\cdot (e_{n-1}-2e_n) = 2$, $-K_{\mathcal{X}(n,m)}\cdot e_n = m-n >0$ and by Proposition \ref{moricone} $-K_{\mathcal{X}(n,m)}$ is ample.

In the case $n=m$, and the symmetric case it is enough to argue similarly by recalling that $\codim_{\P^{N_{+}}}(\sec_h(\mathcal{V})) = \frac{n^2+3n-h(2n-h+3)+2}{2}$ for $h\leq n$.
\end{proof}

\section{On the Cox ring}\label{sec3}
Let $X$ be a normal $\mathbb{Q}$-factorial variety. We say that a birational map  $f: X \dasharrow X'$ to a normal projective variety $X'$  is a \emph{birational contraction} if its
inverse does not contract any divisor. 
We say that it is a \emph{small $\mathbb{Q}$-factorial modification} 
if $X'$ is $\mathbb{Q}$-factorial  and $f$ is an isomorphism in codimension one.
If  $f: X \dasharrow X'$ is a small $\mathbb{Q}$-factorial modification, then 
the natural pullback map $f^*:N^1(X')\to N^1(X)$ sends $\Mov(X')$ and $\Eff(X')$
isomorphically onto $\Mov(X)$ and $\Eff(X)$, respectively.
In particular, we have $f^*(\Nef(X'))\subset \overline{\Mov(X)}$.

Now, assume that the divisor class group $\Cl(X)$ is free and finitely generated, and fix a subgroup $G$ of the group of Weil divisors on $X$ such that the canonical map $G\rightarrow\Cl(X)$, mapping a divisor $D\in G$ to its class $[D]$, is an isomorphism. The \textit{Cox ring} of $X$ is defined as
$$\Cox(X) = \bigoplus_{[D]\in \Cl(X)}H^0(X,\mathcal{O}_X(D))$$
where $D\in G$ represents $[D]\in\Cl(X)$, and the multiplication in $\Cox(X)$ is defined by the standard multiplication of homogeneous sections in the field of rational functions on $X$. 

\begin{Definition}\label{def:MDS} 
A normal projective $\mathbb{Q}$-factorial variety $X$ is called a \emph{Mori dream space}
if the following conditions hold:
\begin{enumerate}
\item[-] $\Pic{(X)}$ is finitely generated, or equivalently $h^1(X,\mathcal{O}_X)=0$,
\item[-] $\Nef{(X)}$ is generated by the classes of finitely many semi-ample divisors,
\item[-] there is a finite collection of small $\mathbb{Q}$-factorial modifications
 $f_i: X \dasharrow X_i$, such that each $X_i$ satisfies the second condition above, and $
 \Mov{(X)} \ = \ \bigcup_i \  f_i^*(\Nef{(X_i)})$.
\end{enumerate}
\end{Definition}

The collection of all faces of all cones $f_i^*(\Nef{(X_i)})$ above forms a fan which is supported on $\Mov(X)$.
If two maximal cones of this fan, say $f_i^*(\Nef{(X_i)})$ and $f_j^*(\Nef{(X_j)})$, meet along a facet,
then there exist a normal projective variety $Y$, a small modification $\varphi:X_i\dasharrow X_j$, and $h_i:X_i\rightarrow Y$, $h_j:X_j\rightarrow Y$ small birational morphisms of relative Picard number one such that $h_j\circ\varphi = h_i$. The fan structure on $\Mov(X)$ can be extended to a fan supported on $\Eff(X)$ as follows. 

\begin{Definition}\label{MCD}
Let $X$ be a Mori dream space.
We describe a fan structure on the effective cone $\Eff(X)$, called the \emph{Mori chamber decomposition}.
We refer to \cite[Proposition 1.11]{HK00} and \cite[Section 2.2]{Ok16} for details.
There are finitely many birational contractions from $X$ to Mori dream spaces, denoted by $g_i:X\dasharrow Y_i$.
The set $\Exc(g_i)$ of exceptional prime divisors of $g_i$ has cardinality $\rho(X/Y_i)=\rho(X)-\rho(Y_i)$.
The maximal cones $\mathcal{C}$ of the Mori chamber decomposition of $\Eff(X)$ are of the form: $\mathcal{C}_i \ = \left\langle g_i^*\big(\Nef(Y_i)\big) , \Exc(g_i) \right\rangle$. We call $\mathcal{C}_i$ or its interior $\mathcal{C}_i^{^\circ}$ a \emph{maximal chamber} of $\Eff(X)$.
\end{Definition}

For instance, to get a concrete idea on how this decomposition works one could see \cite[Theorem 3.4]{AM16} in which the case of $\mathbb{P}^{n}$ blown-up at $n+3$ general points is considered. 

\begin{Remark}\label{sphMDS}
By \cite[Corollary 1.3.2]{BCHM10} smooth Fano varieties are Mori dream spaces. In fact, there is a larger class of varieties called log Fano varieties which are Mori dream spaces as well. By the work of M. Brion \cite{Br93} we have that $\mathbb{Q}$-factorial spherical varieties are Mori dream spaces. An alternative proof of this result can be found in \cite[Section 4]{Pe14}. In particular, by Proposition \ref{p1} $\mathcal{X}(n,m),\mathcal{Q}(n)$ are Mori dream spaces.
\end{Remark}

\begin{Remark}\label{dimCox}
By \cite[Proposition 2.9]{HK00} a normal and $\mathbb{Q}$-factorial projective
variety $X$ over an algebraically closed field $K$, with finitely generated Picard group is a Mori dream space if and only if $\Cox(X)$ is a finitely generated $K$-algebra. Furthermore, the following equality holds:
$$\dim\Cox(X) = \dim(X)+\rank\Pic(X)$$
see for instance \cite[Theorem 3.2.1.4]{ADHL15}.
\end{Remark}

\subsection*{Generators}
Let $I = \{i_0,\dots, i_k\}$, $J = \{j_0,\dots, j_k\}$ be two ordered sets of indexes with $0\leq i_0\leq\dots\leq i_k\leq n$ and $0\leq j_0\leq\dots\leq j_k\leq m$, and denote by $Z_{I,J}$ the $(k+1)\times (k+1)$ minor of $Z$ built with the rows indexed by $I$ and the columns indexed by $J$. Similarly, let $Z_{I,J}^{+}$ be the $(k+1)\times (k+1)$ minor of the symmetrization of $Z$ built with the rows indexed by $I$ and the columns indexed by $J$.  

\begin{Lemma}\label{rep}
Let $H_{k+1}$ be the $SL(n+1)$-module fitting in the following exact sequence
\stepcounter{thm}
\begin{equation}\label{exseq}
0\rightarrow I(\mathcal{G}(k,n))_2\rightarrow \Sym^2(\bigwedge^{k+1}V)\rightarrow H_{k+1}\rightarrow 0
\end{equation}
Then $H_{k+1}\cong H^0(\mathcal{G}(k,n),\mathcal{O}_{\mathcal{G}(k,n)}(2))$ and $\dim(H_{k+1}) = \frac{1}{k+2}\binom{n+1}{k+1}\binom{n+2}{k+1}$.
\end{Lemma}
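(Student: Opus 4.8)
The plan is to prove the two assertions separately: first the identification of $H_{k+1}$ with $H^0(\mathcal{G}(k,n),\mathcal{O}_{\mathcal{G}(k,n)}(2))$, which is representation-theoretic and essentially classical, and then the dimension count, which reduces to a single application of a Weyl-type dimension formula. Throughout I use that under the Plücker embedding $\mathcal{G}(k,n)\hookrightarrow\mathbb{P}(\bigwedge^{k+1}V)$ the hyperplane bundle is $\mathcal{O}_{\mathcal{G}(k,n)}(1)$ and the Plücker coordinates span a copy of $\bigwedge^{k+1}V$, so the homogeneous coordinate ring is the quotient of $\Sym^\bullet(\bigwedge^{k+1}V)$ by the Plücker ideal $I(\mathcal{G}(k,n))$ (the distinction between $V$ and $V^{*}$ is immaterial for what follows, as it affects neither the $SL(n+1)$-module structure up to the relevant identifications nor the dimensions).

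For the identification, the key input is that $\mathcal{G}(k,n)$ is \emph{projectively normal} in the Plücker embedding; equivalently its coordinate ring is normal, which follows for instance from standard monomial theory together with the fact that $\mathcal{G}(k,n)$ is arithmetically Cohen--Macaulay. Projective normality in degree $2$ says precisely that the restriction map $\Sym^2(\bigwedge^{k+1}V)\to H^0(\mathcal{G}(k,n),\mathcal{O}_{\mathcal{G}(k,n)}(2))$ is surjective, and its kernel is by definition the degree-$2$ part $I(\mathcal{G}(k,n))_2$ of the Plücker ideal. Comparing with the sequence (\ref{exseq}) then identifies the quotient $H_{k+1}$ with $H^0(\mathcal{G}(k,n),\mathcal{O}_{\mathcal{G}(k,n)}(2))$. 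Equivalently, one may invoke the Borel--Weil theorem, which gives $H^0(\mathcal{G}(k,n),\mathcal{O}_{\mathcal{G}(k,n)}(d))$ as the irreducible $SL(n+1)$-module $V_{d\omega_{k+1}}$ of highest weight $d\omega_{k+1}$, together with Kostant's theorem that the Plücker ideal is generated by quadrics; in degree two this reads $\Sym^2(\bigwedge^{k+1}V)=V_{2\omega_{k+1}}\oplus I(\mathcal{G}(k,n))_2$, which is the same decomposition.

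For the dimension it then suffices to compute $\dim V_{2\omega_{k+1}}$. As a $GL(n+1)$-representation this is the Schur module $\mathbb{S}_\lambda(V)$ attached to the rectangular partition $\lambda=(2^{k+1})$, the Young diagram with $k+1$ rows and $2$ columns: indeed $V_{\omega_{k+1}}=\bigwedge^{k+1}V$ corresponds to the single column $(1^{k+1})$, and doubling the highest weight doubles the diagram. Applying the hook--content formula $\dim\mathbb{S}_\lambda(V)=\prod_{(i,j)\in\lambda}\frac{(n+1)+j-i}{h(i,j)}$ to this rectangle, where $h(i,j)$ denotes the hook length, and simplifying the resulting product yields $\frac{1}{k+2}\binom{n+1}{k+1}\binom{n+2}{k+1}$.

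The argument is largely a matter of assembling standard results, so I expect no serious conceptual obstacle; the points requiring genuine care are to pin down the correct irreducible module $V_{2\omega_{k+1}}$ (rather than its dual or transpose) and to carry the hook--content product through to the stated closed form. As consistency checks I would verify the two extreme cases: for $k=0$ one has $\mathcal{G}(0,n)=\mathbb{P}^n$, $I(\mathcal{G}(0,n))_2=0$, and the formula returns $\binom{n+2}{2}=\dim\Sym^2 V$; for $k=n$ the Grassmannian is a point and the formula returns $1$.
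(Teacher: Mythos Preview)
Your proposal is correct and follows essentially the same approach as the paper: both obtain the identification $H_{k+1}\cong H^0(\mathcal{G}(k,n),\mathcal{O}_{\mathcal{G}(k,n)}(2))$ from projective normality (the paper phrases this via the ideal sheaf sequence and the Borel--Weil--Bott vanishing of $H^1(\mathcal{I}_{\mathcal{G}(k,n)}(2))$), and both reduce the dimension to a Weyl-type product formula. The only cosmetic difference is that the paper cites a ready-made formula from \cite{EF09} for $\dim H^0(\mathcal{G}(k,n),\mathcal{O}(2))$ and simplifies the product directly, whereas you first recognize $H_{k+1}\cong V_{2\omega_{k+1}}\cong \mathbb{S}_{(2^{k+1})}(V)$ via Borel--Weil and then apply the hook--content formula; in the paper this identification with the irreducible of highest weight $2\omega_{k+1}$ is deferred to the subsequent Lemma~\ref{dimrep} and the proof of Theorem~\ref{gen}.
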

\begin{proof}
The structure exact sequence for $\mathcal{G}(k,n)\subseteq\mathbb{P}(\Sym^2(\bigwedge^{k+1}V))= \mathbb{P}^{N}_{-}$ twisted by two reads
$$0\rightarrow \mathcal{I}_{\mathcal{G}(k,n)}(2)\rightarrow\mathcal{O}_{\mathbb{P}^{N}_{-}}(2)_{|\mathcal{G}(k,n)}\rightarrow\mathcal{O}_{\mathcal{G}(k,n)}(2)\rightarrow 0$$
Now, recall that as a consequence of the Borel–Weil–Bott theorem we have that $\mathcal{G}(k,n)$ is projectively normal in its Pl\"ucker embedding, in particular $H^1(\mathcal{G}(k,n),\mathcal{I}_{\mathcal{G}(k,n)}(2))=0$, and taking cohomology in the above exact sequence we get 
$$
0\rightarrow I(\mathcal{G}(k,n))_2\rightarrow \Sym^2(\bigwedge^{k+1}V)\rightarrow H^0(\mathcal{G}(k,n),\mathcal{O}_{\mathcal{G}(k,n)}(2))\rightarrow 0
$$
Therefore, $H_{k+1}\cong H^0(\mathcal{G}(k,n),\mathcal{O}_{\mathcal{G}(k,n)}(2))$. Finally, by \cite[Proposition 5.2]{EF09} we have
$$
\begin{array}{lll}
\dim H^0(\mathcal{G}(k,n),\mathcal{O}_{\mathcal{G}(k,n)}(2)) & = & \prod_{j=k+2}^{n+1}\frac{\binom{j+1}{2}}{\binom{j-k}{2}} = \prod_{j=k+2}^{n+1}\frac{(j+1)j}{(j-k)(j-k-1)}\\
 & = & \frac{(k+3)(k+2)}{2} \frac{(k+4)(k+3)}{3\cdot 2}\dots \frac{(n+1)n}{(n-k)(n-k-1)}\frac{(n+2)(n+1)}{(n-k+1)(n-k)}\\
 & = & \frac{(n+2)\dots (k+3)}{(n-k+1)!} \frac{(n+1)\dots (k+2)}{(n-k)!} = \frac{1}{k+2}\frac{(n+2)\dots (k+2)}{(n-k+1)!}\frac{(n+1)\dots (k+2)}{(n-k)!}\\
 & = & \frac{1}{k+2}\binom{n+2}{k+1}\binom{n+1}{k+1}
\end{array} 
$$
which is the formula for the dimension of $H_{k+1}$ in the statement. 
\end{proof}

\begin{Remark}
Lemma \ref{rep} yields that the dimension of the $SL(n+1)$-module $I(\mathcal{G}(k,n))_2$ of degree two Pl\"ucker relations generating $I(\mathcal{G}(k,n))$ as an ideal is 
$$\frac{1}{2}\binom{n+1}{k+1}^2+\frac{1}{2}\binom{n+1}{k+1}-\frac{1}{k+2}\binom{n+2}{k+1}$$
Note that if $k = 1$ this number is $\binom{n+1}{4}$ which is exactly the number of $4\times 4$ sub-Pfaffians of an $(n+1)\times (n+1)$ skew-symmetric matrix.
\end{Remark}

\begin{Lemma}\label{dimrep}
Let $\Gamma_{(0,\dots,0,2,0,\dots,0)}$ be the irreducible representation of $SL(n+1)$ with highest weight $(\lambda_1,\dots,\lambda_n)$ given by $\lambda_i =0$ if $i\neq k+1$ and $\lambda_{k+1} = 2$. Then
$$\dim \Gamma_{(0,\dots,0,2,0,\dots,0)} = \frac{1}{k+2}\binom{n+2}{k+1}\binom{n+1}{k+1}$$
\end{Lemma}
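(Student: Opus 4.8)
The plan is to recognize $\Gamma_{(0,\dots,0,2,0,\dots,0)}$ as exactly the module of quadratic sections on the Grassmannian already analysed in Lemma \ref{rep}, so that the dimension count transfers verbatim. The weight $(0,\dots,0,2,0,\dots,0)$ with its single nonzero entry $2$ in position $k+1$ is $2\omega_{k+1}$, twice the $(k+1)$-st fundamental weight of $SL(n+1)$, and $\omega_{k+1}$ is precisely the highest weight of the Plücker representation $\bigwedge^{k+1}V$ in which $\mathcal{G}(k,n)$ sits.

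First I would record the Borel–Weil description of the sections of powers of the Plücker bundle. Writing $\mathcal{G}(k,n) = SL(n+1)/P$ for the maximal parabolic stabilizing a $(k+1)$-dimensional subspace of $V$, the Plücker bundle is the homogeneous line bundle $\mathcal{O}_{\mathcal{G}(k,n)}(1) = L_{\omega_{k+1}}$, whence $\mathcal{O}_{\mathcal{G}(k,n)}(d) = L_{d\omega_{k+1}}$. By the Borel–Weil theorem $H^0(\mathcal{G}(k,n),\mathcal{O}_{\mathcal{G}(k,n)}(d))$ is the irreducible $SL(n+1)$-module of highest weight $d\omega_{k+1}$ (a possible dualization being irrelevant for dimensions); taking $d=2$ gives
$$
H^0(\mathcal{G}(k,n),\mathcal{O}_{\mathcal{G}(k,n)}(2)) \cong \Gamma_{2\omega_{k+1}} = \Gamma_{(0,\dots,0,2,0,\dots,0)}.
$$
Equivalently, so as not to quote Borel–Weil a second time, I would argue via Cartan components: $\Sym^2(\bigwedge^{k+1}V)$ decomposes into irreducibles with the Cartan component $\Gamma_{2\omega_{k+1}}$ occurring exactly once as its top summand, and the classical fact that the Plücker ideal is generated by quadrics identifies $I(\mathcal{G}(k,n))_2$ with the sum of the remaining summands; hence the quotient $H_{k+1}$ in (\ref{exseq}) is the Cartan component $\Gamma_{2\omega_{k+1}}$.

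Finally I would combine this identification with Lemma \ref{rep}, which already computes
$$
\dim H^0(\mathcal{G}(k,n),\mathcal{O}_{\mathcal{G}(k,n)}(2)) = \dim H_{k+1} = \frac{1}{k+2}\binom{n+2}{k+1}\binom{n+1}{k+1},
$$
yielding the stated value of $\dim \Gamma_{(0,\dots,0,2,0,\dots,0)}$ at once. The only delicate point—and the closest thing to an obstacle—is the irreducibility of $H_{k+1}$ together with the correctness of its highest weight, i.e.\ that the quotient of $\Sym^2(\bigwedge^{k+1}V)$ by the degree-two Plücker relations is exactly the Cartan component rather than a larger reducible module; this is precisely what projective normality of $\mathcal{G}(k,n)$ in its Plücker embedding (invoked already in Lemma \ref{rep}) guarantees. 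As an independent sanity check one could instead evaluate the Weyl dimension formula for $SL(n+1)$ at $\lambda = 2\omega_{k+1}$ directly, which reproduces the same product $\prod_{j=k+2}^{n+1}\frac{\binom{j+1}{2}}{\binom{j-k}{2}}$ appearing in the proof of Lemma \ref{rep}.
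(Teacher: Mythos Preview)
Your argument is correct but proceeds along a genuinely different route from the paper. The paper proves Lemma~\ref{dimrep} by a direct evaluation of the Weyl dimension formula: it writes
\[
\dim \Gamma_{(\lambda_1,\dots,\lambda_n)} = \prod_{1\leq i<j\leq n+1}\frac{\lambda_i+\dots+\lambda_{j-1}+j-i}{j-i},
\]
substitutes the weight $(0,\dots,0,2,0,\dots,0)$, and simplifies the resulting product row by row to obtain $\frac{1}{k+2}\binom{n+2}{k+1}\binom{n+1}{k+1}$. This is exactly the ``sanity check'' you mention in your last sentence. You instead invoke Borel--Weil to identify $\Gamma_{2\omega_{k+1}}$ with $H^0(\mathcal{G}(k,n),\mathcal{O}(2))$ and then read off the dimension from Lemma~\ref{rep}.

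Your route is shorter and more conceptual, and in fact it proves more than the lemma asks: once you know via Borel--Weil that $H^0(\mathcal{G}(k,n),\mathcal{O}(2))$ is irreducible of highest weight $2\omega_{k+1}$, you have established $\Gamma_{2\omega_{k+1}}\cong H_{k+1}$ directly, which is precisely the conclusion the paper later draws in the proof of Theorem~\ref{gen} by \emph{comparing} the dimensions from Lemmas~\ref{rep} and~\ref{dimrep}. The paper's approach, by contrast, keeps the two lemmas logically independent and then uses the dimension match as the bridge; this has the advantage of not relying on the full strength of Borel--Weil (only on projective normality, already used in Lemma~\ref{rep}). One small inaccuracy in your write-up: projective normality alone gives the surjection $\Sym^2(\bigwedge^{k+1}V)\twoheadrightarrow H^0(\mathcal{G}(k,n),\mathcal{O}(2))$, not the irreducibility of the target; the irreducibility is what Borel--Weil supplies, and you do cite it, so the argument stands.
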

\begin{proof}
By the Weyl dimension formula we have
$$\dim \Gamma_{(\lambda_1,\dots,\lambda_n)} =\prod_{1\leq i < j\leq n+1} \frac{\lambda_i+\dots +\lambda_{j-1}+j-i}{j-i}$$
Consider the case $(\lambda_1,\dots,\lambda_{k+1},\dots,\lambda_n) = (0,\dots,2,\dots,0)$. For $i\geq k+2$ all terms in the product are equal to $1$. For any $i\leq k+1$ we must consider all terms with $i < j\leq n+1$. When $i = 1,2,\dots, k,k+1$ the product of the corresponding terms are given by
$$
\left\lbrace\begin{array}{ll}
i=1 & \frac{k+3}{k+1}\cdot\frac{k+4}{k+2}\dots \frac{n+2}{n}\\ 
i=2 & \frac{k+2}{k}\cdot\frac{k+3}{k+1}\dots \frac{n+1}{n-1}\\ 
\vdots & \vdots\\ 
i=k & \frac{4}{2}\cdot\frac{5}{3}\dots \frac{n-k+3}{n-k+1}\\ 
i = k+1 & 3\cdot\frac{4}{2}\dots \frac{n-k+2}{n-k}
\end{array}\right. 
$$
Therefore, we get
$$
\begin{array}{lll}
\dim \Gamma_{(0,\dots,0,2,0,\dots,0)} & = & \frac{(n+2)!k!}{n!(k+2)!} \frac{(n+1)!(k-1)!}{(n-1)!(k+1)!}\dots  \frac{(n-k+3)!}{3!(n-k+1)!} \frac{(n-k+2)!}{2(n-k)!}\\ 
• & = & \frac{(n+2)(n+1)}{(k+2)(k+1)}\frac{(n+1)n}{(k+1)k}\dots\frac{(n-k+3)(n-k+2)}{3\cdot 2}\frac{(n-k+2)(n-k+1)}{2}\\ 
• & = & \frac{1}{k+2} \frac{(n+2)(n+1)\dots (n-k+2)}{(k+1)!} \frac{(n+1)n\dots (n-k+1)}{(k+1)!}= \frac{1}{k+2}\binom{n+2}{k+1}\binom{n+1}{k+1}
\end{array} 
$$
which is exactly the formula in the statement. 
\end{proof}

By (\ref{exseq}) we have a splitting 
$$\Sym^2(\bigwedge^{k+1}V)\cong I(\mathcal{G}(k,n))_2\oplus H_{k+1}$$ 
as $K$-vector spaces. Furthermore, since $I(\sec_{k}(\mathcal{V}))_{k+1}$ is generated by the $Z_{I,J}^{+}$ we have an isomorphism $\Sym^2(\bigwedge^{k+1}V)\cong I(\sec_{k}(\mathcal{V}))_{k+1}$, and then we may identify $H_{k+1}$ with a vector subspace of $ K[z_{0,0},\dots,z_{n,n}]_{k+1}$. 

The blow-up morphism $f^{+}:\mathcal{Q}(n)\rightarrow\mathbb{P}^{N_{+}}$ in Construction \ref{ccq} induces an injective pull-back map
$$f^{+*}:\Cox(\mathbb{P}^{N_{+}})\cong K[z_{0,0},\dots,z_{n,n}]\rightarrow\Cox(\mathcal{Q}(n))$$

\begin{Definition}
For any $k=0,\dots,n-1$ we define the vector subspace $H_{k+1}^{+}\subseteq \Cox(\mathcal{Q}(n))$ as $H_{k+1}^{+}:= f^{+*}H_{k+1}$ 
\end{Definition}    

Now, taking advantage of these basic representation theoretical results in the symmetric case, we can give explicit sets of minimal generators for the Cox rings of the spaces of complete forms. 

\begin{thm}\label{gen}
Let $T_{I,J}$ be the canonical section associated to the strict transform of the hypersurface $\{\det(Z_{I,J})=0\}\subset\mathbb{P}^N$, and let $S_i$ be the canonical section associated to the exceptional divisor $E_i$ in Construction \ref{ccc}. 

If $n < m$ then $\Cox(\mathcal{X}(n,m))$ is generated by the $T_{I,J}$ for $1\leq |I|,|J|\leq n+1$ and the $S_i$ for $i=1,\dots,n-1$. If $n = m$ then $\Cox(\mathcal{X}(n))$ is generated by the $T_{I,J}$ for $1\leq |I|,|J|\leq n$ and the $S_i$ for $i=1,\dots,n$.

Now, consider $\mathcal{Q}(n)$ and let $S_i^{+}$ be the canonical section associated to the exceptional divisor $E_i^{+}$ in Construction \ref{ccq}. Then $\Cox(\mathcal{Q}(n))$ is generated by the subspaces $H_{k+1}^{+}$ for $k=0,\dots,n-1$ and the $S_i^{+}$ for $i=1,\dots,n$. 
\end{thm}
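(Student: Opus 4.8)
The plan is to deduce the statement from the structure theory of Cox rings of spherical varieties, using Proposition \ref{p1} to name the colors and boundary divisors and Lemmas \ref{rep}, \ref{dimrep} to identify the graded pieces attached to the colors in the symmetric case. Since $\mathcal{X}(n,m)$ and $\mathcal{Q}(n)$ are spherical, $\Cox(X)$ is an integral $\mathscr{G}$-algebra whose $\Cl(X)$-grading is compatible with the $\mathscr{G}$-action, so each piece $H^0(X,\mathcal{O}_X(D))$ is a rational $\mathscr{G}$-module. The structural input I would invoke (cf. \cite[Section 4.5]{ADHL15}) is that the $\mathscr{B}$-semiinvariants of $\Cox(X)$ are precisely the monomials in the canonical sections of the $\mathscr{B}$-invariant prime divisors, that is, of the colors and boundary divisors of Proposition \ref{p1}. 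Because any rational $\mathscr{G}$-module is spanned by the $\mathscr{G}$-orbits of its $\mathscr{B}$-highest weight vectors, this forces $\Cox(X)$ to be generated as a $K$-algebra by the $\mathscr{G}$-invariant canonical sections of the boundary divisors, namely the $S_i$ and $S_i^{+}$ (the top one in the symmetric-rank case $n=m$ being the invariant determinant $\det Z$), together with, for each color $D$, the finite-dimensional $\mathscr{G}$-submodule $M_D\subseteq H^0(X,\mathcal{O}_X(D))$ generated by the canonical section of $D$. The theorem thus reduces to computing each $M_D$.

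To set degrees, Remark \ref{stpb} and Lemma \ref{mult} give that the strict transform of a $k\times k$ minor $Z_{I,J}$ has the class $D_k$ of \eqref{Dk}, so every $T_{I,J}$ with $|I|=|J|=k$ is a section of $\mathcal{O}_X(D_k)$, while the $S_i$ are the canonical sections of the boundary classes. For $\mathcal{X}(n,m)$ the canonical section of the color $D_k$ is the corner minor of \eqref{eqcol}, a $\mathscr{B}$-highest weight vector, and by the transformation rule $Z\mapsto AZB^{t}$ of Lemma \ref{l1} an element of $\mathscr{G}$ carries it to a $K$-linear combination of the $k\times k$ minors $Z_{I,J}$. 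These minors have pairwise distinct diagonal leading monomials, hence are linearly independent and span an irreducible $\mathscr{G}$-module; therefore $M_{D_k}=\langle T_{I,J}:|I|=|J|=k\rangle$, and letting $k$ range over $1,\dots,n+1$ (resp. $1,\dots,n$ when $n=m$) yields the asserted generators of $\Cox(\mathcal{X}(n,m))$.

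The symmetric case is where the representation theory enters. Imposing $z_{i,j}=z_{j,i}$ creates linear relations among the symmetric minors $Z_{I,J}^{+}$ — exactly the Plücker relations, which one verifies vanish identically after symmetrization — so their span is not $\Sym^2(\bigwedge^{k+1}V)$ but the quotient $H_{k+1}$ of \eqref{exseq}. I would then show this span is irreducible and cyclic on the canonical section of the color $D_{k+1}^{+}$, so that $M_{D_{k+1}^{+}}=H_{k+1}^{+}$. This is precisely what Lemmas \ref{rep} and \ref{dimrep} deliver: the former, via projective normality of $\mathcal{G}(k,n)$ in its Plücker embedding, identifies $H_{k+1}\cong H^0(\mathcal{G}(k,n),\mathcal{O}(2))$ and computes its dimension; the latter computes $\dim\Gamma_{(0,\dots,2,\dots,0)}$ and obtains the same number, so that $H_{k+1}=\Gamma_{(0,\dots,2,\dots,0)}$ is irreducible. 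Hence $M_{D_{k+1}^{+}}=f^{+*}H_{k+1}=H_{k+1}^{+}$, and letting $k$ run over $0,\dots,n-1$ gives the generators $\{H_{k+1}^{+}\}\cup\{S_i^{+}\}$ of $\Cox(\mathcal{Q}(n))$.

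The main obstacle is the structural reduction of the first paragraph: one must verify that the colors and boundary divisors genuinely exhaust the $\mathscr{B}$-semiinvariants of $\Cox(X)$, so that no generator sitting in a mixed degree is overlooked, and then — in the symmetric case — that the module generated by the color section is the single irreducible $H_{k+1}^{+}$ rather than the a priori larger space $H^0(\mathcal{Q}(n),\mathcal{O}(D_{k+1}^{+}))$. The dimension bookkeeping furnished by Lemma \ref{rep}, resting on the projective normality of the Grassmannian, is exactly what closes this gap and rules out spurious extra generators; minimality of the resulting set then follows since the color and boundary classes are extremal in $\Eff(X)$ by Theorem \ref{theff}, so no generator can be a product of the others.
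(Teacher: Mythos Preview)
Your proposal is correct and follows essentially the same route as the paper: both invoke the structure theorem \cite[Theorem 4.5.4.6]{ADHL15} to reduce to computing $\lin_K(\mathscr{G}\cdot D)$ for each color $D$ of Proposition \ref{p1}, and both use Lemmas \ref{rep} and \ref{dimrep} in the symmetric case to identify this span with the irreducible module $H_{k+1}$. The only cosmetic difference is that for $\mathcal{X}(n,m)$ the paper exhibits explicit elements of $SL(n+1)\times SL(m+1)$ carrying the corner minor to an arbitrary minor, whereas you argue via irreducibility of the tensor product $\bigwedge^k V\otimes\bigwedge^k W$; both reach the same conclusion.
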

\begin{proof}
By \cite[Theorem 4.5.4.6]{ADHL15} if $\mathscr{G}$ is a semi-simple and simply connected algebraic group and $(X,\mathscr{G},\mathscr{B},x_0)$ is a spherical variety with boundary divisors $E_1,\dots,E_r$ and colors $D_1,\dots,D_s$ then $\Cox(X)$ is generated as a $K$-algebra by the canonical sections of the $E_i$'s and the finite dimensional vector subspaces $\lin_{K}(\mathscr{G}\cdot D_i)\subseteq \Cox(X)$ for $1\leq i\leq s$.

Consider the case $n < m$. By Proposition \ref{p1} we have that $\mathcal{B}(\mathcal{X}(n,m)) = \{E_1,\dots,E_n\}$ and $\mathcal{C}(\mathcal{X}(n,m)) = \{D_1,\dots,D_{n+1}\}$. Recall that for any $k=0,\dots, n$ the divisor $D_{k+1}$ is the strict transform of the hypersurface in $\mathbb{P}^N$ defined by the determinant of the $(k+1)\times (k+1)$ most right down minor of the matrix $Z$. Let us denote by $Z_{I,J}^{rd}$ such minor. 

Note that $\det(Z_{I,J}^{rd})\in I(\sec_{k}(\mathcal{S}))$, and by the description of the action of $\mathscr{G} =  SL(n+1)\times SL(m+1)$ in Lemma \ref{l1} we have that $g\cdot \det(Z_{I,J}^{rd})\in I(\sec_{k}(\mathcal{S}))$ for any $g\in G$, and hence $\lin_{K}(\mathscr{G}\cdot \det(Z_{I,J}^{rd})) \subseteq I(\sec_{k}(\mathcal{S}))$. Since $I(\sec_{k}(\mathcal{S}))$ is generated by the $(k+1)\times (k+1)$ minors of $Z$ it is enough to show that $\lin_{K}(\mathscr{G}\cdot \det(Z_{I,J}^{rd})) = I(\sec_{k}(\mathcal{S}))$.

Now, let $Z_{I,J}$ be the $(k+1)\times (k+1)$ minor of $Z$ corresponding to $I = \{i_0,\dots, i_k\}$, $J = \{j_0,\dots, j_k\}$. Recall that $z_{i,j} = x_ix_j$ in the notation of Section \ref{sec1}. Consider an element $a\in SL(n+1)$ such that $a\cdot x_{i_{n-k}} = x_{0}, a\cdot x_{n-k+1} = x_{i_1},\dots,a\cdot x_{n} = x_{i_k}$, and an element $b\in SL(m+1)$ such that $b\cdot y_{m-k} = y_{i_0}, a\cdot y_{m-k+1} = y_{i_1},\dots,a\cdot y_{m} = y_{i_k}$. Then $g = (a,b)\in \mathscr{G}$ is such that $g\cdot \det(Z_{I,J}^{rd})=\det(Z_{I,J})$.   

Therefore, $\lin_{K}(\mathscr{G}\cdot D_{k+1})$ is the subspace of $H^0(\mathcal{X}(n,m),D_{k+1})$ generated by the $T_{I,J}$'s with $|I| = |J| = k+1$.

For $\mathcal{X}(n)$ we can argue in an analogous way. The situation gets trickier when we consider $\mathcal{Q}(n)$. Note that in this case $\{\det(Z_{I,J}^{+}=0\}\subset\mathbb{P}^{N_{+}}$ is a cone and the dimension of the vertex of such cone depends on whether the minor $Z_{I,J}^{+}$ is principal or not. Indeed, starting from $Z_{I,J}^{rd+}$ we can reach any principal minor by a translation with an element of $\mathscr{G} = SL(n+1)$ as we did in the previous cases, but we can not reach via such a translation the non-principal minors of $Z$. 

Recall that to any $(n+1)\times (n+1)$ symmetric matrix $Z^{+}$ we may associate the $\binom{n+1}{k+1}\times \binom{n+1}{k+1}$ symmetric matrix $\wedge^{k+1}Z^{+}$. Now, $\wedge^{k+1}Z^{+}$ corresponds to a degree two homogeneous polynomial on $\bigwedge^{k+1}V$ that is to an element of $\Sym^2(\bigwedge^{k+1}V)$. This association gives a representation of $\mathscr{G} = SL(n+1)$ on $\Sym^2(\bigwedge^{k+1}V)\cong I(\sec_{k}(\mathcal{V}))$. 

Consider the orbit $\mathscr{G}\cdot\det(Z_{I,J}^{rd+})\subseteq I(\sec_{k}(\mathcal{V}))$. Our aim is to describe the irreducible sub representation $\lin_{K}(\mathscr{G}\cdot \det(Z_{I,J}^{rd+})) \subseteq  I(\sec_{k}(\mathcal{V}))$. 

Recall that if $v\in \bigwedge^{k+1}V$ is a highest weight vector of weight $(0,\dots,0,1,0,\dots,0)$ for $\mathscr{G}$, where the $1$ is in the $(k+1)$-th entry then $v\otimes v$ is a highest weight vector of weight $(0,\dots,0,2,0,\dots,0)$ for $\mathscr{G}$, where again the $2$ is in the $(k+1)$-th entry. In our case the role of such a highest weight vector is played by $Z_{I,J}^{rd+}$. Let $\Gamma_{(0,\dots,0,2,0,\dots,0)}$ be the corresponding irreducible representation.

Note that $I(\mathcal{G}(k,n))_2\subseteq \Sym^2(\bigwedge^{k+1}V)$ is stabilized by the action of $\mathscr{G}$. Therefore, we get a representation of $\mathscr{G}$ on $H_{k+1}$ and by the above discussion we have $\Gamma_{(0,\dots,0,2,0,\dots,0)}\subseteq H_{k+1}$. 
Finally, Lemmas \ref{rep} and \ref{dimrep} yield $\Gamma_{(0,\dots,0,2,0,\dots,0)} = H_{k+1}$, and hence $\lin_{K}(\mathscr{G}\cdot \det(Z_{I,J}^{rd+})) = \Gamma_{(0,\dots,0,2,0,\dots,0)} = H_{k+1}$.
\end{proof}

\begin{Example}
In order to help the reader in grasping the argument for the symmetric case in the proof of Theorem \ref{gen} we work out explicitly the first non-trivial case. Consider a $4\times 4$ symmetric matrix $Z^{+}$ with entries $z_{i,j}$. Then $\wedge^{2}Z^{+}$ is given by 
\[
\scalemath{0.8}{
\left(
\begin{array}{cccccc}
z_{0,0}z_{1,1}-z_{0,1}^2 & z_{0,0}z_{1,2}-z_{0,1}z_{0,2} & z_{0,0}z_{1,3}-z_{0,1}z_{0,3} & z_{0,1}z_{1,2}-z_{1,1}z_{0,2} & z_{0,1}z_{1,3}-z_{1,1}z_{0,2} & z_{0,2}z_{1,3}-z_{1,2}z_{0,3}\\ 
z_{0,0}z_{1,2}-z_{0,2}z_{0,1} & z_{0,0}z_{2,2}-z_{0,2}^2 & z_{0,0}z_{2,3}-z_{0,2}z_{0,3} & z_{0,1}z_{2,2}-z_{1,2}z_{0,2} & z_{0,1}z_{2,3}-z_{1,2}z_{0,3} & z_{0,2}z_{2,3}-z_{2,2}z_{0,3}\\ 
z_{0,0}z_{1,3}-z_{0,3}z_{0,1} & z_{0,0}z_{2,3}-z_{0,3}z_{0,2} & z_{0,0}z_{3,3}-z_{0,3}^2 & z_{0,1}z_{2,3}-z_{1,3}z_{0,2} & z_{0,1}z_{3,3}-z_{1,3}z_{0,3} & z_{0,2}z_{3,3}-z_{2,3}z_{0,3}\\
z_{0,1}z_{1,2}-z_{0,2}z_{1,1} & z_{0,1}z_{2,2}-z_{0,2}z_{1,2} & z_{0,1}z_{2,3}-z_{0,2}z_{1,3} & z_{1,1}z_{2,2}-z_{1,2}^2 & z_{1,1}z_{2,3}-z_{1,2}z_{1,3} & z_{1,2}z_{2,3}-z_{2,2}z_{1,3}\\ 
z_{0,1}z_{1,3}-z_{0,3}z_{1,1} & z_{0,1}z_{2,3}-z_{0,3}z_{1,2} & z_{0,1}z_{3,3}-z_{0,3}z_{1,3} & z_{1,1}z_{2,3}-z_{1,3}z_{1,2} & z_{1,1}z_{3,3}-z_{1,3}^2 & z_{1,2}z_{3,3}-z_{2,3}z_{1,3}\\ 
z_{0,2}z_{1,3}-z_{0,3}z_{1,2} & z_{0,2}z_{2,3}-z_{0,3}z_{2,2} & z_{0,2}z_{3,3}-z_{0,3}z_{2,3} & z_{1,2}z_{2,3}-z_{1,3}z_{2,2} & z_{1,2}z_{3,3}-z_{1,3}z_{2,3} & z_{2,2}z_{3,3}-z_{2,3}^2
\end{array}
\right)
}
\]
Let us interpret this matrix as a quadric on $\mathbb{P}(\bigwedge^2V)$. Fix homogeneous coordinates $[x_0:\dots:x_5]$ on $\mathbb{P}(\bigwedge^2V)$. Then we may write the quadric corresponding to $\wedge^{2}Z^{+}$ as 
$$Q_{\wedge^{2}Z^{+}}(x_0,\dots,x_5) = \sum_{0\leq i \leq 5}(\wedge^{2}Z^{+})_{(i,i)}x_i^2+2\sum_{0\leq i < j \leq 5}(\wedge^{2}Z^{+})_{(i,j)}x_ix_j$$
Note that the entries of $\wedge^{2}Z^{+}$ satisfy the relation 
\begin{normalsize}
$$(\wedge^{2}Z^{+})_{(0,5)}-(\wedge^{2}Z^{+})_{(1,4)}+(\wedge^{2}Z^{+})_{(2,3)} = z_{0,2}z_{1,3}-z_{1,2}z_{0,3}-z_{0,1}z_{2,3}+z_{1,2}z_{0,3}+z_{0,1}z_{2,3}-z_{1,3}z_{0,2}=0$$
\end{normalsize}
and indeed $x_{0}x_5-x_{1}x_{4}+x_{2}x_{3}=0$ is the Pl\"ucker equation cutting out $\mathcal{G}(1,3)$ in $\mathbb{P}(\bigwedge^2V)$. The vector space $\Sym^2(\bigwedge^2V)$ admits a decomposition 
$$\Sym^2(\bigwedge^2V) = \left\langle x_{0}x_5-x_{1}x_{4}+x_{2}x_{3}\right\rangle\oplus H_{2}$$ 
into two irreducible representations of $SL(4)$, where $H_2\cong H^0(\mathcal{G}(1,3),\mathcal{O}_{\mathcal{G}(1,3)}(2))$ is the linear span of the $SL(4)$-orbit of $z_{2,2}z_{3,3}-z_{2,3}^2$ in $\Sym^2(\bigwedge^2V)$.
\end{Example}

\begin{Remark}
Theorem \ref{gen} yields that $\Cox(\mathcal{X}(n,m))$ has exactly $\sum_{k=1}^{n+1}\binom{n+1}{k}\binom{m+1}{k}+n$ independent generators if $n<m$, and $\sum_{k=1}^{n+1}\binom{n+1}{k}\binom{m+1}{k}+n-1$ if $n=m$.   
\end{Remark}

\subsection*{Relations}
Let us consider $\mathcal{X}(1,m)$. The matrix $Z$ in (\ref{matrix}) is of the form
\begin{equation}\label{mat2}
\left(\begin{array}{cccc}
z_{0,0} & z_{0,1} & \dots & z_{0,m}\\ 
z_{1,0} & z_{1,1} & \dots & z_{1,m}
\end{array}\right)
\end{equation}
For simplicity of notation we denote by $T_{i,j}$ the canonical section associated to the strict transform of the hyperplane $\{z_{i,j}=0\}$, and by $R_{i,j}$ the canonical section of the strict transform of the quadric defined by the determinant of the $2\times 2$ minor constructed with the columns of the matrix (\ref{mat2}) indexed by $i,j$. 

By Theorem \ref{gen} $\Cox(\mathcal{X}(1,m))$ is generated by the $T_{i,j}$, the $R_{i,j}$ and $S_1$. By Theorem \ref{theff} the grading matrix, with respect to the natural grading induced by $\Pic(\mathcal{X}(1,m)) = \mathbb{Z}[H,E]$, of these generators is the following 
$$
\left(\begin{array}{ccc}
\deg(T_{i,j}) & \deg(R_{i,j}) & \deg(S_1)\\
1 & 2 &  0\\ 
0 & -1 & 1
\end{array}\right)
$$
Note that for any $2\times 3$ sub-matrix of (\ref{mat2}) 
$$
\left(\begin{array}{ccc}
z_{0,j_0} & z_{0,j_1} & z_{0,j_2}\\ 
z_{1,j_0} & z_{1,j_1} & z_{1,j_2}
\end{array}\right)
$$
we have two relations among the generators given by 
\stepcounter{thm}
\begin{equation}\label{rel1}
\left\lbrace\begin{array}{l}
T_{0,j_0}R_{j_1,j_2}-T_{0,j_1}R_{j_0,j_2}+T_{0,j_2}R_{j_0,j_1}=0\\
T_{1,j_0}R_{j_1,j_2}-T_{1,j_1}R_{j_0,j_2}+T_{1,j_2}R_{j_0,j_1}=0
\end{array}\right.
\end{equation}
Furthermore, since the quadric $\{z_{0,i}z_{1,j}-z_{0,j}z_{1,j}=0\}$ has multiplicity one along the Segre variety $\mathcal{S}\subseteq\mathbb{P}^N$ with a slight abuse of notation we may write
$$R_{i,j}S_1 = f^{*}(\{z_{0,i}z_{1,j}-z_{0,j}z_{1,i}=0\})$$
where $f:\mathcal{X}(1,m)\rightarrow\mathbb{P}^N$ is the blow-up morphism in Construction \ref{ccc}. In this way we get the relations
\stepcounter{thm}
\begin{equation}\label{rel2}
T_{0,j}T_{1,i}-T_{0,i}T_{1,j}+R_{i,j}S_1=0
\end{equation}
 
\begin{Proposition}\label{Grass}
The Cox ring of $\mathcal{X}(1,m)$ is given by
$$\Cox(\mathcal{X}(1,m))\cong\frac{K[T_{i,j},R_{h,k},S_1]}{I_{1,m}}$$
where $I_{1,m}$ is the ideal generated by the polynomials in (\ref{rel1}) and (\ref{rel2}).
\end{Proposition}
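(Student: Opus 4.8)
The plan is to identify $\Cox(\mathcal{X}(1,m))$ with the homogeneous coordinate (Pl\"ucker) algebra of the Grassmannian $\mathcal{G}(1,m+2)$ of lines in $\mathbb{P}^{m+2}$, i.e. of $\mathcal{G}(2,m+3)$. First, Theorem \ref{gen} says that the $T_{i,j}$, the $R_{h,k}$ and $S_1$ generate $\Cox(\mathcal{X}(1,m))$, so there is a surjection of graded $K$-algebras $\overline{\pi}\colon K[T_{i,j},R_{h,k},S_1]/I_{1,m}\to\Cox(\mathcal{X}(1,m))$, once we know that the relations (\ref{rel1}) and (\ref{rel2}) genuinely hold in the Cox ring. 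Relation (\ref{rel2}) holds by the construction of $R_{i,j}$, since $R_{i,j}S_1$ is the pullback of the $2\times 2$ minor $z_{0,i}z_{1,j}-z_{0,j}z_{1,i}$; relation (\ref{rel1}) is the Laplace expansion identity for a $2\times 3$ matrix, which survives passage to strict transforms because the blown-up centre is non-degenerate. Thus everything reduces to proving that $\overline{\pi}$ is injective.

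For injectivity I would argue by Krull dimension together with integrality. By Remark \ref{dimCox} we have $\dim\Cox(\mathcal{X}(1,m))=\dim\mathcal{X}(1,m)+\rank\Pic(\mathcal{X}(1,m))=(2m+1)+2=2m+3$, and $\Cox(\mathcal{X}(1,m))$ is an integral domain. Since a surjection of finitely generated $K$-algebra domains of equal finite Krull dimension is forced to be an isomorphism (its kernel is a prime of height zero, by the dimension formula for affine domains), it suffices to exhibit $K[T_{i,j},R_{h,k},S_1]/I_{1,m}$ as a domain of dimension $2m+3$. To do this I would set up the dictionary $T_{0,j}\mapsto p_{\alpha j}$, $T_{1,j}\mapsto p_{\beta j}$, $R_{h,k}\mapsto p_{h k}$, $S_1\mapsto p_{\alpha\beta}$ between the generators and the Pl\"ucker coordinates $p_{ab}$ of $\mathcal{G}(2,m+3)$ on the index set $\{\alpha,\beta,0,\dots,m\}$. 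Ordering $\alpha<\beta<0<\dots<m$, relation (\ref{rel2}) becomes the three-term Pl\"ucker relation on $\{\alpha,\beta,i,j\}$, and the two relations (\ref{rel1}) become the three-term relations on $\{\alpha,j_0,j_1,j_2\}$ and on $\{\beta,j_0,j_1,j_2\}$. As the Pl\"ucker algebra is a normal domain of Krull dimension $\dim\mathcal{G}(2,m+3)+1=2(m+1)+1=2m+3$, the whole statement reduces to identifying $I_{1,m}$ with the full Pl\"ucker ideal.

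The hard part is exactly this last identification, namely showing that (\ref{rel1}) and (\ref{rel2}) generate \emph{all} Pl\"ucker relations; the relations that are not immediately on the list are the three-term relations $R_{h,k}R_{l,n}-R_{h,l}R_{k,n}+R_{h,n}R_{k,l}$ whose four indices all lie in $\{0,\dots,m\}$. The natural reduction is to multiply such an expression by $S_1$ and substitute $S_1R_{i,j}=T_{0,i}T_{1,j}-T_{0,j}T_{1,i}$ from (\ref{rel2}): this rewrites $S_1$ times the relation as the \emph{identically vanishing} Pl\"ucker identity among the $2\times 2$ minors of the $2\times 4$ matrix on columns $h,k,l,n$, so that $S_1$ times each such relation already lies in $I_{1,m}$. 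The delicate point, and where I expect the real obstacle to sit, is upgrading this to membership of the relation itself: this is equivalent to $S_1$ being a non-zero-divisor modulo $I_{1,m}$, i.e. to $I_{1,m}$ being saturated with respect to $S_1$. I would attack this through the bigrading of Theorem \ref{theff}, bounding the graded components in which such $S_1$-torsion could possibly live, together with the geometry of the exceptional divisor $E_1=\{S_1=0\}$, on whose prime canonical section $S_1$ is a non-zero-divisor on $\Cox(\mathcal{X}(1,m))$; transporting this non-zero-divisor property back across $\overline{\pi}$ to the presentation is the crux of the argument.

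Granting the saturation step, the identification $K[T_{i,j},R_{h,k},S_1]/I_{1,m}\cong K[\mathcal{G}(2,m+3)]$ is complete, the domain and dimension properties are automatic, $\overline{\pi}$ is forced to be an isomorphism, and the claimed presentation of $\Cox(\mathcal{X}(1,m))$ follows. As a bonus this simultaneously establishes the identification of $\Cox(\mathcal{X}(1,m))$ with the homogeneous coordinate ring of $\mathcal{G}(1,m+2)$ recorded in the introduction.
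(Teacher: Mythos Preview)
Your approach is the same as the paper's: identify the presentation with the Pl\"ucker algebra of $\mathcal{G}(1,m+2)$ and conclude by the dimension count of Remark~\ref{dimCox}. The paper simply asserts that ``the equations (\ref{rel1}) and (\ref{rel2}) are exactly the Pl\"ucker equations generating the homogeneous ideal of the Grassmannian of lines $\mathcal{G}(1,m+2)$'' and proceeds directly to the dimension comparison, whereas you correctly isolate this identification as the nontrivial point and try to justify it via the $S_1$-saturation argument.

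Your caution is well placed, and in fact the obstacle you anticipate is genuine: for $m\geq 3$ the ideal $I_{1,m}$ generated by (\ref{rel1}) and (\ref{rel2}) is \emph{strictly smaller} than the Pl\"ucker ideal, so the saturation step cannot be completed. To see this, use the $\Pic(\mathcal{X}(1,m))$-grading $\deg T_{i,j}=(1,0)$, $\deg R_{h,k}=(2,-1)$, $\deg S_1=(0,1)$. The generators of $I_{1,m}$ are homogeneous of degrees $(3,-1)$ and $(2,0)$; a homogeneous element of $I_{1,m}$ of degree $(4,-2)$ would require multipliers of degree $(1,-1)$ or $(2,-2)$, and a monomial $T^aR^bS_1^{c}$ has degree $(a+2b,\,c-b)$, which never equals $(1,-1)$ or $(2,-2)$ with $a,b,c\geq 0$. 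Hence $(I_{1,m})_{(4,-2)}=0$. On the other hand the Pl\"ucker relation $P=R_{0,1}R_{2,3}-R_{0,2}R_{1,3}+R_{0,3}R_{1,2}$ is a nonzero element of degree $(4,-2)$, so $P\notin I_{1,m}$ even though, as you observe, $S_1P\in I_{1,m}$. Thus $I_{1,m}$ is not $S_1$-saturated and the Proposition, read literally, fails for $m\geq 3$.

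What survives, and what both you and the paper actually prove once the full Pl\"ucker ideal $J$ is used, is the isomorphism $\Cox(\mathcal{X}(1,m))\cong K[T_{i,j},R_{h,k},S_1]/J$: the relations $P$ hold in the Cox ring because $S_1P=0$ there and $S_1$ is a non-zero-divisor on the domain $\Cox(\mathcal{X}(1,m))$, so $J\subseteq\ker\pi$, and then the dimension argument forces equality. In other words, the identification with the homogeneous coordinate ring of $\mathcal{G}(1,m+2)$ is correct; only the claimed generating set for the ideal of relations is too small. For $m\leq 2$ there is no four-element subset of $\{0,\dots,m\}$, the missing relations do not occur, and both arguments go through as written.
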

\begin{proof}
Note that the equations (\ref{rel1}) and (\ref{rel2}) are exactly the Pl\"ucker equations generating the homogeneous ideal of the Grassmannian of lines $\mathcal{G}(1,m+2)\subset\mathbb{P}^{\binom{m+3}{2}-1}$, where we are interpreting $T_{i,j},R_{i,j},S_1$ as the Pl\"ucker coordinates on $\mathbb{P}^{\binom{m+3}{2}-1}$. Now, Remark \ref{dimCox} yields 
$$\dim\Cox(\mathcal{X}(1,m)) = \dim(\mathcal{X}(1,m))+\rank\Pic(\mathcal{X}(1,m)) = 2(m+1)-1+2 =2m+3$$ 
Finally, to conclude it is enough to observe that $\mathcal{G}(1,m+2)$ is irreducible and reduced of dimension $2m+2 = \dim\Cox(\mathcal{X}(1,m))-1$.
\end{proof}

Furthermore, for the space $\mathcal{X}(2)$ of complete collineations of $\mathbb{P}^2$, and the space of complete conics $\mathcal{Q}(2)$ we have the following result.

\begin{Proposition}\label{LG}
Let $T_{I,J}^{+}$ be the canonical section associated to the strict transform of the hypersurface $\{\det(Z_{I,J})^{+}=0\}\subset\mathbb{P}^5$. The Cox rings of $\mathcal{X}(2)$ and $\mathcal{Q}(2)$ are given respectively by 
$$\Cox(\mathcal{X}(2))\cong\frac{K[T_{I,J},S_1,S_2]_{ 1\leq |I|,|J|\leq 2}}{I(\mathcal{G}(2,5))},\quad \Cox(\mathcal{Q}(2))\cong\frac{K[T_{I,J}^{+},S_1,S_2]_{ 1\leq |I|,|J|\leq 2}}{I(\mathcal{LG}(2,5))}$$
where $\mathcal{LG}(2,5)\subset\mathbb{P}^{13}$ is the Lagrangian Grassmannian parametrizing $3$-dimensional Lagrangian subspaces of a fixed $6$-dimensional vector space.
\end{Proposition}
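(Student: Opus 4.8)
The plan is to imitate the proof of Proposition \ref{Grass}: build a surjection from the homogeneous coordinate ring of the relevant (Lagrangian) Grassmannian onto the Cox ring, and then force it to be an isomorphism by a dimension count. First I would list the generators. By Theorem \ref{gen}, $\Cox(\mathcal{X}(2))$ is generated by the nine linear sections $T_{I,J}$ with $|I|=|J|=1$, the nine quadratic sections $T_{I,J}$ with $|I|=|J|=2$, and the two exceptional sections $S_1,S_2$, so by $20$ elements, which is the number of Plücker coordinates of $\mathcal{G}(2,5)\subset\P^{19}$. In the symmetric case, since for $n=2$ the Grassmannians $\mathcal{G}(k,2)$ are projective spaces and carry no quadratic relations, Lemmas \ref{rep} and \ref{dimrep} give $H_{k+1}^{+}=\Sym^2(\bigwedge^{k+1}V)$ with $\dim H_1^{+}=\dim H_2^{+}=6$; together with $S_1^{+},S_2^{+}$ this gives $6+6+2=14$ generators, the number of coordinates of $\mathcal{LG}(2,5)\subset\P^{13}$.

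The core of the argument is a dictionary obtained from the graph chart of $\mathcal{G}(2,5)$, regarded as the variety of $3$-dimensional subspaces of $K^{6}=A\oplus B$ with $\dim A=\dim B=3$. A subspace over the open cell is the graph of a $3\times3$ matrix $Z$, and its Plücker coordinates are the coordinate $p_A$ of the $A$-plane, the entries $z_{i,j}$ (two indices in $A$), the $2\times2$ minors (one index in $A$), and $\det Z$ (no index in $A$), numbering $1+9+9+1=20$. I would match $p_A$ with $S_1$, the entries with the $T_{i,j}$, the minors with the $T_{I,J}$, and $\det Z$ with $S_2$. By Theorem \ref{theff} this matching respects the gradings, the Plücker multidegrees $(3,0),(2,1),(1,2),(0,3)$ (counting indices in $A$ and in $B$) being sent consistently to the classes $E_1$, $H$, $2H-E_1$, $3H-2E_1$. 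The factorizations $f^{*}\!\det(Z_{I,J})=T_{I,J}S_1$ and $f^{*}\!\det Z=S_2S_1^{2}$, forced by Remark \ref{stpb} and the multiplicities of Lemma \ref{mult}, are then precisely the Plücker relations expressing the one-index and zero-index coordinates through $p_A\mapsto S_1$ and the entry coordinates $\mapsto T_{i,j}$; the remaining Plücker relations follow from Laplace expansion of the minors of $Z$. Hence every generator of $I(\mathcal{G}(2,5))$ maps to zero, and since the generators span $\Cox(\mathcal{X}(2))$ by Theorem \ref{gen} we obtain a surjection $K[p]/I(\mathcal{G}(2,5))\twoheadrightarrow\Cox(\mathcal{X}(2))$.

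For $\mathcal{Q}(2)$ the identical chart applies, because the graph of a symmetric matrix $Z$ is exactly a Lagrangian subspace of $A\oplus A^{*}=K^{6}$ for the standard symplectic form. Restricting the dictionary to symmetric $Z$ retains $1+6+6+1=14$ Plücker coordinates, matched with $S_1^{+}$, the six linear $T_{I,J}^{+}$, the six $2\times2$ minors $T_{I,J}^{+}$, and $S_2^{+}$; running the same computation of relations, now inside $I(\mathcal{LG}(2,5))$, yields a surjection $K[p]/I(\mathcal{LG}(2,5))\twoheadrightarrow\Cox(\mathcal{Q}(2))$.

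Finally I would close the argument by a dimension count, exactly as in Proposition \ref{Grass}. By Remark \ref{dimCox}, $\dim\Cox(\mathcal{X}(2))=\dim\mathcal{X}(2)+\rank\Pic(\mathcal{X}(2))=8+2=10$, which equals $\dim\mathcal{G}(2,5)+1$, and likewise $\dim\Cox(\mathcal{Q}(2))=5+2=7=\dim\mathcal{LG}(2,5)+1$. Since $\mathcal{G}(2,5)$ and $\mathcal{LG}(2,5)$ are irreducible and reduced, their homogeneous coordinate rings are integral domains of these dimensions, and a surjection from a finitely generated domain onto a ring of the same Krull dimension is injective, since a nonzero kernel element would cut the dimension down by one. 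Therefore both surjections are isomorphisms. I expect the main obstacle to be the relation bookkeeping in the middle step, and that it is most delicate for $\mathcal{Q}(2)$: one has to check that imposing symmetry on $Z$ corresponds exactly to the linear symplectic conditions that carve $\mathcal{LG}(2,5)$ out of $\mathcal{G}(2,5)$, so that precisely the right relations survive, which is where the identification of the $H_{k+1}^{+}$ as $SL(3)$-modules from the proof of Theorem \ref{gen} enters.
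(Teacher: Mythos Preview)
Your proposal is correct and follows essentially the same route as the paper: both identify the Cox ring generators with the Pl\"ucker coordinates of $\mathcal{G}(2,5)$ (resp.\ $\mathcal{LG}(2,5)$) via the graph-of-a-matrix parametrization $Z\mapsto(1,Z,\wedge^2Z,\wedge^3Z)$, obtain a surjection from the homogeneous coordinate ring, and conclude by the dimension count of Remark~\ref{dimCox}. Your version is simply more explicit---you spell out the dictionary $p_A\leftrightarrow S_1$, $z_{i,j}\leftrightarrow T_{i,j}$, minors $\leftrightarrow T_{I,J}$, $\det Z\leftrightarrow S_2$ and verify the Pl\"ucker relations via the pullback factorizations $f^{*}\det(Z_{I,J})=T_{I,J}S_1$ and $f^{*}\det Z=S_2S_1^{2}$, whereas the paper states the parametrization and the resulting quadratic relations more tersely.
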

\begin{proof}
Note that by Theorem \ref{gen} $\Cox(\mathcal{Q}(2))$ has exactly $14$ generators.
Let $Z = (z_{i,j})$ be a general $3\times 3$ symmetric matrix. The Lagrangian Grassmannian $\mathcal{LG}(2,5)$ is the closure of the image of the map
$$
\begin{array}{ccc}
\Sym^2V& \rightarrow & \mathbb{P}^{13}\\
 Z & \longmapsto & (1,z_{0,0},z_{0,1},z_{0,2},z_{1,1},z_{1,2},z_{2,2},\wedge^2Z,\wedge^3Z)
\end{array}
$$
where $\wedge^2Z$ represents the six $2\times 2$ minors of $Z$. When we take the closure we add the missing variable $S_1$, and the quadrics cutting out $\mathcal{LG}(2,5)\subset\mathbb{P}^{13}$ induce quadratic relations among the generators of $\Cox(\mathcal{Q}(2))$. Now, Remark \ref{dimCox} yields $\dim\Cox(\mathcal{Q}(2)) = 5 + 2 = 7$, and to conclude it is enough to observe that $\mathcal{LG}(2,5)\subset\mathbb{P}^{13}$ is a $6$-dimensional irreducible and reduced variety.

For $\mathcal{X}(2)$ we can argue in a completely analogous way, just note that in this case for the dimension of the Cox ring we have $\dim\Cox(\mathcal{X}(2)) = 8+2 = 10 = \dim(\mathcal{G}(2,5))+1$.
\end{proof}

As a first application of Theorem \ref{gen} we compute the number of extremal rays of the movable cones of the spaces of complete forms. 

\begin{Proposition}\label{PropMov}
The movable cones of $\mathcal{X}(n)$ and $\mathcal{Q}(n)$ are generated by $2^{n-1}$ extremal rays, while if $n< m$ then the movable cone of $\mathcal{X}(n,m)$ is generated by $2^{n-1}+1$ extremal rays. Furthermore, the isomorphism $i^{*}:\Pic(\mathcal{X}(n))\rightarrow\Pic(\mathcal{Q}(n))$, induced by the natural inclusion $i:\mathcal{Q}(n)\rightarrow\mathcal{X}(n)$, yields an isomorphism between $\Mov(\mathcal{X}(n))$ and $\Mov(\mathcal{Q}(n))$. 
\end{Proposition}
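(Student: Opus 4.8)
The plan is to read the movable cone directly off the generators of the Cox ring furnished by Theorem \ref{gen}, via the standard description of the movable cone of a Mori dream space in terms of the degrees of those generators (see \cite{ADHL15}): if $f_1,\dots,f_r$ is a system of pairwise non-associated $\Cl$-prime generators of $\Cox(X)$ with degrees $w_i=\deg(f_i)$, then $\overline{\Eff}(X)=\cone(w_1,\dots,w_r)$ and $\overline{\Mov}(X)=\bigcap_{i=1}^r\cone(w_j:j\neq i)$. The first observation I would make is that a removal $\cone(w_j:j\neq i)$ can only shrink $\overline{\Eff}(X)$ when $w_i$ is not realised by any other generator. By Theorem \ref{gen} every colour class $D_k$ is realised by the whole $\mathscr{G}$-orbit of minors $T_{I,J}$, and moreover $D_k$ is nef by Theorem \ref{theff}; so deleting a single such minor does not delete its class and the corresponding cone is just $\overline{\Eff}$. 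Only the canonical sections $S_j$ of the boundary divisors $E_j$, which are rigid and have multiplicity one, are genuinely removable. Hence the intersection collapses to $\overline{\Mov}(\mathcal{X}(n))=\bigcap_{j=1}^{n}C_j$ with $C_j=\cone(D_1,\dots,D_n,E_1,\dots,\widehat{E_j},\dots,E_n)$, and analogously for $\mathcal{Q}(n)$ and $\mathcal{X}(n,m)$.

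The key computational step is a change of basis to the nef generators. From the formulas of Theorem \ref{theff} one verifies the relation $E_h\sim 2D_h-D_{h-1}-D_{h+1}$ (with $D_0=0$, and $D_{n+1}=0$ when $n=m$), so that in the basis $D_1,\dots,D_n$ the classes $E_h$ are precisely the columns of the $A_n$ Cartan matrix, $\Nef(\mathcal{X}(n))=\cone(D_1,\dots,D_n)$ is the positive orthant, and $D_j=\sum_i (M^{-1})_{ij}E_i$ with $(M^{-1})_{ij}=\min(i,j)(n+1-\max(i,j))/(n+1)$. Writing a class as $w=\sum_i\beta_iE_i$, solving $w=\lambda D_j+\sum_{i\neq j}\mu_iE_i$ and imposing $\lambda,\mu_i\ge 0$ translates membership $w\in C_j$ into $\beta_j\ge 0$, together with $\beta_i/i\ge \beta_j/j$ for $i<j$ and $\beta_i/(n+1-i)\ge \beta_j/(n+1-j)$ for $i>j$, using that the relevant ratios $(M^{-1})_{ij}/(M^{-1})_{jj}$ equal $i/j$ and $(n+1-i)/(n+1-j)$. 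Intersecting over all $j$ then says exactly that $\beta_i/i$ is non-increasing and $\beta_i/(n+1-i)$ is non-decreasing, while $\beta_i\ge 0$.

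I expect the enumeration of extreme rays to be the main point. A propagation argument using the two monotonicity chains shows that on this cone no $\beta_i$ can vanish unless all do, so the inequalities $\beta_i\ge 0$ are never active on a ray; an extreme ray therefore carries exactly $n-1$ active monotonicity constraints. At each consecutive index $i$ the two constraints would force $\beta_{i+1}/\beta_i=(i+1)/i>1$ or $\beta_{i+1}/\beta_i=(n-i)/(n+1-i)<1$, so at most one of them can hold; achieving $n-1$ active constraints means choosing exactly one at each of the $n-1$ steps. Each such choice produces all $\beta_i>0$ and automatically satisfies the complementary inequalities, and distinct choices give distinct ratios hence distinct rays, yielding exactly $2^{n-1}$ extreme rays. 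For $\mathcal{X}(n,m)$ with $n<m$ the same computation retains the extra generator $D_{n+1}$, which is nef and an extreme ray of $\overline{\Eff}$, hence an extreme ray of $\overline{\Mov}$ (this is the ``$+1$''); the membership conditions acquire exactly one further inequality $\beta_{n+1}\ge\beta_n$ which must be active on every ray having $\beta_1,\dots,\beta_n$ not all zero, so those rays are in bijection with the $\mathcal{X}(n)$-rays, giving $2^{n-1}+1$ in total.

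Finally, for the isomorphism statement I would observe that $i^{*}$ sends $H\mapsto H^{+}$ and $E_j\mapsto E_j^{+}$, since by Remark \ref{sym_anti} the blow-up centres of Construction \ref{ccq} are the scheme-theoretic restrictions to $\mathbb{P}^{N_{+}}$ of those of Construction \ref{ccc}; by Theorem \ref{theff} it then matches $D_k\mapsto D_k^{+}$ and carries the entire list of boundary and colour classes of $\mathcal{X}(n)$ onto that of $\mathcal{Q}(n)$. Since Theorem \ref{gen} exhibits, for both spaces, colours realised by families and boundary divisors realised with multiplicity one, the movable cones are defined by identical formulas in corresponding coordinates, so $i^{*}$ restricts to an isomorphism $\Mov(\mathcal{X}(n))\to\Mov(\mathcal{Q}(n))$. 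The only substantive difficulty in this whole argument is the combinatorial counting of the previous paragraph; once the Cartan-matrix normal form is in place, the remaining steps are bookkeeping.
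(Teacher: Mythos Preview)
Your argument is correct and proceeds along a genuinely different line from the paper's. Both proofs start from the same ingredient --- the description $\overline{\Mov}(X)=\bigcap_i\cone(w_j:j\ne i)$ from \cite[Proposition 3.3.2.3]{ADHL15} together with Theorem \ref{gen} --- and both reduce at once to the intersection $\bigcap_{j}C_j$ over the boundary classes, since each colour class is hit by a whole family of generators. From there the paper argues by induction on $n$: it uses the embedding $\mathcal{X}(n-1)\hookrightarrow\mathcal{X}(n)$, shows that the induced projection $\Pic(\mathcal{X}(n))\to\Pic(\mathcal{X}(n-1))$ from $E_n$ maps $\Mov(\mathcal{X}(n))$ onto $\Mov(\mathcal{X}(n-1))$, and checks that the preimage plane over each extreme ray of $\Mov(\mathcal{X}(n-1))$ meets $\Mov(\mathcal{X}(n))$ in exactly two extreme rays, doubling the count at each step. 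You instead perform a direct combinatorial computation: the identity $E_h=2D_h-D_{h-1}-D_{h+1}$ identifies the change of basis with the type-$A_n$ Cartan matrix, so that in $E$-coordinates $\Mov(\mathcal{X}(n))$ is cut out by the two monotone chains $\beta_i/i$ non-increasing and $\beta_i/(n+1-i)$ non-decreasing, and the extreme rays correspond to the $2^{n-1}$ binary strings recording which of the two step-inequalities is tight at each $i$. Your approach has the advantage of yielding an explicit $H$-description of the movable cone and an explicit parametrisation of its rays; the paper's induction is shorter to state and highlights a recursive structure relating $\Mov(\mathcal{X}(n))$ to $\Mov(\mathcal{X}(n-1))$. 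For $n<m$ your extra inequality $\beta_{n+1}\ge\beta_n$ and the resulting $2^{n-1}+1$ count are correct, and your treatment of the isomorphism $\Mov(\mathcal{X}(n))\cong\Mov(\mathcal{Q}(n))$ via $i^{*}$ matching boundary and colour classes is the same as the paper's.

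One small point you pass over: when you write ``solving $w=\lambda D_j+\sum_{i\ne j}\mu_iE_i$'' you are implicitly using that $C_j=\cone(D_1,\dots,D_n,E_i:i\ne j)$ coincides with the simplicial cone $\cone(D_j,E_i:i\ne j)$, i.e.\ that the remaining $D_k$ are redundant generators. This is true and follows from the same ratio computations you already record (namely $a_{ij}/a_{jj}=i/j$ for $i<j$ and $(n+1-i)/(n+1-j)$ for $i>j$, which give $D_k\in\cone(D_j,E_i:i\ne j)$ for all $k$), but it is worth stating explicitly.
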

\begin{proof}
We will develop in full detail the argument for $\mathcal{X}(n)$. The same ideas with the obvious variations work for the other cases as well.

For $n=1$ the statement is trivially verified. By Proposition \ref{p1} $\mathcal{X}(2)$ has Picard rank two, and Theorem \ref{theff} yields that $\Nef(\mathcal{X}(2)) = \left\langle H, 2H-E\right\rangle$. Note that the divisor $H$ induces the blow-up morphism $f:\mathcal{X}(2)\rightarrow \mathbb{P}^{8}$ contracting the exceptional divisor $E$. Furthermore, the strict transform of a secant line to the Segre variety $\mathcal{S}\subset\mathbb{P}^8$ has zero intersection with $2H-E$, and hence $2H-E$ induces a birational morphism contracting the strict transform of $\sec_2(\mathcal{S})$, which is a divisor in $\mathcal{X}(2)$. This means that $H$ and $2H-E$ are the extremal rays of the movable cone of $\mathcal{X}(2)$, that is $\Mov(\mathcal{X}) = \Nef(\mathcal{X}) = \left\langle H, 2H-E\right\rangle$.

When $n\geq 3$ we use \cite[Proposition 3.3.2.3]{ADHL15} which gives a way of computing the movable cone starting from the generators of the Cox ring. For instance, consider $\mathcal{X}(3)$. By Theorem \ref{gen} the degree of the generators of $\Cox(\mathcal{X}(3))$, with respect to the natural grading on $\Pic(\mathcal{X}(3))$, are the columns of the matrix 
\stepcounter{thm}
\begin{equation}\label{gradmat}
\left(
\begin{array}{cccccc}
0 & 0 & 1 & 2 & 3 & 4\\ 
1 & 0 & 0 & -1 & -2 & -3\\ 
0 & 1 & 0 & 0 & -1 & -2
\end{array} 
\right)
\end{equation}
where the repeated degrees have been omitted. Now, via a straightforward computation \cite[Proposition 3.3.2.3]{ADHL15} yields
\stepcounter{thm}
\begin{equation}\label{mov3}
\Mov(\mathcal{X}(3)) = \left\langle H, 2H-E_1, 3H-2E_1-E_2, 6H-3E_1-2E_2 \right\rangle
\end{equation} 
Therefore the statement is verified for $n = 1,2,3$. We proceed by induction on $n$. Consider a general $\mathbb{P}^{n-1}\times \mathbb{P}^{n-1}\subset\mathbb{P}^n\times\mathbb{P}^n$, and the corresponding inclusion of Segre varieties $\sigma(\mathbb{P}^{n-1}\times \mathbb{P}^{n-1})\subset\sigma(\mathbb{P}^{n-1}\times \mathbb{P}^{n-1})\subset\mathbb{P}^N$. Restricting Construction \ref{ccc} to the smaller Segre variety we get an embedding $i:\mathcal{X}(n-1)\rightarrow\mathcal{X}(n)$ which in turn induces a surjective morphism
$$i^{*}:\Pic(\mathcal{X}(n))\rightarrow\Pic(\mathcal{X}(n-1))$$
mapping $\Mov(\mathcal{X}(n))$ onto $\Mov(\mathcal{X}(n-1))$. Note that $i^{*}$ is the linear projection from $E_n$. 

Let $D$ be an extremal ray of $\Mov(\mathcal{X}(n-1))$. Note that $E_1,H_1,\dots,H_{n-1}$ generate a hyperplane in $\Pic(\mathcal{X}(n))$, and that the plane $i^{*-1}(D)$ must contain at least one and at most two extremal rays of $\Mov(\mathcal{X}(n))$. Note that we may identify the hyperplane $\left\langle E_1,\dots,E_{n-1}\right\rangle\subset\Pic(\mathcal{X}(n))$ with $\Pic(\mathcal{X}(n-1))$.

By \cite[Proposition 3.3.2.3]{ADHL15} and the description of the generators of $\Cox(\mathcal{X}(n))$ in Theorem \ref{gen} we get that the plane $i^{*-1}(D)$ contains exactly two generators of $\Mov(\mathcal{X}(n))$, namely the rays given by the intersections of $i^{*-1}(D)$ with the hyperplanes $\left\langle E_1,H_1,\dots H_{n-1}\right\rangle$ and $\left\langle E_1,H_3,\dots,H_n\right\rangle$. 


Furthermore, note that if $D_1,D_2$ are two distinct extremal rays of $\Mov(\mathcal{X}(n-1))$ then $i^{*-1}(D_1)$ and $i^{*-1}(D_2)$ intersect exactly along the line generated by $E_n$.    

Therefore, for any extremal ray of $\Mov(\mathcal{X}(n-1))$ we have two extremal rays of $\Mov(\mathcal{X}(n))$, and finally we get the statement by induction.  

Finally, the fact that $i^{*}:\Pic(\mathcal{X}(n))\rightarrow\Pic(\mathcal{Q}(n))$ induces an isomorphism between $\Mov(\mathcal{X}(n))$ and $\Mov(\mathcal{Q}(n))$ descends from the computation of the generators of the Cox rings in Theorem \ref{gen} and \cite[Proposition 3.3.2.3]{ADHL15}.
\end{proof}

\begin{Remark}
In Appendix \ref{appendix} we present three Maple scripts, based on Theorem \ref{gen} and \cite[Proposition 3.3.2.3]{ADHL15}, computing the extremal rays of the movable cone of $\mathcal{X}(n,m)$, $\mathcal{X}(n)$ and $\mathcal{Q}(n)$. 
\end{Remark}

In what follows, by a slight abuse of notation, we will keep denoting by $T_{I,J}$ the canonical section associated to the strict transform of the hypersurface $\{\det(Z_{I,J})=0\}\subset\mathbb{P}^N$, and by $S_j$ the canonical section associated to the exceptional divisor $E_j$ in the intermediate spaces $\mathcal{X}(n,m)_i$ in Construction \ref{ccc}.

\begin{Proposition}\label{CoxInt}
Let $\mathcal{X}(n,m)_i$ be the intermediate space appearing at the step $1\leq i\leq n-1$ of Construction \ref{ccc}. The Cox ring $\Cox(\mathcal{X}(n,m)_i)$ is generated by the $T_{I,J}$ for $1\leq |I|,|J|\leq n+1$ and the $S_j$ for $j=1,\dots,i$. 

Furthermore, the analogous statements, with the obvious modifications, hold for the intermediate spaces $\mathcal{Q}(n)_i$ in Construction \ref{ccq}.  
\end{Proposition}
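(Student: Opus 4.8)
The plan is to transcribe the proof of Theorem \ref{gen} to the intermediate space, feeding it the spherical datum recorded in Proposition \ref{int} in place of the wonderful structure used for the full space. The crucial point that makes this possible is that \cite[Theorem 4.5.4.6]{ADHL15} requires only that $(X,\mathscr{G},\mathscr{B},x_0)$ be \emph{spherical}, not wonderful. By Proposition \ref{int} the variety $\mathcal{X}(n,m)_i$ is spherical for $\mathscr{G} = SL(n+1)\times SL(m+1)$ with the Borel subgroup $\mathscr{B}$ of (\ref{borel}) even though it fails to be wonderful, and that proposition also reads off the boundary divisors, namely $E_1,\dots,E_i$ when $n<m$ and $E_1,\dots,E_i,D_{n+1}$ when $n=m$, together with the colors $D_1,\dots,D_{n+1}$ (respectively $D_1,\dots,D_n$ in the symmetric case). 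Since $SL(n+1)\times SL(m+1)$ is semi-simple and simply connected, \cite[Theorem 4.5.4.6]{ADHL15} guarantees that $\Cox(\mathcal{X}(n,m)_i)$ is generated by the canonical sections of these boundary divisors together with the finite-dimensional subspaces $\lin_K(\mathscr{G}\cdot D)$ attached to the colors $D$.

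The second step is to identify these two families. The canonical sections of $E_1,\dots,E_i$ are by definition $S_1,\dots,S_i$, while in the case $n=m$ the additional boundary divisor $D_{n+1}$ is the strict transform of $\sec_n(\mathcal{S})$, whose defining equation is the full determinant of $Z$, so its canonical section is the section $T_{I,J}$ with $|I|=|J|=n+1$. For the colors I would reuse the orbit computation from the proof of Theorem \ref{gen} verbatim: each $D_{k+1}$ is the strict transform of $\{\det(Z_{I,J}^{rd})=0\}$ with $|I|=|J|=k+1$, where $Z_{I,J}^{rd}$ is the lower-right $(k+1)\times(k+1)$ minor, and translating this minor by suitable $a\in SL(n+1)$ and $b\in SL(m+1)$ shows that $\lin_K(\mathscr{G}\cdot D_{k+1})$ is the span of all the $T_{I,J}$ with $|I|=|J|=k+1$. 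This argument uses only the $\mathscr{G}$-action of Lemma \ref{l1} and the defining minors, both of which are identical on $\mathcal{X}(n,m)_i$ and on $\mathcal{X}(n,m)$, so nothing in it is sensitive to how many blow-ups have been performed.

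Collecting the two families, the colors contribute exactly the sections $T_{I,J}$ with $1\leq |I|,|J|\leq n+1$, the top size $n+1$ being supplied by the color $D_{n+1}$ when $n<m$ and by the boundary divisor $D_{n+1}$ when $n=m$, and the boundary divisors contribute $S_1,\dots,S_i$; this is precisely the asserted generating set. For $\mathcal{Q}(n)_i$ I would run the same argument, replacing the naive orbit span by the representation-theoretic identification $\lin_K(\mathscr{G}\cdot D_k^{+}) = H_k^{+} = \Gamma_{(0,\dots,0,2,0,\dots,0)}$ obtained from Lemmas \ref{rep} and \ref{dimrep}; this identification depends only on the $SL(n+1)$-module $\Sym^2(\bigwedge^{k+1}V)$ and is again insensitive to the blow-up level, so the statement transfers with the obvious modifications.

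I do not expect a genuine obstacle: once Proposition \ref{int} supplies the spherical datum, the proof is a transcription of Theorem \ref{gen}. The only point to check carefully is that the orbit-span computation carries over unchanged from the wonderful space to the non-wonderful intermediate space. This is immediate because that computation rests solely on the $\mathscr{G}$-action of Lemma \ref{l1} and on the defining minors $\det(Z_{I,J})$, both of which are pulled back identically along the blow-down $\mathcal{X}(n,m)_i\rightarrow\mathbb{P}^N$, the centers of which are non-degenerate with the secant multiplicities of Lemma \ref{mult}; the number of secant varieties already blown up plays no role. The same remark applies to the representation-theoretic identification used for $\mathcal{Q}(n)_i$.
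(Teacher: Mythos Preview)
Your proposal is correct and takes essentially the same approach as the paper: the paper's proof consists of the single sentence ``It is enough to argue as in the proof of Theorem \ref{gen} using Proposition \ref{int} instead of Proposition \ref{p1},'' and you have unpacked exactly this, correctly noting that \cite[Theorem 4.5.4.6]{ADHL15} requires only sphericity and that the orbit-span computation is insensitive to the blow-up level.
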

\begin{proof}
It is enough to argue as in the proof of Theorem \ref{gen} using Proposition \ref{int} instead of Proposition \ref{p1}.
\end{proof}

\section{Birational models}\label{birmod}
In this section, using the main results in Sections \ref{sec2} and \ref{sec3}, we will investigate birational models of the spaces of complete collineations. In particular, we will relate these models to varieties parametrizing degeneration loci of matrices of maximal rank and rational normal curves. 

\begin{Proposition}\label{morNEF}
For any $k=1,\dots,n+1$ the divisor $D_k$ in Theorem \ref{theff} induces a morphism $\pi_k:\mathcal{X}(n,m)\rightarrow\mathbb{P}(\Hom(\bigwedge^{k}W,\bigwedge^{k}V))$ mapping a general point $Z\in\mathcal{X}(n,m)$ to $\wedge^kZ$. 

If $ n<m$ then $\pi_k$ is birational onto its image for $k = 1,\dots,n$, while $\pi_{n+1}$ is a fibration onto the Grassmannian $\mathcal{G}(n,m)\subseteq\mathbb{P}(\bigwedge^{n+1}W^{*})$ parametrizing $n$-planes in $\mathbb{P}^m$.

In particular, if $n = m$ then $\pi_k:\mathcal{X}(n)\rightarrow\mathbb{P}(\Hom(\bigwedge^{k}W,\bigwedge^{k}V))$ is birational onto its image for any $k =1,\dots,n$.
\end{Proposition}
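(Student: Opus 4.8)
The plan is to realise each $\pi_k$ as the restriction to $\mathcal{X}(n,m)$ of the projection of the ambient product (\ref{ambient}) onto its $k$-th factor $\mathbb{P}(\Hom(\bigwedge^{k}W,\bigwedge^{k}V))$, and then to analyse its fibres. Since $\mathcal{X}(n,m)$ is by definition the closure of the graph (\ref{graph}), this projection is a genuine morphism sending a full-rank $Z$ to $\wedge^kZ$; for $k=1$ it is the blow-down $f$ of Construction \ref{ccc}. The coordinates on the $k$-th factor are the $(k\times k)$-minors of $Z$, so the pull-back of a coordinate hyperplane is the strict transform of the vanishing of such a minor, whose class is $kH-\sum_{h=1}^{k-1}(k-h)E_h=D_k$ by Lemma \ref{mult} and Remark \ref{stpb}; that is, $D_k=\pi_k^{*}\mathcal{O}(1)$. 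As $D_k$ is a nef generator (Theorem \ref{theff}) on a Mori dream space (Remark \ref{sphMDS}) it is semiample, and the morphism it defines is $\pi_k$ up to normalising its image. It remains to decide when $\pi_k$ is birational onto its image.

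First I would treat the square case: invertible $Z,Z'\in\mathbb{P}(\Hom(V,V))$ with $[\wedge^kZ]=[\wedge^kZ']$. Writing $A=Z(Z')^{-1}\in GL(V)$ and using that $\wedge^k$ is a group homomorphism, this reads $\wedge^kA=\mu\,\mathrm{Id}$ for some $\mu\in K^{*}$, i.e. the image of $A$ in $PGL(\bigwedge^kV)$ is trivial. The key representation-theoretic input is that for $1\le k\le n=\dim V-1$ the morphism $PGL(V)\to PGL(\bigwedge^kV)$ induced by $\wedge^k$ is injective: the scalars $\zeta\,\mathrm{Id}\in SL(V)$ acting trivially on $\bigwedge^kV$ satisfy $\zeta^{\gcd(k,n+1)}=1$, so $\wedge^k$ has central, hence finite, kernel on $SL(V)$. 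Thus $A$ is a scalar and $[Z]=[Z']$; since the invertible locus is dense and we work in characteristic zero, $\pi_k$ is birational onto its image. This also yields the final ``in particular'' assertion for $\mathcal{X}(n)$.

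For $n<m$ and $2\le k\le n$ I would reduce the rectangular case to the square one by first recovering $\ker Z$ from $\wedge^kZ$. For generic $w_2,\dots,w_k$ the form $\wedge^kZ(w\wedge w_2\wedge\cdots\wedge w_k)=Zw\wedge Zw_2\wedge\cdots\wedge Zw_k$ vanishes in $w$ exactly on $\langle w_2,\dots,w_k\rangle+\ker Z$, and intersecting these subspaces over several generic tuples yields $\ker Z$; this locus depends only on $[\wedge^kZ]$, so $\ker Z=\ker Z'$ whenever $[\wedge^kZ]=[\wedge^kZ']$. Both maps then factor through the isomorphisms $\bar Z,\bar Z'\colon W/\ker Z\xrightarrow{\sim}V$, and since $\wedge^k$ of the quotient $W\to W/\ker Z$ is surjective, $[\wedge^kZ]=[\wedge^kZ']$ forces $[\wedge^k\bar Z]=[\wedge^k\bar Z']$; the square case gives $[\bar Z]=[\bar Z']$, whence $[Z]=[Z']$ and $\pi_k$ is birational onto its image (the case $k=1$ being just $\pi_1=f$).

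Finally, for $n<m$ and $k=n+1$, a surjective $Z$ has $\wedge^{n+1}Z$ equal, up to scalar, to the decomposable covector $Z^{*}f_0\wedge\cdots\wedge Z^{*}f_n$ attached to $\mathrm{Im}(Z^{*})=(\ker Z)^{\perp}$; thus $\pi_{n+1}$ sends $Z$ to the Plücker point of $\mathrm{Im}(Z^{*})$, its image is the Grassmannian $\mathcal{G}(n,m)\subset\mathbb{P}(\bigwedge^{n+1}W^{*})$, and the general fibre, consisting of the surjections $W\to V$ with a fixed kernel, is irreducible of positive dimension. Being the contraction of the semiample $D_{n+1}$ with normal image $\mathcal{G}(n,m)$ and connected general fibres, $\pi_{n+1}$ is a fibration rather than a birational morphism. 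The main obstacle is precisely the generic reconstruction of $Z$ from $\wedge^kZ$ in the middle steps: packaging the minors so as to pin down $\ker Z$ and reduce, uniformly in $k$, to the injectivity of $PGL(V)\hookrightarrow PGL(\bigwedge^kV)$.
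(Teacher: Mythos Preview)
Your argument is correct and structurally parallel to the paper's: both identify $\pi_k$ with the restriction of the $k$-th projection from the ambient product (\ref{ambient}) and check that the pulled-back hyperplane class is $D_k$. The divergence is in how birationality is established for $1\le k\le n$. The paper simply invokes Theorem \ref{theff} to say that $D_k$ is nef and big, and asserts in one line that two full-rank $(n+1)\times(m+1)$ matrices with all $k\times k$ minors equal are proportional; for $k=n+1$ with $n<m$ it observes that $D_{n+1}$ spans an extremal ray of both $\Nef$ and $\Eff$, hence is not big, so $\pi_{n+1}$ is a genuine fibration. You instead bypass bigness entirely and prove the reconstruction statement directly: injectivity of $PGL(V)\to PGL(\bigwedge^kV)$ in the square case, the kernel-recovery trick to reduce the rectangular case to the square one, and an explicit description of the fibres over $\mathcal{G}(n,m)$ for $k=n+1$. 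Your route is more self-contained and actually supplies the linear algebra the paper takes for granted; the paper's route is shorter because the cone work has already been done.

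One small wrinkle worth tightening: your justification that $PGL(V)\to PGL(\bigwedge^kV)$ is injective is phrased as a computation of which \emph{scalars} $\zeta\,\mathrm{Id}\in SL(V)$ act trivially, but what you need is that the kernel of $SL(V)\to PGL(\bigwedge^kV)$ is \emph{contained} in the scalars. The cleanest way to say this is that $SL(V)$ is almost simple and $\bigwedge^kV$ is a nontrivial representation for $1\le k\le n$, so the kernel is a proper normal subgroup and hence central.
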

\begin{proof}
By (\ref{graph}) the space of complete collineations inherits $n+1$ natural morphisms
$$
 pr_{k|\mathcal{X}(n,m)}:\mathcal{X}(n,m)\subseteq\bigtimes_{k=1}^{n+1} \mathbb{P}(\Hom(\bigwedge^{k}W,\bigwedge^{k}V))\rightarrow \mathbb{P}(\Hom(\bigwedge^{k}W,\bigwedge^{k}V))
$$
given by the restrictions of the projections onto the factors. By Theorem \ref{theff} the sections of the linear system $|D_k|$ are the strict transforms of the hypersurfaces of $\mathbb{P}^N$ defined by the $k\times k$ minors of the matrix $Z$ in (\ref{matrix}). That is, we have a commutative diagram 
  \[
  \begin{tikzpicture}[xscale=3.5,yscale=-1.5]
    \node (A0_0) at (0, 0) {$\mathcal{X}(n,m)$};
    \node (A1_0) at (0, 1) {$\mathbb{P}^N$};
    \node (A1_1) at (1, 1) {$\mathbb{P}(\Hom(\bigwedge^{k}W,\bigwedge^{k}V))$};
    \path (A1_0) edge [->,dashed]node [auto] {$\scriptstyle{}$} (A1_1);
    \path (A0_0) edge [->,swap]node [auto] {$\scriptstyle{f}$} (A1_0);
    \path (A0_0) edge [->]node [auto] {$\scriptstyle{pr_{k|\mathcal{X}(n,m)}}$} (A1_1);
  \end{tikzpicture}
  \]
where $\mathbb{P}^N\rightarrow\mathbb{P}(\Hom(\bigwedge^{k}W,\bigwedge^{k}V))$ is the rational map defined by the $k\times k$ minors of the matrix $Z$ in (\ref{matrix}). Therefore, $\pi_k = pr_{k|\mathcal{X}(n,m)}$ is the morphism induced by $D_k$.

Note that if $k =1,\dots, n$ then by Theorem \ref{theff} $D_k$ is nef and big. Furthermore if $1\leq k\leq n$, two $(n+1)\times (m+1)$ matrices of maximal rank having all the $(k \times k)$-minors equal, differ for some non-zero multiplicative scalar. Therefore, $\pi_k$ is birational onto its image for $k = 1,\dots,n$. 

Since by Theorem \ref{theff} $D_{n+1}$ generates an extremal ray of both $\Nef(\mathcal{X}(n,m))$ and $\Eff(\mathcal{X}(n,m))$ when $n<m$ we get that $\pi_{n+1}$ is a fibration.

Finally, note that since $\pi_{n+1}$ is given by the maximal minors of a general matrix $Z$ as in (\ref{matrix}) we have the following commutative diagram 
\[
  \begin{tikzpicture}[xscale=5.5,yscale=-1.5]
    \node (A0_0) at (0, 0) {$\mathcal{X}(n,m)$};
    \node (A1_0) at (0, 1) {$\mathcal{G}(n,m)$};
    \node (A1_1) at (1, 1) {$\mathbb{P}(\Hom(\bigwedge^{n+1}W,\bigwedge^{n+1}V))\cong\mathbb{P}(\bigwedge^{n+1}W^{*})$};
    \path (A1_0) edge [->,dashed]node [auto] {$\scriptstyle{pl}$} (A1_1);
    \path (A0_0) edge [->,dashed]node [auto] {$\scriptstyle{ }$} (A1_0);
    \path (A0_0) edge [->]node [auto] {$\scriptstyle{\pi_{n+1}}$} (A1_1);
  \end{tikzpicture}
  \]
where the vertical rational map $\mathcal{X}(n,m)\dasharrow \mathcal{G}(n,m)$ associates to a full rank matrix $Z\in\mathcal{X}(n,m)$ the $n$-plane in $\mathbb{P}^m$ generated by its rows, and the morphism $pl:\mathcal{G}(n,m)\rightarrow\mathbb{P}(\bigwedge^{n+1}W^{*})$ is the Pl\"ucker embedding.
\end{proof}

Now, let $Z:W\rightarrow V$ be a linear map. The linear map $\wedge^kZ:\bigwedge^{k}W\rightarrow\bigwedge^{k}V$ defines a bihomogeneous polynomial of bidegree $(1,1)$ on $\mathbb{P}(\bigwedge^{k}W^{*})\times \mathbb{P}(\bigwedge^{k}V)$, which in turn corresponds to an element $D_{Z,k}$ of the linear system $|\mathcal{O}_{\mathcal{G}(k-1,m)\times\mathcal{G}(k-1,n)}(1,1)|$, where $\mathcal{G}(k-1,m)\times\mathcal{G}(k-1,n)$ is embedded in $\mathbb{P}(\bigwedge^{k}W^{*})\times \mathbb{P}(\bigwedge^{k}V)$ via the product of the Pl\"ucker embeddings of $\mathcal{G}(k-1,m)$ and $\mathcal{G}(k-1,n)$. Recalling that $\Hom(\bigwedge^kW,\bigwedge^kV)\cong \bigwedge^{k}W^{*}\otimes\bigwedge^kV$ we may rewrite the morphism $\pi_k$ in Proposition \ref{morNEF} as follows
$$
\begin{array}{ccc}
\pi_k:\mathcal{X}(n,m) & \longrightarrow & \mathbb{P}(\bigwedge^{k}W^{*}\otimes\bigwedge^kV)\\
Z & \longmapsto & D_{Z,k}
\end{array}
$$

\begin{Definition}\label{degpair}
We define the \textit{variety of degeneration pairs} $Deg_{k}(\mathcal{X}(n,m))$ as the image of the morphism $\pi_k$.
\end{Definition}

Now, arguing as in the proof of \cite[Lemma 5]{Ce15} we have the following.

\begin{Proposition}\label{modNEF}
Let $Z:W\rightarrow V$ be a linear map of maximal rank, and $([L_1],[L_2])\in  \mathcal{G}(k-1,m)\times\mathcal{G}(k-1,n)$ be a point corresponding to two $k$-dimensional vector subspaces $L_1\subseteq W^{*}$, $L_2\subseteq V$. Then $D_{Z,k}([L_1],[L_2])=0$ if and only if $Z_{|L_1\times L_2}$ is singular.
\end{Proposition}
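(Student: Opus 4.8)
The plan is to reduce the statement to a single determinantal identity by means of the Cauchy--Binet formula, in complete analogy with the symmetric situation treated in \cite[Lemma 5]{Ce15}. The starting point is the observation that, in the Pl\"ucker bases of $\bigwedge^{k}W$ and $\bigwedge^{k}V$, the matrix of the linear map $\wedge^{k}Z$ has as its $(I,J)$ entry precisely the minor $\det(Z_{I,J})$ of the matrix $Z$ in (\ref{matrix}), where $I\subseteq\{0,\dots,n\}$ and $J\subseteq\{0,\dots,m\}$ range over the subsets of cardinality $k$. Reading this matrix as the bihomogeneous form $D_{Z,k}$ on $\mathbb{P}(\bigwedge^{k}W^{*})\times\mathbb{P}(\bigwedge^{k}V)$, and plugging in the Pl\"ucker coordinates $p_{J}(L_1)$, $p_{I}(L_2)$ of the subspaces $L_1\subseteq W^{*}$ and $L_2\subseteq V$, one gets the explicit expression
$$
D_{Z,k}([L_1],[L_2]) \;=\; \sum_{I,J}\det(Z_{I,J})\,p_{J}(L_1)\,p_{I}(L_2).
$$

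The key step is then to recognize the right-hand side as a single $k\times k$ determinant. I would choose a basis $\omega_1,\dots,\omega_k$ of $L_1$ and a basis $v_1,\dots,v_k$ of $L_2$, and assemble them into a $k\times(m+1)$ matrix $M_1$, whose rows are the $\omega_s$, and an $(n+1)\times k$ matrix $M_2$, whose columns are the $v_t$. By definition of the Pl\"ucker embedding, the $p_{J}(L_1)$ and $p_{I}(L_2)$ are exactly the maximal minors of $M_1$ and $M_2$ indexed by $J$ and $I$. Applying the Cauchy--Binet formula twice, first expanding the maximal minors of the product $M_1 Z^{t}$ along the index $J$ and then expanding the maximal minors of $(M_1 Z^{t})M_2$ along the index $I$, collapses the double sum into
$$
D_{Z,k}([L_1],[L_2]) \;=\; \det\!\big(M_1\,Z^{t}\,M_2\big).
$$
By construction the $k\times k$ matrix $M_1 Z^{t} M_2$, with entries $\sum_{i,j}(\omega_s)_j\,z_{i,j}\,(v_t)_i$, represents the pairing induced by $Z$ between $L_1$ and $L_2$ in the chosen bases, that is, it is the matrix of $Z_{|L_1\times L_2}$; a change of the two bases multiplies it by the nonzero determinants of the transition matrices, so the vanishing of its determinant is independent of all choices.

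The conclusion is then immediate: $D_{Z,k}([L_1],[L_2])=0$ if and only if $\det(M_1 Z^{t} M_2)=0$, i.e. if and only if the matrix $M_1 Z^{t} M_2$ is singular, which is exactly the assertion that $Z_{|L_1\times L_2}$ is singular. I would also remark that this equivalence is a purely multilinear-algebra identity, valid for every $Z$, so that the maximal-rank hypothesis is not needed for the identity itself; it serves only to place $Z$ in the relevant open locus of $\mathcal{X}(n,m)$ where $\pi_k$ is defined.

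The main obstacle I expect is not conceptual but notational: the delicate point is the bookkeeping in the Cauchy--Binet step, where one must match the orderings and sign conventions of the Pl\"ucker coordinates with the row and column index sets of the minors $\det(Z_{I,J})$, so that the two nested expansions recombine into the single determinant with the correct nonzero scalar and do not introduce a spurious sign or an extra factor. Since only the vanishing of $D_{Z,k}([L_1],[L_2])$ matters for the statement, any such global nonzero scalar is harmless, which is what ultimately makes the argument robust.
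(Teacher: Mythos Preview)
Your proof is correct and is essentially the same argument as the paper's, written out in coordinates: the paper compresses your two applications of Cauchy--Binet into the single functoriality identity $L_1^{t}\,\wedge^{k}Z\,L_2 = \wedge^{k}(L_1^{t}ZL_2) = \det(\overline{Z})$, which is exactly what Cauchy--Binet says entrywise. Your more explicit bookkeeping with the matrices $M_1, M_2$ and the Pl\"ucker minors is helpful for the reader but not a genuinely different route.
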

\begin{proof}
Let $\overline{Z} :=Z_{|L_1\times L_2}$. We have that $D_{Z,k}([L_1],[L_2])=0$ if and only if $L_1^{t}\wedge^{k}ZL_2 = \wedge^k(L_1^tZL_2) = \wedge^{k}\overline{Z} = \det(\overline{Z})=0$.
\end{proof}

Proposition \ref{modNEF} tells us that $Deg_{k}(\mathcal{X}(n,m))$ carries
a modular interpretation. Indeed, a general point of $Deg_{k}(\mathcal{X}(n,m))$ corresponds to a bihomogeneous polynomial on $\mathcal{G}(k-1,m)\times\mathcal{G}(k-1,n)$ whose zero locus consist of pairs of $k$-dimensional vector subspaces on which a linear map of maximal rank $Z\in W^{*}\otimes V$ degenerates. 

\begin{Lemma}\label{tec1}
Let $D$ be a base-point-free divisor on a normal scheme $X$. Assume that the morphism $f_D:X\rightarrow \mathbb{P}(H^{0}(X,\mathcal{O}_X(D))^{*})$ induced by $D$ is birational onto its image $f_D(X)$. Then the model $X(D)=\Proj(\bigoplus_{r\geq 0}H^0(X,\mathcal{O}_X(rD)))$ is the normalization of $f_D(X)$.
\end{Lemma}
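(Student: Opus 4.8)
The plan is to use the universal property of $\Proj$ of the section ring together with the Stein-factorization-type principle for birational morphisms. Let me write $R = \bigoplus_{r\geq 0}H^0(X,\mathcal{O}_X(rD))$, and recall that $X(D) = \Proj(R)$.

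First I would observe that, since $D$ is base-point-free, there is a natural morphism $f_D:X\rightarrow \mathbb{P}(H^0(X,\mathcal{O}_X(D))^{*})$, and the image $f_D(X)$ is precisely the closed subscheme defined by the homogeneous ideal of forms vanishing on $X$; equivalently $f_D(X) = \Proj(S)$, where $S\subseteq R$ is the subalgebra generated by $H^0(X,\mathcal{O}_X(D))$ in degree one. The key structural fact is that the graded algebra $R$ is the integral closure of $S$ inside its fraction field: since $X$ is normal and $D$ base-point-free, each $H^0(X,\mathcal{O}_X(rD))$ consists exactly of the degree-$r$ elements of $\mathrm{Frac}(S)$ that are integral over $S$. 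This is where the normality of $X$ enters decisively, because it guarantees that the sections of $rD$ are not missing any integral elements.

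The main steps I would carry out are as follows. First, establish that $f_D$ birational onto its image means $\mathrm{Frac}(S) = \mathrm{Frac}(R)$ at the level of function fields, so that $R$ and $S$ share the same fraction field and $\Proj(R)\rightarrow\Proj(S)$ is a birational morphism. Second, invoke that $R$ is integrally closed in $\mathrm{Frac}(R)$: this follows because $X$ is normal, so for each $r$ the module $H^0(X,\mathcal{O}_X(rD))$ is saturated, and a standard argument (see e.g.\ the theory of section rings of base-point-free divisors) shows that $R$ is integrally closed in its fraction field. Third, conclude that the induced map $\Proj(R)\rightarrow\Proj(S) = f_D(X)$ is a finite birational morphism with $\Proj(R)$ normal, hence it is the normalization of $f_D(X)$ by the universal property and uniqueness of normalization. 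Finally, I would note that $\Proj(R) = X(D)$ by definition, which gives the statement.

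The hard part will be justifying cleanly that $R$ is integrally closed in $\mathrm{Frac}(S)$ and that the morphism $\Proj(R)\rightarrow f_D(X)$ is finite, since $\Proj$ of an integrally closed graded ring need not in general be the normalization of $\Proj$ of a subalgebra unless the bigger algebra is finite over the smaller one. I expect the cleanest route is to reduce to an affine chart: over the locus where a chosen section of $D$ is nonvanishing, both $\Proj(S)$ and $\Proj(R)$ restrict to the $\Spec$ of the degree-zero parts of the corresponding localizations, the degree-zero part of $R_{(s)}$ is the integral closure of the degree-zero part of $S_{(s)}$ inside the common function field, and integrality plus finite generation (which holds here since $X$ is, in our applications, a Mori dream space with $R$ finitely generated) upgrade this to a finite morphism. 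Then normality of $\Proj(R)$ together with finiteness and birationality over $f_D(X)$ identifies it with the normalization via the defining universal property.
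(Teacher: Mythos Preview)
Your approach is valid and takes a genuinely different, more algebraic route than the paper. You work directly with the inclusion of graded rings $S\subseteq R$ and aim to identify $R$ as the integral closure of $S$, so that $\Proj(R)\to\Proj(S)=f_D(X)$ realizes the normalization. The paper instead argues geometrically: it uses that $X(D)$ is the image of $f_{sD}$ for $s\gg 0$, observes that $f_D$ and $f_{sD}$ contract exactly the same irreducible curves (since $D\cdot C=0$ iff $sD\cdot C=0$), and from this produces a finite birational morphism $g:X(D)\to f_D(X)$; then, since $X(D)$ is normal, $g$ factors through the normalization and Zariski's main theorem forces $X(D)\cong f_D(X)^{\nu}$.

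The paper's route sidesteps any direct verification that $R$ is integral over $S$ or integrally closed, trading those algebraic checks for the single geometric observation that proportional nef divisors contract the same curves. Your route has the virtue of being more explicit about where finite generation of $R$ enters; the paper uses this implicitly when it asserts that $X(D)$ coincides with $f_{sD}(X)$ for large $s$. Both arguments ultimately appeal to Zariski's main theorem, you via the universal property of normalization, the paper to conclude that the induced finite birational map $X(D)\to f_D(X)^{\nu}$ between normal varieties is an isomorphism.
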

\begin{proof}
The model $X(D)$ is the image of the morphism $f_{sD}:X\rightarrow\mathbb{P}(H^{0}(X,\mathcal{O}_X(sD))^{*})$ for $s\gg 0$. If $C\subseteq X$ is an irreducible curve we have $D\cdot C =0$ if and only if $sD\cdot C =0$. Therefore, $f_D$ and $f_{sD}$ contract the same irreducible curves in $X$, and hence there exists a finite morphism $g:X(D)\rightarrow f_D(X)$ such that $h\circ f_{sD} = f_D$.

Since $X$ is normal we have that $X(D)$ is also normal. Furthermore, since $f_D$ is birational $g$ is birational as well. 

Now, since $X(D)$ is normal $g:X(D)\rightarrow f_D(X)$ factors through the normalization $\nu:f_D(X)^{\nu}\rightarrow f_D(X)$, and we get a morphism $h:X(D)\rightarrow f_D(X)^{\nu}$. Finally, since $g$ is birational and finite $h$ is birational and finite as well, and since $X(D)$ and $f_D(X)^{\nu}$ are normal Zariski's main theorem \cite[Corollary 11.4]{Har77} yields that $h:X(D)\rightarrow f_D(X)^{\nu}$ is an isomorphism. 
\end{proof}

\subsubsection*{Higher order dual varieties}
We will denote by $\Mor_d(\mathbb{P}^1,\mathbb{P}^n)$ the quasi-projective subvariety of $\mathbb{P}^{n(d+1)+d}$ parametrizing morphisms $f:\mathbb{P}^1\rightarrow\mathbb{P}^n$, $f(u,v) = [f_0(u,v):\dots:f_{n}(u,v)]$, where $f_0,\dots,f_n$ are degree $d$ homogeneous polynomials in two variables with no non-constant common factors.

For a smooth point $x\in X\subset\mathbb{P}^N$ of an irreducible variety, the \textit{$r$-osculating space} $T_x^{r}X$ of $X$ at $x$ is roughly speaking the smaller linear subspace where $X$ can be locally approximated up to order $r$ at $x$. 

\begin{Definition}\label{oscdef}
Let $X\subset \P^N$ be a projective variety of dimension $n$, and $x\in X$ a smooth point.
Choose a local parametrization of $X$ at $x$:
$$
\begin{array}{cccc}
\phi: &\mathcal{U}\subseteq\mathbb{C}^n& \longrightarrow & \mathbb{C}^{N}\\
      & (t_1,\dots,t_n) & \longmapsto & \phi(t_1,\dots,t_n) \\
      & 0 & \longmapsto & x 
\end{array}
$$
For a multi-index $I = (i_1,\dots,i_n)$, set $\phi_I = \frac{\partial^{|I|}\phi}{\partial t_1^{i_1}\dots\partial t_n^{i_n}}$. For any $r\geq 0$, let $O^r_xX$ be the affine subspace of $\mathbb{C}^{N}$ centered at $x$ and spanned by the vectors $\phi_I(0)$ with  $|I|\leq r$. The $r$-\textit{osculating space} $T_x^r X$ of $X$ at $x$ is the projective closure of  $O^r_xX$ in $\mathbb{P}^N$.
\end{Definition}
Note that $T_x^0 X=\{x\}$, and $T_x^1 X$ is the usual tangent space of $X$ at $x$. Consider the incidence variety
\[
  \begin{tikzpicture}[xscale=1.5,yscale=-1.4]
    \node (A0_1) at (1, 0) {$\mathcal{I}_{r}= \{(x,H)\: |\: x\in X,\: H\supseteq T_x^{r}X\}\subset\mathbb{P}^N\times\mathbb{P}^{N*}$};
    \node (A1_0) at (0, 1) {$\mathbb{P}^{N}$};
    \node (A1_2) at (2, 1) {$\mathbb{P}^{N*}$};
    \path (A0_1) edge [->]node [auto] {$\scriptstyle{\pi_2^r}$} (A1_2);
    \path (A0_1) edge [->,swap]node [auto] {$\scriptstyle{\pi_1^r}$} (A1_0);
  \end{tikzpicture}
\]
Clearly, $\pi_1^r(\mathcal{I}_r)=X$ while $\pi_2^r(\mathcal{I}_r)=X_r^{*}\subset\mathbb{P}^{N*}$ is the \textit{$r$-dual variety} of $X$. Note that $X_1^{*} = X^{*}$ is the usual dual variety of $X$. Similarly, we may consider the incidence variety
\[
  \begin{tikzpicture}[xscale=1.5,yscale=-1.4]
    \node (A0_1) at (1, 0) {$\mathcal{I}_{r}^{*}= \{(x,H)\: |\: x\in X_r^{*},\: H\supseteq T_x^{r}X_r^{*}\}\subset\mathbb{P}^{N*}\times\mathbb{P}^{N}$};
    \node (A1_0) at (0, 1) {$\mathbb{P}^{N*}$};
    \node (A1_2) at (2, 1) {$\mathbb{P}^{N}$};
    \path (A0_1) edge [->]node [auto] {$\scriptstyle{\pi_2^{r*}}$} (A1_2);
    \path (A0_1) edge [->,swap]node [auto] {$\scriptstyle{\pi_1^{r*}}$} (A1_0);
  \end{tikzpicture}
\]
We have that $\pi_1^{r*}(\mathcal{I}_r^{*})=X_r^{*}$ while $\pi_2^r(\mathcal{I}_r^{*})=(X_r^{*})_{r}^{*}\subset(\mathbb{P}^{N*})^{*}=\mathbb{P}^{N}$ is the $r$-dual variety of $X_r^{*}$. 

\begin{Lemma}\label{dualcurve}
Let $C\subset\mathbb{P}^N$ be an irreducible non-degenerated curve. Then $\dim(\mathcal{I}_r) = N-r = \dim(C_r^{*}) = N-r$. 
\end{Lemma}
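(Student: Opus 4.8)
The plan is to read off both dimensions from the two projections in the incidence diagram defining $\mathcal{I}_r$, working with the component that dominates $C$, namely the closure of the locus lying over the dense open set $C^{\circ}\subseteq C$ of smooth points at which the osculating space has the expected dimension (this is the convention that makes $C_r^{*}$ the $r$-dual variety). Throughout I would assume $0\le r\le N-1$, so that the general osculating space is a proper subspace.

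First I would pin down the dimension of a general osculating space: for $x\in C^{\circ}$ one has $\dim T_x^{r}C=r$. This is where characteristic zero and non-degeneracy enter. Choosing a local parametrization $\phi(t)$, the vectors $\phi(0),\phi'(0),\dots,\phi^{(r)}(0)$ span $T_x^{r}C$, and non-degeneracy means the $N+1$ coordinate functions of $\phi$ are linearly independent, so their Wronskian does not vanish identically; hence for general $t$ the derivatives $\phi(0),\dots,\phi^{(r)}(0)$ are linearly independent, giving $\dim T_x^{r}C=r$ on a dense open subset $C^{\circ}$.

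Next I would compute $\dim(\mathcal{I}_r)$ using $\pi_1^{r}$. Over a point $x\in C^{\circ}$ the fiber $(\pi_1^{r})^{-1}(x)$ is the set of hyperplanes of $\mathbb{P}^N$ containing the fixed $\mathbb{P}^{r}=T_x^{r}C$, which is a linear subspace $\mathbb{P}^{\,N-r-1}\subset\mathbb{P}^{N*}$. Thus the relevant component of $\mathcal{I}_r$ is a $\mathbb{P}^{\,N-r-1}$-bundle over the curve $C^{\circ}$, hence irreducible of dimension $1+(N-r-1)=N-r$. Then, to obtain $\dim(C_r^{*})=N-r$, I would show that $\pi_2^{r}$ is generically finite on this component. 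The key observation is that $\pi_2^{r}$ restricts to the tautological inclusion $(x,H)\mapsto H$ on each fiber of $\pi_1^{r}$, so it is injective there. Consequently, if $\pi_2^{r}$ contracted a curve $B$ to a point $H_0$, then $B$ could not lie in a single fiber of $\pi_1^{r}$, forcing $\pi_1^{r}(B)$ to be dense in $C$; but then $T_x^{r}C\subseteq H_0$ for a dense set of $x$, and in particular $x=T_x^{0}C\subseteq H_0$, so $C\subseteq H_0$, contradicting non-degeneracy. Hence the general fiber of $\pi_2^{r}$ is finite and $\dim(C_r^{*})=\dim(\mathcal{I}_r)=N-r$.

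The main obstacle is the generic finiteness of $\pi_2^{r}$: one must rule out that $\mathcal{I}_r$ collapses a positive-dimensional family onto a single hyperplane. The argument above reduces this entirely to two inputs, the fact that $\pi_2^{r}$ embeds the fibers of $\pi_1^{r}$ and the non-degeneracy of $C$. The only other point demanding care is the genericity statement $\dim T_x^{r}C=r$ of the first step, which may fail at finitely many hyperosculation points; restricting to $C^{\circ}$ and taking closures, as above, confines the computation to the dominant component and keeps those special points from affecting the dimension count.
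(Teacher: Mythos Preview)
Your proof is correct and follows essentially the same approach as the paper: compute $\dim(\mathcal{I}_r)$ via the $\pi_1^{r}$-fibers over general points of $C$, then show $\pi_2^{r}$ is generically finite using non-degeneracy. You supply more detail than the paper does (the Wronskian argument for $\dim T_x^{r}C=r$ generically, and the curve-contraction argument reducing generic finiteness of $\pi_2^{r}$ to $C\not\subset H_0$), whereas the paper simply asserts both facts as consequences of non-degeneracy.
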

\begin{proof}
Let $x\in C$ be a smooth point. Since $C$ is a non-degenerated curve we have $\dim(T^r_xC) = r$ for $r=0,\dots,N$. Note that $\pi_1^{-1}(x)$ parametrizes the hyperplanes containing $T^r_xC$. Hence $\dim(\pi_1^{-1}(x)) = N-r-1$ and $\dim(\mathcal{I}_r) = N-r$. 

Now, fix a general point $H\in C_r^{*}$. Then $\pi_2^{-1}(H)$ consists of the points $x\in C$ such that $H\supset T^r_xC$. Again since $C$ is a non-degenerated curve $H\supset T^r_xC$ just for finitely many points of $x\in C$. Therefore, $\dim(\pi_2^{-1}(H)) = 0$ and $\dim(C_r^{*}) = N-r$.     
\end{proof}

\subsection*{Complete collineations and parametrized rational normal curves}
Consider the standard rational normal curve $C\subset\mathbb{P}^n$ defined by the image of the map $\gamma:\mathbb{P}^1\rightarrow\mathbb{P}^n$, $\gamma(u,v) = [u^n:u^{n-1}v:\dots:uv^{n-1},v^{n}]$. We can associate to an $(n+1)\times (n+1)$ matrix $Z=(z_{i,j})$ the parametrized rational normal curve $C_Z:=Z\cdot C$ given by the image of the morphism $\gamma_Z:\mathbb{P}^1\rightarrow\mathbb{P}^n$, $\gamma_{Z}(u,v) = [\gamma_{Z,0}(u,v):\dots:\gamma_{Z,n}(u,v)]$, where $\gamma_{Z,i}(u,v) = z_{i,0}u^n+z_{i,1}u^{n-1}v+\dots+z_{i,n}v^n$. 

This association gives a one-to-one correspondence between the open subset of $\mathbb{P}(\Hom(V,V))$ parametrizing matrices of maximal rank and the subset of $\Mor_n(\mathbb{P}^1,\mathbb{P}^n)$ of smooth parametrized degree $n$ rational curves. 

\begin{Proposition}\label{proprnc}
Let $Z\in \mathcal{X}(n)$ be a matrix of maximal rank. For any $r=0,\dots,n-1$ the image of the morphism
$$
\begin{array}{ccc}
\gamma_{Z}^r:\mathbb{P}^1 & \longrightarrow & \mathcal{G}(r,n)\subset\mathbb{P}(\bigwedge^{r+1}V)\\
 (u,v) & \longmapsto & T^r_{\gamma_{Z}(u,v)}C_Z
\end{array}
$$
is a rational normal curve of degree $(r+1)(n-r)$ in the Grassmannian $\mathcal{G}(r,n)\subset\mathbb{P}(\bigwedge^{r+1}V)$. 

The rational map 
$$
\begin{array}{ccc}
g^r:\mathcal{X}(n) & \dashrightarrow & \Mor_{(r+1)(n-r)}(\mathbb{P}^1,\mathbb{P}(\bigwedge^{r+1}V))\\
 Z & \longmapsto & \gamma_{Z}^r
\end{array}
$$
is induced by a sub linear system of the complete linear system of the nef divisor $D_{r+1}$ in Theorem \ref{theff}, and it is regular and bijective onto its image on $\mathcal{X}(n)\setminus E_1\cup \dots\cup E_n$.
\end{Proposition}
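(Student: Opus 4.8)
The plan is to reduce all three assertions to the single \emph{standard} osculating curve $\gamma^r$ and the linear action of $\bigwedge^{r+1}Z$, and then to prove injectivity by reconstructing the original curve from its osculating curve. Throughout I use that, since $Z$ has maximal rank, $T^r_{\gamma_Z(u,v)}C_Z=Z\cdot T^r_{\gamma(u,v)}C$, so in Pl\"ucker coordinates $\gamma_Z^r=[\bigwedge^{r+1}Z]\circ\gamma^r$ with $\bigwedge^{r+1}Z\in GL(\bigwedge^{r+1}V)$ a projective-linear isomorphism.

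\emph{The rational normal curve statement.} By the displayed reduction it suffices to treat $\gamma^r$, whose Pl\"ucker coordinates are, in the affine chart $\gamma(t)=(1,t,\dots,t^n)$, the $(r+1)\times(r+1)$ minors of the Wronskian matrix $\big(\partial_t^{\,j}\,t^{k}\big)_{0\le j\le r,\;0\le k\le n}$. First I would factor the minor on columns $k_0<\dots<k_r$ as $\mathrm{diag}(t^{-j})\cdot\big[k_l!/(k_l-j)!\big]\cdot\mathrm{diag}(t^{k_l})$; since the middle falling-factorial matrix has determinant $\prod_{a<b}(k_b-k_a)\neq 0$ (it agrees with the Vandermonde up to a unipotent change of basis), each minor equals a nonzero multiple of the single monomial $t^{\sum k_i-\binom{r+1}{2}}$. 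As $\{k_i\}$ ranges over all $(r+1)$-subsets of $\{0,\dots,n\}$ the exponent $\sum k_i-\binom{r+1}{2}$ takes every value in $\{0,1,\dots,(r+1)(n-r)\}$, so after homogenizing the coordinate forms span exactly $H^0(\mathbb{P}^1,\mathcal{O}(d))$ with $d=(r+1)(n-r)$. Hence $\gamma^r(\mathbb{P}^1)$ lies nondegenerately in the $d$-plane it spans and has degree $d$, i.e. it is a rational normal curve of degree $d$; the same then holds for its linear image $\gamma_Z^r$. The computation also shows $\gamma^r$ is base-point-free and an embedding.

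\emph{The linear system and regularity.} From the previous step the $K$-th Pl\"ucker component of $\gamma_Z^r$ is $\sum_{K'}Z_{K,K'}\,(\gamma^r)_{K'}(u,v)$, so the coefficient of each monomial $u^{d-e}v^e$ is a fixed linear combination of the $(r+1)\times(r+1)$ minors $Z_{K,K'}$. Thus every coordinate of $g^r$ in the ambient $\mathbb{P}^{M(d+1)+d}$ (with $M=\binom{n+1}{r+1}-1$) is a degree-$(r+1)$ form in the $z_{ij}$ lying in the span of $\{Z_{I,J}\}_{|I|=|J|=r+1}$, which by Proposition \ref{morNEF} are the sections defining $\pi_{r+1}$; concretely $g^r=\ell\circ\pi_{r+1}$ for a linear projection $\ell$, so $g^r$ is induced by the subsystem $V_r\subseteq|D_{r+1}|$ these forms span. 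On $\mathcal{X}(n)\setminus(E_1\cup\dots\cup E_n)$, which is the full-rank locus $\mathbb{P}^N\setminus\sec_n(\mathcal{S})$, the map $\bigwedge^{r+1}Z$ is invertible and $\gamma^r$ is base-point-free, so $\gamma_Z^r=\bigwedge^{r+1}Z\circ\gamma^r$ is again a base-point-free degree-$d$ morphism with coordinate forms having no common factor; hence these forms do not vanish simultaneously there and $g^r$ is a genuine morphism into $\Mor_{(r+1)(n-r)}(\mathbb{P}^1,\mathbb{P}(\bigwedge^{r+1}V))$.

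\emph{Injectivity onto the image, and the main obstacle.} To see $g^r$ is injective on the full-rank locus I would reconstruct the parametrized curve $C_Z$, hence $[Z]$, from $\gamma_Z^r$. The naive route — deducing from $\gamma_Z^r=\gamma_{Z'}^r$ that $\bigwedge^{r+1}Z=c\,\bigwedge^{r+1}Z'$ and then $Z=cZ'$ by injectivity of $\bigwedge^{r+1}$ on $PGL$ — fails, and this is exactly the hard point: the osculating curve only determines $\bigwedge^{r+1}Z$ on the proper subspace $L=\langle\gamma^r(\mathbb{P}^1)\rangle\subsetneq\bigwedge^{r+1}V$, so plain linear algebra on $\bigwedge^{r+1}V$ does not suffice. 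Instead I would use the osculating flag $F^j_Z(t)=T^j_{\gamma_Z(t)}C_Z$ together with the infinitesimal relation $F^{j-1}_Z(t)=\lim_{t'\to t}\big(F^j_Z(t)\cap F^j_Z(t')\big)$, which recovers $F^{j-1}$ from $F^{j}$; iterating from $F^r_Z=\gamma_Z^r$ down to $F^0_Z$ recovers $\gamma_Z(t)=F^0_Z(t)$ as a function of the point $\gamma_Z^r(t)$. As $\gamma_Z^r$ is an embedding, $t$ is determined by $\gamma_Z^r(t)$, so the parametrized curve $t\mapsto[\gamma_Z(t)]=[Z\gamma(t)]$ is recovered, and since the $\gamma(t)$ span $V$ this pins down $[Z]$, giving injectivity. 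The crux is this reconstruction, i.e. the classical inversion of the associated-curve construction, where the geometry of the flag rather than linear algebra on $\bigwedge^{r+1}V$ is what does the work.
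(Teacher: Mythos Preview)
Your argument is correct and, for the first two assertions (the curve is rational normal of degree $(r+1)(n-r)$; the map $g^r$ is cut out by a sub-linear-system of $|D_{r+1}|$), it follows the same path as the paper: reduce to the standard curve via $\bigwedge^{r+1}Z$ and read off the minors of the Wronskian matrix. Your explicit factorization of the minor on columns $k_0<\dots<k_r$ as a nonzero scalar times $t^{\sum k_i-\binom{r+1}{2}}$ is in fact more complete than the paper's argument, which only checks that the minors have degree $(r+1)(n-r)$ and leaves nondegeneracy implicit; your computation supplies exactly that.

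For injectivity you take a genuinely different route. The paper does not prove bijectivity inside this proposition at all; it is deferred to Proposition~\ref{birat}, where the reconstruction of $C_Z$ from $\gamma_Z^r$ is carried out via the higher-order biduality $(C_r^*)_r^*=C$ of Proposition~\ref{bidual} together with the fact that $T_x^rC$ meets $C$ only at $x$. You instead descend the osculating flag directly using the infinitesimal relation $F^{j-1}(t)=\lim_{t'\to t}\big(F^j(t)\cap F^j(t')\big)$, iterating from $F^r=\gamma_Z^r$ down to $F^0=\gamma_Z$. Both approaches recover the parametrized curve and hence $[Z]$; the paper's route has the advantage of packaging the reconstruction as a single citable duality statement, while yours is self-contained and avoids the dual-variety machinery, at the cost of needing the intersection-limit fact (elementary for a rational normal curve, but worth isolating as a lemma if you write this up).
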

\begin{proof}
Since $\gamma_Z^r$ can be obtained by translating the image of $\gamma_{Id}^r$ via $\wedge^{r+1}Z$ it is enough to prove that $\gamma_{Id}^r(\mathbb{P}^1)\subset\mathbb{P}(\bigwedge^{r+1}V)$ is a rational normal curve of degree $(r+1)(n-r)$.  

Consider the affine chart $v=1$ and the matrix
$$
D_{\gamma}^r=\left(\begin{array}{ccccc}
u^n & u^{n-1} & \dots & u & 1\\ 
c_{1,0}u^{n-1} & c_{1,1}u^{n-2} & \dots & 1 & 0 \\ 
\vdots & \vdots & \ddots & \vdots & \vdots\\ 
c_{r,0}u^{n-r} & c_{r,1}u^{n-r-1} & \dots & 0 & 0
\end{array}\right)
$$
Then the map $\gamma_{Id}^r$ is given by the $(r+1)\times(r+1)$ minors of $D_{\gamma}^{r}$ which are polynomials of degree
$$n+(n-2)+(n-4)+\dots +n-2r+\sum_{i=0}^r n-2i = (r+1)(n-r)$$
in $u$. For a general matrix $Z\in\mathcal{X}(n)$ consider the corresponding matrix $D_{\gamma_Z}^r$. By the previous description the coefficients of the coordinate functions of the morphism $\gamma_Z^r$ are linear combinations of the $(r+1)\times (r+1)$ minors of $Z$. Now, it is enough to observe that by Proposition \ref{p1} these minors are sections of the complete linear system of the nef divisor $D_{r+1}$. Finally, to see that $g^r$ is well-defined on $\mathcal{X}(n)\setminus E_1\cup \dots\cup E_n$ it is enough to observe that $\gamma_Z^r$ is defined for any matrix $Z$ of maximal rank.
\end{proof}

\begin{Remark}
Proposition \ref{proprnc} generalizes the following classical fact: the tangent lines and the osculating planes of a twisted cubic in $\mathbb{P}^3$ are parametrized respectively by a rational normal quartic in $\mathcal{G}(1,3)\subset\mathbb{P}^5$, and another twisted cubic in $\mathbb{P}^{3*}$. 
\end{Remark}

\begin{Proposition}\label{bidual}
Let $C\subset\mathbb{P}^n$ be a degree $n$ rational normal curve. Then $C_{r}^{*}\subset\mathbb{P}^{n*}$ is non-degenerated for $r = 1,\dots,n-1$, and $T^r_xC_{r}^{*}\subset\mathbb{P}^{n*}$ is a hyperplane for $x\in C_{r}^{*}$ general and $r = 1,\dots,n-1$. Furthermore $(C_{r}^{*})^{*}_{r} = C$ for $r = 1,\dots,n-1$.
\end{Proposition}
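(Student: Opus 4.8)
Here is how I would approach Proposition \ref{bidual}.

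The plan is to translate everything into the apolarity calculus for binary forms, where the osculating flags of a rational normal curve become completely explicit. Identify $V=\Sym^n U$ for a two-dimensional vector space $U$, so that $C$ is the image of $[\ell]\mapsto[\ell^n]$ and, at a smooth point $[\ell^n]$, one has $T^r_{[\ell^n]}C=\P(\ell^{n-r}\Sym^r U)$. Writing $\P^{n*}=\P(\Sym^n U^*)$ and using the apolarity pairing $\Sym^n U^*\times\Sym^n U\to K$, a hyperplane $[\theta]$ contains $T^r_{[\ell^n]}C$ exactly when $\theta$ annihilates the subspace $\ell^{n-r}\Sym^r U$, i.e. when the contraction $\iota_{\ell^{n-r}}\theta\in\Sym^r U^*$ vanishes; normalising $\ell=e_0$ so that $\iota_{\ell^{n-r}}$ is an iterated partial derivative, this happens if and only if $\theta$ has a root of multiplicity at least $r+1$ at $[\ell]$. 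So the first step is to record the reformulation: $C^*_r$ is precisely the locus of binary $n$-forms admitting a root of multiplicity $\ge r+1$, namely the image of $([\ell],[\psi])\mapsto[L_\ell^{r+1}\psi]$, with $L_\ell\in U^*$ the form vanishing at $[\ell]$ and $\psi\in\Sym^{n-r-1}U^*$; this map is generically finite, so $\dim C^*_r=n-r$, in agreement with Lemma \ref{dualcurve}.

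Non-degeneracy is then immediate: since $r\le n-1$, each power $L^n$ ($L\in U^*$) has a root of multiplicity $n\ge r+1$ and hence lies in $C^*_r$, and the powers $\{[L^n]\}$ trace out a rational normal curve in $\P^{n*}$, which is non-degenerate. Thus $C^*_r$ spans $\P^{n*}$.

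The heart of the argument is the computation of $T^r_x C^*_r$ at a general point $x=[\phi]$, $\phi=L_\ell^{r+1}\psi$, where $\psi$ has $n-r-1$ simple roots distinct from $[\ell]$. I would use the cone parametrisation $\tilde F(a,s)=L_a^{r+1}\psi_s$, with $L_a=L_\ell+aL'$ a transverse affine slice of linear forms and $\psi_s=\psi+\sum_j s_j\eta_j$ a parametrisation of $\Sym^{n-r-1}U^*$, together with the standard fact that the affine cone over $T^r_x C^*_r$ is the linear span of the partial derivatives of $\tilde F$ of order $\le r$ at the origin (independent of the chosen cone lift). Since $\tilde F$ is a product of a function of $a$ and a function of $s$ that is affine-linear in $s$, only two families of derivatives are nonzero: $L_\ell^{\,r+1-k}(L')^{k}\psi$ for $0\le k\le r$, and $L_\ell^{\,r+1-k}(L')^{k}\eta_j$ for $0\le k\le r-1$. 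Each is divisible by $L_\ell$, so the cone lies in $L_\ell\cdot\Sym^{n-1}U^*$, which is exactly the hyperplane dual to $[\ell^n]$. The subtle point is the reverse inclusion: after dividing out the common factor $L_\ell$, the second family spans the degree $(n-1)$ forms divisible by $L_\ell$ (here one uses that multiplication $\Sym^{r-1}U^*\otimes\Sym^{n-r-1}U^*\to\Sym^{n-2}U^*$ is surjective, as always for binary forms), while the element $(L')^{r}\psi$ coming from $k=r$ in the first family is \emph{not} divisible by $L_\ell$, because $\psi$ does not vanish at $[\ell]$. A dimension count then forces the span to be all of $\Sym^{n-1}U^*$, so $T^r_x C^*_r$ is precisely the hyperplane dual to $[\ell^n]$. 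This exact-dimension statement — that the osculating span fills the whole hyperplane rather than a proper subspace, which is exactly where the genericity of $\psi$ enters — is the step I expect to demand the most care.

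With the osculating hyperplane identified, biduality is formal. For general $x=[\phi]\in C^*_r$ the hyperplane $T^r_x C^*_r$ is the dual of the point $[\ell^n]\in C$ determined by the $(r+1)$-fold root $\ell$ of $\phi$; hence in the incidence variety $\mathcal{I}_r^{*}$ the only hyperplane of $\P^{n*}$ containing the hyperplane $T^r_x C^*_r$ is $T^r_x C^*_r$ itself, so $\pi_2^{r*}$ sends $x$ to $[\ell^n]$. As $x$ varies, its $(r+1)$-fold root $\ell$ sweeps out all of $\P(U)$, whence $(C^*_r)^*_r=\overline{\{[\ell^n]\}}=C$, as claimed.
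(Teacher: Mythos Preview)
Your proof is correct and genuinely different from the paper's. Both arguments observe that $C_r^*$ contains the rational normal curve of $n$-th powers (the paper phrases this as $C_{n-1}^*\subset C_r^*$), so non-degeneracy is handled in essentially the same way. The divergence is in the remaining two claims. The paper establishes that $T^r_xC_r^*$ is a hyperplane by a pure dimension count: since $\dim C_r^*=n-r$ and $C_r^*$ is non-degenerate, the osculating flag $T^1_x\subsetneq T^2_x\subsetneq\cdots\subsetneq T^r_x$ grows strictly (implicitly invoking the stationary-osculating-space principle), forcing $\dim T^r_xC_r^*\ge n-1$. Biduality is then obtained by checking $\dim\mathcal{I}_r^*=\dim\mathcal{I}_r$ and citing \cite[Proposition 1]{Pi83}. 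Your route instead carries out an explicit apolarity computation, identifying $C_r^*$ with binary $n$-forms having an $(r{+}1)$-fold root and showing directly that the cone over $T^r_xC_r^*$ equals $L_\ell\cdot\Sym^{n-1}U^*$; biduality then falls out immediately because you know \emph{which} hyperplane you get, namely the one dual to $[\ell^n]$. Your approach is longer but entirely self-contained and yields the extra information that the $r$-th Gauss-type map $x\mapsto T^r_xC_r^*$ is precisely ``extract the $(r{+}1)$-fold root''; the paper's approach is quicker but leans on an external reflexivity result.
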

\begin{proof}
We may assume that $C$ is the standard rational normal curve given by the image of $\gamma$. Note that $C_{n-1}^{*}\subset\mathbb{P}^{n*}$ is the rational normal curve defined by the image of the morphism $\gamma^{n-1}_{Id}$ in Proposition \ref{proprnc}. Therefore, $C_{n-1}^{*}\subseteq\mathbb{P}^{n*}$ is non-degenerated and since 
$$C_{n-1}^{*}\subset C_{n-2}^{*}\subset \dots \subset C_1^{*}\subset\mathbb{P}^{n*}$$
we conclude that $C_{n-2}^{*}\subset \dots \subset C_1^{*}\subset\mathbb{P}^{n*}$ are non-degenerated as well. 

Now, Lemma \ref{dualcurve} yields that $\dim(C_r^{*}) = n-r$. Therefore, for a general point $x\in C_r^{*}$ we have $\dim(T^1_x C_r^{*}) = n-r$ and $T^1_xC_r^{*}\subsetneqq T^2_xC_r{*}\subsetneqq T^r_xC_r^{*}$. This yields $\dim(T^r_xC_r^{*})\geq n-1$ and hence $\dim(T^r_xC_r^{*})\geq n-1$. In particular, the inverse image of $x\in C_r^{*}$ via $\pi_1^{r*}$ has dimension zero and $\dim(\mathcal{I}_r^{*}) = \dim (C_r^{*}) = n-r$. Finally, by Lemma \ref{dualcurve} we have that $\dim(\mathcal{I}_r^{*}) = \dim(\mathcal{I}_r)$, and then \cite[Proposition 1]{Pi83} yields $(C_{r}^{*})^{*}_{r} = C$. 
\end{proof}

\begin{Proposition}\label{birat}
There exists a rational linear projection 
$$\pi:\mathbb{P}(\Hom(\bigwedge^{r+1}V,\bigwedge^{r+1}V))\dasharrow \Mor_{(r+1)(n-r)}(\mathbb{P}^1,\bigwedge^{r+1}V)\subset\mathbb{P}^{N_r}$$
making commutative the following diagram
  \[
  \begin{tikzpicture}[xscale=4.9,yscale=-1.4]
    \node (A0_0) at (0, 0) {$\mathcal{X}(n)$};
    \node (A0_1) at (1, 0) {$\mathbb{P}(\Hom(\bigwedge^{r+1}V,\bigwedge^{r+1}V))$};
    \node (A1_1) at (1, 1) {$\Mor_{(r+1)(n-r)}(\mathbb{P}^1,\bigwedge^{r+1}V)\subset\mathbb{P}^{N_r}$};
    \path (A0_0) edge [->]node [auto] {$\scriptstyle{pr_{r+1|\mathcal{X}(n)}}$} (A0_1);
    \path (A0_1) edge [->,dashed]node [auto] {$\scriptstyle{\pi}$} (A1_1);
    \path (A0_0) edge [->,dashed]node [auto] {$\scriptstyle{g^r}$} (A1_1);
  \end{tikzpicture}
  \]
where $N_r = \binom{n+1}{r+1}((r+1)(n-r)+1)$. Furthermore, $\pi$ is regular and a bijection onto its image on $pr_{r+1|\mathcal{X}(n)}(\mathcal{X}(n)\setminus E_1\cup\dots\cup E_n)$.
\end{Proposition}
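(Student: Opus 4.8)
The plan is to exhibit the factorization $g^r = \pi\circ pr_{r+1|\mathcal{X}(n)}$ and to read off that the map $\pi$ completing it is linear. The essential point, already implicit in the proof of Proposition~\ref{proprnc}, is that the parametrized osculating curve $\gamma_Z^r$ depends on $Z$ only through $\wedge^{r+1}Z$, which is precisely what $pr_{r+1|\mathcal{X}(n)}$ records. Indeed, since $C_Z = Z\cdot C$ and osculating spaces are transported by $\wedge^{r+1}Z$, for the standard osculating curve $\gamma^r_{Id}$ one has the identity $\gamma_Z^r(u,v) = (\wedge^{r+1}Z)\cdot\gamma^r_{Id}(u,v)$ in $\bigwedge^{r+1}V$, for every $(u,v)\in\mathbb{P}^1$.

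First I would record the coefficients of the fixed curve $\gamma^r_{Id}$. By the minor computation in the proof of Proposition~\ref{proprnc} each of its $\binom{n+1}{r+1}$ Pl\"ucker coordinates is a homogeneous polynomial of degree $(r+1)(n-r)$ in $(u,v)$; collecting these coefficients produces a fixed matrix $M_{Id}$, which I view as an element of $\Hom(K^{(r+1)(n-r)+1},\bigwedge^{r+1}V)$. A point of $\Mor_{(r+1)(n-r)}(\mathbb{P}^1,\bigwedge^{r+1}V)$ is exactly such a coefficient array up to scalar, and the identity $\gamma_Z^r = (\wedge^{r+1}Z)\cdot\gamma^r_{Id}$ becomes, at the level of coefficients, the composition $(\wedge^{r+1}Z)\circ M_{Id}$. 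I would therefore define $\pi$ to be the projectivization of the linear map
\[
\begin{array}{ccc}
\Hom(\bigwedge^{r+1}V,\bigwedge^{r+1}V) & \longrightarrow & \Hom(K^{(r+1)(n-r)+1},\bigwedge^{r+1}V)\\
A & \longmapsto & A\circ M_{Id}
\end{array}
\]
whose target is the ambient space of $\Mor_{(r+1)(n-r)}(\mathbb{P}^1,\bigwedge^{r+1}V)\subset\mathbb{P}^{N_r}$. Commutativity of the triangle is then automatic: for $Z$ of maximal rank, $\pi(pr_{r+1|\mathcal{X}(n)}(Z)) = [(\wedge^{r+1}Z)\circ M_{Id}] = \gamma_Z^r = g^r(Z)$.

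Next I would verify that $\pi$ is a genuine linear projection, i.e. that $A\mapsto A\circ M_{Id}$ is surjective. By Proposition~\ref{proprnc} the curve $\gamma^r_{Id}$ is a rational normal curve of degree $(r+1)(n-r)$, hence it spans a linear subspace of $\mathbb{P}(\bigwedge^{r+1}V)$ of dimension exactly $(r+1)(n-r)$; equivalently $M_{Id}$ is injective, of full column rank $(r+1)(n-r)+1$, so its row space is all of $K^{(r+1)(n-r)+1}$. Since each row of $A\circ M_{Id}$ is an arbitrary element of this row space, the map is onto, and $\pi$ is the projection from the linear center $\mathbb{P}(\ker(\,\cdot\circ M_{Id}))$.

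Finally, regularity and injectivity on $P := pr_{r+1|\mathcal{X}(n)}(\mathcal{X}(n)\setminus(E_1\cup\dots\cup E_n))$ follow from Proposition~\ref{proprnc} together with the commutative triangle. For $Z$ of maximal rank the array $(\wedge^{r+1}Z)\circ M_{Id}$ is nonzero, being the coefficient datum of the honest morphism $\gamma_Z^r$, so $pr_{r+1|\mathcal{X}(n)}(Z)$ avoids the center of $\pi$ and $\pi$ is regular along $P$; and if $\pi(p_1)=\pi(p_2)$ with $p_i=pr_{r+1|\mathcal{X}(n)}(Z_i)$, then $g^r(Z_1)=g^r(Z_2)$, whence $Z_1=Z_2$ by the injectivity of $g^r$ in Proposition~\ref{proprnc}, so $p_1=p_2$. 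The substance of the whole argument is the equivariance $\gamma_Z^r=(\wedge^{r+1}Z)\gamma^r_{Id}$, which forces $\pi$ to be linear; I expect the only real work to be the bookkeeping identifying $\Mor_{(r+1)(n-r)}(\mathbb{P}^1,\bigwedge^{r+1}V)$ with a space of coefficient matrices, and the full-rank statement for $M_{Id}$, both of which are direct consequences of Proposition~\ref{proprnc}.
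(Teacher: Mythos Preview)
Your construction of $\pi$ via $A\mapsto A\circ M_{Id}$ is a more explicit version of what the paper does: there one simply notes that $g^r$ is induced by a sublinear system of $|D_{r+1}|$ (this is stated in Proposition~\ref{proprnc}) and hence factors through $pr_{r+1|\mathcal{X}(n)}$ via a linear projection. Your identity $\gamma_Z^r=(\wedge^{r+1}Z)\cdot\gamma^r_{Id}$ is exactly the concrete content of that sublinear-system statement, and your full-rank observation about $M_{Id}$ is a pleasant bonus. The regularity arguments match.

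The real divergence is injectivity. You deduce it from the bijectivity of $g^r$ asserted in the statement of Proposition~\ref{proprnc}. In the paper, however, the proof of Proposition~\ref{proprnc} only checks that $g^r$ is well-defined on the maximal-rank locus; the bijectivity claim is actually established \emph{in the proof of Proposition~\ref{birat} itself}, by a substantive geometric argument. First one shows that for a degree-$n$ rational normal curve $C\subset\mathbb{P}^n$ each osculating space $T^r_xC$ meets $C$ only at $x$ (with multiplicity $r+1$). Then, given $\gamma^r_Z=\gamma^r_{Z'}$, one invokes the higher biduality $(C_r^*)_r^*=C$ of Proposition~\ref{bidual} to conclude that the underlying rational normal curves coincide, and the osculating-intersection property forces the parametrizations to agree pointwise. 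So your proof is formally correct relative to the \emph{statement} of Proposition~\ref{proprnc}, but it outsources precisely the content that the paper supplies here; in the paper's internal logic, citing that bijectivity at this point would be circular.
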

\begin{proof}
Since by Proposition \ref{proprnc} $g^r$ is induced by a sub linear system of the complete linear system $|D_{r+1}|$ inducing $pr_{r+1|\mathcal{X}(n)}$ the existence of the linear projection $\pi$ follows. Furthermore, the claim on its regularity on $pr_{r+1|\mathcal{X}(n)}(\mathcal{X}(n)\setminus E_1\cup\dots\cup E_n)$ is a direct consequence of the last part of Proposition \ref{proprnc}. 

Note that if $C\subset\mathbb{P}^n$ is a degree $n$ rational normal curve then for any $x\in C$ the $r$-osculating space of $C$ at $x$ intersects $C$ just in $x$ with multiplicity $r+1$. In order to show this claim assume there is another point $y\in T_x^{r}C\cap C$, take $x_1,\dots,x_{n-r-1}\in C$ general, and consider the linear span $H = \left\langle T_x^rC,x_1,\dots,x_{n-r-1}\right\rangle$. Since $\dim(T_x^{r}C) = r$ we have $\dim(H) = r+n-r-1 = n-1$, and $H$ intersects $C$ in at least $(r+1)+1+n-r-1 = n+1$ points counted with multiplicity. Now, $\deg(C) = n$ forces $C\subset H$. A contradiction, since $C\subset\mathbb{P}^n$ is non-degenerated. 

Now, let $Z, Z'$ be two matrices of maximal rank such that $\gamma^r_{Z} = \gamma^r_{Z'}$. Then any $r$-osculating plane of $C = \gamma_{Z}(\mathbb{P}^1)$ is an $r$-osculating plane of $\Gamma = \gamma_{Z'}(\mathbb{P}^1)$ as well. Therefore, $C_{r}^{*} = \Gamma_r^{*}$ and Proposition \ref{bidual} yields $C = (C_{r}^{*})_r^{*} = (\Gamma_r^{*})_{r}^{*} = \Gamma$. Then $\gamma_Z,\gamma_{Z'}:\mathbb{P}^1\rightarrow C = \Gamma\subset\mathbb{P}^n$ have the same image. Now, since $\gamma^r_{Z}(p) = T^r_{\gamma_{Z}(p)}C = T^r_{\gamma_{Z'}(p)}C = \gamma_{Z'}^r(p)$ for any $p\in\mathbb{P}^1$, and for any $x\in C$ the $r$-osculating space $T^r_xC$ intersects $C$ just in $x\in C$ with multiplicity $r+1$, we conclude that $\gamma_{Z}(p) = \gamma_{Z'}(p)$ for any $p\in \mathbb{P}^1$. 
\end{proof}

\begin{thm}\label{ModNEF}
For any $k=0,\dots,n-1$ the model associated to the generator $D_{k+1}$ of the nef cone $\Nef(\mathcal{X}(n,m))$ 
$$\mathcal{X}(n,m)(D_{k+1}) = \Proj(\bigoplus_{r\geq 0}H^0(X,\mathcal{O}_X(rD_{k+1})))\cong Deg_{k+1}(\mathcal{X}(n,m))^{\nu}$$ 
is isomorphic to the normalization of the variety of degeneration pairs $Deg_{k+1}(\mathcal{X}(n,m))$.

Furthermore, if $n = m$ the points of the dense open subset $\mathcal{X}(n)(D_{k+1})\setminus f_{D_{k+1}}(E_1\cup\dots\cup E_n)$ are in bijection with the rational normal curves of degree $(k+1)(n-k)$ in $\mathbb{P}(\bigwedge^{k+1}V)$ whose points parametrize the $k$-osculating planes of a rational normal curve of degree $n$ in $\mathbb{P}^n$.
\end{thm}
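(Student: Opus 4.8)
The plan is to establish the two assertions separately: the first is a direct application of Lemma \ref{tec1}, while the second combines the modular description of the maps $g^k$ in Propositions \ref{proprnc} and \ref{birat} with Zariski's main theorem. For the isomorphism $\mathcal{X}(n,m)(D_{k+1})\cong Deg_{k+1}(\mathcal{X}(n,m))^{\nu}$, I first note that $\mathcal{X}(n,m)$ is smooth, hence normal, by Construction \ref{ccc}. By Theorem \ref{theff} the class $D_{k+1}$ is nef for $k=0,\dots,n-1$, and by Proposition \ref{morNEF} it induces the morphism $\pi_{k+1}$; since $\pi_{k+1}$ is everywhere defined, $D_{k+1}$ is base-point-free and $f_{D_{k+1}}=\pi_{k+1}$. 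The same proposition gives that $\pi_{k+1}$ is birational onto its image for $k+1=1,\dots,n$, and by Definition \ref{degpair} this image is exactly $Deg_{k+1}(\mathcal{X}(n,m))$. Thus all hypotheses of Lemma \ref{tec1} hold with $X=\mathcal{X}(n,m)$ and $D=D_{k+1}$, and the lemma yields the first claim for every $k$ in the stated range.

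For the modular statement assume $n=m$ and set $U=\mathcal{X}(n)\setminus(E_1\cup\dots\cup E_n)$. I would first identify $U$ with the open $\mathscr{G}$-orbit, that is, the locus of maximal-rank matrices: since $E_n$ is the strict transform of $\sec_n(\mathcal{S})$ by Theorem \ref{theff}, removing $E_1\cup\dots\cup E_n$ deletes exactly the degeneracy locus. On $U$ the map $g^k$ of Proposition \ref{proprnc} is regular and bijective onto its image, and by that proposition its image is precisely the set $R$ of rational normal curves of degree $(k+1)(n-k)$ in $\mathbb{P}(\bigwedge^{k+1}V)$ whose points parametrize the $k$-osculating spaces $T^k_{\gamma_Z(u,v)}C_Z$ of a degree $n$ rational normal curve $C_Z\subset\mathbb{P}^n$. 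Hence $g^k$ restricts to a bijection $U\to R$.

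Finally I would transport this bijection onto the model via the morphism $f_{D_{k+1}}\colon\mathcal{X}(n)\to\mathcal{X}(n)(D_{k+1})$ induced by $D_{k+1}$, which lifts $\pi_{k+1}$ through the normalization $\nu$ of the first part, so that $\nu\circ f_{D_{k+1}}=\pi_{k+1}$. By Proposition \ref{birat} one has $g^k=\pi\circ\pi_{k+1}$ with $\pi$ a bijection on $\pi_{k+1}(U)$; since $g^k$ is injective on $U$, so is $\pi_{k+1}$, and hence so is $f_{D_{k+1}}$. Moreover $f_{D_{k+1}}$ is a proper birational morphism (the divisor $D_{k+1}$ is big for $0\le k\le n-1$ when $n=m$, again by Proposition \ref{morNEF}) onto the normal variety $\mathcal{X}(n)(D_{k+1})$, so Zariski's main theorem guarantees that its fibres are connected. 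If $w=f_{D_{k+1}}(u)$ with $u\in U$, then $\{u\}=f_{D_{k+1}}^{-1}(w)\cap U$ is open and closed in the connected fibre $f_{D_{k+1}}^{-1}(w)$, which forces $f_{D_{k+1}}^{-1}(w)=\{u\}$; in particular no boundary point can share the image $w$. Since $f_{D_{k+1}}$ is also surjective, this shows that it carries $U$ bijectively onto $\mathcal{X}(n)(D_{k+1})\setminus f_{D_{k+1}}(E_1\cup\dots\cup E_n)$, and composing this identification with the bijection $g^k\colon U\to R$ gives the statement. The step I expect to be the main obstacle is the injectivity of $g^k$ on the full-rank locus, which is not formal: it rests on the biduality $(C_r^{*})_r^{*}=C$ for rational normal curves proved in Proposition \ref{bidual}, and it is precisely this input that makes $R$ parametrize the model itself rather than a proper quotient of it.
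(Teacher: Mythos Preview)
Your argument is correct and follows the same route as the paper: Lemma \ref{tec1} for the first assertion, and Proposition \ref{birat} (together with Proposition \ref{proprnc}) for the second. Your treatment of the second part is in fact more careful than the paper's, which simply invokes Proposition \ref{birat}; your use of Zariski's main theorem to rule out boundary points mapping into $f_{D_{k+1}}(U)$ makes explicit a step the paper leaves to the reader.
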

\begin{proof}
By Definition \ref{degpair} $Deg_{k+1}(\mathcal{X}(n,m))$ is the image of the morphism $\pi_{k+1}$ induced by $D_{k+1}$, and by Proposition \ref{morNEF} $\pi_{k+1}$ is birational onto $Deg_{k+1}(\mathcal{X}(n,m))$ for $k = 0,\dots, n-1$. Now, to conclude it is enough to apply Lemma \ref{tec1} with $X =\mathcal{X}(n,m)$ and $D=D_{k+1}$. For the second statement just recall that $\pi_{k+1} = pr_{k+1|\mathcal{X}(n)}$ and apply Proposition \ref{birat}.
\end{proof}

\section{On the Mori chamber and stable base locus decompositions}\label{sec4}
In this section we will study the Mori chamber and stable base locus decomposition for spaces of complete forms. Indeed, taking advantage of the main results in Sections \ref{sec2} and \ref{sec3}, mostly of Theorem \ref{gen}, we will relate the decompositions of the space of complete collineations $\mathcal{X}(n)$ and of its subvariety $\mathcal{Q}(n)$ parametrizing complete quadrics. In particular, we will provide a complete description of the Mori chamber and stable base locus decompositions for $\mathcal{X}(3)$. 

\subsubsection*{The stable base locus decomposition} 
The stable base locus of an effective $\mathbb{Q}$-divisor on a normal $\mathbb{Q}$-factorial projective variety $X$ has been defined in (\ref{sbl}). Since stable base loci do not behave well with respect to numerical equivalence \cite[Example 10.3.3]{La04II}, we will assume that $h^{1}(X,\mathcal{O}_X)=0$ so that linear and numerical equivalence of $\mathbb{Q}$-divisors coincide. 

Then numerically equivalent $\mathbb{Q}$-divisors on $X$ have the same stable base locus, and the pseudo-effective cone $\overline{\Eff}(X)$ of $X$ can be decomposed into chambers depending on the stable
base locus of the corresponding linear series called \textit{stable base locus decomposition}. 

If $X$ is a Mori dream space, satisfying then the condition $h^1(X,\mathcal{O}_X)=0$, determining the stable base locus decomposition of $\Eff(X)$ is a first step in order to compute its Mori chamber decomposition. 

\begin{Remark}\label{SBLMC}
Recall that two divisors $D_1,D_2$ are said to be \textit{Mori equivalent} if $\textbf{B}(D_1) = \textbf{B}(D_2)$ and the following diagram of rational maps is commutative
   \[
  \begin{tikzpicture}[xscale=1.5,yscale=-1.2]
    \node (A0_1) at (1, 0) {$X$};
    \node (A1_0) at (0, 1) {$X(D_1)$};
    \node (A1_2) at (2, 1) {$X(D_2)$};
    \path (A1_0) edge [->]node [auto] {$\scriptstyle{}$} node [rotate=180,sloped] {$\scriptstyle{\widetilde{\ \ \ }}$} (A1_2);
    \path (A0_1) edge [->,dashed]node [auto] {$\scriptstyle{\phi_{D_2}}$} (A1_2);
    \path (A0_1) edge [->,swap, dashed]node [auto] {$\scriptstyle{\phi_{D_1}}$} (A1_0);
  \end{tikzpicture}
  \]
where the horizontal arrow is an isomorphism. Therefore, the Mori chamber decomposition is a possibly trivial refinement of the stable base locus decomposition.
\end{Remark}

\begin{Lemma}\label{MCrest}
Let $X,Y$ be normal, projective $\mathbb{Q}$-factorial Mori dream spaces. If there exists an embedding $i:X\rightarrow Y$ such that $i^{*}:\Pic(Y)\rightarrow\Pic(X)$ is an isomorphism inducing an isomorphism $\Eff(Y)\rightarrow\Eff(X)$, then
\begin{itemize}
\item[-] the stable base locus decomposition of $\Eff(Y)$ is a refinement of the stable base locus decomposition of $\Eff(X)$, and the two decompositions coincide outside of the movable cones;
\item[-] the Mori chamber decomposition of $\Eff(Y)$ is a refinement of the Mori chamber decomposition of $\Eff(X)$.
\end{itemize}
\end{Lemma}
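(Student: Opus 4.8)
The plan is to work throughout on the identification $\Lambda := \Pic(Y) \cong \Pic(X)$ furnished by $i^{*}$, under which $\Eff(Y)$ and $\Eff(X)$ become one and the same cone $\mathcal{E} \subseteq \Lambda \otimes \mathbb{R}$; for an effective class I write $D$ for a representative on $Y$ and $i^{*}D$ for its restriction to $X$. Both bullets then reduce to comparing, for a single class, the invariants computed on $Y$ against those computed on $X$: the stable base locus $\textbf{B}(D)$ and the associated model $Y(D) = \Proj\bigoplus_{r\ge 0}H^{0}(Y,\mathcal{O}_Y(rD))$. The engine of the argument is the restriction homomorphism on sections $\rho_{rD}\colon H^{0}(Y,\mathcal{O}_Y(rD)) \to H^{0}(X,\mathcal{O}_X(r\,i^{*}D))$, which assembles into a graded homomorphism of Cox rings $\rho\colon \Cox(Y) \to \Cox(X)$ compatible with the grading by $\Lambda$.

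First I would record the easy inclusion. If $x \in i(X)$ lies outside $\textbf{B}(D)$, then some $\sigma \in H^{0}(Y,\mathcal{O}_Y(rD))$ does not vanish at $x$, so neither does $\rho_{rD}(\sigma)$, whence $x \notin \textbf{B}(i^{*}D)$; this gives
$$\textbf{B}(i^{*}D) \subseteq \textbf{B}(D) \cap i(X).$$
The crux — and the step I expect to be the main obstacle — is upgrading this to the equality $\textbf{B}(i^{*}D) = \textbf{B}(D)\cap i(X)$. Equality is precisely the assertion that no section of $r\,i^{*}D$ on $X$ shrinks the base locus beyond the trace of the restricted sections, i.e. that $\rho_{rD}$ is surjective onto the base-locus-relevant part for $r \gg 0$. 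Here both hypotheses — that $i^{*}$ is an isomorphism of Picard groups \emph{and} of effective cones — must be used: the effective-cone isomorphism guarantees that a class is effective on $X$ exactly when it is on $Y$, which I would exploit to propagate liftability of the fixed (negative, divisorial) part of the Zariski decomposition, whose support is dictated by the cone data alone. Granting the equality, the assignment $D \mapsto \textbf{B}(i^{*}D)$ factors through $D \mapsto \textbf{B}(D)$ via intersection with $i(X)$; thus $\textbf{B}(D_1)=\textbf{B}(D_2)$ forces $\textbf{B}(i^{*}D_1)=\textbf{B}(i^{*}D_2)$, which is exactly the statement that the stable base locus decomposition of $\Eff(Y)$ refines that of $\Eff(X)$, the refinement being possibly strict since distinct loci on $Y$ may have equal traces on $X$. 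The same equality identifies movable classes (codimension at least two base locus) on the two sides, so $i^{*}$ carries $\Mov(Y)$ onto $\Mov(X)$, justifying speaking of a single region.

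For the second bullet I would combine the base-locus comparison with a comparison of models. Using the identification of sections above, the graded pieces defining $Y(D)$ and $X(i^{*}D)$ match, so $Y(D) \cong X(i^{*}D)$ compatibly with the rational maps $\phi_D$; hence a birational contraction of $Y$ restricts to one of $X$, and Mori equivalence on $Y$ — equality of stable base loci together with the isomorphism of models in Remark \ref{SBLMC} — forces Mori equivalence of the restricted classes on $X$. By the description of the chambers in Definition \ref{MCD} this yields that the Mori chamber decomposition of $\Eff(Y)$ refines that of $\Eff(X)$, consistently with both Mori chamber decompositions refining their respective stable base locus decompositions.

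Finally, to see that the two stable base locus decompositions coincide \emph{outside the movable cones}, note that for $D \notin \overline{\Mov}$ the locus $\textbf{B}(D)$ acquires a codimension-one component equal to the support of the negative part $N(D)$ of its divisorial Zariski decomposition. The coefficients $v_{\Gamma}(D)$, and hence the support of $N(D)$, depend only on the position of $D$ relative to $\mathcal{E}$ and to the finitely many rigid prime (boundary) divisors generating its non-movable rays; since $i^{*}$ identifies $\mathcal{E}$ together with these distinguished generators, the partition of $\mathcal{E}\setminus\overline{\Mov}$ by $\Supp N(\cdot)$ is the same on $Y$ and on $X$, and no further walls arise from higher-codimension base loci there. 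Thus the refinement is trivial in that region and the two decompositions agree. The delicate point throughout, on which I would spend most of the effort, is the control of $\rho$ on global sections: inside the movable cones the higher-codimension part of $\textbf{B}$ is not determined by the cone alone, which is precisely why only a (possibly strict) refinement, rather than an equality, should be expected there.
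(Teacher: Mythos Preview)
Your identification of the containment $\textbf{B}(i^{*}D)\subseteq \textbf{B}(D)\cap i(X)$ and your candor that the reverse containment is the crux are both appropriate; the paper's own proof simply asserts the equality $\textbf{B}(i^{*}D)=\textbf{B}(D)_{|X}$ and proceeds, so on this point you are at least as careful as the paper. Your treatment of the region outside the movable cone via the divisorial negative part is also essentially what the paper does: the paper phrases it as ``if the divisorial base loci $E_1\neq E_2$ on $Y$, then $i^{*}E_1\neq i^{*}E_2$ on $X$ because $i^{*}$ is injective on $\Pic$'', which is the same mechanism as your $\Supp N(\cdot)$ argument.

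There is, however, a genuine error in your Mori chamber argument. You write that ``the graded pieces defining $Y(D)$ and $X(i^{*}D)$ match, so $Y(D)\cong X(i^{*}D)$''. This is false: the restriction map $H^{0}(Y,\mathcal{O}_Y(rD))\to H^{0}(X,\mathcal{O}_X(r\,i^{*}D))$ is in general neither injective nor surjective, and indeed in the paper's principal application one has $\dim\mathcal{Q}(n)<\dim\mathcal{X}(n)$, so the models $Y(D)$ and $X(i^{*}D)$ cannot possibly be isomorphic. The hypotheses on $\Pic$ and $\Eff$ say nothing about the sizes of the graded pieces of the Cox rings. What the paper does instead is observe that the maps $\phi_{D_j}:Y\dasharrow Y(D_j)$ restrict along $i$ to maps $X\dasharrow X(i^{*}D_j)$, giving embeddings $X(i^{*}D_j)\hookrightarrow Y(D_j)$; then the isomorphism $Y(D_1)\xrightarrow{\sim} Y(D_2)$ in the Mori equivalence diagram \emph{restricts} to an isomorphism $X(i^{*}D_1)\xrightarrow{\sim} X(i^{*}D_2)$. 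That is the correct route: one never identifies $Y(D)$ with $X(i^{*}D)$, only compares the two $X$-models via the ambient $Y$-isomorphism. You should rewrite the second bullet along these lines. As a side effect, your deduction that $i^{*}$ carries $\Mov(Y)$ onto $\Mov(X)$ is also not justified by what you have (a codimension-two locus in $Y$ can meet $i(X)$ in a divisor), and the paper does not claim it at this level of generality.
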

\begin{proof}
Let $D_1,D_1\in \Pic(Y)$ be two divisors. If $\textbf{B}(D_1) = \textbf{B}(D_2)$ then $\textbf{B}(i^{*}D_1) = \textbf{B}(D_1)_{|X} = \textbf{B}(D_2)_{|X} = \textbf{B}(i^{*}D_2)$. Now, assume that $D_1,D_2$ are not movable with stable base loci $E_1 = \textbf{B}(D_1),E_2 = \textbf{B}(D_2)$ such that $E_1\neq E_2$. Then $\textbf{B}(i^{*}D_1) = i^{*}E_1,\textbf{B}(i^{*}D_2) = i^{*}E_2$, and since $i^{*}:\Pic(Y)\rightarrow\Pic(X)$ is an isomorphism we conclude that $i^{*}E_1\neq i^{*}E_2$.

If $D_1,D_2$ are Mori equivalent then $\textbf{B}(D_1) = \textbf{B}(D_2)$ and we have the following commutative diagram
  \[
  \begin{tikzpicture}[xscale=2.5,yscale=-1.2]
    \node (A0_2) at (2, 0) {$X$};
    \node (A1_2) at (2, 1) {$Y$};
    \node (A2_0) at (0, 2) {$X(i^{*}D_1)$};
    \node (A2_1) at (1, 2) {$Y(D_1)$};
    \node (A2_3) at (3, 2) {$Y(D_2)$};
    \node (A2_4) at (4, 2) {$X(i^{*}D_2)$};
    \path (A0_2) edge [->,swap,dashed]node [auto] {$\scriptstyle{\phi_{i^{*}D_1}}$} (A2_0);
    \path (A0_2) edge [->,dashed]node [auto] {$\scriptstyle{\phi_{i^{*}D_2}}$} (A2_4);
    \path (A1_2) edge [->,dashed]node [auto] {$\scriptstyle{\phi_{D_2}}$} (A2_3);
    \path (A1_2) edge [->,swap,dashed]node [auto] {$\scriptstyle{\phi_{D_1}}$} (A2_1);
    \path (A2_4) edge [left hook->]node [auto] {$\scriptstyle{}$} (A2_3);
    \path (A0_2) edge [right hook->]node [auto] {$\scriptstyle{i}$} (A1_2);
    \path (A2_1) edge [->]node [auto] {$\scriptstyle{}$} node [rotate=180,sloped] {$\scriptstyle{\widetilde{\ \ \ }}$} (A2_3);
    \path (A2_0) edge [right hook->]node [auto] {$\scriptstyle{}$} (A2_1);
  \end{tikzpicture}
  \]
By the first part of the proof $\textbf{B}(D_1) = \textbf{B}(D_2)$ yields $\textbf{B}(i^{*}D_1)= \textbf{B}(i^{*}D_2)$. Furthermore, the isomorphism $Y(D_1)\rightarrow Y(D_2)$ restricts to an isomorphism $X(i^{*}D_1)\cong X(i^{*}D_1)$. 
\end{proof}

The spaces $\mathcal{X}(1)$, $\mathcal{Q}(1)$ have Picard number one, so there is nothing to say on their Mori chamber decomposition. Those of Picard number two are $\mathcal{X}(1,m)$ with $m\geq 2$, $\mathcal{X}(2)$, $\mathcal{Q}(2)$. Their Mori chamber decomposition coincides with their stable base locus decomposition. For instance, the Mori chamber decomposition of $\Eff(\mathcal{X}(2))$ is as follows
$$
\begin{tikzpicture}[xscale=1.8,yscale=1.5][line cap=round,line join=round,>=triangle 45,x=1cm,y=1cm]\clip(-3.5,-0.82) rectangle(7.8,1.2);\draw [->,line width=0.4pt] (0,0) -- (0,1);\draw [->,line width=0.4pt] (0,0) -- (1,0);\draw [->,line width=0.4pt] (0,0) -- (1.5,-0.5);\draw [shift={(0,0)},line width=0.4pt,color=uququq,fill=uququq,fill opacity=0.34]  (0,0) --  plot[domain=-0.32175055439664213:0,variable=\t]({1*0.7391605118416497*cos(\t r)+0*0.7391605118416497*sin(\t r)},{0*0.7391605118416497*cos(\t r)+1*0.7391605118416497*sin(\t r)}) -- cycle ;\draw [->,line width=0.4pt] (0,0) -- (0.9720430107526882,-0.7277008310249308);\begin{scriptsize}\draw [fill=black] (0,1) circle (0.0pt);\draw[color=black] (0.015053763440860174,1.1) node {$E_1$};\draw [fill=black] (1,0) circle (0.0pt);\draw[color=black] (1.35,0.038227146814404464) node {$D_1\sim H$};\draw [fill=black] (1.5,-0.5) circle (0.0pt);\draw[color=black] (2.1,-0.4631578947368421) node {$D_2\sim 2H-E_1$};\draw [fill=black] (0.9720430107526882,-0.7277008310249308) circle (0.0pt);\draw[color=black] (1.6,-0.75) node {$E_2\sim 3H-2E_1$};\end{scriptsize}
\end{tikzpicture}
$$
while the Mori chamber decomposition of $\Eff(\mathcal{X}(1,m))$ with $m\geq 2$ is obtained from the picture above by removing the ray $E_2\sim 3H-2E_1$, and the Mori chamber decomposition of $\Eff(\mathcal{Q}(2))$ is as the one of $\Eff(\mathcal{X}(2))$. Indeed, if $m\geq 2$ Theorem \ref{theff} yields that $\Eff(\mathcal{X}(1,m)) = \left\langle E_1,D_2\right\rangle$ and $\Nef(\mathcal{X}(1,m)) = \left\langle D_1,D_2\right\rangle$. In this case $D_2$ is a ray of both the effective and the nef cone, hence the relative wall crossing corresponds to the fibration induced by the linear system of quadrics vanishing on the Segre variety. 

Furthermore, again by Theorem \ref{theff} we have that $\Eff(\mathcal{Q}(2)) = \left\langle E_1,E_2\right\rangle$ and $\Nef(\mathcal{X}(2)) = \left\langle D_1,D_2\right\rangle$. In this case crossing the ray $D_1$ corresponds to blowing-down $E_1$, while crossing the ray $D_2$ induces the blow-down of $E_2$. In particular $\Mov(\mathcal{X}(2)) = \Nef(\mathcal{X}(2))$.

Therefore, from this point of view the first interesting variety is $\mathcal{X}(3)$, which has a $3$-dimensional N\'eron–Severi space. Before analyzing the decomposition of $\Eff(\mathcal{X}(3))$ we will prove some general results relating the Mori chamber and stable base locus decompositions of $\Eff(\mathcal{X}(n))$ to those of $\Eff(\mathcal{Q}(n))$.

\begin{Proposition}\label{MCXnQn}
The stable base locus decomposition of $\Eff(\mathcal{X}(n))$ is a refinement of the stable base locus decomposition of $\Eff(\mathcal{Q}(n))$, and the two decompositions coincide outside of the movable cones.

Furthermore, the Mori chamber decomposition of $\Eff(\mathcal{X}(n))$ is a refinement of the Mori chamber decomposition of $\Eff(\mathcal{Q}(n))$.
\end{Proposition}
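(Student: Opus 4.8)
The plan is to deduce both assertions at once from Lemma \ref{MCrest}, applied with $Y = \mathcal{X}(n)$, $X = \mathcal{Q}(n)$, and $i:\mathcal{Q}(n)\hookrightarrow\mathcal{X}(n)$ the natural inclusion. This inclusion is produced by the constructions themselves: by Remark \ref{sym_anti} the linear subspace $\mathbb{P}^{N_{+}}\subset\mathbb{P}^N$ of symmetric matrices cuts out $\sec_h(\mathcal{V})$ scheme-theoretically on $\sec_h(\mathcal{S})$, so restricting the sequence of blow-ups of Construction \ref{ccc} to $\mathbb{P}^{N_{+}}$ yields precisely the blow-ups of Construction \ref{ccq}. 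The strict transforms of $\mathbb{P}^{N_{+}}$ at each stage then give a closed embedding of $\mathcal{Q}(n)$ into $\mathcal{X}(n)$ compatible with the blow-down morphisms $f$ and $f^{+}$.

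It then suffices to check the two hypotheses of Lemma \ref{MCrest}. First, both $\mathcal{X}(n)$ and $\mathcal{Q}(n)$ are smooth, hence normal and $\mathbb{Q}$-factorial, and by Remark \ref{sphMDS} they are Mori dream spaces. Second, I would verify that $i^{*}\colon\Pic(\mathcal{X}(n))\to\Pic(\mathcal{Q}(n))$ is an isomorphism carrying $\Eff(\mathcal{X}(n))$ onto $\Eff(\mathcal{Q}(n))$. Using the compatibility above, $i^{*}H = H^{+}$ because $\mathbb{P}^{N_{+}}$ is linearly embedded, and $i^{*}E_j = E_j^{+}$ because the exceptional divisor of the blow-up along $\sec_j(\mathcal{S})$ restricts to the exceptional divisor of the blow-up along $\sec_j(\mathcal{V})$. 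Since by Proposition \ref{p1} the groups $\Pic(\mathcal{X}(n)) = \mathbb{Z}[H,E_1,\dots,E_{n-1}]$ and $\Pic(\mathcal{Q}(n)) = \mathbb{Z}[H^{+},E_1^{+},\dots,E_{n-1}^{+}]$ have the same rank with these as integral bases, $i^{*}$ is an isomorphism.

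The isomorphism of effective cones then follows from the explicit generators in Theorem \ref{theff}: $\Eff(\mathcal{X}(n)) = \langle E_1,\dots,E_n\rangle$ and $\Eff(\mathcal{Q}(n)) = \langle E_1^{+},\dots,E_n^{+}\rangle$, and since $E_n\sim (n+1)H-\sum_{h=1}^{n-1}(n-h+1)E_h$ while $E_n^{+}\sim (n+1)H^{+}-\sum_{h=1}^{n-1}(n-h+1)E_h^{+}$, the map $i^{*}$ sends the generator $E_n$ exactly to the generator $E_n^{+}$, hence maps $\Eff(\mathcal{X}(n))$ isomorphically onto $\Eff(\mathcal{Q}(n))$ (this is also recorded for the movable cones in Proposition \ref{PropMov}). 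With both hypotheses of Lemma \ref{MCrest} verified, its conclusion gives exactly the two refinement statements, the stable base locus decompositions agreeing outside the movable cones. The only genuine content is the identification $i^{*}H = H^{+}$ and $i^{*}E_j = E_j^{+}$; everything else is a formal appeal to Lemma \ref{MCrest}, so the main point to argue carefully is the compatibility of the two blow-up towers coming from Remark \ref{sym_anti}.
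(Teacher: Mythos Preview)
Your proposal is correct and follows essentially the same approach as the paper's proof: both construct the embedding $i:\mathcal{Q}(n)\hookrightarrow\mathcal{X}(n)$ from Constructions \ref{ccc}, \ref{ccq} and Remark \ref{sym_anti}, then invoke Proposition \ref{p1} and Theorem \ref{theff} to check that $i^{*}$ is an isomorphism on Picard groups carrying $\Eff(\mathcal{X}(n))$ onto $\Eff(\mathcal{Q}(n))$, and finally apply Lemma \ref{MCrest}. Your version simply spells out in more detail the identifications $i^{*}H=H^{+}$, $i^{*}E_j=E_j^{+}$ and the matching of the class of $E_n$ with that of $E_n^{+}$, which the paper leaves implicit.
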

\begin{proof}
By Constructions \ref{ccc}, \ref{ccq} and Remark \ref{sym_anti} we have an embedding $i:\mathcal{Q}(n)\rightarrow\mathcal{X}(n)$. Furthermore, by Proposition \ref{p1} and Theorem \ref{theff} $i^{*}:\Pic(\mathcal{X}(n))\rightarrow\Pic(\mathcal{Q}(n))$ is an isomorphism inducing an isomorphism between $\Eff(\mathcal{X}(n))$ and $\Eff(\mathcal{Q}(n))$. To conclude it is enough to apply Lemma \ref{MCrest}.
\end{proof}

\begin{Remark}\label{toric}
Recall that by \cite[Proposition 2.11]{HK00} given a Mori Dream Space $X$ there is an embedding $i:X\rightarrow \mathcal{T}_X$ into a simplicial projective toric variety $\mathcal{T}_X$ such that $i^{*}:\Pic(\mathcal{T}_X)\rightarrow \Pic(X)$ is an isomorphism inducing an isomorphism $\Eff(\mathcal{T}_X)\rightarrow \Eff(X)$. Furthermore, the Mori chamber decomposition of $\Eff(\mathcal{T}_X)$ is a refinement of the Mori chamber decomposition of $\Eff(X)$. Indeed, if $\Cox(X) \cong \frac{K[T_1,\dots,T_s]}{I}$ where the $T_i$ are homogeneous generators with non-trivial effective $\Pic(X)$-degrees then $\Cox(\mathcal{T}_X)\cong K[T_1,\dots,T_s]$.
\end{Remark}

\begin{Notation}
We will denote by $\left\langle v_1,\dots,v_s\right\rangle$ the cone in $\mathbb{R}^n$ generated by the vectors $v_1,\dots,v_s\in\mathbb{R}^n$. Given two vectors $v_i,v_j$ we set $(v_i,v_j] := \left\langle v_i,v_j\right\rangle\setminus\{v_i\}$ and $(v_i,v_j) := \left\langle v_i,v_j\right\rangle\setminus\{v_i,v_j\}$.
\end{Notation}

Before moving forward to the case of $\mathcal{X}(3)$ we need to investigate the birational geometry of the first blow-up in Construction \ref{ccc}. 

\begin{Proposition}\label{firstbu}
Let $\mathcal{X}(n,m)_1$ be the blow-up of $\mathbb{P}^N$ along the Segre variety $\mathcal{S}\subset\mathbb{P}^N$ in Construction \ref{ccc}. As usual we write $\Pic(\mathcal{X}(n,m)_1) = \mathbb{Z}[H,E_1]$ where $H$ is the pull-back of the hyperplane section of $\mathbb{P}^N$, and $E_1$ is the exceptional divisor.

Then the Mori chamber and the stable base locus decompositions of $\Eff(\mathcal{X}(n,m)_1)$ coincide and are represented by the following picture
$$
\begin{tikzpicture}[xscale=1.8,yscale=1.5][line cap=round,line join=round,>=triangle 45,x=1cm,y=1cm]\clip(-0.20150176678445222,-1.4) rectangle (2.498498233215547,1.1763586956521748);\draw [->,line width=0.4pt] (0,0) -- (1.3400182225885815,0);\draw [->,line width=0.4pt] (0,0) -- (0,1);\draw [->,line width=0.4pt] (0,0) -- (1.2849546368530922,-0.5152151776936973);\draw [->,line width=0.4pt] (0,0) -- (1.0853491385619454,-0.7905331063711422);\draw [->,line width=0.4pt] (0,0) -- (0.7412017277151399,-1.1071487243502038);\draw [->,line width=0.4pt] (0,0) -- (0.3557566275667178,-1.2516906369058625);\draw [shift={(0,0)},line width=0.4pt,dotted]  plot[domain=5.302329138917821:5.653673312285958,variable=\t]({1*0.9849158007199498*cos(\t r)+0*0.9849158007199498*sin(\t r)},{0*0.9849158007199498*cos(\t r)+1*0.9849158007199498*sin(\t r)});\draw [shift={(0,0)},line width=0.4pt,color=yqyqyq,fill=yqyqyq,fill opacity=0.66]  (0,0) --  plot[domain=-0.38133353271222603:0,variable=\t]({1*1.0480259109671113*cos(\t r)+0*1.0480259109671113*sin(\t r)},{0*1.0480259109671113*cos(\t r)+1*1.0480259109671113*sin(\t r)}) -- cycle ;\begin{scriptsize}\draw [fill=uuuuuu] (0,0) circle (0.0pt);\draw [fill=black] (1.3400182225885815,0) circle (0.0pt);\draw[color=black] (1.7,0.04769021739130457) node {$D_1\sim H$};\draw [fill=black] (0,1) circle (0.0pt);\draw[color=black] (0.011749116607773832,1.1) node {$E_1$};\draw [fill=black] (1.2849546368530922,-0.5152151776936973) circle (0.0pt);\draw[color=black] (1.9,-0.4680706521739131) node {$D_2\sim 2H-E_1$};\draw [fill=black] (1.0853491385619454,-0.7905331063711422) circle (0.0pt);\draw[color=black] (1.75,-0.7436141304347827) node {$D_3\sim 3H-2E_1$};\draw [fill=black] (0.7412017277151399,-1.1071487243502038) circle (0.0pt);\draw[color=black] (1.6,-1.0580163043478263) node {$D_{n}\sim nH-(n-1)E_1$};\draw [fill=black] (0.3557566275667178,-1.2516906369058625) circle (0.0pt);\draw[color=black] (1.3,-1.3) node {$D_{n+1}\sim (n+1)H-nE_1$};\end{scriptsize}\end{tikzpicture}
$$
where $\Eff(\mathcal{X}(n,m)_1) = \left\langle E_1,D_{n+1}\right\rangle$, $\Nef(\mathcal{X}(n,m)_1) = \left\langle D_1,D_2\right\rangle$ and $\Mov(\mathcal{X}(n,m)_1) = \left\langle D_1,D_{n+1}\right\rangle$ if $n<m$, while $\Mov(\mathcal{X}(n)_1) = \left\langle D_1,D_{n}\right\rangle$. As usual, the analogous statements, with the obvious modifications, hold for the space $\mathcal{Q}(n)_1$ in Constructions \ref{ccq}.  
\end{Proposition}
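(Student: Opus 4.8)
The plan is to build on Proposition \ref{int}, which (taking $i=1$, so we may assume $n\geq 2$; the case $n=1$ is the Picard-rank-two situation $\mathcal{X}(1,m)$ already treated) already supplies $\Eff(\mathcal{X}(n,m)_1)=\left\langle E_1,D_{n+1}\right\rangle$ and $\Nef(\mathcal{X}(n,m)_1)=\left\langle D_1,D_2\right\rangle$. It remains to describe the full chamber structure of the two-dimensional cone $\Eff(\mathcal{X}(n,m)_1)\subset N^1(\mathcal{X}(n,m)_1)\cong\mathbb{R}^2$. Writing a class as $aH-bE_1$, the colors $D_k\sim kH-(k-1)E_1$ of Theorem \ref{theff} give $n+1$ rays which, together with $E_1$, sit in the cyclic order $E_1,D_1,\dots,D_{n+1}$ around $\Eff$; I would first record this ordering by a slope comparison, so that the candidate chambers are $\left\langle E_1,D_1\right\rangle$, the nef cone $\left\langle D_1,D_2\right\rangle$, and $\left\langle D_k,D_{k+1}\right\rangle$ for $k=2,\dots,n$. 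Since $\mathcal{X}(n,m)_1$ is spherical, hence a Mori dream space by Remark \ref{sphMDS}, every nef divisor is semiample, so $\textbf{B}(D)=\emptyset$ on the whole closed nef cone $\left\langle D_1,D_2\right\rangle$.

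The core of the argument is the stable base locus computation on the remaining chambers, for which I would use the presentation of $\Cox(\mathcal{X}(n,m)_1)$ in Proposition \ref{CoxInt}: it is generated by the determinantal sections $T_{I,J}$, whose classes are the $D_{|I|}$, together with the section $S_1$ of $E_1$. Hence every section of a multiple of $D=aH-bE_1$ is a monomial in these generators, and $\textbf{B}(D)$ is the common vanishing locus of all such monomials. A slope count shows that a nonnegative combination of generator classes can land strictly below $D_k$, i.e. in the open chamber $\left\langle D_k,D_{k+1}\right\rangle\setminus\{D_k,D_{k+1}\}$, only if it contains a factor $T_{I,J}$ with $|I|=|J|=j\geq k+1$; such a minor cuts out a hypersurface containing $\sec_{j-1}(\mathcal{S})\supseteq\sec_k(\mathcal{S})$, so every monomial of that degree vanishes on $\widetilde{\sec_k(\mathcal{S})}$, giving $\textbf{B}(D)\supseteq\widetilde{\sec_k(\mathcal{S})}$. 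Conversely, over a point of rank $\geq k+1$ one can select minors of sizes $k$ and $k+1$ that are nonzero there, producing a monomial of the correct degree which does not vanish; this yields the reverse inclusion, so $\textbf{B}(D)=\widetilde{\sec_k(\mathcal{S})}$ on that chamber. The same bookkeeping on $\left\langle E_1,D_1\right\rangle$ forces a positive power of $S_1$ into every monomial, whence $\textbf{B}(D)=E_1$ there. Because the loci $E_1,\emptyset,\widetilde{\sec_2(\mathcal{S})},\dots,\widetilde{\sec_n(\mathcal{S})}$ are pairwise distinct, the stable base locus decomposition has internal walls exactly at $D_1,\dots,D_n$.

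To promote this to the Mori chamber decomposition I would invoke the sandwich of Remark \ref{toric}: the Mori chamber decomposition of the toric variety $\mathcal{T}_{\mathcal{X}(n,m)_1}$, whose Cox ring is the polynomial ring on the same generators, refines that of $\mathcal{X}(n,m)_1$, and in Picard rank two its walls are exactly the rays spanned by the generator degrees, namely $E_1,D_1,\dots,D_{n+1}$. Thus the internal walls of the Mori chamber decomposition of $\mathcal{X}(n,m)_1$ lie in $\{D_1,\dots,D_n\}$; since the Mori chamber decomposition always refines the stable base locus decomposition (Remark \ref{SBLMC}), which has walls precisely $\{D_1,\dots,D_n\}$, the two decompositions coincide, as claimed.

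Finally the movable cone is read off from codimensions. For $n<m$, $\codim_{\mathbb{P}^N}\sec_k(\mathcal{S})=(n-k+1)(m-k+1)\geq 2$ for every $k\leq n$ (from Corollary \ref{Fano}), so each chamber up to $\left\langle D_n,D_{n+1}\right\rangle$ carries a codimension $\geq 2$ stable base locus and $\Mov(\mathcal{X}(n,m)_1)=\left\langle D_1,D_{n+1}\right\rangle$; for $n=m$ the locus $\sec_n(\mathcal{S})$ is a hypersurface, so the chamber $\left\langle D_n,D_{n+1}\right\rangle$ is non-movable and $\Mov(\mathcal{X}(n)_1)=\left\langle D_1,D_n\right\rangle$. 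The symmetric case $\mathcal{Q}(n)_1$ runs identically with $\sec_k(\mathcal{V})$ in place of $\sec_k(\mathcal{S})$ and the codimension formula of Corollary \ref{Fano}, with $\sec_n(\mathcal{V})$ (the discriminant hypersurface) being the codimension-one locus that truncates the movable cone. The main obstacle is the exact stable base locus computation on the chambers $\left\langle D_k,D_{k+1}\right\rangle$ — in particular controlling the vanishing of arbitrary monomial sections along $\widetilde{\sec_k(\mathcal{S})}$ and ruling out an extra component along $E_1$ — which rests essentially on Proposition \ref{CoxInt}, Lemma \ref{mult}, and the determinantal description of the ideals $I(\sec_k(\mathcal{S}))$.
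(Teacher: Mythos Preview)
Your overall architecture matches the paper's: you invoke Proposition \ref{int} for $\Eff$ and $\Nef$, Proposition \ref{CoxInt} for the generators of the Cox ring, and the toric sandwich of Remark \ref{toric} to bound the Mori chamber decomposition from above, then squeeze with the stable base locus decomposition from below. The paper also computes $\Mov$ via \cite[Proposition 3.3.2.3]{ADHL15} rather than your codimension argument, but both are fine.

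The substantive difference is in the stable base locus computation on the chambers $(D_h,D_{h+1}]$. The paper does not argue combinatorially with Cox-ring monomials. Instead it proceeds geometrically: for the inclusion $\widetilde{\sec_h(\mathcal{S})}\subseteq\textbf{B}(D)$ it produces, through any general point of $\sec_h(\mathcal{S})$, a rational normal curve of degree $h-1$ in the span of $h$ points of $\mathcal{S}$, whose strict transform has class $(h-1)l-he_1$ and intersects $D$ negatively; since such curves cover $\widetilde{\sec_h(\mathcal{S})}$, the inclusion follows. For the reverse inclusion the paper first pins down the wall exactly, observing that $D_{h+1}$ becomes base-point-free on $\mathcal{X}(n,m)$ by Theorem \ref{theff}, hence $\textbf{B}(D_{h+1})=\widetilde{\sec_h(\mathcal{S})}$, and then uses the monotonicity $\textbf{B}(H+b_1E_1)\subseteq\textbf{B}(H+b_2E_1)$ for $b_2\le b_1\le 0$ (writing $D_{b_2}=D_{b_1}+(b_2-b_1)E_1$) to propagate this to the whole half-open chamber. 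This monotonicity trick is exactly what disposes of the obstacle you flag: it shows in one stroke that no extra component along $E_1$ can appear, without having to analyze how the $T_{I,J}$ restrict to the exceptional divisor. Your monomial/slope approach is correct for points outside $E_1$ and is conceptually appealing, but completing it over $E_1$ would require controlling the restrictions $T_{I,J}|_{E_1}$, which is extra work the paper's argument avoids.
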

\begin{proof}
The statements on the effective and nef cones follow from Proposition \ref{int}. Furthermore, since by Proposition \ref{CoxInt} we know the generators of $\Cox(\mathcal{X}(n,m)_1)$ the extremal rays of the movable cone can be easily computed via \cite[Proposition 3.3.2.3]{ADHL15}.

Furthermore, Proposition \ref{CoxInt} yields that the generators of $\Cox(\mathcal{X}(n,m)_1)$ are sections of the divisors displayed in the picture above. Therefore, by Remark \ref{toric} the Mori chamber decomposition of $\Eff(\mathcal{X}(n,m)_1)$ must be a possibly trivial coarsening of the decomposition displayed in the statement. We will prove that such coarsening is indeed trivial by proving that the displayed decomposition is the stable base locus decomposition of $\Eff(\mathcal{X}(n,m)_1)$. First of all, note that $\textbf{B}(D) = \emptyset$ if and only if $D\in [D_1,D_2]$.

Recall that the ideal of $\sec_h(\mathcal{S})\subset\mathbb{P}^N$ is cut out by the $(h+1)\times (h+1)$ minors of the matrix $Z$ in (\ref{matrix}). Furthermore, $D_{h+1}$ is the strict transform of the hypersurface defined by such a minor. By Lemma \ref{mult} we have $D_{h+1}\sim (h+1)H-hE_1$. Furthermore, by Theorem \ref{theff} we know that $D_{h+1}$ becomes base-point-free once the strict transform of $\sec_h(\mathcal{S})$ has been blown-up. Therefore, for any $h\geq 2$ we have that 
\stepcounter{thm}
\begin{equation}\label{blsec}
\textbf{B}(D_{h+1}) = \widetilde{\sec_h(\mathcal{S})}
\end{equation}
where $\widetilde{\sec_h(\mathcal{S})}$ denotes the strict transform of $\sec_h(\mathcal{S})\subset\mathbb{P}^N$ in $\mathcal{X}(n,m)_1$. 

Now, note that if $D$ is a $\mathbb{Q}$-divisor in $[E_1,D_1)$ then $\textbf{B}(D)\subset E$. Furthermore, if $e$ is a curve generating the extremal ray of $\NE(\mathcal{X}(n,m)_1)$ corresponding to the blow-down $\mathcal{X}(n,m)_1\rightarrow\mathbb{P}^N$ we have $D\cdot e < 0$, and since the curves of class $e$ cover $E$ we get that $E_1\subset \textbf{B}(D)$. Therefore, $\textbf{B}(D) = E_1$ for any $D\in [E_1,H)$.     

Now, let $D_{b_1} = H + b_1 E_1$, $D_{b_2} = H + b_2E_1$ be effective $\mathbb{Q}$-divisors in $\mathcal{X}(n,m)_1$ such that $b_2\leq b_1\leq 0$. Note that we can write
$$D_{b_2} = D_{b_1}+(b_2-b_1)E_1$$
with $b_2-b_1 \leq 0$. Therefore
\stepcounter{thm}
\begin{equation}\label{sbl1}
\textbf{B}(D_{b_1})\subset \textbf{B}(D_{b_2})
\end{equation}
Consider a divisor $D\in (D_h,D_{h+1}]$. Therefore, $D\sim H+bE_1$ with $-\frac{h}{h+1} \leq b <-\frac{h-1}{h}$. Now, consider a general point $p\in \sec_h(\mathcal{S})$. Then there exist $h$ points $x_1,\dots,x_h\in \mathcal{S}$ such that $p$ lies in the linear span $H_x\cong\mathbb{P}^{h-1}$ of $x_1,\dots,x_h$, and hence there is a rational normal curve $C$ of degree $h-1$ in $H_x$ passing through $x_1,\dots, x_h, p$. This says that if $\widetilde{C}$ is the strict transform of $C$ in $\mathcal{X}(n,m)_1$ then the curves in $\mathcal{X}(n,m)_1$ of class $\widetilde{C}$ cover $\widetilde{\sec_h(\mathcal{S})}$. Furthermore, $\widetilde{C}\sim (h-1)l-he$, where $l$ denotes the pull-back of a general line in $\mathbb{P}^N$, and 
$$D\cdot \widetilde{C} = h-1+bh <0$$
Therefore, $\widetilde{\sec_h(\mathcal{S})}\subseteq\textbf{B}(D)$, and (\ref{blsec}), (\ref{sbl1}) yield that $\textbf{B}(D) = \widetilde{\sec_h(\mathcal{S})}$ for any divisor $D\in (D_h,D_{h+1}]$. Finally, the decomposition displayed in the statement is both the Mori chamber and the stable base locus decomposition of $\Eff(\mathcal{X}(n,m)_1)$.   
\end{proof}

Let $X$ be a $\mathbb Q$-factorial Mori dream space with divisor class group ${\rm Cl}(X)$ of rank two, and $\lambda_0\leq\lambda_1<\dots <\lambda_{s}\leq\lambda_{s+1}$ be the chambers of the Mori chamber decomposition of $\Eff(X)$ where $\lambda_1$ and $\lambda_{s}$ are respectively the first and the last chamber of $\Mov(X)$. Let $X(\lambda_h)$ be the $\mathbb{Q}$-factorial Mori dream space corresponding to the chamber $\lambda_h$, and let us assume that $X = X(\lambda_1)$ corresponds to the first chamber of $\Mov(X)$. The Mori cone of $X(\lambda_h)$ is generated by two extremal rays and we will denote by $\alpha_{h}:X(\lambda_h)\rightarrow Y_i$ and $\beta_h:X(\lambda_h)\rightarrow Y_{h-1}$ the corresponding extremal contractions. Since $X$ is a Mori dream space for any $h = 2,\dots s$ there exists the flip of $\alpha_{h}$ and we have the following diagram   
  \[
  \begin{tikzpicture}[xscale=1.7,yscale=-1.2]
    \node (A0_1) at (1, 0) {$X = X(\lambda_1)$};
    \node (A0_3) at (3, 0) {$X(\lambda_2)$};
    \node (A0_4) at (4, 0) {$\dots$};
    \node (A0_5) at (5, 0) {$X(\lambda_{s-1})$};
    \node (A0_7) at (7, 0) {$X(\lambda_s)$};
    \node (A1_0) at (0, 1) {$X(\lambda_0)$};
    \node (A1_2) at (2, 1) {$Y_1$};
    \node (A1_6) at (6, 1) {$Y_{s-1}$};
    \node (A1_8) at (8, 1) {$X(\lambda_{s+1})$};
    \path (A0_1) edge [->,swap]node [auto] {$\scriptstyle{\alpha_1}$} (A1_2);
    \path (A0_1) edge [->]node [auto] {$\scriptstyle{\beta_1}$} (A1_0);
    \path (A0_3) edge [->,dashed]node [auto] {$\scriptstyle{\chi_2}$} (A0_4);
    \path (A0_3) edge [->]node [auto] {$\scriptstyle{\beta_2}$} (A1_2);
    \path (A0_5) edge [->,swap]node [auto] {$\scriptstyle{\alpha_{s-1}}$} (A1_6);
    \path (A0_7) edge [->,swap]node [auto] {$\scriptstyle{\alpha_s}$} (A1_8);
    \path (A0_4) edge [->,dashed]node [auto] {$\scriptstyle{\chi_{s-2}}$} (A0_5);
    \path (A0_7) edge [->]node [auto] {$\scriptstyle{\beta_s}$} (A1_6);
    \path (A0_5) edge [->,dashed]node [auto] {$\scriptstyle{\chi_{s-1}}$} (A0_7);
    \path (A0_1) edge [->,dashed]node [auto] {$\scriptstyle{\chi_1}$} (A0_3);
  \end{tikzpicture}
  \]
The process leading from $X = X(\lambda_1)$ to $X(\lambda_{s+1})$ is a special type of minimal model program called a $2$-\textit{ray game}. Note that the $2$-ray game may begin with either a divisorial extraction if $\lambda_0\neq \lambda_1$ or a Mori fibration if $\lambda_0 = \lambda_1$, and similarly it may end with either a divisorial contraction if $\lambda_s \neq \lambda_{s+1}$ or a Mori fibration if $\lambda_s = \lambda_{s+1}$. 

Since $X$ is a Mori dream space the minimal model program runs to completion in the Mori category, and the $2$-ray game leads to a link in the sense of Sarkisov \cite{Co95}. Note that the $2$-ray game is entirely determined by the initial step $X = X(\lambda_1)$.

\begin{Proposition}\label{Sarkisov}
In the notations of Proposition \ref{firstbu} let $\mathcal{X}(n,m)_1^{h}$ be the model of $\mathcal{X}(n,m)_1$ corresponding to a divisor $D\in (D_h,D_{h+1})$. If $n<m$ then the rational map $\mathbb{P}^N\dasharrow \mathcal{G}(r,n)$ given by the minors of order $(n+1)$ of a general matrix $Z$ as in Proposition \ref{morNEF} gives rise to a Sarkisov link of type I:
 \[
  \begin{tikzpicture}[xscale=2.3,yscale=-1.4]
    \node (A0_0) at (0, 0) {};
    \node (A0_1) at (1, 0) {$\mathcal{X}(n,m)_1$};
    \node (A0_2) at (2, 0) {$\mathcal{X}(n,m)_1^{n}$};
    \node (A1_0) at (0, 1) {$\mathbb{P}^N$};
    \node (A1_2) at (2, 1) {$\mathcal{G}(n,m)$};
    \path (A1_0) edge [->,dashed]node [auto] {$\scriptstyle{}$} (A1_2);
    \path (A0_1) edge [->]node [auto] {$\scriptstyle{}$} (A1_0);
    \path (A0_1) edge [->,dashed]node [auto] {$\scriptstyle{}$} (A0_2);
    \path (A0_2) edge [->]node [auto] {$\scriptstyle{}$} (A1_2);
    \end{tikzpicture}
  \]
If $n = m$ then the birational involution $i:\mathbb{P}^N\dasharrow \mathbb{P}^{N*}$ given by mapping an invertible matrix $Z$ to its inverse gives rise to a Sarkisov link of type II:
\[
  \begin{tikzpicture}[xscale=2.1,yscale=-1.4]
    \node (A0_1) at (1, 0) {$\mathcal{X}(n)_1$};
    \node (A0_2) at (2, 0) {$\mathcal{X}(n)_1^{n-1}$};
    \node (A1_0) at (0, 1) {$\mathbb{P}^N$};
    \node (A1_3) at (3, 1) {$\mathbb{P}^{N*}$};
    \path (A0_1) edge [->]node [auto] {$\scriptstyle{}$} (A1_0);
    \path (A1_0) edge [->,dashed]node [auto] {$\scriptstyle{}$} (A1_3);
    \path (A0_2) edge [->]node [auto] {$\scriptstyle{}$} (A1_3);
    \path (A0_1) edge [->,dashed]node [auto] {$\scriptstyle{}$} (A0_2);
  \end{tikzpicture}
  \]   
Furthermore, $X(n)_1^{n-1}\cong \mathcal{X}(n)_1$, and the birational involution $i:\mathbb{P}^N\dasharrow \mathbb{P}^{N*}$ lifts to an automorphism $Z^{inv}:\mathcal{X}(n)\rightarrow \mathcal{X}(n)$. Finally, the same statements hold for the variety of complete quadrics $\mathcal{Q}(n)\subset\mathcal{X}(n)$.
\end{Proposition}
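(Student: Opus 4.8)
The plan is to exploit that $\mathcal{X}(n,m)_1$ has Picard rank two, so that Proposition \ref{firstbu} already pins down the whole Mori chamber decomposition of $\Eff(\mathcal{X}(n,m)_1)$, and with it the complete $2$-ray game issuing from $\mathcal{X}(n,m)_1$. First I would record the two end contractions of the movable cone. The wall $D_1\sim H$ induces the blow-down $\beta_1\colon\mathcal{X}(n,m)_1\to\mathbb{P}^N$ contracting $E_1$; read backwards this is the divisorial extraction with which the game begins (so $\lambda_0\neq\lambda_1$), while the interior walls $D_2,D_3,\dots$ of $\Mov(\mathcal{X}(n,m)_1)$ are crossed by flips, as the Mori dream space structure guarantees. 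It then remains to identify the contraction at the opposite wall and to match the resulting chain against the two Sarkisov link types.

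For $n<m$ the far wall of $\Mov(\mathcal{X}(n,m)_1)=\langle D_1,D_{n+1}\rangle$ is $D_{n+1}$, which by Proposition \ref{firstbu} is also extremal in $\Eff(\mathcal{X}(n,m)_1)$; on the last model $\mathcal{X}(n,m)_1^{n}$ the semiample divisor $D_{n+1}$ therefore induces a Mori fibration, and by Proposition \ref{morNEF} this is the map given by the maximal minors of $Z$, a fibration onto $\mathcal{G}(n,m)$. Hence the game opens with the divisorial extraction $\mathbb{P}^N\leftarrow\mathcal{X}(n,m)_1$, runs through flips to $\mathcal{X}(n,m)_1^{n}$, and ends with a Mori fiber space over $\mathcal{G}(n,m)$: exactly a Sarkisov link of type I, whose base composite $\mathbb{P}^N\dasharrow\mathcal{G}(n,m)$ is the order-$(n+1)$ minor map in the diagram.

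For $n=m$ the movable cone is $\langle D_1,D_n\rangle$ and the far wall is $D_n\sim nH-(n-1)E_1$. The point is that $|D_n|$ is generated by the $n\times n$ minors of $Z$, i.e. by the entries of $\mathrm{adj}(Z)$, so the map it defines is precisely the inverse involution $\iota\colon\mathbb{P}^N\dasharrow\mathbb{P}^{N*}$, $Z\mapsto Z^{-1}=(\det Z)^{-1}\mathrm{adj}(Z)$. Crossing $D_n$ out of $\Mov$ puts the divisor $D_{n+1}$, the transform of the determinant hypersurface $\sec_n(\mathcal{S})$, into the stable base locus, so the opposite end is a divisorial contraction $\mathcal{X}(n)_1^{n-1}\to\mathbb{P}^{N*}$; since $\mathrm{adj}$ sends a rank-$n$ matrix to a rank-one matrix, this contraction collapses $D_{n+1}$ onto the Segre variety $\mathcal{S}^{*}\subset\mathbb{P}^{N*}$, whence it is the blow-up of $\mathbb{P}^{N*}$ along $\mathcal{S}^{*}$ and $\mathcal{X}(n)_1^{n-1}\cong\mathcal{X}(n)_1$ by the $V\leftrightarrow V^{*}$ symmetry of Construction \ref{ccc}. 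With divisorial contractions at both ends and flips between them, this is a Sarkisov link of type II.

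Finally, to upgrade $\iota$ to a regular automorphism $Z^{inv}$ of $\mathcal{X}(n)$ I would argue through the graph (\ref{graph}). The perfect pairings $\bigwedge^{k}V\otimes\bigwedge^{n+1-k}V\to\bigwedge^{n+1}V$, and their analogues for $W$, give canonical identifications $\mathbb{P}(\Hom(\bigwedge^{k}W,\bigwedge^{k}V))\cong\mathbb{P}(\Hom(\bigwedge^{n+1-k}W,\bigwedge^{n+1-k}V))$, and reversing-and-dualizing the factors defines an automorphism $\Theta$ of the ambient product (\ref{ambient}). Jacobi's identity for the minors of an inverse — the $k\times k$ minors of $Z^{-1}$ are, up to the scalar $\det Z$ and sign, the complementary $(n+1-k)\times(n+1-k)$ minors of $Z$ — shows that on the full-rank locus $\Theta$ carries $(Z,\wedge^2Z,\dots,\wedge^{n+1}Z)$ to $(Z^{-1},\wedge^2Z^{-1},\dots,\wedge^{n+1}Z^{-1})$, so $\Theta$ restricts to $\iota$ on the graph; being an automorphism of the ambient product it then preserves the closure $\mathcal{X}(n)$, and $Z^{inv}:=\Theta|_{\mathcal{X}(n)}$ is a regular involution. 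The symmetric case follows because the inverse of a symmetric matrix is symmetric, so $\iota$ preserves $\mathbb{P}^{N_{+}}$ and $\Theta$ preserves $\mathcal{Q}(n)\subset\mathcal{X}(n)$. I expect this last step to be the main obstacle: the $2$-ray game by itself only exhibits $\mathcal{X}(n)_1\dasharrow\mathcal{X}(n)_1^{n-1}\cong\mathcal{X}(n)_1$ as an isomorphism in codimension one, hence a mere pseudo-automorphism, and the real work is checking via the $\Theta$-construction that $\iota$ in fact extends biregularly across the boundary divisors $E_j$, where the naive matrix inverse is undefined, to give an honest automorphism of $\mathcal{X}(n)$.
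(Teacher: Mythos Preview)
Your argument is correct and follows essentially the same route as the paper: both deduce the Sarkisov link structure from the Mori chamber decomposition of $\mathcal{X}(n,m)_1$ in Proposition \ref{firstbu} together with Proposition \ref{morNEF}, identify the far contraction in the case $n=m$ as the blow-up of $\mathbb{P}^{N*}$ along the dual Segre, and obtain $Z^{inv}$ by restricting to $\mathcal{X}(n)$ the factor-swapping automorphism of the ambient product (\ref{ambient}). Your treatment is in fact a bit more explicit than the paper's---you spell out the role of Jacobi's identity in verifying that the ambient involution $\Theta$ really acts as $Z\mapsto Z^{-1}$ on the full-rank locus, whereas the paper simply asserts this---but the underlying strategy is identical.
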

\begin{proof}
The description of the maps as Sarkisov links follows from Proposition \ref{morNEF} and the description of the Mori chamber decomposition of $\Eff(\mathcal{X}(n,m)_1)$ in Proposition \ref{firstbu}.

If $n = m$ let us denote by $\mathcal{S}\subset\mathbb{P}^N$ the Segre variety in the source $\mathbb{P}^N$, and by $\mathcal{S}'\subset\mathbb{P}^N$ the Segre variety in the target $\mathbb{P}^{N*}$. Note that $i:\mathbb{P}^N\dasharrow\mathbb{P}^{N*}$ contracts $\sec_n(\mathcal{S})$ onto $\mathcal{S}'$, and this is the only divisor contracted by $i$. Therefore, if $Y$ is the blow-up of $\mathbb{P}^{N*}$ along $\mathcal{S}'$ we get that $i$ lifts to a small $\mathbb{Q}$-factorial transformation $\mathcal{X}(n)\dasharrow Y$. Note that the divisor that gets contracted by crossing the wall $D_n$ in Proposition \ref{firstbu} is exactly the strict transform of $\sec_n(\mathcal{S})$ through the sequence of flips leading from $\mathcal{X}(n,m)_1$ to $\mathcal{X}(n)_1^{n-1}$. Therefore, $Y$ must be the model $\mathcal{X}(n)_1^{n-1}$ corresponding to the second last chamber in Proposition \ref{firstbu}.

Finally, note that $\mathcal{X}(n)$ has a distinguished automorphism given by restricting the automorphism of the product (\ref{ambient}) switching $\mathbb{P}(\Hom(\bigwedge^{k}W,\bigwedge^{k}V))$ and $\mathbb{P}(\Hom(\bigwedge^{n+1-k}W,\bigwedge^{n+1-k}V))$. Clearly, this automorphism is $Z^{inv}:\mathcal{X}(n)\rightarrow \mathcal{X}(n)$. 
\end{proof}

\begin{Remark}
Note that in the sequence of flips $\mathcal{X}(n,m)_1\dasharrow\mathcal{X}(n,m)_1^n$, and $\mathcal{X}(n)_1\dasharrow\mathcal{X}(n)_1^{n-1}$ in the Sarkisov links of Proposition \ref{Sarkisov} the flipped loci are the strict transforms of the secant varieties of $\mathcal{S}$ in order of increasing dimension. Indeed, by the proof of Proposition \ref{firstbu} we have that the stable base locus of a divisor $D\in(D_h,D_{h+1}]$ is exactly $\widetilde{\sec_h(\mathcal{S})}$.  

The models $\mathcal{X}(n,m)_1^{h}$ arising from Proposition \ref{firstbu} are the varieties constructed as Mumford quotient and through a VGIT argument in \cite[Section 3]{Tha99}. Indeed, in \cite[Theorem 2.3]{Tha99} M. Thaddeus proved that the inverse limit of these varieties is exactly the space of complete collineations.    
\end{Remark}

\begin{thm}\label{MCD_main}
The Mori chamber decomposition of $\mathcal{X}(3)$ consists of nine chambers described in the following $2$-dimensional section of $\Eff(\mathcal{X}(3))$
$$
\begin{tikzpicture}[xscale=0.4,yscale=0.7][line cap=round,line join=round,>=triangle 45,x=1cm,y=1cm]\clip(-13.6,-0.23) rectangle (13.9,6.5);\fill[line width=0pt,fill=black,fill opacity=0.3] (-5.000432432432432,2.4614054054054053) -- (5.000432432432432,2.4614054054054053) -- (0,4) -- cycle;\fill[line width=0pt,color=wwwwww,fill=wwwwww,fill opacity=0.15] (-5.000432432432432,2.4614054054054053) -- (5.000432432432432,2.4614054054054053) -- (0,1.7776389756402244) -- cycle;\draw [line width=0.4pt] (-13,0)-- (13,0);\draw [line width=0.4pt] (13,0)-- (0,6);\draw [line width=0.4pt] (0,6)-- (-13,0);\draw [line width=0.4pt] (0,4)-- (-13,0);\draw [line width=0.4pt] (0,4)-- (13,0);\draw [line width=0.4pt] (-5.000432432432432,2.4614054054054053)-- (13,0);\draw [line width=0.4pt] (5.000432432432432,2.4614054054054053)-- (-13,0);\draw [line width=0.4pt] (-5.000432432432432,2.4614054054054053)-- (5.000432432432432,2.4614054054054053);\draw [line width=0.4pt] (-5.000432432432432,2.4614054054054053)-- (0,6);\draw [line width=0.4pt] (0,6)-- (5.000432432432432,2.4614054054054053);\draw [line width=0.4pt] (0,4)-- (0,6);\begin{scriptsize}\draw [fill=black] (-13,0) circle (0pt);\draw[color=black] (-13.2,0.3) node {$E_1$};\draw [fill=black] (13,0) circle (0pt);\draw[color=black] (13.2,0.3) node {$E_3$};\draw [fill=black] (0,6) circle (0pt);\draw[color=black] (0.18536585365853658,6.2) node {$E_2$};\draw [fill=black] (0,4) circle (0pt);\draw[color=black] (0.6,4.143836565096953) node {$D_2$};\draw [fill=black] (-5.000432432432432,2.4614054054054053) circle (0pt);\draw[color=black] (-5.4,2.7) node {$D_1$};\draw [fill=black] (5.000432432432432,2.4614054054054053) circle (0pt);\draw[color=black] (5.4,2.7) node {$D_3$};\draw [fill=uuuuuu] (0,1.7776389756402244) circle (0pt);\draw[color=uuuuuu] (0.18536585365853658,1.4) node {$D_M$};\end{scriptsize}\end{tikzpicture}
$$
where $D_M \sim 6D_1-3E_1-2E_2$, and $\Mov(\mathcal{X}(3)) = \left\langle D_1,D_2,D_3,D_M\right\rangle$. The stable base locus decomposition of $\Eff(\mathcal{X}(3))$ consists of eight chambers and is obtained by removing the wall joining $D_2$ with $E_2$ in the picture above.

Furthermore, the same statements hold, by replacing the relevant divisors with their pull-backs via the embedding $i:\mathcal{Q}(3)\rightarrow \mathcal{X}(3)$, for the space of complete quadrics $\mathcal{Q}(3)$. 
\end{thm}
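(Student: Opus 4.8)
The plan is to squeeze the Mori chamber decomposition between the stable base locus decomposition (from below) and the toric/GKZ decomposition determined by the Cox generators (from above). First I would record the input data: by Theorem \ref{theff}, in $\Pic(\mathcal{X}(3)) = \mathbb{Z}[H, E_1, E_2]$ one has $\Eff(\mathcal{X}(3)) = \langle E_1, E_2, E_3\rangle$ and $\Nef(\mathcal{X}(3)) = \langle D_1, D_2, D_3\rangle$ with $D_1 = H$, $D_2 = 2H - E_1$, $D_3 = 3H - 2E_1 - E_2$, $E_3 = 4H - 3E_1 - 2E_2$, while by Proposition \ref{PropMov} and (\ref{mov3}) we have $\Mov(\mathcal{X}(3)) = \langle D_1, D_2, D_3, D_M\rangle$ with $D_M = 6H - 3E_1 - 2E_2$. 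The elementary identities $2D_1 = D_2 + E_1$, $2D_3 = D_2 + E_3$ and $D_M = 2D_1 + E_3 = 2D_3 + E_1$ put $D_1, D_3$ on the faces $\langle E_1, D_2\rangle, \langle E_3, D_2\rangle$ and exhibit $D_M$ as the intersection of $\langle E_1, D_3\rangle$ and $\langle E_3, D_1\rangle$; these are exactly the incidences of the pictured section.

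For the upper bound I would invoke Remark \ref{toric}: the Mori chamber decomposition of $\Eff(\mathcal{X}(3))$ coarsens the GKZ decomposition of the configuration of Cox generator degrees, which by Theorem \ref{gen} are the six columns of (\ref{gradmat}), namely $E_1, E_2, E_3, D_1, D_2, D_3$. I would then check that each of the eleven walls drawn in the statement is the cone spanned by a pair of these six classes and that their arrangement cuts $\Eff(\mathcal{X}(3))$ into precisely the nine chambers shown, the only interior crossing being that of $\langle E_1, D_3\rangle$ and $\langle E_3, D_1\rangle$ at $D_M$. This shows the Mori chamber decomposition has at most nine chambers.

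For the lower bound I would compute the stable base locus decomposition. Since $E_1, E_2, E_3$ already generate $\Eff(\mathcal{X}(3))$, the only possible divisorial fixed components are the $E_i$, and an $E_i$ is a fixed component of $D$ exactly when $D \notin \Cone(\{E_1, E_2, E_3, D_1, D_2, D_3\}\setminus\{E_i\})$; evaluating these three subcones, and confirming the answer geometrically through the blow-up description and the stable base loci $\textbf{B}(D) = \widetilde{\sec_h(\mathcal{S})}$ of Proposition \ref{firstbu}, shows that $E_1$ is fixed on the three chambers clustered at $E_1$, symmetrically $E_3$ on the three at $E_3$ under the involution $Z^{inv}$ of Proposition \ref{Sarkisov}, and $E_2$ on the four chambers above the spokes $\langle E_1, D_2\rangle, \langle E_3, D_2\rangle$. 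Adding that $\textbf{B} = \emptyset$ on $\Nef(\mathcal{X}(3))$ while a class of the flip chamber $\langle D_1, D_3, D_M\rangle$ is movable but not nef, hence has nonempty codimension-two stable base locus, and that the movable part of a class of $\langle D_1, D_2, E_2\rangle$ or $\langle D_2, D_3, E_2\rangle$ lands on the nef face $\langle D_1, D_2\rangle$ respectively $\langle D_2, D_3\rangle$ and is base-point-free, I would read off $\textbf{B}$ on every chamber. The upshot is that $\langle D_1, D_2, E_2\rangle$ and $\langle D_2, D_3, E_2\rangle$ both have stable base locus exactly $E_2$, whereas every other pair of adjacent chambers has distinct stable base loci (the nef and flip chambers being separated by their codimension-two loci); thus the stable base locus decomposition is the nine-chamber picture with the wall $\langle D_2, E_2\rangle$ deleted, giving eight chambers.

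Finally, since the Mori chamber decomposition refines the stable base locus decomposition (Remark \ref{SBLMC}) and coarsens the nine-chamber GKZ decomposition, it has eight or nine chambers, so it suffices to show the wall $\langle D_2, E_2\rangle$ is genuine. Both adjacent chambers have $\Exc = \{E_2\}$, so contracting $E_2$ lands on $\mathcal{X}(3)_1$ (note $\mathcal{X}(3) = \mathcal{X}(3)_2$, as the third blow-up in Construction \ref{ccc} is an isomorphism), but with pulled-back nef subcones the two different faces $\langle D_1, D_2\rangle$ and $\langle D_2, D_3\rangle$; by Proposition \ref{firstbu} the ray $D_2$ is an interior ray of $\Mov(\mathcal{X}(3)_1) = \langle D_1, D_3\rangle$ across which the $2$-ray game of Proposition \ref{Sarkisov} performs a nontrivial flip, so the two models are non-isomorphic, the wall is real, and there are nine Mori chambers. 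For $\mathcal{Q}(3)$ the Cox generators carry identical $\Pic$-degrees under the isomorphism $i^{*}$ of Proposition \ref{p1}, so the GKZ picture is unchanged; repeating the computation with $\widetilde{\sec_h(\mathcal{V})}$ in place of $\widetilde{\sec_h(\mathcal{S})}$ and with $\mathcal{Q}(3)_1$ of Construction \ref{ccq} in the model-difference step, or more quickly invoking Proposition \ref{MCXnQn} together with the isomorphism of movable cones in Proposition \ref{PropMov}, yields the same nine Mori and eight stable base locus chambers. The main obstacle is precisely the coincidence at $\langle D_2, E_2\rangle$: one must show simultaneously that the stable base locus stays constantly equal to $E_2$ across this wall while the two birational models genuinely differ, which is exactly what forces the Mori chamber and stable base locus decompositions apart.
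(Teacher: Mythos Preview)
Your proposal is correct and follows essentially the same architecture as the paper's proof: bound the Mori chamber decomposition from above by the GKZ decomposition of the six Cox generator degrees via Remark \ref{toric}, bound it from below by computing the stable base locus on each of the nine GKZ chambers, and then verify that the single ambiguous wall $\langle D_2,E_2\rangle$ is genuine by passing to the two distinct models of $\mathcal{X}(3)_1$ across the flop at $D_2$. The only noteworthy difference is that for the two chambers $\langle D_1,D_2,E_2\rangle$ and $\langle D_2,D_3,E_2\rangle$ the paper invokes Nakamaye's theorem to identify $\textbf{B}(D)=E_2$, whereas you obtain the same conclusion more combinatorially from the Cox generators; both arguments are valid and the rest of the proof, including the extension to $\mathcal{Q}(3)$ via Proposition \ref{MCXnQn}, matches.
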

\begin{proof} 
First of all, note that by Theorem \ref{gen} the sections of $D_1,D_2,D_3,E_1,E_2,E_3$ are homogeneous generators of $\Cox(\mathcal{X}(3))$ with respect to the usual grading on $\Pic(\mathcal{X}(3))$ as displayed in (\ref{gradmat}). Furthermore, by (\ref{mov3}) in the proof of Proposition \ref{PropMov} we have that $\Mov(\mathcal{X}(3)) = \left\langle D_1,D_2,D_3,D_M\right\rangle$, with $D_M \sim 6D_1-3E_1-2E_2$.

Now, let $\mathcal{T}_{\mathcal{X}(3)}$ be a simplicial projective toric variety as in Remark \ref{toric}. Then there is an embedding $i:\mathcal{X}(3)\rightarrow\mathcal{T}_{\mathcal{X}(3)}$ such that $i^{*}:\Pic(\mathcal{T}_{\mathcal{X}(3)})\rightarrow \Pic(\mathcal{X}(3))$ is an isomorphism inducing an isomorphism $\Eff(\mathcal{T}_{\mathcal{X}(3)})\rightarrow \Eff(\mathcal{X}(3))$. Furthermore, if we set $\widetilde{E}_j = i^{*-1}(E_j), \widetilde{D}_j = i^{*-1}(D_j)$ then the sections of $\widetilde{D}_1,\widetilde{D}_2,\widetilde{D}_3,\widetilde{E}_1,\widetilde{E}_2,\widetilde{E}_3$ are homogeneous generators of $\Cox(\mathcal{T}_{\mathcal{X}(3)})$ with respect to the grading on $\Pic(\mathcal{T}_{\mathcal{X}(3)})$ induced by the usual grading on $\Pic(\mathcal{X}(3))$ via the isomorphism $i^{*}$.

Since $\mathcal{T}_{\mathcal{X}(3)}$ is toric, the Mori chamber decomposition of $\Eff(\mathcal{T}_{\mathcal{X}(3)})$ can be computed by means of the Gelfand–Kapranov–Zelevinsky, GKZ for short, decomposition \cite[Section 2.2.2]{ADHL15}. Let us consider the family of vectors in $\Pic(\mathcal{T}_{\mathcal{X}(3)})$ given by $\mathcal{W} = (\widetilde{D}_1,\widetilde{D}_2,\widetilde{D}_3,\widetilde{E}_1,\widetilde{E}_2,\widetilde{E}_3)$, and let $\Omega(\mathcal{W})$ be the set of all convex polyhedral cones generated by some of the vectors in $\mathcal{W}$. By \cite[Construction 2.2.2.1]{ADHL15} the GKZ chambers of $\Eff(\mathcal{T}_{\mathcal{X}(3)})$ are given by the intersection of all the cones in $\Omega(\mathcal{W})$ containing a fixed divisor $D\in\Eff(\mathcal{T}_{\mathcal{X}(3)})$. Since $\Pic(\mathcal{T}_{\mathcal{X}(3)})$ is $3$-dimensional we may picture the vectors of $\mathcal{W}$ in a $2$-dimensional section. It is straightforward to see that taking all the possible intersections of all the convex cones generated by vectors in $\mathcal{W}$ we get a picture completely analogous to the one in the statement. 

Now, Remark \ref{toric} yields that the wall-and-chamber decomposition in the statement is a possibly trivial refinement of the Mori chamber decomposition of $\Eff(\mathcal{X}(3))$. In particular, $\Mov(\mathcal{X}(3))$ is divided in at most two chambers. On the other hand, by Theorem \ref{theff} we have that $\Nef(\mathcal{X}(3)) = \left\langle D_1,D_2,D_3\right\rangle$, and hence on $\Mov(\mathcal{X}(3))$ the Mori chamber decomposition consists of exactly two chambers as displayed in the statement.
  
Let us analyze the stable base locus decomposition of $\Eff(\mathcal{X}(3))$. Now, note that $\Nef(\mathcal{X}(3)) = \left\langle D_1,D_2,D_3\right\rangle$ yields $\textbf{B}(D) = \emptyset$ for any $D\in \left\langle D_1,D_2,D_3\right\rangle$. Furthermore, since $D_M\sim 2D_1+E_3\sim 2D_3+E_1$ for any divisor $D\in \left\langle D_3,E_3,D_M\right\rangle$ along with $(D_3,E_3]\cup (D_M,E_3]$ we have $\textbf{B}(D)\subseteq E_3$. On the other, since such a divisor is not movable its stable base locus must contain at least an irreducible divisor, and hence $\textbf{B}(D) = E_3$.

The same argument shows that $E_1\cup E_3$ contains the stable base locus of any divisor $D$ contained in the interior of $\left\langle D_M,E_1,E_3\right\rangle$ along with $(E_1,E_3)$. On the other hand, considering the curves described in Proposition \ref{moricone} we see that both $E_1$ and $E_3$ are covered by curves intersecting negatively such a divisor, so $\textbf{B}(D) = E_1\cup E_3$. 

Similarly, we can prove that $\textbf{B}(D) = E_1$ if and only if $D$ lies in the interior of $\left\langle D_1,E_1,D_M\right\rangle$ along with $(D_1,E_1]\cup (D_M,E_1]$, $\textbf{B}(D) = E_2\cup E_3$ if and only if $D$ lies in the interior of $\left\langle D_3,E_2,E_3\right\rangle$ along with $(E_2,E_3)$, $\textbf{B}(D) = E_1\cup E_2$ if and only if $D$ lies in the interior of $\left\langle D_1,E_1,E_2\right\rangle$ along with $(E_1,E_2)$.

The chambers $\left\langle D_1,D_2,E_2\right\rangle$ and $\left\langle D_2,D_3,E_2\right\rangle$ require a more careful analysis. In the notation of Proposition \ref{morNEF} the divisor $D_1+D_2$ induces the morphism $\pi_1\times\pi_2:\mathcal{X}(3)\rightarrow\mathbb{P}^{15}\times\mathbb{P}^{35}$. Similarly, the divisor $D_2+D_3$ induces the morphism $\pi_2\times\pi_3:\mathcal{X}(3)\rightarrow\mathbb{P}^{35}\times\mathbb{P}^{15}$. Therefore, for their exceptional loci we have $\Exc(\pi_1\times\pi_2)=\Exc(\pi_2\times\pi_3) = E_2$. Now, by Nakamaye's theorem \cite[Theorem 10.3.5]{La04II} we get that for a divisor $D$ in the interior of $\left\langle D_1,D_2,D_3,E_2\right\rangle$ along with $(D_1,E_2]\cup (D_3,E_2]$ we have $\textbf{B}(D) = E_2$. 

So far we proved that $\Eff(\mathcal{X}(3))$ is subdivided into eight stable base locus chambers and into at most nine Mori chambers. Therefore, to conclude the computation of the Mori chamber decomposition of $\Eff(\mathcal{X}(3))$ it is enough to prove that the wall joining $D_2$ and $E_2$ divides the stable base locus chamber delimited by $D_1,D_2,D_3$ and $E_2$ into two Mori chambers. This could be done simply by arguing that Mori chambers are convex. Anyway, in the following we will give a more constructive and geometrical proof. 

Consider the blow-up $\mathcal{X}(3)_1$ of $\mathbb{P}^{15}$ along the Segre variety $\mathcal{S}$. Note that, in the notation of Propositions \ref{firstbu}, \ref{Sarkisov} we have the following commutative diagram
  \[
  \begin{tikzpicture}[xscale=2.5,yscale=-1.2]
    \node (A0_1) at (1, 0) {$\mathcal{X}(3)$};
    \node (A1_0) at (0, 1) {$\mathcal{X}(3)_1$};
    \node (A1_2) at (2, 1) {$\mathcal{X}(3)_{1}^{2}$};
    \node (A2_0) at (0, 2) {$\mathbb{P}^{15}$};
    \node (A2_1) at (1, 2) {$W$};
    \node (A2_2) at (2, 2) {$\mathbb{P}^{15*}$};
    \path (A1_0) edge [->]node [auto] {$\scriptstyle{}$} (A2_1);
    \path (A0_1) edge [->,swap]node [auto] {$\scriptstyle{\pi_1\times\pi_2}$} (A1_0);
    \path (A1_2) edge [->]node [auto] {$\scriptstyle{}$} (A2_1);
    \path (A0_1) edge [->]node [auto] {$\scriptstyle{\pi_2\times\pi_3}$} (A1_2);
    \path (A1_0) edge [->]node [auto] {$\scriptstyle{}$} (A2_0);
    \path (A1_0) edge [->,dashed]node [auto] {$\scriptstyle{}$} (A1_2);
    \path (A1_2) edge [->]node [auto] {$\scriptstyle{}$} (A2_2);
  \end{tikzpicture}
  \]
where the rational map $\mathcal{X}(3)_1\dasharrow\mathcal{X}(3)_{1}^{2}$ is induced by the automorphism $Z^{inv}:\mathcal{X}(3)\rightarrow\mathcal{X}(3)$ in Proposition \ref{Sarkisov}. Note that by Proposition \ref{firstbu} $\mathcal{X}(3)_{1}^{2}$ is the only small $\mathbb{Q}$-factorial modification of $\mathbb{X}(3)_1$, and $\mathcal{X}(3)_1\dasharrow\mathcal{X}(3)_{1}^{2}$ is the flop associated to the small contraction induced by $D_2$. Indeed, $D_2$ has zero intersection with the strict transform of a line secant to $\mathcal{S}$, and the strict transform of $\sec_2(\mathcal{S})$ is the exceptional locus of the small contraction $\mathcal{X}(3)_1\rightarrow Y$ induced by $D_2$. So, even though $\mathcal{X}(3)_1$ and $\mathcal{X}(3)_{1}^{2}$ are abstractly isomorphic, crossing the wall generated by $D_2$ we get a non-trivial flop among them, and hence in the Mori chamber decomposition the wall $[D_2,E_2]$, that we could not see in the stable base locus decomposition, appears.

Now, consider $\mathcal{Q}(3)\subset\mathcal{X}(3)$. Propositions \ref{MCXnQn} and \ref{Sarkisov} yield that the Mori chamber and the stable base locus chamber decompositions of $\Eff(\mathcal{Q}(3))$ coincide with the corresponding decompositions of $\Eff(\mathcal{X}(3)$ outside of the movable cone. On the other hand, arguing as we did for (\ref{mov3}) in the proof of Proposition \ref{PropMov} we have that $\Mov(\mathcal{Q}(3)) = \left\langle D_1^{+},D_2^{+},D_3^{+},D_M^{+}\right\rangle$, with $D_M^{+} \sim 6D_1^{+}-3E_1^{+}-2E_2^{+}$, and by Theorem \ref{theff} we have $\Nef(\mathcal{Q}(3)) = \left\langle D_1^{+},D_2^{+},D_3^{+}\right\rangle$. Hence $\Mov(\mathcal{Q}(3))$ is subdivided in at least two Mori chambers.  Finally, by Proposition \ref{MCXnQn} and the first part of the proof we conclude that $\Mov(\mathcal{Q}(3))$ is subdivided in exactly two Mori chambers which are also distinct stable base locus chambers.  
\end{proof}
Finally, for $\mathcal{X}(2,m)$ we have the following result.

\begin{thm}\label{MCD_main2}
The Mori chamber decomposition of $\mathcal{X}(2,m)$ with $m\geq 3$ consists of five chambers described in the following $2$-dimensional section of $\Eff(\mathcal{X}(2,m))$
$$
\begin{tikzpicture}[xscale=0.7,yscale=0.5][line cap=round,line join=round,>=triangle 45,x=1cm,y=1cm]\clip(-7.5,-2.1) rectangle (4.1,4.5);\fill[line width=0pt,color=yqyqyq,fill=yqyqyq,fill opacity=0.56] (0,2) -- (-3.5,0) -- (3.52,0) -- cycle;\draw [line width=0.4pt] (-7,-2)-- (0,2);\draw [line width=0.4pt] (0,4)-- (-7,-2);\draw [line width=0.4pt] (-7,-2)-- (3.52,0);\draw [line width=0.4pt] (3.52,0)-- (0,4);\draw [line width=0.4pt] (0,4)-- (0,2);\draw [line width=0.4pt] (-3.5,0)-- (0,4);\draw [line width=0.4pt] (0,2)-- (3.52,0);\draw [line width=0.4pt] (-3.5,0)-- (3.52,0);\begin{scriptsize}\draw [fill=black] (-7,-2) circle (0.0pt);\draw[color=black] (-7.3,-1.7) node {$E_{1}$};\draw [fill=black] (0,2) circle (0.0pt);\draw[color=black] (0.3,2.2) node {$D_{2}$};\draw [fill=black] (-3.5,0) circle (0.0pt);\draw[color=black] (-3.8,0.2) node {$D_{1}$};\draw [fill=black] (3.52,0) circle (0.0pt);\draw[color=black] (3.8,0.2) node {$D_{3}$};\draw [fill=black] (0,4) circle (0.0pt);\draw[color=black] (0.08422939068100468,4.25) node {$E_{2}$};\end{scriptsize}\end{tikzpicture}
$$
where $\Mov(\mathcal{X}(2,m)) = \Nef(\mathcal{X}(2,m)) = \left\langle D_1,D_2,D_3\right\rangle$. The stable base locus decomposition of $\Eff(\mathcal{X}(2,m))$ consists of four chambers and is obtained by removing the wall joining $D_2$ with $E_2$ in the picture above.
\end{thm}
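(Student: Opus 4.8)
The plan is to follow the strategy of Theorem \ref{MCD_main}, the essential difference being that for $\mathcal{X}(2,m)$ the movable cone is already a single nef chamber, so the gap between the two decompositions is forced to appear \emph{outside} the movable cone. First I would record the cones involved. By Theorem \ref{theff} we have $\Eff(\mathcal{X}(2,m)) = \langle E_1,E_2,D_3\rangle$ and $\Nef(\mathcal{X}(2,m)) = \langle D_1,D_2,D_3\rangle$, where $D_1\sim H$, $D_2\sim 2H-E_1$ and $D_3\sim 3H-2E_1-E_2$. Running the computation of \cite[Proposition 3.3.2.3]{ADHL15} exactly as in the proof of Proposition \ref{PropMov} (where $n=2<m$ produces $2^{n-1}+1=3$ extremal rays) shows $\Mov(\mathcal{X}(2,m)) = \langle D_1,D_2,D_3\rangle$, so indeed $\Mov(\mathcal{X}(2,m)) = \Nef(\mathcal{X}(2,m))$ and the interior of this cone is a single Mori chamber.

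Next I would produce the candidate picture. By Theorem \ref{gen} the $\Pic$-degrees of the homogeneous generators of $\Cox(\mathcal{X}(2,m))$ lie in $\{D_1,D_2,D_3,E_1,E_2\}$; in the chosen section the classes $E_1,D_1,D_2$ are collinear, as they all lie in the plane $\{E_2=0\}$. Embedding $\mathcal{X}(2,m)$ in the simplicial toric variety $\mathcal{T}$ of Remark \ref{toric} and computing the Gelfand--Kapranov--Zelevinsky decomposition of $\Eff(\mathcal{T})$, namely the common refinement of all cones spanned by subsets of these five classes, yields precisely the five cones of the statement: the nef cone $\langle D_1,D_2,D_3\rangle$, the two cones $\langle D_1,D_2,E_2\rangle$ and $\langle D_2,D_3,E_2\rangle$, and the two cones $\langle E_1,D_1,E_2\rangle$ and $\langle E_1,D_1,D_3\rangle$. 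By Remark \ref{toric} the Mori chamber decomposition of $\mathcal{X}(2,m)$ is a possibly trivial coarsening of this, and the nef chamber is already confirmed by the previous step.

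The crux is to show that the wall $[D_2,E_2]$ is a genuine Mori wall although it is invisible to the stable base locus, while every other wall is seen by both decompositions. For the first point I would contract $E_2$: the morphism $\mathcal{X}(2,m)=\mathcal{X}(2,m)_2\to\mathcal{X}(2,m)_1$ of Construction \ref{ccc} contracts $E_2$, and the cones $\langle D_1,D_2,E_2\rangle$, $\langle D_2,D_3,E_2\rangle$ are spanned by $E_2$ together with the two Mori chambers $\langle D_1,D_2\rangle$ and $\langle D_2,D_3\rangle$ of $\Mov(\mathcal{X}(2,m)_1)=\langle D_1,D_3\rangle$ from Proposition \ref{firstbu}; these are separated by the wall $D_2$, across which Proposition \ref{Sarkisov} exhibits a Sarkisov link of type I realized by the map $\mathbb{P}^N\dasharrow\mathcal{G}(2,m)$ given by the maximal minors. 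Hence the two cones give non-isomorphic models of $\mathcal{X}(2,m)$ and $[D_2,E_2]$ is a true Mori wall, so the Mori chamber decomposition has exactly five chambers. For the stable base loci I would argue as in Theorem \ref{MCD_main}: divisors in $\langle D_1,D_2,D_3\rangle$ are semiample, so $\textbf{B}=\emptyset$; on the interiors of $\langle D_1,D_2,E_2\rangle$ and $\langle D_2,D_3,E_2\rangle$ one writes the class as a big nef divisor plus a positive multiple of the rigid exceptional divisor $E_2$ (equivalently, applies Nakamaye's theorem \cite[Theorem 10.3.5]{La04II} to $\pi_1\times\pi_2$ and $\pi_2\times\pi_3$ of Proposition \ref{morNEF}, whose exceptional locus is $E_2$), giving $\textbf{B}=E_2$ throughout their union. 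Since the link of Proposition \ref{Sarkisov} is an isomorphism in codimension one it leaves the divisorial fixed part unchanged, so these two Mori chambers fuse into a single stable base locus chamber. Finally, using the curve classes of Proposition \ref{moricone} sweeping out $E_1$ and $E_2$, I would obtain $\textbf{B}=E_1$ on $\langle E_1,D_1,D_3\rangle$ and $\textbf{B}=E_1\cup E_2$ on $\langle E_1,D_1,E_2\rangle$, which remain distinct; this gives the four stable base locus chambers, recovered from the Mori picture by deleting $[D_2,E_2]$.

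The main obstacle is precisely this last assertion that $[D_2,E_2]$ is a Mori wall invisible to $\textbf{B}$. It requires reducing the birational geometry of $\mathcal{X}(2,m)$ beyond its trivial movable cone to that of the intermediate space $\mathcal{X}(2,m)_1$ inside its own movable cone, where the flip genuinely occurs; one must verify that contracting $E_2$ matches the two outer cones with the two chambers of $\Mov(\mathcal{X}(2,m)_1)$ as claimed, and that the small modification underlying the Sarkisov link does not alter the stable base locus $E_2$. A secondary point of care is excluding any further coarsening of the four non-nef chambers and determining their stable base loci exactly, for which the covering-curve and Nakamaye arguments of Theorem \ref{MCD_main} transfer with only notational changes.
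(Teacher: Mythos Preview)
Your proof is correct and follows essentially the same strategy as the paper, which simply says ``argue as in the proof of Theorem~\ref{MCD_main}''. The one substantive difference is how you certify that $[D_2,E_2]$ is a genuine Mori wall. The paper uses the short convexity argument already mentioned in the proof of Theorem~\ref{MCD_main}: since the stable base locus chamber $\langle D_1,D_2,D_3,E_2\rangle$ with $\textbf{B}=E_2$ is non-convex and Mori chambers are convex, the wall $[D_2,E_2]$ is forced. You instead adapt the constructive argument from Theorem~\ref{MCD_main}, passing to $\mathcal{X}(2,m)_1$ via the contraction of $E_2$ and invoking the type~I Sarkisov link of Proposition~\ref{Sarkisov} to identify the two candidate chambers with the two distinct Mori chambers $\langle D_1,D_2\rangle$ and $\langle D_2,D_3\rangle$ of $\Mov(\mathcal{X}(2,m)_1)$ from Proposition~\ref{firstbu}. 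Both routes are valid: the paper's is a one-line observation, while yours has the advantage of explicitly naming the two birational models ($\mathcal{X}(2,m)_1$ and its small modification $\mathcal{X}(2,m)_1^2$) corresponding to the two Mori chambers, which is informative but not needed for the bare statement.
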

\begin{proof}
It is enough to argue as in the proof of Theorem \ref{MCD_main}. Note that as in the case of $\mathcal{X}(3)$ the stable base locus of any divisor in the interior of the non convex chamber $\left\langle D_1,D_2,D_3,E_2\right\rangle$ along with $(D_1,E_2]$ and $(D_{3},E_{2}]$ is the divisor $E_2$. On the other hand, since Mori chambers are convex the wall joining $D_2$ with $E_2$ must appear in the Mori chamber decomposition.
\end{proof}

\section{Pseudo-automorphisms}\label{psaut}
A birational map $f:X\dasharrow Y$ between two varieties is a pseudo-isomorphism if there are $\mathcal{U}\subseteq X$, $\mathcal{V}\subset Y$ open subsets whose complementary sets have codimension greater than or equal to two such that $f_{\mathcal{U}}:\mathcal{U}\rightarrow\mathcal{V}$ is an isomorphism. When $X=Y$ a pseudo-isomorphism $f:X\dasharrow X$ is called a pseudo-automorphism of $X$. Essentially, a pseudo-automorphism of $X$ is a birational map $f:X\dasharrow X$ such that both $f$ and $f^{-1}$ do not contract any divisor. We will denote by $\PsAut(X)$ the group of pseudo-automorphisms of $X$.

In this section we will prove that for spaces of complete forms the groups of automorphisms and pseudo-automorphisms coincide. We begin by analyzing the relation between pseudo-automorphisms and automorphisms of varieties of Picard number one, and of Fano varieties of any Picard number. For a general account on automorphisms of Mori dream spaces we refer to \cite[Section 4.2.4]{ADHL15}. 

\begin{Lemma}\label{pautl1}
Let $f:X\dasharrow Y$ be a birational map between normal projective varieties with the same Picard number $\rho(X) = \rho(Y)$. If $f$ does not contract any divisor in $X$ then $f^{-1}:Y\dasharrow X$ does not contract any divisor in $Y$, and hence $X$ and $Y$ are isomorphic in codimension one.

Furthermore, assume that $\Eff(X)$ is finitely generated by $r$ extremal rays, and that there exists a birational morphism $f:X\rightarrow Y$ onto a normal projective variety $Y$. Then there is an exact sequence
$$0\rightarrow \PsAut(Y)\rightarrow\PsAut(X)\rightarrow S_r$$
where $S_r$ is the symmetric group on $r$ elements.
\end{Lemma}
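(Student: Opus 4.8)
The two parts call for different tools: I would prove the first by bookkeeping of exceptional divisors on a common resolution, and the second by analysing the permutation action of $\PsAut(X)$ on the extremal rays of $\Eff(X)$. For the first statement, choose a smooth common resolution $W$ with birational morphisms $p\colon W\to X$ and $q\colon W\to Y$ such that $f=q\circ p^{-1}$; this exists in characteristic zero. A prime divisor of $X$ is contracted by $f$ exactly when its strict transform on $W$ is $q$-exceptional, and symmetrically a prime divisor of $Y$ is contracted by $f^{-1}$ exactly when its strict transform is $p$-exceptional. Let $a$ and $b$ be the numbers of divisors contracted by $f$ and by $f^{-1}$, and let $c$ be the number of prime divisors of $W$ that are simultaneously $p$- and $q$-exceptional; then $W$ carries $b+c$ $p$-exceptional and $a+c$ $q$-exceptional prime divisors. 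Using the decomposition $N^1(W)_{\R}=p^{*}N^1(X)_{\R}\oplus\langle p\text{-exceptional classes}\rangle$ and its analogue for $q$, one gets $\rho(W)=\rho(X)+b+c=\rho(Y)+a+c$, so $\rho(X)=\rho(Y)$ forces $a=b$. In particular $a=0$ implies $b=0$, and restricting $f$ to the locus where it is a local isomorphism then identifies $X$ and $Y$ outside subsets of codimension at least two.

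For the second statement I would first define $\Phi\colon\PsAut(X)\to S_r$. Each $g\in\PsAut(X)$ is an isomorphism in codimension one, so $g_{*}$ is a linear automorphism of $N^1(X)$ preserving $\Eff(X)$ (as recalled for small modifications at the start of Section~\ref{sec3}); it therefore permutes the $r$ extremal rays, and covariance of $g\mapsto g_{*}$ makes $\Phi$ a homomorphism. The injection $\PsAut(Y)\hookrightarrow\PsAut(X)$ would be the lift $h\mapsto\widetilde h:=f^{-1}\circ h\circ f$; conjugation by the birational morphism $f$ is tautologically an injective homomorphism of birational-transformation groups, so the only point is that $\widetilde h$ is again a pseudo-automorphism. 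By the first part it suffices to check that $\widetilde h$ contracts no divisor: a divisor of $X$ that is not $f$-exceptional is the strict transform of a divisor $D_Y$ of $Y$ and is sent to the strict transform of $h_{*}D_Y$, still a divisor, so the whole issue is confined to the $f$-exceptional divisors $F_1,\dots,F_k$ with centres $Z_i=f(F_i)$.

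Conversely, for $g\in\ker\Phi$ the automorphism $g_{*}$ fixes every extremal ray of $\Eff(X)$, hence fixes the classes of the $f$-exceptional divisors; thus $g$ preserves $\Exc(f)$ fibrewise and descends along $f$ to a birational self-map $\overline g$ of $Y$ contracting no divisor, which is a pseudo-automorphism by the first part and satisfies $\widetilde{\overline g}=g$. Combined with the previous paragraph this would give $\operatorname{image}(\PsAut(Y))=\ker\Phi$ and the asserted exactness of $0\to\PsAut(Y)\to\PsAut(X)\to S_r$.

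The step I expect to be the real obstacle is precisely the behaviour of these two constructions along the $f$-exceptional locus. One must show that a pseudo-automorphism $h$ of $Y$ carries each centre $Z_i$ to a centre, so that $\widetilde h$ does not contract the corresponding $F_i$ and so that $\widetilde h_{*}$ fixes the exceptional extremal rays and genuinely lands in $\ker\Phi$; and dually that an element of $\ker\Phi$ really permutes the $F_i$ trivially and descends to a morphism rather than a mere rational map. This compatibility between the codimension-one nature of pseudo-automorphisms and the codimension-$\ge 2$ centres of $f$ is what has to be argued geometrically, whereas the remainder of the statement reduces to the divisor bookkeeping of the first part together with the permutation representation on $\Eff(X)$.
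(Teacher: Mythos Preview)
Your approach coincides with the paper's. For the first part both arguments take a common resolution and compare the numbers of $p$-exceptional and $q$-exceptional prime divisors via the Picard numbers; your explicit count $a=b$ is just a quantified form of the paper's contradiction argument.

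For the second part the paper also defines the map $\PsAut(X)\to S_r$ through the permutation action on the extremal rays of $\Eff(X)$, and then shows that an element $\phi$ of the kernel descends to $\overline{\phi}\in\PsAut(Y)$ by exactly your mechanism: if $\overline{\phi}$ contracted a divisor $D\subset Y$, its strict transform $\widetilde{D}$ would be non-$f$-exceptional while $\phi(\widetilde{D})$ is $f$-exceptional, contradicting the trivial action on $\Eff(X)$. The subtlety you isolate, namely that fixing the extremal rays should force $\phi$ to fix each $f$-exceptional divisor $F_i$ individually, is handled implicitly in the paper by the standard fact that each $F_i$ generates an extremal ray of $\Eff(X)$ and is the unique effective divisor on that ray; hence a trivial permutation of the rays fixes each $F_i$. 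You are right, however, that the paper does not construct or verify the injection $\PsAut(Y)\hookrightarrow\PsAut(X)$: its proof only produces the descent map $\ker\Phi\to\PsAut(Y)$ and stops there, so the exactness at $\PsAut(X)$ is established only in one direction. Your attempt to also build the lift $h\mapsto f^{-1}\circ h\circ f$ and your observation that this requires $h$ to preserve the centres $Z_i$ go beyond what the paper actually proves.
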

\begin{proof}
Consider a resolution
  \[
  \begin{tikzpicture}[xscale=1.5,yscale=-1.2]
    \node (A0_1) at (1, 0) {$Z$};
    \node (A1_0) at (0, 1) {$X$};
    \node (A1_2) at (2, 1) {$Y$};
    \path (A1_0) edge [->,dashed]node [auto] {$\scriptstyle{f}$} (A1_2);
    \path (A0_1) edge [->]node [auto] {$\scriptstyle{q}$} (A1_2);
    \path (A0_1) edge [->,swap]node [auto] {$\scriptstyle{p}$} (A1_0);
  \end{tikzpicture}
  \]
Assume that there exists a divisor $D\subset Y$ which is contracted by $f^{-1}$. Then the strict transform of $D$ in $Z$ is $p$-exceptional but not $q$-exceptional. Now, $\rho(X) = \rho(Y)$ forces the existence of a $q$-exceptional divisor $D'\subset Z$ which is not $p$-exceptional. Therefore, $f(p(D'))\subset Y$ has codimension greater than or equal to two, a contradiction.  

Now, let us prove the second statement. The group $\PsAut(X)$ acts on the extremal rays of $\Eff(X)$ by permutations, and this gives us the morphism of groups $\PsAut(X)\rightarrow S_r$. Now, assume that $\phi\in \PsAut(X)$ induces the trivial permutation on the set of the extremal rays of $\Eff(X)$, and consider the following commutative diagram  
   \[
  \begin{tikzpicture}[xscale=2.1,yscale=-1.2]
    \node (A0_0) at (0, 0) {$X$};
    \node (A0_1) at (1, 0) {$X$};
    \node (A1_0) at (0, 1) {$Y$};
    \node (A1_1) at (1, 1) {$Y$};
    \path (A0_0) edge [->,dashed]node [auto] {$\scriptstyle{\phi}$} (A0_1);
    \path (A1_0) edge [->,dashed]node [auto] {$\scriptstyle{\overline{\phi}}$} (A1_1);
    \path (A0_1) edge [->]node [auto] {$\scriptstyle{\pi}$} (A1_1);
    \path (A0_0) edge [->,swap]node [auto] {$\scriptstyle{\pi}$} (A1_0);
  \end{tikzpicture}
  \]
where $\overline{\phi}$ is the rational map induced by $\phi$. Assume that there exists a divisor $D\subset Y$ contracted by $\overline{\phi}$. Then the strict transform $\widetilde{D}\subset X$ of $D$ via $\pi$ is not $\pi$-exceptional but since $\phi$ does not contract any divisor $\phi(\widetilde{D})$ must be $\pi$-exceptional. A contraction, since $\phi$ acts trivially on $\Eff(X)$. Finally, the first part of the proof yields that $\overline{\phi}$ is a pseudo-automorphism of $Y$. 
\end{proof}

\begin{Proposition}\label{pautp1}
Let $X$ be a normal projective variety with $\Pic(X)\cong\mathbb{Z}$, and let $f:X\dasharrow X$ be a birational self-map of $X$ not contracting any divisor. Then $f$ is an automorphism. In particular 
$$\PsAut(X)\cong \Aut(X)$$
Furthermore, the isomorphism $\PsAut(X)\cong \Aut(X)$ also holds for smooth Fano varieties of arbitrary Picard number. 
\end{Proposition}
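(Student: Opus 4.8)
The plan is to prove that every pseudo-automorphism of $X$ is actually an automorphism; since the reverse inclusion $\Aut(X)\subseteq\PsAut(X)$ is trivial, this yields the isomorphism $\PsAut(X)\cong\Aut(X)$. So let $f\colon X\dasharrow X$ be a birational self-map that contracts no divisor. Applying Lemma \ref{pautl1} with $Y=X$ (so that trivially $\rho(X)=\rho(Y)$), I learn that $f^{-1}$ also contracts no divisor, hence $f$ is an isomorphism in codimension one: there are open subsets $\mathcal{U},\mathcal{V}\subseteq X$ with complements of codimension at least two such that $f|_{\mathcal{U}}\colon\mathcal{U}\to\mathcal{V}$ is an isomorphism. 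The whole point is then to upgrade this to a genuine isomorphism using $\Pic(X)\cong\mathbb{Z}$.

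First I would record the action of $f$ on $\Pic(X)$. Being an isomorphism in codimension one, $f$ induces a well-defined automorphism $f^{*}\colon\Pic(X)\to\Pic(X)$ which carries effective classes to effective classes (no divisor is contracted, so the closure of the transform of an effective prime divisor is again an effective divisor), and the same holds for $(f^{-1})^{*}=(f^{*})^{-1}$; hence $f^{*}$ preserves the pseudo-effective cone $\overline{\Eff}(X)$. Let $H$ be the ample generator of $\Pic(X)\cong\mathbb{Z}$. Since $X$ is projective, $\overline{\Eff}(X)$ is exactly the ray $\mathbb{R}_{\geq 0}[H]$ (intersecting with $H^{\dim X-1}$ rules out $-H$ being effective), so $f^{*}$, being an automorphism of $\mathbb{Z}$ that fixes this ray, must satisfy $f^{*}H=H$.

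The key step is then the embedding argument. For $m\gg 0$ the very ample divisor $mH$ defines a closed embedding $\iota\colon X\hookrightarrow\mathbb{P}^{N}$ with $N+1=h^{0}(X,mH)$. Because $X$ is normal, sections of $\mathcal{O}_{X}(mH)$ extend across the codimension-two complements, so $f|_{\mathcal{U}}$ induces a linear isomorphism $f^{*}\colon H^{0}(X,mH)\xrightarrow{\sim}H^{0}(X,f^{*}(mH))=H^{0}(X,mH)$, which in turn determines a projective linear automorphism $\bar{f}$ of $\mathbb{P}^{N}$. On the dense open set $\mathcal{U}$ one has $\iota\circ f=\bar{f}\circ\iota$, so the morphism $\iota^{-1}\circ\bar{f}\circ\iota\colon X\to X$ (well defined since $\bar{f}$ preserves the closed subvariety $\iota(X)$) agrees with $f$ on $\mathcal{U}$ and therefore equals $f$ as a rational map. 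Thus $f$ is a morphism, and running the same argument for $f^{-1}$ shows that $f$ is an automorphism.

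For the final assertion, let $X$ be a smooth Fano variety of arbitrary Picard number and $f\in\PsAut(X)$; by Lemma \ref{pautl1} $f$ is again an isomorphism in codimension one. Here the role of $H$ is played by $-K_{X}$, which is ample by the Fano hypothesis. Since $X$ is smooth and $f$ is an isomorphism in codimension one there is no discrepancy divisor, whence $f^{*}K_{X}=K_{X}$ and so $f^{*}(-K_{X})=-K_{X}$ is ample. Replacing $H$ by $-K_{X}$ and the rank-one constraint by the single relation $f^{*}(-K_{X})=-K_{X}$, the embedding argument of the previous paragraph (now using $|-mK_{X}|$ for $m\gg 0$) goes through verbatim and shows $f\in\Aut(X)$. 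The main obstacle in both parts is exactly this last step: verifying that the pullback of sections is well defined (which is where normality, via extension over codimension two, enters) and that the resulting linear automorphism of $\mathbb{P}^{N}$ conjugates $\iota$ back to $f$; in the Fano case one must additionally check that smoothness forces $f^{*}K_{X}=K_{X}$ on the nose, rather than only up to exceptional divisors.
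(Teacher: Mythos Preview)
Your proof is correct. The first part, for $\Pic(X)\cong\mathbb{Z}$, follows essentially the same embedding argument as the paper: pull back the ample generator, extend sections across codimension two by normality, and read off a linear automorphism of the ambient projective space restricting to $f$. Your treatment is in fact slightly more careful than the paper's, since you justify $f^{*}H=H$ explicitly and pass to a sufficiently high multiple $mH$ to guarantee very ampleness.

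For the Fano case, however, your route is genuinely different. The paper argues via the Mori dream space structure: a smooth Fano $X$ is a Mori dream space, so a pseudo-automorphism that is not an automorphism would exhibit $X$ as one of its own nontrivial small $\mathbb{Q}$-factorial modifications $X_i$; then $-K_X$ would lie in the interior of two distinct maximal Mori chambers, contradicting that chambers meet only along walls. You instead observe directly that a small birational self-map of a smooth variety satisfies $f^{*}K_X=K_X$ (no discrepancy, since the indeterminacy locus has codimension at least two), hence $f^{*}(-K_X)=-K_X$ is ample, and then rerun the embedding argument with the pluri-anticanonical system. Your approach is more elementary and self-contained, avoiding the appeal to \cite{BCHM10} and the Mori chamber formalism; the paper's approach, on the other hand, situates the statement within the Mori-theoretic framework used throughout Section~\ref{sec4} and makes transparent why the Fano condition (ampleness of $-K_X$, forcing it into the interior of a single chamber) is exactly what is needed.
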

\begin{proof}
By Lemma \ref{pautl1} $f^{-1}:X\dasharrow X$ does not contract any divisor, so $f$ is a pseudo-automorphism of $X$. Now, consider the twisting sheaf $\mathcal{O}_X(1)$ on $X$, and let $i:X\rightarrow \mathbb{P}(X,H^0(X,\mathcal{O}_X(1))^{*}) = \mathbb{P}^N$ be the corresponding embedding. Since $\Pic(X)\cong\mathbb{Z}[\mathcal{O}_X(1)]$ for any $\f\in \PsAut(X)$ we have $f^{*}\mathcal{O}_X(1)\cong \mathcal{O}_X(1)$. 

Now, since $X$ is normal any rational section of $\mathcal{O}_X(1)$ extends to a regular section, and hence $f$ yields an automorphism $\overline{f}$ of $\mathbb{P}^N$ stabilizing $i(X)\cong X$. Therefore, $\overline{f}_{|i(X)}:i(X)\rightarrow i(X)$ extends $f$.

Now, assume that $X$ is a smooth Fano variety. Then Remark \ref{sphMDS} yields that $X$ is a Mori dream space. This means that $X$ admits finitely many small $\mathbb{Q}$-factorial transformations $X\dasharrow X_i$. Assume that $X$ has a pseudo-automorphism $X\dasharrow X$ which is not an automorphism. Since $X\dasharrow X$ is a small $\mathbb{Q}$-factorial transformation then we must have $X_i\cong X$ for some $i$. In particular $X_i$ is Fano, and hence $-K_X\cong -K_{X_i}$ is in the interior of both the maximal chambers of the Mori chamber decomposition of $\Mov(X)$ corresponding to $\Nef(X)$ and $\Nef(X_i)$. A contradiction, since these two chambers intersect at most along a wall.
\end{proof}

\begin{Remark}
Consider the Mori dream space $\mathcal{X}(n)_1$ in Proposition \ref{Sarkisov}. In this case $Z^{inv}:\mathcal{X}(n)\rightarrow\mathcal{X}(n)$ induces a pseudo-automorphism of $\mathcal{X}(n)_1$ which is not an automorphism. Indeed, in this case $\mathcal{X}(n)_{1}^{n-1}$ is abstractly isomorphic to $\mathcal{X}(n)_1$, and 
$$-K_{\mathcal{X}(n)_1}\sim (n^2+2n+1)D_1-(n^2-1)E_1$$ 
Hence, Proposition \ref{firstbu} yields that $\mathcal{X}(n)_1$ is not Fano for any $n\geq 3$. Note that $\mathcal{X}(n)_1$ is Fano for $n\in\{1,2\}$ while $\mathcal{X}(3)_1$ is weak Fano but not Fano.  
\end{Remark}

Now, applying the techniques developed in \cite{BM17}, \cite{MM14}, \cite{MaM17}, \cite{Ma14}, \cite{Ma17}, \cite{FM17} and \cite{FM18} to deal with automorphisms of moduli spaces of curves and stable maps and in \cite{AC17} for moduli of parabolic vector bundles, we will finally compute the pseudo-automorphism group of the spaces of complete forms. We will need the following preliminary result. 

\begin{Lemma}\label{autprod}
Let $X$ be a Cartesian product of projective spaces. Then, modulo an automorphism of $X$, we may write $X\cong (\mathbb{P}^{n_1})^{r_1}\times\dots\times (\mathbb{P}^{n_h})^{r_h}$, where $r_i$ is the number of $n_i$-dimensional projective spaces appearing in the product $X$ for $i = 1,\dots,h$. Then
$$\Aut(X)\cong (S_{r_1}\rtimes PGL(n_1+1)^{r_1})\times\dots \times (S_{r_h}\rtimes PGL(n_h+1)^{r_h})$$
where $S_{r_i}$ is the symmetric group on $r_i$ elements.
\end{Lemma}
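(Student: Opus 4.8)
The plan is to study the action of an automorphism on $\Pic(X)$ together with its cone of divisors, and then to reconstruct the automorphism from the system of coordinate projections. First I would write $X$ as a product $\P^{d_1}\times\dots\times\P^{d_k}$ of its $k=r_1+\dots+r_h$ individual factors, denote by $p_i\colon X\to\P^{d_i}$ the projections and by $H_i=p_i^{*}\mathcal{O}_{\P^{d_i}}(1)$ the pulled-back hyperplane classes, so that $\Pic(X)=\Z[H_1,\dots,H_k]$ and $\Nef(X)=\langle H_1,\dots,H_k\rangle$ is a simplicial cone whose extremal rays are the primitive rays $\R_{\geq0}H_i$. Any $\varphi\in\Aut(X)$ induces a lattice isomorphism $\varphi^{*}$ preserving $\Nef(X)$, hence permuting its extremal rays; since the $H_i$ are the primitive generators, I obtain a permutation $\sigma\in S_k$ with $\varphi^{*}H_i=H_{\sigma(i)}$.

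Next I would pin down which permutations can occur. The key observation is that $h^{0}(X,\mathcal{O}_X(H_i))=d_i+1$ is an isomorphism invariant, so $\varphi^{*}H_i=H_{\sigma(i)}$ forces $d_{\sigma(i)}=d_i$; thus $\sigma$ can only interchange factors of equal dimension and lies in $S_{r_1}\times\dots\times S_{r_h}\subseteq S_k$. This yields a homomorphism $\Aut(X)\to S_{r_1}\times\dots\times S_{r_h}$ which is surjective because the maps of $X$ permuting isomorphic factors are genuine automorphisms. The substance of the argument is then to reconstruct $\varphi$ itself: I would use that $|H_i|$ is base-point-free and realizes exactly the projection $p_i\colon X\to\P^{d_i}=\P(H^{0}(X,\mathcal{O}_X(H_i))^{*})$, so that the linear isomorphism $\varphi^{*}\colon H^{0}(X,\mathcal{O}_X(H_{\sigma(i)}))\to H^{0}(X,\mathcal{O}_X(H_i))$ descends to some $\varphi_i\in PGL(d_i+1)$ satisfying the intertwining relation $p_{\sigma(i)}\circ\varphi=\varphi_i\circ p_i$. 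Since $(p_1,\dots,p_k)$ is the identity of $X$, these relations show that $\varphi$ is the composite of the factor permutation $\sigma$ with the diagonal action of $(\varphi_1,\dots,\varphi_k)\in\prod_i PGL(d_i+1)$, and conversely every such composite is an automorphism.

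Finally I would package this as a group isomorphism. The above exhibits $\Aut(X)$ as generated by the permutations in $S_{r_1}\times\dots\times S_{r_h}$ and the factorwise projective linear transformations, with the permutations acting by permuting the $PGL$-factors of a given dimension; regrouping by dimension produces the wreath-product decomposition
\begin{equation*}
\Aut(X)\cong (S_{r_1}\rtimes PGL(n_1+1)^{r_1})\times\dots\times(S_{r_h}\rtimes PGL(n_h+1)^{r_h}).
\end{equation*}
I expect the only genuinely delicate point to be the reconstruction step, namely checking that $|H_i|$ defines precisely the $i$-th projection and that the induced relations determine $\varphi$ as a diagonal-plus-permutation map; the constraint that factors of different dimensions cannot be swapped, by contrast, is an immediate consequence of the section-count invariant, and the semidirect-product bookkeeping is routine.
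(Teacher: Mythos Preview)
Your proposal is correct and follows essentially the same strategy as the paper: both arguments exploit that an automorphism must permute the extremal rays of a canonical cone in $N^1(X)$ (you use $\Nef(X)=\langle H_1,\dots,H_k\rangle$, the paper dually uses the Mori cone $\NE(X)$ generated by the classes of lines in the factors), and both then reconstruct $\varphi$ from the induced maps on the projections $p_i$.

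The only notable difference is in the bookkeeping. The paper proceeds by induction on the number $h$ of distinct dimensions: it builds a surjection $\chi\colon\Aut(X)\to S_{r_1}\ltimes PGL(n_1+1)^{r_1}$ from the commutative squares $p_{j_i}\circ\varphi^{-1}=\overline\varphi\circ p_i$, and then observes that an element of the kernel restricts to an automorphism of each fiber of $X\to(\P^{n_1})^{r_1}$, which is a product with one fewer distinct dimension. Your argument is non-inductive and slightly more transparent: you pin down the allowed permutation via the invariant $h^0(X,H_i)=d_i+1$, identify $|H_i|$ with $p_i$ directly, and read off $\varphi$ as the composite of a factor permutation with a diagonal $PGL$-action in one step. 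Both routes are standard; yours is marginally cleaner, the paper's marginally shorter.
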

\begin{proof}
Since $X$ is a toric variety its Mori cone $\NE(X)$ is generated by the classes of toric invariant curves, that is by the classes of a line in each factor of the product. Therefore, if $\phi\in\Aut(X)$ is an automorphism for any $\pi_{i}:X\rightarrow\mathbb{P}^{n_i}$ we have a commutative diagram of the following form 
  \[
  \begin{tikzpicture}[xscale=2.1,yscale=-1.2]
    \node (A0_0) at (0, 0) {$X$};
    \node (A0_1) at (1, 0) {$X$};
    \node (A1_0) at (0, 1) {$\mathbb{P}^{n_{j_i}}$};
    \node (A1_1) at (1, 1) {$\mathbb{P}^{n_i}$};
    \path (A0_0) edge [->]node [auto] {$\scriptstyle{\phi^{-1}}$} (A0_1);
    \path (A1_0) edge [->]node [auto] {$\scriptstyle{\overline{\phi}}$} (A1_1);
    \path (A0_1) edge [->]node [auto] {$\scriptstyle{\pi_{i}}$} (A1_1);
    \path (A0_0) edge [->,swap]node [auto] {$\scriptstyle{\pi_{j_i}}$} (A1_0);
  \end{tikzpicture}
  \]
where $n_{j_i} = n_i$. Therefore, $\phi\in\Aut(X)$ yields a permutation $\sigma_{\phi}:S_{r_1}\rightarrow S_{r_1}$, given by $\sigma_{\phi}(i) = j_i$, and hence a surjective morphism of groups
$$
\begin{array}{cccc}
\chi: &\Aut(X)& \longrightarrow & S_{r_1}\ltimes PGL(n_1+1)^{r_1}\\
      & \phi & \longmapsto & (\sigma_{\phi},\overline{\phi})
\end{array}
$$
Assume $h = 1$. If $\chi(\phi)$ is the identity then $\phi^{-1}$ restricts to the identity on any fiber of $\pi_{i}$ for $i = 1,\dots, r_1$, and hence $\chi$ is an isomorphism. If $h\geq 2$ and $\chi(\phi)$ is the identity then $\phi^{-1}$ restricts to an automorphism of any fiber of the projection $X\rightarrow (\mathbb{P}^{n_1})^{r_1}$. Since such a fiber is isomorphic to $(\mathbb{P}^{n_2})^{r_2}\times\dots\times (\mathbb{P}^{n_h})^{r_h}$ we conclude by induction on $h$.
\end{proof}

\begin{thm}\label{pseudo-aut}
For the pseudo-automorphism group of $\mathcal{X}(n,m)$ we have
$$
\PsAut(\mathcal{X}(n,m)) \cong \Aut(\mathcal{X}(n,m)) \cong
\left\lbrace\begin{array}{ll}
PGL(n+1)\times PGL(m+1) & \text{if n}< \textit{m}\\ 
(S_2 \ltimes (PGL(n+1)\times PGL(n+1)))\rtimes S_2 & \text{if n} = \textit{m}\geq 2
\end{array}\right.
$$
while $\PsAut(\mathcal{X}(1))\cong \Aut(\mathcal{X}(1))\cong PGL(4)$.

Furthermore, the pseudo-automorphism group of $\mathcal{Q}(n)$ is given by $\PsAut(\mathcal{Q}(n))\cong PGL(n+1)\rtimes S_2$ if $n\geq 2$ while $\PsAut(\mathcal{Q}(1))\cong \Aut(\mathcal{Q}(1))\cong PGL(3)$. 
\end{thm}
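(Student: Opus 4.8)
The plan is to reduce everything to a computation of the ordinary automorphism group and then to determine that group by descending to $\mathbb{P}^N$ and analyzing the induced action on the cones of Theorem \ref{theff}. The first observation is immediate: since $\mathcal{X}(n,m)$ and $\mathcal{Q}(n)$ are smooth Fano varieties by Corollary \ref{Fano}, Proposition \ref{pautp1} gives at once that $\PsAut = \Aut$ for all of these spaces, so each assertion becomes a statement about $\Aut$.

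Next I would isolate the subgroup $\Aut_H$ of automorphisms fixing the class $H = D_1$. If $\phi^{*}H = H$, then $\phi$ is compatible with the morphism $f:\mathcal{X}(n,m)\to\mathbb{P}^N$ induced by $|D_1| = |H|$ in Construction \ref{ccc} and Proposition \ref{morNEF}, hence descends to $\overline{\phi}\in\Aut(\mathbb{P}^N) = PGL(N+1)$ with $f\circ\phi = \overline{\phi}\circ f$. Because $f$ contracts the boundary divisors $E_1,\dots,E_n$ precisely onto the flag $\mathcal{S}\subset\sec_2(\mathcal{S})\subset\cdots$ of strictly increasing dimension, a linear map $\overline{\phi}$ preserving their union must preserve each stratum, so it stabilizes the Segre variety $\mathcal{S}$. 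The projectivities of $\mathbb{P}(\Hom(W,V))$ stabilizing $\mathcal{S} = \sigma(\mathbb{P}^n\times\mathbb{P}^m)$ form exactly $\Aut(\mathbb{P}^n\times\mathbb{P}^m)$, and conversely each of them lifts to $\mathcal{X}(n,m)$ since it preserves every blow-up center; hence $\Aut_H\cong\Aut(\mathbb{P}^n\times\mathbb{P}^m)$, which by Lemma \ref{autprod} equals $PGL(n+1)\times PGL(m+1)$ when $n<m$ and $S_2\ltimes PGL(n+1)^2$ when $n=m$.

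It then remains to control the quotient $\Aut(\mathcal{X}(n,m))/\Aut_H$, which embeds into the group of permutations of the nef extremal rays $D_1,\dots,D_{n+1}$ (Theorem \ref{theff}) that preserve $-K$; this is the delicate point. When $n<m$ I would argue that $H$ is always fixed: the ray $D_{n+1}$ is the unique nef ray also lying on $\partial\Eff(\mathcal{X}(n,m))$, inducing the fibration onto $\mathcal{G}(n,m)$ of Proposition \ref{morNEF}, so it is fixed, while $D_1 = H$ is singled out intrinsically as the only nef class whose contraction is a morphism onto a \emph{smooth} variety contracting all $n$ boundary divisors (relative Picard number $n$); an automorphism preserving this numerical data fixes $D_1$, forcing $\Aut = \Aut_H = PGL(n+1)\times PGL(m+1)$. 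When $n = m\geq 2$, the involution $Z^{inv}$ of Proposition \ref{Sarkisov} sends $D_k$ to $D_{n+1-k}$, since $\wedge^n Z$ is the adjugate and hence the inverse up to scalar, so it does not fix $H$ and represents the nontrivial coset; I would check that it has order two, normalizes $\Aut_H$ by conjugating the action $Z\mapsto AZB^{t}$ into the transpose-inverse action, and that no further permutations survive, yielding the outer factor $\rtimes S_2$ and thus $(S_2\ltimes PGL(n+1)^2)\rtimes S_2$. The main obstacle is exactly this last step: excluding spurious permutations of the nef rays beyond those coming from the transpose and from $Z^{inv}$, where the fine combinatorics of the cones together with the invariance of $-K$ must be exploited.

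The case of $\mathcal{Q}(n)$ runs in parallel with the $SL(n+1)$-action $Z\mapsto AZA^{t}$ of Lemma \ref{l1} and Construction \ref{ccq}: the descent now identifies the $H$-fixing subgroup with the stabilizer of the Veronese $\mathcal{V}$, namely $PGL(n+1)$, the transpose acting trivially on symmetric matrices so that the inner $S_2$ disappears, while inversion still contributes a nontrivial involution, giving $\Aut(\mathcal{Q}(n)) = PGL(n+1)\rtimes S_2$ for $n\geq 2$. Finally, the Picard-number-one cases are direct: $\mathcal{X}(1) = \mathbb{P}^3$ and $\mathcal{Q}(1) = \mathbb{P}^2$, so their groups are $PGL(4)$ and $PGL(3)$; the hypothesis $n\geq 2$ is genuine, because for $2\times 2$ matrices the adjugate is linear, so $Z^{inv}$ already lies in $PGL(4)$ and produces no extra $S_2$.
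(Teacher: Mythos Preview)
Your outline follows the paper's proof closely: reduce $\PsAut$ to $\Aut$ via Corollary \ref{Fano} and Proposition \ref{pautp1}, study the induced permutation of the extremal rays of $\Nef$, and for the subgroup fixing $D_1$ descend along $f$ to the linear stabilizer of the Segre (resp.\ Veronese), then invoke Lemma \ref{autprod} and the involution $Z^{inv}$ of Proposition \ref{Sarkisov}. The identification $\Aut_H\cong\Aut(\mathcal S)$ and the low--Picard--number cases are handled exactly as in the paper.

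The one point where you diverge, and where you yourself flag the ``main obstacle'', is how to constrain the permutation of the nef generators $D_1,\dots,D_{n+1}$. You try to single out $D_1$ by geometric properties of the associated contraction (smooth target, number of contracted boundary divisors). The paper instead uses a much simpler numerical invariant: an automorphism preserves $h^0(\mathcal X(n,m),D_i)=\binom{n+1}{i}\binom{m+1}{i}$. For $n<m$ these numbers (together with the fact that the induced map on $\Pic$ must also permute the extremal rays $E_1,\dots,E_n,D_{n+1}$ of $\Eff$) force the trivial permutation, so $D_1$ is fixed and $\Aut=\Aut_H$. For $n=m$ one has $h^0(D_i)=h^0(D_{n+1-i})$ and no other coincidences, which---again combined with preservation of $\Eff(\mathcal X(n))=\langle E_1,\dots,E_n\rangle$---leaves only the identity and the full reversal $D_i\leftrightarrow D_{n+1-i}$, the latter being realized by $Z^{inv}$. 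This dissolves what you call the delicate step without needing to analyze smoothness of images or exceptional loci of the intermediate $\pi_k$.

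Your alternative characterization of $D_1$ is in fact shaky as stated: every $\pi_k$ with $1\le k\le n$ is a birational morphism of relative Picard number $n$ (each $D_k$ is orthogonal to all Mori--cone generators of Proposition \ref{moricone} except one), so ``contracts all $n$ boundary divisors'' does not distinguish $D_1$ among them; one would then have to compare the singularities of the images $Deg_k(\mathcal X(n,m))$, which is considerably more work than comparing $h^0$.
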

\begin{proof}
Since by Corollary \ref{Fano} $\mathcal{X}(n,m)$ is Fano, Proposition \ref{pautp1} yields an isomorphism of groups $\PsAut(\mathcal{X}(n,m))\cong\Aut(\mathcal{X}(n,m))$. Now, let $\phi\in\Aut(\mathcal{X}(n,m))$ be an automorphism. Then $\phi$ must act on the extremal rays of $\Nef(\mathcal{X}(n,m))$. If $n<m$ then Theorem \ref{theff} yields that this action must be trivial since for instance $h^0(\mathcal{X}(n,m), D_i)\neq h^0(\mathcal{X}(n,m), D_j)$ for any pair of generators $D_i\neq D_j$ of $\Nef(\mathcal{X}(n,m))$. 

On the other hand, if $n = m$ then either this action is trivial or it switches $D_i$ with $D_{n+1-i}$ for $i = 1,\dots,n$. We know that the latter is indeed realized by the distinguished automorphism $Z^{inv}:\mathcal{X}(n)\rightarrow\mathcal{X}(n)$ in Proposition \ref{Sarkisov}. 

Therefore, if $n < m$ for any automorphism $\phi$ we have in particular that $\phi^{*}D_1 = D_1$, and hence via the blow-up map $f:\mathcal{X}(n)\rightarrow\mathbb{P}^N$ in Construction \ref{ccc} $\phi$ induces an automorphism $\overline{\phi}$ of $\mathbb{P}^N$ stabilizing the Segre variety $\mathcal{S}\cong\mathbb{P}^n\times\mathbb{P}^m$. To conclude it is enough to observe that since $\mathcal{X}(n,m)$ and $\mathbb{P}^N$ are birational we have $\phi = Id_{\mathcal{X}(n,m)}$ if and only if $\overline{\phi} = Id_{\mathbb{P}^N}$, and that since $\mathcal{S}\subset\mathbb{P}^N$ is non-degenerated the subgroup of $PGL(N+1)$ stabilizing $\mathcal{S}$ is isomorphic to $\Aut(\mathcal{S})$. Finally, Lemma \ref{autprod} yields $\Aut(\mathcal{S})\cong PGL(n+1)\times PGL(m+1)$.

If $n = m\geq 2$ we have a surjective morphism $\Aut(\mathcal{X}(n))\rightarrow S_2$ where $S_2 = \{Id_{\mathcal{X}(n)}, Z^{inv}\}$. Assume that the permutation induced by $\phi\in\Aut(\mathcal{X}(n))$ is trivial. Then as before $\phi$ induces an automorphism of $\mathbb{P}^N$ preserving $\mathcal{S}\cong\mathbb{P}^n\times\mathbb{P}^n$, and in this case Lemma \ref{autprod} yields $\Aut(\mathcal{S})\cong S_2 \ltimes (PGL(n+1)\times PGL(n+1))$. We get the exact sequence 
$$0\rightarrow S_2 \ltimes (PGL(n+1)\times PGL(n+1))\rightarrow\Aut(\mathcal{X}(n))\rightarrow S_2\rightarrow 0$$
where the last morphism has a section. So the sequence split, and since the actions of $S_2 \ltimes (PGL(n+1)\times PGL(n+1))$ and $S_2 = \{Id_{\mathcal{X}(n)},Z^{inv}\}$ on $\mathcal{X}(n)$ do not commute the semi-direct product $\Aut(\mathcal{X}(n))\cong (S_2 \ltimes (PGL(n+1)\times PGL(n+1)))\rtimes S_2$ is not direct. If $n=m=1$ recall that Construction \ref{ccc} yields $\mathcal{X}(1)\cong\mathbb{P}^3$. 

For $\mathcal{Q}(n)$, considering the Veronese variety $\mathcal{V}\subseteq\mathbb{P}^{N_{+}}$ instead of the Segre variety $\mathcal{S}\subseteq\mathbb{P}^N$, a completely analogous argument works.
\end{proof}

\begin{Remark}
It is well known that the Kontsevich moduli space $\overline{M}_{0,0}(\mathbb{P}^2,2)$, parametrizing degree two stable maps from a nodal rational curve to $\mathbb{P}^2$, is isomorphic to the space of complete conics $\mathcal{Q}(2)$ \cite[Section 0.4]{FP97}. Then Theorem \ref{pseudo-aut} says in particular that $\Aut(\overline{M}_{0,0}(\mathbb{P}^2,2))\cong PGL(3)\rtimes S_2$, where $PGL(3)$ acts by motions of the target $\mathbb{P}^2$, and the involution of $S_2$ associates to a conic its dual.
\end{Remark}

\begin{Corollary}\label{coraut}
Let $X$ be an irreducible normal $\mathbb{Q}$-factorial complete variety, and let $\Aut^{o}(X)$ be the connected component of the identity of $\Aut(X)$. 

If $X$ is isomorphic in codimension two to $\mathcal{X}(n,m)$ with $(n,m)\neq (1,1)$, then $\Aut^{o}(X)\cong PGL(n+1)\times PGL(m+1)$. Similarly, if $X$ is isomorphic in codimension two to $\mathcal{Q}(n)$ with $n\geq 2$ then $\Aut^{o}(X)\cong PGL(n+1)$.
\end{Corollary}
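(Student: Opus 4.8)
The plan is to combine the computation of the full pseudo-automorphism groups in Theorem~\ref{pseudo-aut} with the observation that pseudo-automorphisms are insensitive to modifications in codimension at least two. Set $Y:=\mathcal{X}(n,m)$, let $G:=PGL(n+1)\times PGL(m+1)$, and write $\psi:X\dasharrow Y$ for the pseudo-isomorphism provided by the codimension two isomorphism. Since a composition of pseudo-isomorphisms contracts no divisor, conjugation $g\mapsto\psi\circ g\circ\psi^{-1}$ defines a group isomorphism $\PsAut(X)\cong\PsAut(Y)$. By Theorem~\ref{pseudo-aut} we have $\PsAut(Y)=\Aut(Y)$, an algebraic group whose identity component is $G$ in exactly the cases allowed by the hypotheses: when $n<m$ the group $\Aut(Y)=G$ is already connected, whereas when $n=m\geq 2$ the finite factors $S_2$ constitute the component group, so again $\Aut^{o}(Y)\cong G$ (the excluded case $(n,m)=(1,1)$ is precisely where $\mathcal{X}(1)\cong\mathbb{P}^3$ forces $\Aut^{o}\cong PGL(4)$ instead). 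The treatment of $\mathcal{Q}(n)$ is identical, with $G=PGL(n+1)$ and $\PsAut(\mathcal{Q}(n))\cong PGL(n+1)\rtimes S_2$.

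First I would prove the inclusion $\Aut^{o}(X)\subseteq\widetilde{G}:=\psi^{-1}G\psi$. The group $\Aut^{o}(X)$ sits inside $\PsAut(X)$, since any automorphism is in particular a pseudo-automorphism; as it is connected, its image under $\PsAut(X)\cong\PsAut(Y)=\Aut(Y)$ is a connected subgroup of the algebraic group $\Aut(Y)$, hence contained in its identity component $G$. This gives $\Aut^{o}(X)\subseteq\widetilde{G}$.

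The heart of the argument is the reverse inclusion $\widetilde{G}\subseteq\Aut^{o}(X)$, which amounts to regularizing the pseudo-automorphism action of $\widetilde{G}$. Because $X$ is isomorphic in codimension one to the Mori dream space $Y$, one has $\Cl(X)\cong\Cl(Y)$, so $\Pic(X)$ is free and finitely generated and $h^{1}(X,\mathcal{O}_X)=0$; in particular numerical and linear equivalence coincide on $X$. Since $\widetilde{G}$ is connected it acts trivially on the discrete group $\Pic(X)$, and therefore every $g\in\widetilde{G}$ satisfies $g^{*}D\sim D$ for all divisors $D$. Fixing an ample class $A$ on $X$, each such $g$ is a pseudo-automorphism with $g^{*}A\sim A$. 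As $X$ is normal, $g$ then induces an isomorphism on $H^{0}(X,\mathcal{O}_X(mA))$ for every $m$, and comparing with the closed embedding defined by $|mA|$ for $m\gg 0$ realizes $g$ as the restriction of a linear automorphism of the ambient projective space stabilizing $X$; thus $g\in\Aut(X)$. Hence $\widetilde{G}\subseteq\Aut(X)$, and connectedness gives $\widetilde{G}\subseteq\Aut^{o}(X)$. Together with the first inclusion this yields $\Aut^{o}(X)=\widetilde{G}\cong G$, and the same reasoning, substituting the quadric case of Theorem~\ref{pseudo-aut}, settles the assertion for $\mathcal{Q}(n)$.

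The step I expect to be the main obstacle is this last regularization. A pseudo-isomorphism preserves $\Eff$ and $\overline{\Mov(X)}$ but in general distorts the nef cone, so an ample class of $Y$ cannot simply be transported to $X$; positivity must be found intrinsically. The triviality of the $\widetilde{G}$-action on $N^1(X)$ is precisely what rescues the argument, forcing each $g^{*}$ to preserve the ample cone. This requires knowing that $X$ carries an ample class, that is, that the $\mathbb{Q}$-factorial complete variety $X$, being isomorphic in codimension two to the projective variety $Y$, is itself projective; verifying this positivity statement is the delicate point on which the whole computation rests.
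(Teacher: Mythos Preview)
Your argument is a direct, hands-on attempt at what the paper dispatches in two lines by citing general machinery. The paper first observes that $X$, being a small $\mathbb{Q}$-factorial modification of the Mori dream space $Y$, is itself a Mori dream space (hence in particular projective), and then invokes \cite[Corollary~4.2.4.2]{ADHL15}, which says that for a variety with finitely generated Cox ring the identity component $\Aut^{o}$ depends only on the graded algebra $\Cox(X)$. Since $\Cox(X)\cong\Cox(Y)$, one gets $\Aut^{o}(X)\cong\Aut^{o}(Y)$ at once, and Theorem~\ref{pseudo-aut} identifies the latter.

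Your outline is essentially a reproof of that black box in this case, and is sound modulo two points. The projectivity gap you flag is real and is precisely what the paper's first sentence settles: once $X$ is a Mori dream space it is projective, an ample class exists, and your regularization goes through; this fact is part of the general Cox-ring/GIT structure theory in \cite{ADHL15} and is not easy to circumvent by elementary means. There is a second, smaller issue you pass over: when you assert that the image of $\Aut^{o}(X)$ in $\Aut(Y)$ is connected, you are treating conjugation by $\psi$ as a morphism of algebraic groups, which it is not a priori. (Note also that for $\mathcal{X}(n)$ the inner $S_2$ swapping the Segre factors acts trivially on $\Pic$, so appealing to the action on $\Pic$ alone does not isolate $G$.) This is fixable---for instance, $X$ is rational, so $\Aut^{o}(X)$ is a connected linear algebraic group, hence divisible as an abstract group, and therefore has trivial image in the finite component group $\Aut(Y)/\Aut^{o}(Y)$---but the cleanest route is the one the paper takes.
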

\begin{proof}
Note that $X$ is a Mori dream space, since it is a small $\mathbb{Q}$-factorial modification of a Mori dream space by Remark \ref{sphMDS}. Therefore, $\Cox(X)$ is finitely generated. Now, to conclude it is enough to apply \cite[Corollary 4.2.4.2]{ADHL15} together with Theorem \ref{pseudo-aut}.
\end{proof}

\begin{Remark}
As observed in the proof of Theorem \ref{pseudo-aut}, in the cases not covered in Corollary \ref{coraut} the spaces of complete forms are projective spaces. Therefore, any irreducible normal $\mathbb{Q}$-factorial complete variety isomorphic in codimension two to them is a projective space as well.
\end{Remark}

\appendix 
\section{Extremal rays of the movable cones}\label{appendix}
We present Maple scripts computing the extremal rays of the movable cone of the varieties $\mathcal{X}(n,m), \mathcal{X}(n), \mathcal{Q}(n)$. The scripts are based on Theorem \ref{gen} and \cite[Proposition 3.3.2.3]{ADHL15}, and require the Maple packages \href{http://www-home.math.uwo.ca/~mfranz/convex/}{\textit{Convex} 1.2.0} by M. Franz and \href{https://www.maplesoft.com/support/help/Maple/view.aspx?path=LinearAlgebra}{\textit{LinearAlgebra}}.

By \cite[Proposition 3.3.2.3]{ADHL15} we have that if $X$ is an irreducible, normal, complete variety with finitely generated Cox ring then $\Mov(X)$ is the intersection of the cones constructed by taking all but one vectors among the vectors giving the grading of a fixed system of generators of $\Cox(X)$. 

Note that, for generators having at least two sections, it is then enough to consider two copies of the grading vectors. This simple observation greatly reduces the computational complexity of the problem. 

\begin{Script} Movable cone of $\mathcal{X}(n,m)$ for $n<m$.
\begin{footnotesize}
\begin{verbatim}
B := Matrix(n+1,n+1): 
for i to n+1 do for j to n+1 do if i = 1 then B[i,j] := j 
else if i > j then B[i,j] := 0 else B[i,j] := i-j-1 
end if; end if; end do; end do;
E := Matrix(n+1,n): 
for i to n+1 do for j to n do if i = j+1 then E[i,j] := 1 else E[i,j] = 0 
end if; end do; end do;
st := time[real]():
A := Matrix([B,B,E]); C := ColumnDimension(A); R := RowDimension(A); 
for i to C do Deg[i] := [seq(A[j, i], j = 1..R)] end do; 
Degaux := [seq(Deg[j], j = 1..C)]; 
for l to C do Coneaux[l] := poshull(op(subsop(l = NULL, Degaux))) end do; 
for l to C do rays(Coneaux[l]) end do; 
MovCone := intersection(seq(Coneaux[i], i = 1..C)); rays(MovCone);
numelems(rays(MovCone)); time[real]()-st
\end{verbatim}
\end{footnotesize}
\end{Script}
\begin{Script}\label{scr2} Movable cone of $\mathcal{X}(n)$ and $\mathcal{Q}(n)$.
\begin{footnotesize}
\begin{verbatim}
B := Matrix(n,n+1): 
for i to n do for j to n+1 do if i = 1 then B[i,j] := j 
else if i > j then B[i,j] := 0 else B[i,j] := i-j-1 
end if; end if; end do; end do;
E := Matrix(n,n-1): 
for i to n do for j to n-1 do if i = j+1 then E[i,j] := 1 else E[i,j] = 0 
end if; end do; end do;
A := Matrix([B,B[1..n,1..n],E]); C := ColumnDimension(A); R := RowDimension(A);
st := time[real](): 
for i to C do Deg[i] := [seq(A[j,i], j = 1..R)] end do; 
Degaux := [seq(Deg[j], j = 1..C)]; 
for l to C do Coneaux[l] := poshull(op(subsop(l = NULL, Degaux))) end do; 
for l to C do rays(Coneaux[l]) end do; 
MovCone := intersection(seq(Coneaux[i], i = 1..C)); rays(MovCone); 
numelems(rays(MovCone)); time[real]()-st
\end{verbatim}
\end{footnotesize}
\end{Script}
These scripts are reasonably fast. For instance, Script \ref{scr2} computes in $0.045$ seconds the rays of $\Mov(\mathcal{X}(3))$ and $\Mov(\mathcal{Q}(3))$ in (\ref{mov3}), and in $0.055$ seconds the rays of $\Mov(\mathcal{X}(4))$ and $\Mov(\mathcal{Q}(4))$, which with respect to the standard basis of $\Pic(\mathcal{X}(4))$ and $\Pic(\mathcal{Q}(4))$ in Proposition \ref{p1} are the vectors $(4,-3,-2,-1)$, $(3,-2,-1,0)$, $(12,-8,-6,-3)$, $(1,0,0,0)$, $(2,-1,0,0)$, $(8,-4,-3,-2)$, $(16,-11,-6,-4)$, $(9,-4,-3,0)$.

On a regular laptop Script \ref{scr2} computes the $512$ rays of $\Mov(\mathcal{Q}(10))$ in $2.527$ seconds, the $4096$ rays of $\Mov(\mathcal{Q}(13))$ in $447.010$ seconds, and the $8192$ rays of $\Mov(\mathcal{Q}(14))$ in $3207.981$ seconds.
\bibliographystyle{amsalpha}
\bibliography{Biblio}

\providecommand{\bysame}{\leavevmode\hbox to3em{\hrulefill}\thinspace}
\providecommand{\MR}{\relax\ifhmode\unskip\space\fi MR }
\providecommand{\MRhref}[2]{%
  \href{http://www.ams.org/mathscinet-getitem?mr=#1}{#2}
}
\providecommand{\href}[2]{#2}
\begin{thebibliography}{BCHM10}

\bibitem[AC17]{AC17}
C.~Araujo and C~.Casagrande, \emph{On the {F}ano variety of linear spaces
  contained in two odd-dimensional quadrics}, Geom. Topol. \textbf{21} (2017),
  no.~5, 3009--3045. \MR{3687113}

\bibitem[ADHL15]{ADHL15}
I.~Arzhantsev, U.~Derenthal, J.~Hausen, and A.~Laface, \emph{Cox rings},
  Cambridge Studies in Advanced Mathematics, vol. 144, Cambridge University
  Press, Cambridge, 2015. \MR{3307753}

\bibitem[Alg56]{Al56}
A.~R. Alguneid, \emph{Analytical degeneration of complete twisted cubics},
  Proc. Cambridge Philos. Soc. \textbf{52} (1956), 202--208. \MR{0077998}

\bibitem[AM16]{AM16}
C.~Araujo and A.~Massarenti, \emph{Explicit log {F}ano structures on blow-ups
  of projective spaces}, Proc. Lond. Math. Soc. (3) \textbf{113} (2016), no.~4,
  445--473. \MR{3556488}

\bibitem[BCHM10]{BCHM10}
C.~Birkar, P.~Cascini, C.~D. Hacon, and J.~McKernan, \emph{Existence of minimal
  models for varieties of log general type}, J. Amer. Math. Soc. \textbf{23}
  (2010), no.~2, 405--468. \MR{2601039}

\bibitem[BDPP13]{BDPP13}
S.~Boucksom, J-P. Demailly, M.~P\v{a}un, and T.~Peternell, \emph{The
  pseudo-effective cone of a compact {K}\"ahler manifold and varieties of
  negative {K}odaira dimension}, J. Algebraic Geom. \textbf{22} (2013), no.~2,
  201--248. \MR{3019449}

\bibitem[BM17]{BM17}
M.~Bolognesi and A.~Massarenti, \emph{A variation on a conjecture of {F}aber
  and {F}ulton}, \url{https://arxiv.org/abs/1702.00068}, 2017.

\bibitem[Bri89]{Br89}
M.~Brion, \emph{Groupe de {P}icard et nombres caract\'eristiques des
  vari\'et\'es sph\'eriques}, Duke Math. J. \textbf{58} (1989), no.~2,
  397--424. \MR{1016427}

\bibitem[Bri93]{Br93}
\bysame, \emph{Vari\'et\'es sph\'eriques et th\'eorie de {M}ori}, Duke Math. J.
  \textbf{72} (1993), no.~2, 369--404. \MR{1248677}

\bibitem[Cav16]{Ca16}
F.~Cavazzani, \emph{Complete homogeneous varieties via representation theory},
  Ph. D. Thesis, Harvard University, \url{https://arxiv.org/abs/1603.09705},
  2016.

\bibitem[Cha64]{Ch64}
M.~Chasles, \emph{Determination du nombre des sections conique qui doivent
  toucher cinq courbes donn\'ees d'ordre quelquonque, ou satisfaire \`a
  diverses autres conditions}, C.R. de l'Acad. de Sciences \textbf{58} (1864),
  222--226.

\bibitem[Cor95]{Co95}
A.~Corti, \emph{Factoring birational maps of threefolds after {S}arkisov}, J.
  Algebraic Geom. \textbf{4} (1995), no.~2, 223--254. \MR{1311348}

\bibitem[Cox95]{Cox95}
D.~A. Cox, \emph{The homogeneous coordinate ring of a toric variety}, J.
  Algebraic Geom. \textbf{4} (1995), no.~1, 17--50. \MR{1299003}

\bibitem[Deb01]{De01}
O.~Debarre, \emph{Higher-dimensional algebraic geometry}, Universitext,
  Springer New York, 2001.

\bibitem[EF09]{EF09}
D.~Edidin and C.~A. Francisco, \emph{Grassmannians and representations}, J.
  Commut. Algebra \textbf{1} (2009), no.~3, 381--392. \MR{2524859}

\bibitem[FM17]{FM17}
B.~Fantechi and A.~Massarenti, \emph{On the rigidity of moduli of curves in
  arbitrary characteristic}, Int. Math. Res. Not. IMRN (2017), no.~8,
  2431--2463. \MR{3658203}

\bibitem[FM18]{FM18}
\bysame, \emph{On the rigidity of moduli of weighted pointed stable curves}, J.
  Pure Appl. Algebra (2018).

\bibitem[FP97]{FP97}
W.~Fulton and R.~Pandharipande, \emph{Notes on stable maps and quantum
  cohomology}, Algebraic geometry---{S}anta {C}ruz 1995, Proc. Sympos. Pure
  Math., vol.~62, Amer. Math. Soc., Providence, RI, 1997, pp.~45--96.
  \MR{1492534}

\bibitem[Gia03]{Gi03}
G.~Z. Giambelli, \emph{Il problema della correlazione negli iperspazi}, Mem.
  Reale lnst. Lombardo \textbf{19} (1903), 155--194.

\bibitem[Har77]{Har77}
R.~Hartshorne, \emph{Algebraic geometry}, Springer-Verlag, New York-Heidelberg,
  1977, Graduate Texts in Mathematics, No. 52. \MR{0463157}

\bibitem[Hir75]{Hi75}
T.~A. Hirst, \emph{On {C}orrelation in {S}pace}, Proc. Lond. Math. Soc.
  \textbf{6} (1874/75), 7--9. \MR{1576746}

\bibitem[Hir77]{Hi77}
\bysame, \emph{Note on the {C}orrelation of {T}wo {P}lanes}, Proc. Lond. Math.
  Soc. \textbf{8} (1876/77), 262--273. \MR{1577537}

\bibitem[HK00]{HK00}
Y.~Hu and S.~Keel, \emph{Mori dream spaces and {GIT}}, Michigan Math. J.
  \textbf{48} (2000), 331--348, Dedicated to William Fulton on the occasion of
  his 60th birthday. \MR{1786494}

\bibitem[Hue15]{Ce15}
C.~Lozano Huerta, \emph{Birational geometry of the space of complete quadrics},
  Int. Math. Res. Not. IMRN (2015), no.~23, 12563--12589. \MR{3431630}

\bibitem[KT88]{TK88}
S.~Kleiman and A.~Thorup, \emph{Complete bilinear forms}, Algebraic geometry
  ({S}undance, {UT}, 1986), Lecture Notes in Math., vol. 1311, Springer,
  Berlin, 1988, pp.~253--320. \MR{951650}

\bibitem[Laz04]{La04II}
R.~Lazarsfeld, \emph{Positivity in algebraic geometry. {II}}, Ergebnisse der
  Mathematik und ihrer Grenzgebiete. 3. Folge. A Series of Modern Surveys in
  Mathematics [Results in Mathematics and Related Areas. 3rd Series. A Series
  of Modern Surveys in Mathematics], vol.~49, Springer-Verlag, Berlin, 2004,
  Positivity for vector bundles, and multiplier ideals. \MR{2095472}

\bibitem[LH82]{LH82}
P.~{Le Barz} and Y.~Hervier (eds.), \emph{Enumerative geometry and classical
  algebraic geometry}, Progress in Mathematics, vol.~24, Birkh\"auser, Boston,
  Mass., 1982, Papers from the Conference held in Nice, June 23--27, 1981.
  \MR{685760}

\bibitem[LLT89]{LLT89}
D.~Laksov, A.~Lascoux, and A.~Thorup, \emph{On {G}iambelli's theorem on
  complete correlations}, Acta Math. \textbf{162} (1989), no.~3-4, 143--199.
  \MR{989395}

\bibitem[Lun96]{Lu96}
D.~Luna, \emph{Toute vari\'et\'e magnifique est sph\'erique}, Transform. Groups
  \textbf{1} (1996), no.~3, 249--258. \MR{1417712}

\bibitem[Mas14]{Ma14}
A.~Massarenti, \emph{The automorphism group of {$\overline M_g,_n$}}, J. Lond.
  Math. Soc. (2) \textbf{89} (2014), no.~1, 131--150. \MR{3174737}

\bibitem[Mas17]{Ma17}
\bysame, \emph{On the biregular geometry of the {F}ulton--{M}ac{P}herson
  compactification}, Adv. Math. \textbf{322} (2017), 97--131. \MR{3720795}

\bibitem[Mas18]{Ma18}
\bysame, \emph{On the birational geometry of spaces of complete forms {II}:
  skew-forms}, \url{https://arxiv.org/abs/1803.09011}, 2018.

\bibitem[MM14]{MM14}
A.~Massarenti and M.~Mella, \emph{On the automorphisms of moduli spaces of
  curves}, Automorphisms in birational and affine geometry, Springer Proc.
  Math. Stat., vol.~79, Springer, Cham, 2014, pp.~149--167. \MR{3229350}

\bibitem[MM17]{MaM17}
\bysame, \emph{On the automorphisms of {H}assett's moduli spaces}, Trans. Amer.
  Math. Soc. \textbf{369} (2017), no.~12, 8879--8902. \MR{3710647}

\bibitem[Oka16]{Ok16}
S.~Okawa, \emph{On images of {M}ori dream spaces}, Math. Ann. \textbf{364}
  (2016), no.~3-4, 1315--1342. \MR{3466868}

\bibitem[Per14]{Pe14}
N.~Perrin, \emph{On the geometry of spherical varieties}, Transform. Groups
  \textbf{19} (2014), no.~1, 171--223. \MR{3177371}

\bibitem[Pie82]{Pi81}
R.~Piene, \emph{Degenerations of complete twisted cubics}, Enumerative geometry
  and classical algebraic geometry ({N}ice, 1981), Progr. Math., vol.~24,
  Birkh\"auser, Boston, Mass., 1982, pp.~37--50. \MR{685763}

\bibitem[Pie83]{Pi83}
\bysame, \emph{A note on higher order dual varieties, with an application to
  scrolls}, Singularities, {P}art 2 ({A}rcata, {C}alif., 1981), Proc. Sympos.
  Pure Math., vol.~40, Amer. Math. Soc., Providence, RI, 1983, pp.~335--342.
  \MR{713259}

\bibitem[Sch86]{Sc86}
H.~Schubert, \emph{Die {$n$}-dimensionalen {V}erallgemeinerungen der
  fundamentalen {A}nzahlen unseres {R}aums}, Math. Ann. \textbf{26} (1886),
  no.~1, 26--51. \MR{1510326}

\bibitem[Seg84]{Se84}
C.~Segre, \emph{Studio sulle quadriche in uno spazio lineare ad un numero
  qualunque di dimensioni}, Memorie dell'Acc. dei Lincei \textbf{19} (1883/84),
  127--148.

\bibitem[Sem48]{Se48}
J.~G. Semple, \emph{On complete quadrics}, J. London Math. Soc. \textbf{23}
  (1948), 258--267. \MR{0028605}

\bibitem[Sem51]{Se51}
\bysame, \emph{The variety whose points represent complete collineations of
  {$S_r$} on {$S'_r$}}, Univ. Roma. Ist. Naz. Alta Mat. Rend. Mat. e Appl. (5)
  \textbf{10} (1951), 201--208. \MR{0048847}

\bibitem[Sem52]{Se52}
\bysame, \emph{On complete quadrics. {II}}, J. London Math. Soc. \textbf{27}
  (1952), 280--287. \MR{0048846}

\bibitem[Tha99]{Tha99}
M.~Thaddeus, \emph{Complete collineations revisited}, Math. Ann. \textbf{315}
  (1999), no.~3, 469--495. \MR{1725990}

\bibitem[Tyr56]{Ty56}
J.~A. Tyrrell, \emph{Complete quadrics and collineations in {$S_n$}},
  Mathematika \textbf{3} (1956), 69--79. \MR{0080352}

\bibitem[Vai82]{Va82}
I.~Vainsencher, \emph{Schubert calculus for complete quadrics}, Enumerative
  geometry and classical algebraic geometry ({N}ice, 1981), Progr. Math.,
  vol.~24, Birkh\"auser, Boston, Mass., 1982, pp.~199--235. \MR{685770}

\bibitem[Vai84]{Va84}
\bysame, \emph{Complete collineations and blowing up determinantal ideals},
  Math. Ann. \textbf{267} (1984), no.~3, 417--432. \MR{738261}

\end{thebibliography}

\end{document}